\theoremstyle{plain}
\newtheorem{thm}{Theorem}[section]
\newtheorem{prop}[thm]{Proposition}
\newtheorem{lem}[thm]{Lemma}
\newtheorem{cor}[thm]{Corollary}
\theoremstyle{definition}
\newtheorem{defn}[thm]{Definition}
\newtheorem{exmpl}[thm]{Example}
\newtheorem{rem}[thm]{Remark}
\newtheorem{question}[thm]{Question}
\def\longrightleftarrows{\mathrel{
	\mathop{\vcenter{
	\offinterlineskip
	\hbox to 0.6truecm{\rightarrowfill}%
	\hbox to 0.6truecm{\leftarrowfill}}}%
	}}
\newcommand{\reflective}[1][\empty]{
	\mathrel{\mathop{\vcenter{%
	\offinterlineskip
	\hbox to 0.6truecm{\rightarrowfill}%
	\hbox to 0.6truecm{$\vphantom\gets\smash{\longleftarrow\joinrel\rhook}$}}}%
	\limits^{#1}%
	}}
\newcommand{\coreflective}[1][\empty]{%
	\mathrel{\mathop{\vcenter{%
	\offinterlineskip
	\hbox to 0.6truecm{$\vphantom\to\smash{\lhook\joinrel\longrightarrow}$}%
	\hbox to 0.6truecm{\leftarrowfill}}}\limits_{#1}%
	}}
\newcommand{\doublewidetilde}[1]{{%
  \mathpalette\double@widetilde{#1}%
}}
\newcommand{\double@widetilde}[2]{%
  \sbox\z@{$\m@th#1\widetilde{#2}$}%
  \ht\z@=.9\ht\z@
  \widetilde{\box\z@}%
}
\newdimen\ex@
\def\nointerlineskip{\prevdepth-\@m\p@}
\def\@projlim{%
		\mathop{\vtop{\ialign{##\crcr
		\hfil\rm lim\hfil\crcr\noalign{\nointerlineskip}\leftarrowfill\crcr
		\noalign{\nointerlineskip\kern-\ex@}\crcr}}}
}
\def\@injlim{%
		\mathop{\vtop{\ialign{##\crcr
		\hfil\rm lim\hfil\crcr\noalign{\nointerlineskip}\rightarrowfill\crcr
		\noalign{\nointerlineskip\kern-\ex@}\crcr}}}
}
\def\@holim{%
		\mathop{\vtop{\ialign{##\crcr
		\hfil\rm holim\hfil\crcr\noalign{\nointerlineskip}\leftarrowfill\crcr
		\noalign{\nointerlineskip\kern-\ex@}\crcr}}}
}
\def\@hocolim{%
		\mathop{\vtop{\ialign{##\crcr
		\hfil\rm holim\hfil\crcr\noalign{\nointerlineskip}\rightarrowfill\crcr
		\noalign{\nointerlineskip\kern-\ex@}\crcr}}}
}
\def\varprojlim{\mathop{\@projlim}}
\def\varinjlim{\mathop{\@injlim}}
\def\varholim{\mathop{\@holim}}
\def\varhocolim{\mathop{\@hocolim}}
\def\subsection{\@startsection{subsection}{2}%
  \z@{.5\linespacing\@plus.7\linespacing}{-.5em}%
  {\bfseries\mathversion{bold}}}
\let\cal=\mathcal
\def\N{\mathbb{N}}
\def\Z{\mathbb{Z}}
\newcommand{\pDer}[1][D]{\mathbb{#1}}
\def\id{\mathop{\rm id}\nolimits}
\def\ker{\mathop{\rm Ker}\nolimits}
\def\coker{\mathop{\rm Coker}\nolimits}
\let\im=\Im\relax
\def\Spec{\mathop{\rm Spec}\nolimits}
\def\Hom{\mathop{\rm Hom}\nolimits}
\def\Ext{\mathop{\rm Ext}\nolimits}
\def\Ab{\mathord{\rm Ab}}
\def\lMod{{\rm\mathchar`-Mod}}
\def\lmod{{\rm\mathchar`-mod}}
\def\Ch{\mathord{{\cal C}\kern-1pt{\it h}}}
\def\Ho{{\cal K}}
\def\De{{\cal D}}
\def\fg{\mathop{\rm fg}\nolimits}
\def\fp{\mathop{\rm fp}\nolimits}
\def\TF{\mathord{\mathcal{T\kern-1pt F}}}
\def\TFT{\mathord{\mathcal{T\kern-1pt FT}}}
\def\HTF{\mathord{\mathcal{HT\kern-1pt F}}}
\newcommand{\adjust}[1][H]{\mathop{{\vphantom\varinjlim}^{\!({\cal#1})}}}
\def\leqdef{\mathrel{\mathrel{\mathop:}=}}
\def\reqdef{\mathrel{=\mathrel{\mathop:}}}
\let\Phi=\varPhi
\let\Sigma=\varSigma
\title[Local Coherence of Thomason Hearts]%
{Local Coherence of Hearts associated with Thomason Filtrations}%
\author[L.~Martini]{Lorenzo Martini}
\address[Lorenzo Martini]{%
Dipartimento di Matematica\\
Universit\`a di Trento\\
Via Sommarive 14, 38123 Povo (Trento), Italy}
\email{lorenzo.martini-2@unitn.it}
\address{%
Dipartimento di Informatica -- Settore di Matematica\\
Universit\`a di Verona\\
Strada le Grazie 15 -- Ca' Vignal, I-37134 Verona, Italy}
\email{lorenzo.martini@univr.it}
\author[C.\,E.~Parra]{Carlos E.~Parra}
\address[Carlos E.~Parra]{%
Universidad Austral de Chile\\
Instituto de Ciencias F\'isicas y Ma\-te\-m\'a\-ti\-cas\\
Edificio Emilio Pugin, Campus Isla Teja, 5090000 Valdivia, Chile}
\email{carlos.parra@uach.cl}%
\subjclass[2010]{13D30, 18E15, 18E30, 18A35, 18C35}
\keywords{%
TTF triple,
t-structure,
compactly generated,
Thomason filtration,
locally coherent,
Grothendieck category}
\thanks{The first named author is supported by a grant from University of Trento}
\thanks{The second named author is supported by grant ANID+Fondecyt/Regular+1200090}
\begin{document}

\begin{abstract}
Any Thomason filtration of a commutative ring yields (at least) two t-structures in the derived category of the ring, one of which is compactly generated \cite{Hrb20,HHZ21}. We study the hearts of these two t-structures and prove that they coincide in case of a weakly bounded below filtration. Prompted by \cite{SS20}, in which it is proved that the heart of a compactly generated t-structure in a triangulated category with coproduct is a locally finitely presented Grothendieck category, we study the local coherence of the hearts associated with a weakly bounded below Thomason filtration, achieving a useful recursive characterisation in case of a finite length filtration. Low length cases involve hereditary torsion classes of finite type of the ring, and even their Happel--Reiten--Smal\o\ hearts; in these cases, the relevant characterisations are given by few module-theoretic conditions.
\end{abstract}
\maketitle

\section*{Introduction}%
The main way to study an arbitrary abelian category is to provide good enough categorical correspondences to a category of modules over an arbitrary ring or, if this is not manageable, to define directly on the abelian category some homological properties that generalise the corresponding module-theoretic ones. Possibly, categories of modules over an associative ring are the ``nicest'' abelian categories one can work with, since they are Grothendieck categories with a finitely generated projective generator and carrying an additional finiteness condition, namely they are locally finitely presented. However, there are other fundamental homological conditions that are not shared by all the module categories. In this sense, we are interested in providing necessary and sufficient conditions for certain locally finitely presented Grothendieck categories to be locally coherent. Such finiteness condition if formulated just by miming the behaviour of the category of modules over a coherent ring, namely by asking for the finitely presented objects to form an abelian category. Locally coherent Grothendieck categories constitute the abelian setting in which it is possible to perform a fruitful purity theory strictly related to the general purity theory for triangulated categories (see \cite{Kra00}; in turn, these theories are a generalisation of the classical one for modules). Purity is a central topic in representation theory and it is in fact interwoven with other powerful homological and categorical tools, such as localisation, tilting theory, cotorsion theory, and derivators (see \cite{AHMV17,Lak18,SSV17,SS20}).

The Grothendieck categories we want to examine come from the world of triangulated categories, more precisely they are the hearts of certain t-structures (the heart of any t-structure is an abelian category, see~\cite{BBD82}). The problem of detecting and characterising the Grothendieck and other homological conditions on the hearts of t-structures in arbitrary triangulated categories is widely open. Nonetheless, two families of t-structures have been intensively studied in the literature, namely the Happel--Reiten--Smal\o\ and the compactly generated ones.

HRS~t-structures were introduced in \cite{HRS96} and they are defined in the derived category of an abelian category by means of a torsion pair of the latter. Many authors (see e.g.\ \cite{CGM07,CMT11,PS15,PS16,SSV17}) have investigated on the module theoretic properties of their hearts, in fact establishing necessary and sufficient conditions for these latter to be equivalent to module categories. When the underlying abelian category is Grothendieck, then the Grothendieck condition for the heart has been completely characterised in \cite{PS16a}: it occurs if and only if the torsion pair is of finite type. On the other hand, crucial results concerning the finiteness conditions have been achieved e.g.~in \cite{Sao17,PSV19}. An exhaustive survey devoted to the study of HRS~hearts and related topics is \cite{PS20}.

Compactly generated t-structures are defined in any triangulated category with coproducts. Very recent works \cite{SSV17,Bon19,SS20} show that their hearts are locally finitely presented Grothendieck categories; thus, it is natural to ask when they are locally coherent.

In the present paper we concentrate on t-structures and hearts in the derived category of a commutative ring. Here, the t-structures we are interested in have a useful homological description by means of classes associated with the ring, in particular with the hereditary torsion classes of finite type of its module category; indeed, these latter are parametrised by the so-called Thomason subsets of the spectrum of the ring (\cite[Theorem~2.2]{GP08}). Among these t-structures, the compactly generated ones are in bijection with the so-called Thomason filtrations of the spectrum, i.e.~decreasing maps of $(\Z,\subseteq)$ into the parts of the spectrum, such that each image is a Thomason subset. This fundamental result goes back to \cite{AJS10} in the case of a noetherian ring (see Theorem~3.10 therein): the aisles of such t-structures consist of the complexes whose cohomologies are supported along the corresponding Thomason filtration. In \cite{Hrb20} the author extends such result to the commutative case, i.e.~dropping the chain condition on the ring, losing however the previous cohomological description of the aisle. Such description still yields an aisle of a t-structure (see \cite[Lemma~3.6]{HHZ21}), though it is not known whether this latter needs to be compactly generated. Consequently, a Thomason filtration of a commutative ring yields at least two t-structures, one of which ---that one in the sense of Hrbek--- is compactly generated, so that has a locally finitely presented Grothendieck heart; we will call the other one the {\it Alonso--Jerem\'\i as--Saor\'\i n~t-structure\/} induced by the Thomason filtration.

In details, we study the local coherence of the hearts of the two mentioned t-structures associated with a Thomason filtration, achieving a characterisation for filtrations of finite length (Theorem~\ref{t:recursive}). Such result is in fact a very special case of a more general characterisation, i.e.~that of the local coherence of a Grothendieck category endowed with a TTF~triple of finite type (Theorem~\ref{t:lc-TTF_finite_type}); furthermore, it is formulated recursively, meaning that at any fixed length the local coherence of the heart associated with a Thomason filtration is encoded by that of the hearts of filtrations with lower length naturally associated with the given one.

The paper is organised as follows.

Section~\ref{s:preliminaries} contains the notations and all the preliminary definitions and results we need, concerning abelian and Grothendieck categories (in particular their torsion theories), triangulated categories (in particular their t-structures), and a brief survey on prederivators. We introduce the notion of quasi locally coherent additive category, which is a generalisation of the notion of local coherence for abelian categories.

In Section~\ref{s:a_criterion_for_the_local_coherence} we state and prove Theorem~\ref{t:lc-TTF_finite_type}, namely the result we want to specialise in the body of the paper, concerning a characterisation of the local coherence of a Grothendieck category equipped with a TTF~triple of finite type.

Section~\ref{s:basics_on_thomason_filtrations_and_hearts} contains the definition of Thomason filtration of a commutative ring, some relations between the two t-structures and hearts it induces, and a first main result concerning the AJS~heart, namely that in case of a weakly bounded below Thomason filtration, it is always a locally finitely presented Grothendieck category, as well as the other heart is (see Proposition~\ref{p:heart_is_bounded}).

In Section~\ref{s:some_useful_results} we continue giving properties relating the hearts and certain subcategories involved in the study of the finitely presented complexes of the hearts, which will be useful in the sequel. Above all, we can realise an arbitrary hereditary torsion class of finite type, together with the HRS~heart it gives rise, as the heart of a suitable Thomason filtration of finite length (respectively~$0$ and~$1$; see Subsection~\ref{ss:a_crucial_example} and Example~\ref{e:HRS-heart&filtration}). Our main result is Theorem~\ref{t:comm_non-coherent}, in which we completely characterise the local coherence of the former category; that is, the local coherence of an arbitrary hereditary torsion class of finite type of the ring.

Sections~\ref{s:arbitrary_thomason_filtrations} and~\ref{s:thomason_filtrations_of_finite_length} are the central part of the paper: they provide the machinery to apply Theorem~\ref{t:lc-TTF_finite_type} to the case of the hearts of filtrations of finite length. 

Section~\ref{s:arbitrary_thomason_filtrations} is devoted to detecting within the hearts of a (weakly bounded below) Thomason filtration a TTF~triple (of finite type), in order to let Theorem~\ref{t:lc-TTF_finite_type} apply. For this task, weakly bounded below Thomason filtrations have a prominent role. Indeed, on the one hand their two induced t-structures have the same hearts, as proved in Theorem~\ref{t:when_hearts_coincide}. On the other hand, they allow to identity for an arbitrary Thomason filtration the TTF~classes we are looking for: in Theorem~\ref{t:H_*-TTF_finite_type} we show that such classes are indeed the hearts of weakly bounded below Thomason filtrations naturally associated with the given one. In particular, these hearts are all locally coherent Grothendieck categories in case the given heart is so (\cite[Theorem~2.16]{Her97} and \cite[Theorem~2.6]{Kra97}).

Section~\ref{s:thomason_filtrations_of_finite_length} contains the main results concerning the Thomason filtrations of finite length, hence those whose two corresponding hearts coincide. In view of Theorem~\ref{t:H_*-TTF_finite_type}, such length is taken in the vein of providing a recursive argument for the characterisation of the local coherence of the heart, namely by taking into account the local coherence of the TTF classes of finite type detected previously. Theorem~\ref{t:recursive}, the main result of the section, identifies the local coherence in such a recursive way, by means of five conditions. These conditions are the most eligible ones, in the sense that for the crucial cases of length~$0,1,2$, almost all translate into module-theoretic properties, as proved in Corollaries~\ref{c:recursive-length1} and~\ref{c:recursive-length2}.

Eventually, Section~\ref{s:applications} contains some applications of Corollary~\ref{c:recursive-length1}: the first part is focused on the case of the HRS~hearts (as discussed in Section~4) while the very last subsection is devoted to specialise the previous main results over a commutative noetherian ring. Corollary~\ref{c:HRS_heart_LC_characterisation} is the direct application of Corollary~\ref{c:recursive-length1} when the involved Thomason filtration gives the HRS~heart of a hereditary torsion pair of finite type; besides, it also allows us to exhibit an example of a quasi locally coherent additive category that is not locally coherent in the usual sense (see Example~\ref{e:qlc-non-lc}). Other further applications of such Corollary are obtained by adding conditions either on the ring, i.e.~when it is coherent, or on the underlying torsion pair, i.e.~when it is stable. In any case, it furtherly enlights into more handleable module-theoretic conditions. Finally, in case the ring is noetherian, we recover for Thomason filtrations of finite length the useful result \cite[Theorem~6.3]{Sao17} by Saor\'\i n  concerning the local coherence of the hearts of the so-called restrictable t-structures of the ring.

\section{Preliminaries}%
\label{s:preliminaries}%
We will refer to \cite{Pop73,Ste75} for the basics on abelian categories, and to \cite{Nee01,Mil} for what does concern triangulated categories. Throughout the present paper, $R$ will denote a commutative ring, while $R\lMod$, $\Ch(R)$, $\Ho(R)$ and $\De(R)$ will denote, respectively, its module category, its category of cochain complexes, its homotopy category, and its unbounded derived category. Given a preadditive category $\cal A$ and a set $\cal S$ of objects of $\cal A$, we will denote, for short,
\begin{align*}
	{\cal S}^{\bot_0} &\leqdef \{M\in{\cal A}\mid \Hom_{\cal A}(S,M)=0\ \forall S\in{\cal S}\}
		\reqdef\ker\Hom_{\cal A}({\cal S},-), \cr
	{}^{\bot_0}{\cal S} &\leqdef \{M\in{\cal A}\mid \Hom_{\cal A}(M,S)=0\ \forall S\in{\cal S}\}
		\reqdef\ker\Hom_{\cal A}(-,{\cal S}) \;.
\end{align*}
Moreover, in case $\cal A$ is an abelian category with coproducts, $\mathop{\rm Gen}{\cal S}$ ($\mathop{\rm gen}{\cal S}$) and $\mathop{\rm Add}{\cal S}$ ($\mathop{\rm add}{\cal S}$) will denote, respectively, the full subcategories formed by the objects admitting an epimorphism, resp.~a split epimorphism, originating in a coproduct of (finitely many) objects of $\cal S$. If $\mathop{\rm Gen}{\cal S}={\cal A}$, then $\cal S$ is said to be a {\it set of generators\/} for $\cal A$.

\subsection{Abelian categories}%
Let $\cal A$ be an additive category and let $I$ be a small category (i.e.~its objects form a set). A functor $F\colon I\to{\cal A}$ will be also called a {\it diagram of shape $I$ on\/ $\cal A$\/} and denoted by $(F_i)_{i\in I}$, where $F_i\leqdef F(i)$ for all $i\in I$. The category having such diagrams as objects and the natural transformations between them as morphisms will be denoted by ${\cal A}^I$. When any $I$-shaped diagram has colimit (resp.~limit) in $\cal A$, then $\cal A$ is said to admit $I$-colimits (resp.~$I$-limits). In this case, we have two adjoint pairs
$$
	\mathop{\rm colim}_{i\in I}:{\cal A}^I\longrightleftarrows {\cal A}:\Delta_I\qquad\hbox{and}
	\qquad \Delta_I:{\cal A}\longrightleftarrows {\cal A}^I:\lim_{i\in I}
$$
where $\Delta_I$ is the constant functor. If $\cal A$ admits $I$-colimits (resp.~$I$-limits) for every small category $I$, then $\cal A$ is said to be {\it co\-complete\/} (resp.~{\it complete\/}). A very important case occurs when $I$ is a directed poset, hence regarded as a small category in the usual way: the $I$-shaped diagrams are called {\it direct systems\/}, while the $I^{\rm op}$-shaped diagrams are called {\it inverse systems\/}, so that the corresponding $I$-colimit and $I$-limit functors are then called respectively the {\it direct limit\/} and the {\it inverse limit\/}, and denoted by
$$
	\varinjlim_{i\in I} F_i\quad\hbox{and}\quad
	\varprojlim_{\hidewidth i\in I^{\rm op}\hidewidth}F_i
$$
for every $F=(F_i)_{i\in I}\in{\cal A}^I$.

Following the celebrated T\^ohoku paper \cite{Gro57} by Grothendieck, an abelian category $\cal A$ is said to be

\begin{enumerate}
\item[AB-3] if it admits coproducts or, equivalently, if it is cocomplete;

\item[AB-4] if it is AB-3 and the coproduct functor $\smash[b]{\coprod\limits_{i\in I}}\colon{\cal A}^I\to{\cal A}$ is exact for every small category $I$;

\item[AB-5] if it is AB-3 and the direct colimit functor $\smash[b]{\varinjlim\limits_{i\in I}}\colon{\cal A}^I\to{\cal A}$ is exact for every directed poset $I$.
\end{enumerate}

The most important and studied AB-5 abelian categories are the {\it Grothendieck categories\/}, namely those having a set of generators (equivalently, a generator). For instance, it is well-known that Grothendieck categories are also complete, provide injective envelopes for their objects and have an injective cogenerator (\cite[Corollary~X.4.4]{Ste75}, \cite{Gro57}).

We want to study certain Grothendieck categories having some additional finiteness conditions which we now recall explicitly in a more general context (cf.~\cite{CB94,Kra97}). Let $\cal A$ be an additive category with direct limits (that is, $\cal A$ admits $I$-colimits for every directed poset $I$; in particular, $\cal A$ has cokernels), then

\begin{itemize}
\item an object $A\in{\cal A}$ is called {\it finitely generated\/} if for every direct system of monomorphisms $(M_i)_{i\in I}$ of $\cal A$ (i.e.~each connection map $M_i\to M_j$ is a monomorphism), the natural group homomorphism
$$
	\varinjlim_{i\in I}\Hom_{\cal A}(A,M_i)\longrightarrow\Hom_{\cal A}(A,\varinjlim_{i\in I}M_i)
$$
is bijective.

\item An object $B\in{\cal A}$ is called {\it finitely presented\/} if for every direct system $(M_i)_{i\in I}$ of $\cal A$, the natural group homomorphism
$$
	\varinjlim_{i\in I}\Hom_{\cal A}(B,M_i)\longrightarrow\Hom_{\cal A}(B,\varinjlim_{i\in I}M_i)
$$
is bijective; that is, the functor $\Hom_{\cal A}(B,-)\colon{\cal A}\to\Ab$ {\it commutes with direct limits\/}.
\end{itemize}

\noindent These definitions in turn provide the aforementioned finiteness conditions for an additive category with kernels and direct limits: such $\cal A$ will be called

\begin{itemize}

\item {\it locally finitely presented\/} if the full subcategory $\fp({\cal A})$ of the finitely presented objects is skeletally small and ${\cal A}=\varinjlim\fp({\cal A})$, meaning that each object of $\cal A$ is isomorphic to a direct limit of a direct system of $\fp({\cal A})$;

\item {\it quasi locally coherent\/} if it is locally finitely presented and $\fp({\cal A})$ is closed under taking kernels of $\cal A$.
\end{itemize}

\noindent When $\cal A$ is an abelian category, then it is locally finitely presented if and only if it is a Grothendieck category with a generating set of finitely presented objects (see \cite{Kra97}). In this case, the finitely generated objects are precisely the quotients of the finitely presented ones. Moreover, $\cal A$ is called

\begin{itemize}
\item {\it locally coherent\/} if it is locally finitely presented and $\fp({\cal A})$ is an abelian category.
\end{itemize}

\noindent In Example~\ref{e:qlc-non-lc} we will show that for an additive category the notion of quasi local coherence is independent to the usual notion of local coherence, i.e.~we will exhibit an additive category which is quasi locally coherent and whose finitely presented objects do not form an abelian category.

Throughout this paper, we will set $\fg(R\lMod)\reqdef\mathop{\rm gen}R$ and $\fp(R\lMod)\reqdef R\lmod$.

\subsection{Torsion theories}%
Let $\cal A$ be an abelian category. A {\it torsion pair\/} in $\cal A$ is a pair $({\cal T},{\cal F})$ of full subcategories such that $\Hom_{\cal A}({\cal T},{\cal F})=0$ and for every $M\in{\cal A}$ there exists a (functorial) short exact sequence $0\to X\to M\to Y\to0$ such that $X\in{\cal T}$ and $Y\in{\cal F}$. Generally, such approximating exact sequence will be expressed by means of two functors, say $x$ and $y$, involved in the following adjoint pairs
$$
	{\cal T} \coreflective[x] {\cal A}
		\reflective[y] {\cal F} \;.
$$
In view of this display, the induced endofunctors of $\cal A$, denoted by $x$ and $y$ again, are called respectively the {\it torsion radical\/} and the {\it torsion coradical\/}. $\cal T$ is called the {\it torsion class\/} of the torsion pair and its objects are the {\it torsion objects\/} of $\cal A$ (w.r.t.~the torsion pair), whereas $\cal F$ is the {\it torsionfree class\/} and its objects are the {\it torsionfree objects\/} of $\cal A$.

When $\cal C$ is a Grothendieck category, then two full subcategories $\cal T$ and $\cal F$ form a torsion pair $({\cal T},{\cal F})$ in $\cal C$ if and only if ${\cal T}^{\bot_0}={\cal F}$ and ${\cal T}={}^{\bot_0}{\cal F}$. On the other hand, a non-empty class $\cal T$ of objects of $\cal C$ is a torsion class iff it is closed under quotient objects, extensions, and coproducts; dually, a non-empty class $\cal F$ of objects of $\cal C$ is a torsionfree class iff it is closed under subobjects, extensions and products.

A torsion pair $({\cal T},{\cal F})$ in $\cal C$ is said to be:

\begin{itemize}
\item {\it hereditary\/} if $\cal T$ is closed under taking subobjects (equivalently, if $\cal F$ is closed under taking injective envelopes);

\item {\it stable\/} if it is hereditary and also $\cal T$ is closed under injective envelopes;

\item {\it of finite type\/} if $\cal F$ is closed under taking direct limits.
\end{itemize}

\noindent Moreover, we say that the torsion pair $({\cal T},{\cal F})$ {\it restricts to $\fp({\cal C})$\/} if for every $B\in\fp({\cal C})$ we have $x(B),y(B)\in\fp({\cal C})$.

\begin{rem}
Let $\cal C$ be a Grothendieck category.%
\label{r:fp_additive_category}%

\begin{enumerate}
\item By the properties of a torsion pair, it is clear that any torsion class of $\cal C$ is an additive category with direct limits; moreover it has cokernels and kernels, the former computed as in $\cal C$, the latter by taking the torsion radical of the kernels of $\cal C$ (thus, it makes sense to ask whether or when it fulfils some finiteness condition). 

However, a torsion class needs not to be an abelian category. For instance, given a tilting (non projective) object $V\in{\cal C}$ (see e.g.~\cite{PS20}), the induced tilting torsion class $\mathop{\rm Gen}V=\ker\Ext^1_{\cal C}(V,-)$ is not abelian. Indeed, since $\cal C$ provides injective envelopes, then the tilting class is cogenerating, meaning that for any $M\in{\cal C}$ there exists a short exact sequence $0\to M\to T\buildrel q\over\to T'\to0$ for some  $T,T'\in\mathop{\rm Gen}V$. Now, we can choose $0\ne M\in\ker\Hom_{\cal C}(V,-)$, and if $\mathop{\rm Gen}V$ would be abelian, then $q$ would be an isomorphism in $\mathop{\rm Gen}V$, in particular a split epimorphism in $\cal C$, contradiction.

\item When $({\cal T},{\cal F})$ is a hereditary torsion pair of $\cal C$ such that ${\cal T}=\mathop{\rm Gen}M$ for some object $M\in{\cal T}$, then $\cal T$ is a Grothendieck category with the same exact structure of $\cal C$.
\end{enumerate}
\end{rem}

We are particularly interested in (hereditary) torsion pairs of finite type (mostly in view of Theorem~\ref{t:lc-TTF_finite_type}).

In the case of $R\lMod$ (see \cite[Theorem~VI.5.1]{Ste75} and \cite[Appendix]{GP08}), a hereditary torsion pair (of finite type) corresponds bijectively to a {\it Gabriel filter\/} (of finite type) of $R$; that is, to a set $\cal G$ of ideals of $R$ fulfilling the following axioms:

\begin{enumerate}
\item[(i)] for any $I,J\in{\cal G}$, $I\cap J\in{\cal G}$;

\item[(ii)] if $I\in{\cal G}$ and $J$ is an ideal such that $J\supseteq I$, then $J\in{\cal G}$;

\item[(iii)] if $I\in{\cal G}$ and $r\in R$, then $(I:r)\leqdef\{\gamma\in R\mid \gamma r\in I\}\in{\cal G}$;

\item[(iv)] for any ideal $J$, if there exists an ideal $I\in{\cal G}$ such that $(J:a)\in{\cal G}$ for all $a\in I$, then $J\in{\cal G}$.
\end{enumerate}

\noindent Recall that a Gabriel filter $\cal G$ is {\it of finite type\/} if it has a basis of finitely generated ideals; that is, if every ideal in $\cal G$ contains a finitely generated ideal in $\cal G$. The bijective correspondence between hereditary torsion pairs of finite type $({\cal T},{\cal F})$ in $R\lMod$ and Gabriel filters of finite type $\cal G$ of $R$ is given by the mutually inverse assignments
\begin{align*}
	{\cal T} &\longmapsto {\cal G}_{\cal T}\leqdef\{I\le R\mid R/I\in{\cal T}\} \cr
	\noalign{\hbox{and}}
	{\cal G} &\longmapsto {\cal T}_{\cal G}\leqdef\{M\in R\lMod\mid \mathop{\rm Ann}\nolimits_R(x)\in{\cal G}\ \forall x\in M\} \;.
\end{align*}

Another particular case of torsion theories in a Grothendieck category $\cal C$ we are interested in is given by the {\it TTF~triples\/}, namely triples $({\cal E},{\cal T},{\cal F})$ such that both $({\cal E},{\cal T})$ and $({\cal T},{\cal F})$ are torsion pairs of $\cal C$ (see \cite{BR07} for a detailed reference). The middle term $\cal T$ is called {\it TTF~class\/} of the triple; by the closure properties of torsion and torsionfree classes, it follows that a full subcategory $\cal T$ of $\cal C$ is a TTF~class if and only if $\cal T$ is closed under subobjects, quotients, coproducts, products and extensions. In this case, since the right constituent $({\cal T},{\cal F})$ is hereditary, as well as the left constituent $({\cal E},{\cal T})$ is of finite type, a TTF~triple is {\it hereditary\/} resp.~{\it of finite type\/} in case its left, resp.~right, constituent is so.

TTF triples over a commutative ring $R$ are well-understood (see \cite[VI.8]{Ste75}): they are in bijection with idempotent ideals of $R$, and $\cal T$ is a TTF~class in $R\lMod$ if, and only if, there is an idempotent ideal $J\le R$ such that $\cal T$ consists of the modules annihilated by $J$, i.e.\ ${\cal T}=R/J\lMod$, so that in the left constituent $({}^{\bot_0}{\cal T},{\cal T})$ of the triple the torsion modules are precisely the $J$-divisible modules, i.e.~those $M\in R\lMod$ such that $JM=M$.

\subsection{$\rm t$-structures}%
\label{ss:t-structures}
The corresponding notion of torsion pair for a triangulated category is the one of t-structure, introduced in the celebrated work \cite{BBD82}, to which we will refer to. t-structures provide a useful approximation theory in their ambient triangulated category, as well as torsion pairs do in their ambient abelian category. The most powerful feature of such approximation theory is that each t-structure makes a ``homological algebra'' available within its triangulated category, and the relevant cohomologies belong to a suitable abelian category naturally associated with the t-structure.

Let $({\cal D},(-)[1])$ be a triangulated category. A {\em t-structure\/} in $\cal D$ is a pair $({\cal U},{\cal V})$ of full subcategories closed under direct summands and satisfying the following conditions:

\begin{enumerate}
\item[(i)] ${\cal U}[1]\subseteq{\cal U}$; that is, $\cal U$ is closed under positive shifts;

\item[(ii)] $\Hom_{\cal D}({\cal U},{\cal V}[-1])=0$;

\item[(iii)] For any object $M\in{\cal D}$, there exists an exact triangle $U\to M\to V\buildrel+\over\to$ with $U\in{\cal U}$ and $V\in{\cal V}[-1]$.
\end{enumerate}

\noindent The assignments $M\mapsto U$ and $M\mapsto V$ provided by axiom~(iii) underlie the so-called {\it truncation functors\/} $\tau^\le_{\cal U}$ and $\tau^>_{\cal U}$ of the t-structure, which are adjoint to the relevant inclusions:
$$
	{\cal U}\coreflective[] {\cal D}:\tau^\le_{\cal U}\qquad\hbox{and}
	\qquad\tau^>_{\cal U}:{\cal D}\reflective[]{\cal V[-1]} \;.
$$
By the axioms of a triangulated category, it is readily seen that any t-structure $({\cal U},{\cal V})$ can be expressed by means of the first component $\cal U$ via the equality ${\cal V}={\cal U}^{\bot_0}[1]$. $\cal U$ is called the {\em aisle\/} and its orthogonal ${\cal U}^{\bot_0}$ is the {\em coaisle\/} of the t-structure. We recall that $({\cal U},{\cal V})$ is a t-structure if, and only if, $({\cal U}[n],{\cal V}[n])$ is a t-structure for every $n\in\Z$.

Let us recall the well-known and most important results from \cite{BBD82} on a t-structure $({\cal U},{\cal V})$ we are going to use in the sequel. The main one is that the intersection ${\cal H}\leqdef{\cal U}\cap{\cal V}$ turns out to be an abelian category, called the {\em heart\/} of the t-structure. This said, the ``homological algebra'' we referred to on $\cal D$ is provided by the naturally isomorphic cohomological functors $H_{\cal H},\tilde H_{\cal H}\colon{\cal D}\to{\cal H}$ defined as
$$
	H_{\cal H}\leqdef \tau^>_{{\cal U}[1]}\circ\tau^\le_{\cal U}\cong
	\tau^\le_{\cal U}\circ\tau^>_{{\cal U}[1]}\reqdef \tilde H_{\cal H} \;.
$$

The abelian structure of $\cal H$ is described as follows. Given a morphism $f\colon M\to N$ in $\cal H$, embed it in an exact triangle of $\cal D$ by means of a cone $C$. Consider the approximation of $C[-1]$ within $({\cal U},{\cal V})$, then the following octahedron provided by a cone $W$ of the morphism $\tau^\le_{\cal U}(C[-1])\to M$,
\[
\xymatrix{%
	\tau^\le_{\cal U}(C[-1]) \ar[r]\ar@{=}[d] & C[-1]\ar[r]\ar[d] &
		\tau^>_{\cal U}(C[-1]) \ar[r]^-{+} \ar@{.>}[d] & {} \\
	\tau^\le_{\cal U}(C[-1]) \ar[r]\ar[d] & M \ar[r]\ar[d]^f &
		W \ar[r]^+\ar@{.>}[d] & {} \\
	0 \ar[r] & N \ar@{=}[r]\ar[d] & N \ar@{.>}[d] \\
	& C \ar[r] & \tau^>_{\cal U}(C[-1])[1]
}
\]
Define:
\begin{align*}
	\ker^{({\cal H})}(f) &\leqdef \tau^\le_{\cal U}(C[-1]) \cr
	\im^{({\cal H})}(f) &\leqdef W \cr
	\coker^{({\cal H})}(f) &\leqdef \tau^>_{\cal U}(C[-1])[1]=H_{\cal H}(C) \;.
\end{align*}
Moreover, the short exact sequences of $\cal H$ are precisely the exact triangles of $\cal D$ whose vertices belong to $\cal H$. Consequently, we have the following crucial correspondences, valid for all $M,N\in{\cal H}$:
\begin{align*}
	\Ext^1_{\cal H}(M,N) &\buildrel\cong\over\longrightarrow \Hom_{\cal D}^{\vphantom1}(M,N[1]) \cr
	\Ext^2_{\cal H}(M,N) &\lhook\joinrel\longrightarrow \Hom_{\cal D}^{\vphantom2}(M,N[2]) \;.
\end{align*}

If the ambient triangulated category $\cal D$ admits coproducts, which we will denote by the symbol $\coprod$, then the heart $\cal H$ has coproducts as well, generally distinct to those of $\cal D$; indeed, given a family $(M_i)_{i\in I}$ of objects of $\cal H$, it is not difficult to see that
$$
	\bigoplus_{i\in I}\adjust M_i\leqdef H_{\cal H}\Bigl(\coprod_{i\in I}M_i\Bigr)
$$
is the coproduct of the family in $\cal H$. It is now clear how to compute direct limits. When the coaisle ${\cal V}[-1]$ is closed under coproducts in $\cal D$, then the coproducts of $\cal H$ coincide with those of $\cal D$, and in this case the t-structure $({\cal U},{\cal V})$ is said to be {\em smashing\/}. The dual notion of products and inverse limits are also available in case $\cal D$ has products.

\begin{exmpl}
Let $\cal A$ be an abelian category. For every $n\in\Z$, let
\begin{align*}
	\De^{\le n}({\cal A}) &\leqdef \{M\in\De({\cal A})\mid H^j(M)=0\ \forall j>n\}, \cr
	\De^{\ge n}({\cal A}) &\leqdef \{M\in\De({\cal A})\mid H^j(M)=0\ \forall j<n\}
\end{align*}
be the subcategories of bounded below resp.~above complexes over $\cal A$ (notice that $\De^{\le n}({\cal A})=\De^{\le0}({\cal A})[-n]$). Then $({\cal D}^{\le n}({\cal A}),{\cal D}^{\ge n}({\cal A}))$ is a t-structure of $\De({\cal A})$, called the {\it (shifted) standard t-structure\/}, and its heart is equivalent to ${\cal A}[-n]$.
\end{exmpl}

\begin{exmpl}
Let $\cal C$ be a Grothendieck category and $({\cal T},{\cal F})\reqdef\boldsymbol{\tau}$ be a torsion pair in $\cal C$. The {\it Happel-Reiten-Smal\o\ t-structure\/} associated with $\boldsymbol{\tau}$ (introduced in \cite{HRS96}) is the t-structure of the bounded derived category $\De({\cal C})$, whose members are defined respectively as
\begin{align*}
	{\cal U}_{\boldsymbol{\tau}} &\leqdef \{M\in\De^{\le0}({\cal C}) \mid H^0(M)\in{\cal T}\} \cr
	\noalign{\hbox{and}}
	{\cal V}_{\boldsymbol{\tau}} &\leqdef \{M\in\De^{\ge-1}({\cal C}) \mid H^{-1}(M)\in{\cal F}\} \;.
\end{align*}
Therefore, the associated {\it HRS heart\/} ${\cal H}_{\boldsymbol{\tau}}$ consists of the cochain complexes $0\to Y\buildrel d\over\to X\to0$ over $\cal C$ concentrated in degrees $-1$ and $0$ having $\ker d\in{\cal F}$ and $\coker d\in{\cal T}$. Such heart admits $({\cal F}[1],{\cal T}[0])$ as torsion pair. We recall that in \cite{PS15,PS16a} it is proved that ${\cal H}_{\boldsymbol{\tau}}$ is a Grothendieck category if and only if $\boldsymbol{\tau}$ is of finite type.%
\label{e:HRS-heart}%
\end{exmpl}

\begin{rem}
We will deal with (hearts of) certain {\it compactly generated\/} t-structures in the derived category of a commutative ring $R$. In relation to our instance of providing finiteness conditions (in particular, the local coherence) on a given abelian category, the interest in compactly generated t-structures of $\De(R)$ is motivated by the recent paper \cite{SS20}, in which it is proved that the heart of a compactly generated t-structure in a triangulated category with coproducts is a locally finitely presented Grothendieck category (see Theorem~8.20 therein).%
\label{r:cg}%

We recall that $\De(R)$ admits coproducts (and products) and it is {\it compactly generated\/}, meaning that there exists a set $\cal S$ of complexes such that:

\begin{enumerate}
\item[(i)] every $S\in{\cal S}$ is a {\em compact object\/} of $\De(R)$, i.e.~each functor $\Hom_{\De(R)}(S,-)\colon\De(R)\to\Ab$ commutes with coproducts;

\item[(ii)] $\cal S$ is a {\em set of generators\/} of $\De(R)$, i.e.~given $M\in\De(R)$, it is $M=0$ if and only if $\Hom_{\De(R)}(S[k],M)=0$ for all $k\in\mathbb{Z}$ and all $S\in{\cal S}$.
\end{enumerate}

\noindent The full subcategory of $\De(R)$ formed by the {\it compact objects\/}, i.e.~those satisying~(i), will be denoted by $\De^c(R)$. A t-structure $({\cal U},{\cal V})$ of $\De(R)$ is {\it compactly generated\/} if there is a set $\cal S$ of compact objects of $\De(R)$ such that $\cal U$ coincides with the smallest suspended subcategory of $\De(R)$ closed under coproducts and containing $\cal S$ (in this case we will write ${\cal U}=\mathop{\rm aisle}{\cal S}$); equivalently, see \cite[Lemma~3.1]{AJSo03}, if the set $\cal S$ is such that ${\cal V}=\bigcap_{k\ge0}\ker\Hom_{\De(R)}({\cal S}[k],-)$.
\end{rem}

\subsection{Derivators}%
We briefly recall some terminology and basic facts concerning Grothendieck prederivators, more precisely the strong and stable derivators, following \cite{Gro13,Sto14,Lak18,SSV17}. The aim is to remind that to any such derivator it is naturally associated a triangulated category, called its base, in which homotopy limits and colimits are defined; furthermore, there is a strong and stable derivator whose base is equivalent to the derived category of a fixed ring, so that the homotopy colimits of this latter may be managed (and understood) in the base instead.

Let $\bf Cat$ be the $2$-category of all categories, $\bf cat$ be the $2$-category of small categories, and ${\bf cat}^{\rm op}$ be the $2$-category obtained by $\bf cat$ reversing the arrows of the $1$-cells and letting the $2$-cells unchanged. A {\it prederivator\/} is a strict $2$-functor $\pDer\colon {\bf cat}^{\rm op}\to{\bf Cat}$. Let $\bf1$ be the discrete small category consisting of one object, and let $I$ be any small category; then each object $i\in I$ may be regarded as the functor ${\bf1}\to I$, $0\mapsto i$, and similarly any morphism $\lambda\colon i\to j$ in $I$ is a natural transformation of functors $i\to j$. $\pDer({\bf1})$ is called the {\it base\/} of the prederivator $\pDer$, $\pDer({\bf1})^I$ is the category of {\it incoherent diagrams of shape $I$ on $\pDer$\/}, while $\pDer(I)$ is the category of {\it coherent diagrams of shape $I$ on $\pDer$\/}. There is a canonical {\it diagram functor\/} associated with a prederivator $\pDer$,
\begin{align*}
	\mathop{\rm diag}\nolimits_I\colon \pDer(I) &\longrightarrow \pDer({\bf1})^I \cr
	{\cal X} &\longmapsto (i\mapsto \pDer(i)({\cal X})\reqdef{\cal X}_i),
\end{align*}
which in general is not an equivalence of categories. Since $\bf1$ is a terminal object of $\bf cat$, for every small category $I\in{\bf cat}$ there is a unique functor $\mathop{\rm pt}_I\colon I\to{\bf1}$; the {\it homotopy colimit\/} (resp.~{\it homotopy limit\/}) functor is the left (resp.~right) adjoint to the functor $\pDer(\mathop{\rm pt}_I)\colon \pDer({\bf1})\to\pDer(I)$:
\begin{align*}
	\mathop{\rm hocolim}\limits_{i\in I}:\pDer(I) &\longrightleftarrows\pDer({\bf1}):\pDer(\mathop{\rm pt}\nolimits_I) \cr
	\noalign{\hbox{and}}
	\pDer(\mathop{\rm pt}\nolimits_I):\pDer({\bf1}) &\longrightleftarrows\pDer(I):\mathop{\rm holim}\limits_{i\in I} \;.
\end{align*}
In general, a prederivator needs not to admit homotopy (co)limits; in fact, {\it derivators\/} are axiomatised in order to guarantee (also) their existence for all $I\in{\bf cat\/}$. Besides, the axioms of {\it strong and stable derivators\/} provide the conditions in order to equip each of their images with a triangulated structure and, moreover, their homotopy (co)limits into triangulated functors (see \cite[Theorem~4.16, Corollary~4.19]{Gro13}) Furthermore, by \cite[Theorem~A]{SSV17}, given a strong and stable derivator $\pDer$ and a t-structure $({\cal U},{\cal V})$ with heart $\cal H$ in the base $\pDer({\bf1})$, then the classes
\begin{align*}
	{\cal U}_I &\leqdef \{{\cal X}\in\pDer(I)\mid {\cal X}_i\in{\cal U},\, \forall i\in I\} \cr
	\noalign{\hbox{and}}
	{\cal V}_I &\leqdef \{{\cal Y}\in\pDer(I)\mid {\cal Y}_i\in{\cal V},\, \forall i\in I\}
\end{align*}
form a t-structure with heart ${\cal H}_I$ in the category $\pDer(I)$, and the diagram functor induces an equivalence of abelian categories ${\cal H}_I\cong{\cal H}^I$.

\begin{exmpl}
Let $R$ be a ring. For any small category $I\in{\bf cat}$, the assignment
\begin{align*}
	\pDer_R\colon {\bf cat}^{\rm op} &\longrightarrow {\bf Cat} \cr
	I &\longmapsto \smash{\De(R\lMod^I)} \cr
	(u\colon J\to I) &\longmapsto \smash{\bigl(\De(R\lMod^I)\buildrel u^\ast\over\to\De(R\lMod^J)\bigr)},
\end{align*}
where $u^\ast\leqdef\pDer_R(u)$ is induced by the exact functor $R\lMod^I\to R\lMod^J$ given by the precomposition by $u$, well-defines a strong and stable derivator, called the {\it standard derivator of\/ $R$\/}. The base $\pDer_R({\bf1})$ is then equivalent to the derived category of the ring. On the other hand, since $\Ch(R\lMod^I)\cong\Ch(R)^I$ canonically, the objects of $\pDer_R(I)=\De(R\lMod^I)$ can be regarded as $I$-shaped diagrams with values in $\Ch(R)$. In these derived categories, the homotopy (co)limits are naturally isomorphic to the total right (resp.~left) derived functors of the ordinary (co)limits of complexes, i.e.~for every $I\in{\bf cat}$ and ${\cal X}$ in $\De(R\lMod^I)$, say it $(X_i)_{i\in I}$ in $\Ch(R)^I$, we have
$$
	\mathop{\rm holim}\limits_{i\in I}{\cal X}={\bf R}\!\lim_{i\in I} X_i\quad\hbox{and}\quad
		\mathop{\rm hocolim}\limits_{i\in I}{\cal X}=
	{\bf L}\!\mathop{\rm colim}\limits_{i\in I} X_i \;.
$$
We are mostly interested in the case of filtered homotopy colimits, namely when $I$ is a directed poset. In this case, the ordinary colimit functor $\Ch(R)^I\to\Ch(R)$ is exact, hence for any $\cal X$ as above we have a natural isomorphism
$$
	\varhocolim_{i\in I}{\cal X}\cong\varinjlim_{i\in I} X_i \;.
$$%
\end{exmpl}

\section{A criterion for the local coherence}%
\label{s:a_criterion_for_the_local_coherence}%
We prove a general result characterising the local coherence of a Grothendieck categories equipped with a TTF~triple of finite type. The body of the present paper will be prominently focused in specialising this result to the hearts of certain t-structures, as announced in Remark~\ref{r:cg} (cf.~Remark~\ref{r:notation}(1)).

\begin{thm}
Let\/ $\cal C$ be a Grothendieck category equipped with a TTF~triple of finite type $({\cal E},{\cal T},{\cal F})$. Consider the following three statements:%
\label{t:lc-TTF_finite_type}%
\begin{enumerate}
\item[(a)] $\cal C$ is locally coherent;

\item[(b)] The following conditions are satisfied:

\begin{enumerate}
\item[(i)] $\cal E$ and\/ $\cal T$ are quasi locally coherent;

\item[(ii)] For every $P\in\fp({\cal E})$, the functor $\Ext^1_{\cal C}(P,-)$ commutes with direct limits of direct systems of\/ $\cal T$;

\item[(iii)] For every $Q\in\fp({\cal T})$, the functor $\Ext^1_{\cal C}(Q,-)$ commutes with direct limits of direct systems of\/ $\cal E$.
\end{enumerate}

\item[(c)] Conditions~$\rm(i),(ii)$ of part~$\rm(b)$ hold true, and moreover

\begin{enumerate}
\item[(iii)'] The torsion pair $({\cal E},{\cal T})$ restricts to $\fp({\cal C})$.
\end{enumerate}
\end{enumerate}

\noindent Then $\mathchar``\mathchar``\rm(a)\Leftrightarrow(b)\Rightarrow(c)\mathchar`'\mathchar`'$, and the statements are all equivalent in case $\cal C$ is locally finitely presented.
\end{thm}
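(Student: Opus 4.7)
The plan is to exploit the two adjunctions attached to the TTF triple, namely $t\dashv\iota_{\cal T}$ for ${\cal T}\hookrightarrow{\cal C}$ and $\iota_{\cal E}\dashv e$ for ${\cal E}\hookrightarrow{\cal C}$, together with the observation that under the finite-type hypothesis both torsion functors commute with direct limits. A direct adjunction chase will give $\fp({\cal T})=\fp({\cal C})\cap{\cal T}$ and $\fp({\cal E})=\fp({\cal C})\cap{\cal E}$, and in particular the coradical $t$ carries $\fp({\cal C})$ into itself. Consequently~(iii)' will reduce to showing $e(C)\in\fp({\cal C})$ whenever $C\in\fp({\cal C})$, and---using $\iota_{\cal E}\dashv e$ again---this amounts to $e(C)\in\fp({\cal E})$.

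For (a)\,$\Rightarrow$\,(b), local coherence of $\cal C$ makes $\fp({\cal C})$ abelian; intersecting with $\cal T$ (where ${\cal T}$-kernels coincide with ${\cal C}$-kernels, since $\cal T$ is closed under subobjects) and with $\cal E$ (where ${\cal E}$-kernels are obtained by applying $e$ to ${\cal C}$-kernels) yields~(i). Conditions~(ii) and~(iii) then follow from the standard fact that in a locally coherent Grothendieck category $\Ext_{\cal C}^n(P,-)$ commutes with direct limits for every $P\in\fp({\cal C})$ and all $n\ge 0$, via fp resolutions.

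For (b)\,$\Rightarrow$\,(c), I would fix $C\in\fp({\cal C})$ and a direct system $(E_j)$ in $\cal E$, and apply $\Hom_{\cal C}(-,E_j)$ to the torsion sequence $0\to e(C)\to C\to t(C)\to 0$. A five-lemma comparison of the direct limit of the resulting Hom--Ext sequences with the analogous sequence for $\varinjlim E_j$ shows that $\Hom_{\cal C}(e(C),-)$ commutes with direct limits in $\cal E$ provided $\Ext^1_{\cal C}(t(C),-)$ does, and this is precisely condition~(iii) since $t(C)\in\fp({\cal T})$; hence $e(C)\in\fp({\cal E})$, which by the reduction above gives~(iii)'. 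For (b)\,$\Rightarrow$\,(a), I would first show that $\cal C$ is locally finitely presented by writing any $C$ as an extension of $t(C)=\varinjlim_jT_j$ by $e(C)=\varinjlim_iE_i$ via~(i), and then using~(iii) to represent the extension class at finite stages, producing a cofinal system of fp extensions $0\to E_{i(j)}\to Y_j\to T_j\to 0$ with $C\cong\varinjlim Y_j$. Closure of $\fp({\cal C})$ under kernels would then be extracted from the snake lemma applied to the torsion sequences of a morphism $f\colon C\to C'$ in $\fp({\cal C})$: by~(iii)' the corner terms lie in $\fp({\cal C})$; by~(i) the ${\cal E}$- and ${\cal T}$-kernels of $e(f)$ and $t(f)$ are fp in the respective subcategories; and~(ii),~(iii) control the bridging Ext-terms of the six-term snake sequence, so that $\ker f$ can be assembled from fp pieces.

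The hardest step will be this kernel-closure argument, since the snake sequence mixes the $\cal E$- and $\cal T$-components of $\ker f$ through a connecting morphism whose fp-ness requires carefully invoking the cross Ext-commutations~(ii) and~(iii) to descend to finite stages. Under the additional hypothesis that $\cal C$ is locally finitely presented, the remaining direction (c)\,$\Rightarrow$\,(b)---namely deducing~(iii) from~(i) and~(iii)'---I would handle symmetrically: by~(i), the Grothendieck category $\cal T$ is locally coherent, so any $Q\in\fp({\cal T})$ admits an fp resolution in $\cal T$ lying in $\fp({\cal C})$ via the basic identification, and a parallel five-lemma argument on the long exact Hom--Ext sequences against direct systems in $\cal E$ then delivers the required commutation of $\Ext^1_{\cal C}(Q,-)$ with direct limits in $\cal E$.
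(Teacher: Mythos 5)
Your overall architecture matches the paper's: identifying $\fp({\cal E})=\fp({\cal C})\cap{\cal E}$ and $\fp({\cal T})=\fp({\cal C})\cap{\cal T}$ via the finite-type adjunctions, the Hom--Ext five-lemma on the torsion sequence $0\to e(C)\to C\to t(C)\to0$ to get $\rm(iii)'$ from $\rm(iii)$, and the factorisation of extension classes through finite stages via $\rm(iii)$ to prove local finite presentability. Those parts are sound. The genuine gap sits exactly where you flag the ``hardest step'', and it is not resolved by ``carefully invoking the cross Ext-commutations''. In the snake-lemma sequence $0\to\ker p\to\ker f\to\ker q\to0$ attached to an epimorphism $f\colon B\to B'$ in $\fp({\cal C})$ (with $p=e(f)$, $q=t(f)$), the term $\ker q$ is harmless: it is the ${\cal T}$-kernel, hence finitely presented by~(i). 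But $\ker p$ is the kernel \emph{in $\cal C$} of a map between objects of $\cal E$, and $\cal E$ is not closed under subobjects; condition~(i) only controls its torsion part $e(\ker p)=\ker^{({\cal E})}(p)$. The problematic piece is the torsion-free part $t(\ker p)$, which is not visibly a kernel, cokernel or extension of finitely presented objects, and none of (i)--(iii) applies to it directly. The paper's resolution is a specific construction you are missing: form the pushout of $\ker p\rightarrowtail e(B)$ along $\ker p\twoheadrightarrow t(\ker p)$, observe that the pushout $C\cong e(B)/e(\ker p)$ is finitely presented as a cokernel in $\fp({\cal C})$, extract the short exact sequence $0\to t(\ker p)\to C\to e(B')\to0$, and then apply condition~(ii) to $e(B')\in\fp({\cal E})$ together with the five lemma to conclude that $\Hom_{\cal C}(t(\ker p),-)$ commutes with direct limits of $\cal T$. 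Without this (or an equivalent) device, ``$\ker f$ can be assembled from fp pieces'' does not go through.

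A second, independent problem is your closing step for the locally finitely presented case, namely deriving (iii) from (i), (ii), $\rm(iii)'$ so as to get ``$\rm(c)\Rightarrow(b)$''. An fp resolution of $Q$ inside $\cal T$ does not compute $\Ext^1_{\cal C}(Q,-)$, because the resolving objects are not projective (nor $\Ext_{\cal C}$-acyclic on $\cal E$); the five-lemma comparison on the long exact sequence of $0\to\Omega Q\to P_0\to Q\to0$ requires knowing that $\Ext^1_{\cal C}(P_0,-)$ already commutes with direct limits of $\cal E$, which is the statement being proved with $P_0$ in place of $Q$, so the argument never bottoms out. The paper closes the cycle the other way: it proves $\rm(c)\Rightarrow(a)$ directly --- the kernel-closure argument sketched above uses only (i), (ii) and $\rm(iii)'$, not (iii) --- and then recovers (b) from (a), where (ii) and (iii) follow from local coherence via \cite[Proposition~3.5(2)]{Sao17}. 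You should either adopt that route or supply a genuinely new proof of $\rm(c)\Rightarrow\rm(iii)$.
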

\begin{proof}
We will denote by
$$
	{\cal E}\coreflective[x]{\cal C}\reflective[y]{\cal T}
$$
the adjunction provided by the left constituent of the TTF~triple. Notice that the hypotheses on the TTF triple imply, by \cite[Lemma~2.4]{PSV19}, that $\fp({\cal E}),\fp({\cal T})\subseteq\fp({\cal C})$. In turn, the local coherence of $\cal C$ always implies conditions~(ii), (iii) and~$\rm(iii)\mathchar`'$: 
the first two conditions are guaranteed by \cite[Proposition~3.5(2)]{Sao17}; to see the third, for every $B\in\fp({\cal C})$ we have $x(B)\in\fp({\cal C})$, since the latter object occurs as the kernel of the epimorphism $B\to y(B)$ in $\fp({\cal C})$.

Let us prove ``${\rm(a)\Rightarrow(b)}$''. By what we just observed, we only have to check condition~(i). $\cal T$ is a locally coherent Grothendieck category thanks to \cite[Theorem~2.16]{Her97} and \cite[Theorem~2.6]{Kra97}, thus it is quasi locally coherent. Now, let us show that $\cal E$ is locally finitely presented. Let $X\in{\cal E}$ and $(B_i)_{i\in I}$ be a direct system in $\fp({\cal C})$ such that $X=\varinjlim_{i\in I}B_i$. Since $({\cal E},{\cal T})$ is of finite type, we have $X=x(X)=\varinjlim_{i\in I}x(B_i)$, thus $\cal E$ is locally finitely presented since each $x(B_i)$ belongs to $\fp({\cal E})$. It remains to show that $\cal E$ is quasi locally coherent. By the previous part, it suffices to check that the kernel in $\cal E$ of an epimorphism $f\colon P\to P'$ in $\fp({\cal E})$ is finitely presented as well. Notice that $f$ is an epimorphism also in $\cal C$, therefore $\ker f\in\fp({\cal C})$ by the local coherence hypothesis. Our claim then follows since $\ker^{({\cal E})}(f)=x(\ker f)$ and $({\cal E},{\cal T})$ restricts to $\fp({\cal C})$.

Now, let us now show that if $\cal C$ is locally finitely presented, then ``${\rm(c)\Rightarrow(a)}$''. We have to prove that the kernel of any epimorphism $f\colon B\to B'$ in $\fp({\cal C})$ is finitely presented as well. Since the torsion pair $({\cal E},{\cal T})$ restricts to $\fp({\cal C})$ by~(iii)', the following commutative diagram with exact rows
\[
\xymatrix{%
	0\ar[r] & x(B) \ar[r]\ar[d]_p & B \ar@{->>}[d]^f\ar[r] & y(B) \ar@{->>}[d]^q\ar[r] & 0 \cr
	0\ar[r] & x(B') \ar[r] & B' \ar[r] & y(B') \ar[r] & 0
}
\]
lives in $\fp({\cal C})$. Besides $q$, also $p$ is an epimorphism, being $\coker p\in{\cal E}\cap{\cal T}=0$. Therefore, $p$ and $q$ are epimorphisms in $\fp({\cal E})$ and $\fp({\cal T})$ respectively, hence by hypothesis~(i) we obtain that $\ker^{({\cal E})}(p)$ and $\ker^{({\cal T})}(q)=\ker q$ are finitely presented objects of $\cal C$. Thus, once we prove that $\ker p\in\fp({\cal C})$, we infer that $\ker f$ is finitely presented by extension-closure (see \cite[Corollary~1.8]{PSV19}), applied on the short exact sequence $0\to\ker p\to\ker f\to\ker q\to0$ provided by the Snake~Lemma. Consider the approximation
$$
	0\longrightarrow\ker^{({\cal E})}(p)\longrightarrow\ker p\longrightarrow y(\ker p)\longrightarrow0
$$
of the relevant kernel within $({\cal E},{\cal T})$, and let us prove that the third term is finitely presented in $\cal C$. We have the following pushout diagram:
\[
\xymatrix{%
	\ker^{({\cal E})}(p) \ar@{>->}[d]\ar@{=}[r] & \ker^{({\cal E})}(p) \ar@{>->}[d] \cr
	\ker p \ar@{->>}[d]\ar@{>->}[r]\ar@{}[dr]|{\rm P.O.} &
		x(B) \ar@{->>}[d]\ar@{->>}[r]^p & x(B') \ar@{=}[d] \cr
	y(\ker p) \ar@{>->}[r] & C \ar@{->>}[r] & x(B')
}
\]
whose second column tells us that the pushout $C$ is finitely presented as well. Eventually, given a direct system $(M_i)_{i\in I}$ of objects of $\cal T$, applying the functors
$$
	\varinjlim_{i\in I}\Ext^k_{\cal C}(-,M_i)\quad\hbox{and}\quad
	\Ext^k_{\cal C}(-,\varinjlim_{i\in I}M_i)\qquad (k\in\N\cup\{0\})
$$
on the second exact row, thanks to hypothesis~(ii), by the Five~Lemma we get that $\Hom_{\cal C}(y(\ker p),-)$ preserves direct limits of $\cal T$; that is, $y(\ker p)$ is a finitely presented object of $\cal T$, hence of $\cal C$, as desired.

In order to conclude the proof, we now show that condition~(b) implies that $\cal C$ is locally finitely presented and the condition~(c). For the first claim we will follow the pattern of the proof of \cite[Lemma~2.5]{PSV19}. Let $M$ be an arbitrary object of $\cal C$ and consider its approximation $0\to x(M)\to M\to y(M)\to0$ within $({\cal E},{\cal T})$. Since $\cal T$ is locally finitely presented by~(i), there exists a direct system $(Q_i)_{i\in I}$ in $\fp({\cal T})$ such that $y(M)=\varinjlim\limits_{i\in I}Q_i$. We have the pullback diagram
\[
\xymatrix{%
	0\ar[r] & x(M) \ar[r]\ar@{=}[d] & M_i \ar[d]\ar[r]\ar@{}[dr]|{\rm P.B.} & Q_i \ar[d]\ar[r] & 0 \cr
	0\ar[r] & x(M) \ar[r] & M \ar[r] & y(M) \ar[r] & 0
}
\]
and the $M_i$'s form a direct system in $\cal C$ whose direct limit is $M$. Once we show that $M_i\in\mathop{\rm Gen}[\fp({\cal C})]$ for all $i\in I$, then we conclude our first claim (see the proof of \cite[Lemma~2.5]{PSV19}). Consider the extension $\xi_i:0\to x(M)\to M_i\to Q_i\to0$ provided by the previous diagram. Since $\cal E$ is locally finitely presented, there exists a direct system $(P_\lambda)_{\lambda\in\Lambda}\subseteq\fp({\cal E})$ such that $x(M)=\smash[b]{\varinjlim\limits_{\lambda\in\Lambda}P_\lambda}$. By hypothesis~(iii), we obtain
$$
	\xi_i\in\Ext^1_{\cal C}(Q_i,\varinjlim_{\lambda\in\Lambda}P_\lambda)\cong\varinjlim_{\lambda\in\Lambda}\Ext^1_{\cal C}(Q_i,P_\lambda),
$$
i.e., by definition of Yoneda ext-group, there is an index $\gamma\in\Lambda$ such that $\xi_i$ factors as the pushout diagram (see again \cite{PSV19} for details)
\[
\xymatrix{%
	0\ar[r] & P_\gamma \ar[r]\ar[d]\ar@{}[dr]|{\rm P.O.} & N_\gamma \ar[d]\ar[r] & Q_i \ar@{=}[d]\ar[r] & 0 \cr
	0\ar[r] & x(M) \ar[r] & M_i \ar[r] & Q_i \ar[r] & 0
}
\]
in which $N_\gamma$ is a finitely presented object of $\cal C$ by \cite[Corollary~1.8]{PSV19}. Moreover, it is
$$
	M_i=\varinjlim_{\lambda\ge\gamma}N_\lambda
$$
so that our first claim is proved. Let us check that condition~(iii)' holds true. Let $B\in\fp({\cal C})$ and let us consider its approximation $0\to x(B)\to B\to y(B)\to0$ within $({\cal E},{\cal T})$. We only have to show that $x(B)\in\fp({\cal E})\subseteq\fp({\cal C})$, since $y(B)\in\fp({\cal T})\subseteq\fp({\cal C})$. The approximation yields the following long exact sequence of covariant functors:
\begin{multline*}
	0\to\Hom_{\cal C}(y(B),-)\to\Hom_{\cal C}(B,-)\to\Hom_{\cal C}(x(B),-)\relbar\joinrel\cdots \cr
	\cdots\joinrel\to\Ext^1_{\cal C}(y(B),-)\to \Ext^1_{\cal C}(B,-)
\end{multline*}
which, when restricted to $\cal E$, by hypothesis~(iii), \cite[Proposition~1.6]{PSV19} and the Five~Lemma, gives that $x(B)\in\fp({\cal E})$. \qedhere
\end{proof}

\section{Basics on Thomason filtrations and hearts}%
\label{s:basics_on_thomason_filtrations_and_hearts}%
Let $R$ be a commutative ring and $\Spec R$ be its {\it prime spectrum\/} i.e.~the set of all the prime ideals of the ring. Let us recall that for every $p\in\Spec R$ one can consider the localisation $\phi\colon R\to R_p$ of $R$ at $p$ and set $M_p\leqdef M\otimes_R R_p$ for every $M\in R\lMod$. This assignment well-defines the so-called extension of scalars functor $\phi^\ast\leqdef -\otimes_RR_p$, which is left adjoint to the scalar restriction $\phi_\ast\colon R_p\lMod\to R\lMod$ induced by $\phi$. Given $M\in R\lMod$, define its {\it support\/} by setting
$$
	\mathop{\rm Supp}M\leqdef\{p\in\Spec R\mid M\otimes_RR_p\ne0\} \;.
$$
Recall that $\Spec R$ is a topological space whose closed subsets are of the form $V(J)\leqdef\{p\in\Spec R\mid p\supseteq J\}=\mathop{\rm Supp}R/J$ for all ideals $J\le R$.

\begin{defn}
A subset $Z$ of $\Spec R$ is said to be {\it Thomason\/} if there exists a family ${\cal B}_Z$ of finitely generated ideals of $R$ such that $Z=\bigcup_{J\in{\cal B}_Z}V(J)$.
\end{defn}

Notice that $\Spec R$ is itself Thomason, for one chooses ${\cal B}_Z$ as the family of principal ideals generated by the elements of $R$, each of which is contained in some maximal ideal.

By \cite[Theorem~2.2]{GP08}, a Thomason subset $Z$ corresponds bijectively to a hereditary torsion pair of finite type $({\cal T}_Z,{\cal F}_Z)$ in $R\lMod$, where
$$
	{\cal T}_Z\leqdef\{M\in R\lMod \mid \mathop{\rm Supp}M\subseteq Z\},
$$
thus in turn it corresponds bijectively to a Gabriel filter of finite type on $R$ defined by
$$
	{\cal G}_Z \leqdef \{J\le R\mid V(J)\subseteq Z\} \;.
$$

\begin{prop}
Let\/ $Z$, ${\cal T}_Z$ and\/ ${\cal G}_Z$ be as above. Then%
\label{p:torsion-Thomason_subset}%

\begin{enumerate}
\item[(i)] ${\cal T}_Z$ is a Grothendieck category, and\/ $\fp({\cal T}_Z)={\cal T}_Z\cap R\lmod$;

\item[(ii)] ${\cal T}_Z=\mathop{\rm Gen}(R/J\mid J\in{\cal G}_Z\cap\mathop{\rm gen}R)$.
\end{enumerate}
\end{prop}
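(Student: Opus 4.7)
My plan is to prove (ii) first, since the generating set obtained there immediately gives the Grothendieck property in (i) via Remark~\ref{r:fp_additive_category}(2). For (ii), the inclusion $\mathop{\rm Gen}(R/J\mid J\in{\cal G}_Z\cap\mathop{\rm gen}R)\subseteq{\cal T}_Z$ is automatic since each such $R/J$ lies in ${\cal T}_Z$ and the torsion class is closed under coproducts and quotients. For the reverse inclusion, I take $M\in{\cal T}_Z$ and, for each $m\in M$, observe that $\mathop{\rm Ann}_R(m)\in{\cal G}_Z$ (by the Gabriel filter description of ${\cal T}_{{\cal G}_Z}$). Since ${\cal G}_Z$ is of finite type, I can pick a finitely generated ideal $J_m\in{\cal G}_Z$ with $J_m\subseteq\mathop{\rm Ann}_R(m)$, inducing an epimorphism $R/J_m\twoheadrightarrow Rm$. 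Assembling over $m\in M$ yields the desired epimorphism $\bigoplus_{m\in M}R/J_m\twoheadrightarrow M$.

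For (i), once (ii) is established, Remark~\ref{r:fp_additive_category}(2) applied to the generator $\bigoplus_{J\in{\cal G}_Z\cap\mathop{\rm gen}R}R/J$ gives that ${\cal T}_Z$ is a Grothendieck category with the same exact structure as $R\lMod$; in particular, direct limits in ${\cal T}_Z$ agree with those in $R\lMod$. The inclusion $\supseteq$ of $\fp({\cal T}_Z)={\cal T}_Z\cap R\lmod$ is then routine: if $M\in{\cal T}_Z$ is a finitely presented $R$-module and $(M_i)_{i\in I}$ is a direct system in ${\cal T}_Z$, then $\varinjlim_{i\in I}M_i$ computed in ${\cal T}_Z$ coincides with the direct limit in $R\lMod$, and $\Hom_R(M,-)$ commutes with this direct limit by hypothesis.

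The genuine content lies in the reverse inclusion $\fp({\cal T}_Z)\subseteq R\lmod$: a finitely presented object of ${\cal T}_Z$ only knows about direct systems in ${\cal T}_Z$, so I must lift the property to arbitrary direct systems $(N_i)_{i\in I}$ in $R\lMod$. Here the finite type hypothesis is crucial: since ${\cal F}_Z$ is closed under direct limits, the torsion radical $t$ associated with $({\cal T}_Z,{\cal F}_Z)$ commutes with direct limits, giving $t(\varinjlim_{i\in I}N_i)=\varinjlim_{i\in I}t(N_i)$. Because $M\in{\cal T}_Z$, every morphism $M\to\varinjlim_{i\in I}N_i$ factors uniquely through $t(\varinjlim_{i\in I}N_i)=\varinjlim_{i\in I}t(N_i)$, and by finite presentation of $M$ in ${\cal T}_Z$ it further factors through some $t(N_j)\hookrightarrow N_j$; the same torsion radical trick, applied to the kernel of the canonical map $\coprod_{i\in I}N_i\to\varinjlim_{i\in I}N_i$, gives the required uniqueness. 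This directly yields finite presentation of $M$ in $R\lMod$.

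The main obstacle I anticipate is precisely this last step: the finite-type condition has to be exploited carefully so that the torsion radical converts an arbitrary direct system in $R\lMod$ into one lying in ${\cal T}_Z$, which is the bridge allowing the internal finite presentation of $M$ to upgrade to finite presentation over $R$. Every other piece of the argument is either the direct translation of the Gabriel filter--torsion class dictionary or the immediate consequence of closure properties.
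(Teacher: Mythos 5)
Your proposal is correct, and it reaches the same conclusions by a mildly different route at each step. For (ii) you cover $M$ by its cyclic submodules and use that $\mathop{\rm Ann}_R(m)\in{\cal G}_Z$ contains a finitely generated member of the filter, whereas the paper first reduces to finitely generated torsion modules (via hereditariness) and then uses the identity $\mathop{\rm Supp}M=V(\mathop{\rm Ann}_R(M))$ to kill all of $M$ by a single finitely generated $J$; your element-wise version avoids that reduction and the support computation, at the cost of a larger index set, and both hinge on the same finite-type property of ${\cal G}_Z$. For the Grothendieck claim in (i) the paper simply defers to its later Proposition~\ref{p:torsion-lfp} (realising ${\cal T}_Z$ as a heart), while you derive it on the spot from (ii) together with Remark~\ref{r:fp_additive_category}(2); this is legitimate and more self-contained, and it is worth noting that your order of proof (ii) before (i) is forced by this choice. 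Finally, for the inclusion $\fp({\cal T}_Z)\subseteq R\lmod$ the paper just cites \cite[Lemma~1.11]{PSV19}; your argument — that finite type makes the torsion radical commute with direct limits, so that $\Hom_R(M,-)\cong\Hom_R(M,t(-))$ converts arbitrary direct systems of $R\lMod$ into direct systems of ${\cal T}_Z$ — is exactly the content of that lemma, correctly reproved. The cleanest way to package your last step is the chain of natural isomorphisms $\varinjlim\Hom_R(M,N_i)=\varinjlim\Hom_R(M,t(N_i))\cong\Hom_R(M,\varinjlim t(N_i))=\Hom_R(M,t(\varinjlim N_i))=\Hom_R(M,\varinjlim N_i)$, which gives injectivity and surjectivity simultaneously and spares you the separate "kernel of $\coprod N_i\to\varinjlim N_i$" discussion.
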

\begin{proof}\mbox{}\vglue0pt%
\noindent(i)\enspace It is well-known that ${\cal T}_Z$ is a Grothendieck category (we deduce it in Proposition~\ref{p:torsion-lfp}). Let us show the equality in the second part of the statement. The inclusion ``$\supseteq$'' is clear, while ``$\subseteq$'' follows by \cite[Lemma~1.11]{PSV19} since $({\cal T}_Z,{\cal F}_Z)$ is a torsion pair of finite type.

\noindent(ii)\enspace The inclusion ``$\supseteq$'' is clear from the properties of a torsion class. Conversely, since ${\cal T}_Z$ is a hereditary torsion class of $R\lMod$, every torsion object is the direct limit of a direct system in ${\cal T}_Z\cap\mathop{\rm gen}R$, hence it suffices to show that each module $M$ in the latter category is the homomorphic image of the direct sum of some $R/J$'s, where each $J$ is a finitely generated ideal in ${\cal G}_Z$. Since $M$ is a finitely generated module, then $\mathop{\rm Supp}M=V(\mathop{\rm Ann}_R(M))$ (see e.g.~\cite[Exercise~23, p.~58]{Lam99}). Therefore, $V(\mathop{\rm Ann}_R(M))\subseteq Z$, and since ${\cal G}_Z$ is a Gabriel filter of finite type, $\mathop{\rm Ann}_R(M)$ contains a finitely generated ideal $J$ of the filter. This means $JM=0$ i.e.~$M$ is a $R/J$-module, in fact finitely generated over $R/J$ as well, so that there exists an epimorphism $(R/J)^n\to M$ for some positive integer $n$. \qedhere
\end{proof}

\begin{cor}
Let\/ $Z=\bigcup_{J\in{\cal B}_Z}V(J)$ be a Thomason set, let ${\cal G}_Z$ be the associated Gabriel filter and set ${\cal J}_Z={\cal G}_Z\cap\mathop{\rm gen}R$. Then $Z=\bigcup_{J\in{\cal J}_Z}V(J)$.%
\label{c:torsion-Thomason_subset}
\end{cor}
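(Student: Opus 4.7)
The plan is to sandwich the union $\bigcup_{J\in{\cal J}_Z}V(J)$ between $Z$ and itself by verifying the chain of inclusions
$$
Z \;=\; \bigcup_{J\in{\cal B}_Z}V(J) \;\subseteq\; \bigcup_{J\in{\cal J}_Z}V(J) \;\subseteq\; Z,
$$
which is forced by the very definition of ${\cal G}_Z$. The result is a formal consequence of the set-up, so I do not anticipate any real obstacle; the only point to make explicit is that the original family ${\cal B}_Z$ prescribing the Thomason structure is contained in the canonical family ${\cal J}_Z$ of all finitely generated ideals of the Gabriel filter.

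For the first (and only nontrivial) inclusion I would argue ${\cal B}_Z\subseteq{\cal J}_Z$ as follows. Fix $J\in{\cal B}_Z$. By the very definition of a Thomason subset, $J$ is finitely generated, so $J\in\mathop{\rm gen}R$. Moreover, trivially $V(J)\subseteq\bigcup_{K\in{\cal B}_Z}V(K)=Z$, which by the definition ${\cal G}_Z=\{K\le R\mid V(K)\subseteq Z\}$ gives $J\in{\cal G}_Z$. Therefore $J\in{\cal G}_Z\cap\mathop{\rm gen}R={\cal J}_Z$, and taking the union over ${\cal B}_Z$ yields $Z\subseteq\bigcup_{J\in{\cal J}_Z}V(J)$.

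The reverse inclusion is immediate from the definition of ${\cal G}_Z$: each $J\in{\cal J}_Z\subseteq{\cal G}_Z$ satisfies $V(J)\subseteq Z$ by construction, hence $\bigcup_{J\in{\cal J}_Z}V(J)\subseteq Z$. Combining the two inclusions gives the claimed equality. (One could equivalently appeal to Proposition~\ref{p:torsion-Thomason_subset}(ii) to view ${\cal J}_Z$ as the canonical finitely generated basis of the Gabriel filter of finite type ${\cal G}_Z$, but the direct set-theoretic argument already suffices.)
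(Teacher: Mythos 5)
Your proof is correct. The inclusion $Z\subseteq\bigcup_{J\in{\cal J}_Z}V(J)$ via the observation ${\cal B}_Z\subseteq{\cal J}_Z$ is exactly how the paper handles that direction. For the reverse inclusion the two arguments diverge, though only mildly: you read $V(J)\subseteq Z$ for $J\in{\cal J}_Z$ straight off the paper's explicit definition ${\cal G}_Z=\{J\le R\mid V(J)\subseteq Z\}$, which is indeed a tautology, whereas the paper takes a prime $p$ containing some $J\in{\cal J}_Z$, notes that $R/p$ is a quotient of $R/J$ and hence lies in ${\cal T}_Z$ by Proposition~\ref{p:torsion-Thomason_subset}, and concludes $p\in V(p)=\mathop{\rm Supp}(R/p)\subseteq Z$. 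Your route is shorter and purely definitional; the paper's detour through the torsion class is the argument one would need if ${\cal G}_Z$ had been introduced abstractly as the Gabriel filter corresponding to ${\cal T}_Z$ under the bijection of \cite{GP08} rather than by the formula $V(J)\subseteq Z$, so it is the more robust of the two but buys nothing extra here. It is worth noting that you and the authors disagree about which inclusion is the ``only nontrivial'' one, which reflects the fact that both are essentially immediate; in any case your argument is complete and sound.
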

\begin{proof}
The right-ward inclusion $Z\subseteq\bigcup_{J\in{\cal J}_Z}V(J)$ is clear (notice that ${\cal B}_Z\subseteq{\cal J}_Z$). Conversely, let $p$ be a prime ideal containing some finitely generated ideal $J$ in ${\cal G}_Z$, and let us prove that $p$ contains an ideal in ${\cal B}_Z$. The module $R/p$ is a torsion by Proposition~\ref{p:torsion-Thomason_subset}, whence $\mathop{\rm Supp}R/p=V(p)\subseteq Z$, so we are done since clearly $p\in V(p)$. \qedhere
\end{proof}

Henceforth, we will always identify a Thomason subset $Z=\bigcup_{J\in{\cal B}_Z}V(J)$ by setting ${\cal B}_Z$ as the the family ${\cal J}_Z$ of all finitely generated ideals in the Gabriel filter associated with $Z$.

\begin{defn}
A {\it Thomason filtration\/} of $\Spec R$ is a decreasing map $\Phi\colon(\Z,\le)\to(2^{\Spec R},\subseteq)$ such that $\Phi(n)$ is a Thomason subset of $\Spec R$ for all $n\in\Z$.

A Thomason filtration $\Phi$ will be called:%
\label{d:bounded_Thomason_filtrations}%

\begin{itemize}
\item {\it weakly bounded below\/} if there is $l\in\Z$ such that $\Phi(l)=\Phi(l-i)$ for all $i\ge0$; if $\Phi(l)=\Spec R$, then $\Phi$ is called {\it bounded below\/};

\item {\it bounded above\/} if there exists $r\in\Z$ such that $\Phi(r+1)=\emptyset$. 
\end{itemize}

\noindent In these cases, we say that $\Phi$ is {\it (weakly) bounded below~$l$\/} and {\it bounded above~$r$}, respectively.

\begin{itemize}
\item A Thomason filtration $\Phi$ weakly bounded below and bounded above will be called {\it of finite lenght\/}.
\end{itemize}

\noindent Without loss of generality, we shall assume a Thomason filtration of finite length to be bounded above~$0$, so that if it is weakly bounded below $-l$, with $\Phi(-l+1)\ne\Phi(-l)$, we will say that it has {\it length~$l$\/}.

\end{defn}

In~\cite{Hrb20} the author classifies all compactly generated t-structures in the derived category of a commutative ring $R$, generalising \cite[Theorem~3.10]{AJS10}, this latter concerning the case of a noetherian commutative ring. More precisely, \cite[Lemma~3.7]{Hrb20} exhibits a bijective correspondence between {\it bounded below\/} compactly generated t-structures in $\De(R)$ and bounded below Thomason filtrations of $\Spec R$, given explicitly by the assignments
$$
	\Phi\longmapsto ({\cal U}_\Phi^{\vphantom{\bot_0}},{\cal U}_\Phi^{\bot_0}[1]) \qquad\hbox{and}
	\qquad ({\cal U},{\cal V})\longmapsto \Phi_{\cal U},
$$
where
\begin{align*}
	{\cal U}_\Phi &\leqdef \{M\in\De(R)\mid \mathop{\rm Supp}H^n(M)\subseteq \Phi(n),\,\forall n\in\Z\} \cr
	&\hphantom{:}=\{M\in\De(R)\mid H^n(M)\in{\cal T}_{\Phi(n)},\,\forall n\in\Z\}, \cr
	\noalign{\hbox{and}}
	\Phi_{\cal U}(n) &\leqdef \bigcup_{%
	\substack{M\in R\lMod \cr M[-n]\in{\cal U}}}
		\mathop{\rm Supp}M,
\end{align*}
for all $n\in\mathbb{Z}$. Another bijective correspondence, involving arbitrary compactly generated t-structures and Thomason filtrations, is provided by \cite[Theorem~5.1]{Hrb20}; however, unlike the noetherian case, the cohomological description of the aisle by means of the supports seems currently unavailable, and for the aisle ${\cal U}_\Phi$ corresponding to a Thomason filtration $\Phi$ we have (see Remark~\ref{r:cg})
$$
	{\cal U}_\Phi=\mathop{\rm aisle}(K(J)[-n]\mid V(J)\subseteq\Phi(n),\,\forall n\in\Z) \;.
$$
Nonetheless, this latter aisle is always contained in the formerly displayed class, which still is an aisle, call it now ${\cal U}^\#$ (see \cite[Lemma~3.6]{HHZ21}), though it is not known whether it induces a compactly generated t-structure; on the other hand, these aisles coincide on the bounded below derived category:
$$
	{\cal U}_\Phi\cap\De^+(R)={\cal U}^\#\cap\De^+(R) \;.
$$
Consequently, the cohomologies of the complexes of ${\cal U}_\Phi$ are always supported on the Thomason subsets; moreover, for any module $X\in{\cal T}_{\Phi(j)}$, $j\in\Z$, by the previous equality we see that its stalk $X[-j]$ belongs to ${\cal U}_\Phi$.

In order to make clearer the distinction between the above two t-structures associated to a Thomason filtration $\Phi$, we will call the t-structure $({\cal U}^\#,({\cal U}^\#)^{\bot_0}[1])$ and its heart ${\cal H}^\#$ the {\it Alonso-Jerem\'\i as--Saor\'\i n t-structure\/}, resp.\ {\it AJS~heart\/}, induced by $\Phi$.

\begin{lem}
Let\/ $\Phi$ be any Thomason filtration of\/ $\Spec R$. Then ${\cal U}_\Phi={\cal U}^\#$ if and only if the AJS~t-structure $({\cal U}^\#,({\cal U}^\#)^{\bot_0}[1])$ is compactly generated in\/ $\De(R)$.%
\label{l:sufficient_aisles}%
\end{lem}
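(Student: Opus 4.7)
My plan is to reduce the claim to Hrbek's bijection between compactly generated t-structures in $\De(R)$ and Thomason filtrations of $\Spec R$.

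The forward implication is essentially by definition. Assume ${\cal U}_\Phi={\cal U}^\#$. The aisle ${\cal U}_\Phi$ is defined as $\mathop{\rm aisle}(K(J)[-n]\mid V(J)\subseteq\Phi(n),\,n\in\Z)$, where each Koszul complex $K(J)$ on a finitely generated ideal $J$ is a bounded complex of finitely generated free modules, hence a compact object of $\De(R)$. Thus ${\cal U}_\Phi$ is, by construction, the aisle of a compactly generated t-structure, and via the hypothesis so is the AJS t-structure.

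For the converse, suppose the AJS t-structure is compactly generated. Since we already know ${\cal U}_\Phi\subseteq{\cal U}^\#$ in general, it suffices to prove ${\cal U}^\#\subseteq{\cal U}_\Phi$. By \cite[Theorem~5.1]{Hrb20}, compactly generated t-structures of $\De(R)$ are in bijection with Thomason filtrations of $\Spec R$ via the assignments $\Phi\mapsto({\cal U}_\Phi,{\cal U}_\Phi^{\bot_0}[1])$ and $({\cal U},{\cal V})\mapsto\Phi_{\cal U}$, where $\Phi_{\cal U}(n)=\bigcup_{M[-n]\in{\cal U}}\mathop{\rm Supp}M$. Hence it is enough to show that the Thomason filtration $\Phi_{{\cal U}^\#}$ associated with the AJS t-structure coincides with $\Phi$, since then by injectivity of the bijection ${\cal U}^\#={\cal U}_{\Phi_{{\cal U}^\#}}={\cal U}_\Phi$.

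The equality $\Phi_{{\cal U}^\#}=\Phi$ is the only real computation and it is straightforward. Fix $n\in\Z$ and a module $M\in R\lMod$. By the cohomological description of ${\cal U}^\#$, the stalk complex $M[-n]$ lies in ${\cal U}^\#$ if and only if $M\in{\cal T}_{\Phi(n)}$. Hence
\[
\Phi_{{\cal U}^\#}(n)=\bigcup_{M\in{\cal T}_{\Phi(n)}}\mathop{\rm Supp}M.
\]
The inclusion $\Phi_{{\cal U}^\#}(n)\subseteq\Phi(n)$ is immediate from the definition of ${\cal T}_{\Phi(n)}$. For the reverse inclusion, observe that $\Phi(n)$ is a union of closed sets $V(J)$, hence specialisation-closed: if $p\in\Phi(n)$, then $V(p)\subseteq\Phi(n)$, so $R/p\in{\cal T}_{\Phi(n)}$ and $p\in V(p)=\mathop{\rm Supp}(R/p)\subseteq\Phi_{{\cal U}^\#}(n)$. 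This yields $\Phi_{{\cal U}^\#}=\Phi$ and completes the argument.

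The only delicate point of the argument is invoking the precise form of Hrbek's bijection for arbitrary Thomason filtrations (not merely the bounded below case, for which an explicit cohomological description of the aisle is available); once the correct bijection is in place, the proof reduces to the trivial calculation of $\Phi_{{\cal U}^\#}$.
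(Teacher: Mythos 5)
Your proof is correct, but it follows a genuinely different route from the paper's. For the converse the paper argues directly: if ${\cal U}^\#=\mathop{\rm aisle}{\cal S}$ for a set $\cal S$ of compact objects, then by Rickard's characterisation of $\De^c(R)$ the members of $\cal S$ are perfect, hence bounded, complexes, so ${\cal S}\subseteq{\cal U}^\#\cap\De^+(R)={\cal U}_\Phi\cap\De^+(R)\subseteq{\cal U}_\Phi$, and minimality of $\mathop{\rm aisle}{\cal S}$ among aisles containing $\cal S$ yields ${\cal U}^\#\subseteq{\cal U}_\Phi$; combined with the standing inclusion ${\cal U}_\Phi\subseteq{\cal U}^\#$ this finishes the proof. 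You instead invoke the full classification of \cite[Theorem~5.1]{Hrb20} and compute the Thomason filtration attached to ${\cal U}^\#$. This works, and your computation of $\Phi_{{\cal U}^\#}=\Phi$ is fine, but be aware that the paper records the explicit inverse assignment $({\cal U},{\cal V})\mapsto\Phi_{\cal U}$ only in the bounded below case (\cite[Lemma~3.7]{Hrb20}); to apply it to an arbitrary compactly generated aisle you should verify that $\Phi_{{\cal U}_\Psi}=\Psi$ for every Thomason filtration $\Psi$, so that the support formula really computes the inverse of the general bijection. That verification is short and uses exactly the two facts you already exploit (cohomologies of objects of ${\cal U}_\Psi$ are supported along $\Psi$, and stalks of torsion modules lie in ${\cal U}_\Psi$ because the two aisles agree on $\De^+(R)$), so this is a citation-precision issue rather than a real gap. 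What each approach buys: yours makes transparent that the only possible compactly generated t-structure with aisle ${\cal U}^\#$ is the one attached to $\Phi$ itself, while the paper's is more economical, needing only the boundedness of compact objects and the agreement of the two aisles on $\De^+(R)$ rather than the classification theorem.
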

\begin{proof}\mbox{}\vglue0pt%
\noindent``$\Rightarrow$''\enspace There is nothing to prove since ${\cal U}_\Phi$ is the aisle of a compactly generated t-structure.

\noindent``$\Leftarrow$''\enspace By hypothesis, there exists a family ${\cal S}\subseteq\De^c(R)$ of compact complexes such that ${\cal U}^\#=\mathop{\rm aisle}{\cal S}$. By a well-known characterisation of $\De^c(R)$ due to Rickard, the objects of $\cal S$ belong in particular to $\De^+(R)$. In other words, we have ${\cal S}\subseteq{\cal U}^\#\cap\De^+(R)$, and this latter coincides with ${\cal U}_\Phi\cap\De^+(R)$, thus we infer ${\cal U}^\#\subseteq{\cal U}_\Phi$ by minimality in containing $\cal S$, and we are done. \qedhere
\end{proof}

\begin{rem}\mbox{\label{r:notation}}\vglue0pt%
\begin{enumerate}
\item We want to study the local coherence of the hearts associated with certain Thomason filtrations. This instance entails at least two possible strategies, in fact related to the hearts we are referring to, 
namely ${\cal H}_\Phi$ and ${\cal H}^\#$. More precisely, ${\cal H}_\Phi$ is always a locally finitely presented Grothendieck category, by \cite[Theorem~8.31]{SS20}, for its t-structure being compactly generated in $\De(R)$. On the other hand, it is not known so far whether ${\cal U}^\#$ is the aisle of a compactly generated t-structure, so that the cited result of Saor\'in--\v S\v tov\'i\v cek cannot apply on ${\cal H}^\#$. Nonetheless, we will prove in Theorem~\ref{t:when_hearts_coincide} that such hearts coincide under the assumption of $\Phi$ being a weakly bounded below Thomason filtration (though this does not ensure that the AJS~t-structure is compactly generated). Thanks to the announced result, we will be able to achieve a satisfactory characterisation of the local coherence of the heart of a Thomason filtration of finite length (see Theorem~\ref{t:recursive}).

\item Henceforth, when a filtration $\Phi$ is fixed we will denote the t-structure and the heart it gives rise respectively by $({\cal U},{\cal V})$ and $\cal H$, i.e.~omitting any subscript referring to $\Phi$, for it will not create confusion. Moreover, the torsion pair associated with each Thomason subset $\Phi(n)$ will be denoted just by $({\cal T}_n,{\cal F}_n)$; in turn, the relevant adjunctions to the inclusions in $R\lMod$ will be denoted by
$$
	{\cal T}_n \coreflective[x_n] R\lMod
		\reflective[y_n] {\cal F}_n \;.
$$
\end{enumerate}
\end{rem}

As claimed in the previous Remark, our interest will be focused mostly on the weakly bounded below filtrations. Let us start with the following result, which has a vital role throughout the paper.

\begin{prop}
Let\/ $\Phi$ be a weakly bounded below~$k$ Thomason filtration of\/ $\Spec R$.%
\label{p:heart_is_bounded}%

\begin{enumerate}
\item[(i)] The AJS~heart ${\cal H}^\#$ is contained in $\De^{\ge k}(R)$; in particular, when $\Phi$ has length~$l(\empty\ge0)$ we have ${\cal H}\subseteq \De^{[-l,0]}(R)$.

\item[(ii)] ${\cal H}^\#$ is an exact abelian subcategory of\/ $\cal H$ and an $\rm AB\mathchar`-3$ abelian category. Moreover, the coproducts hence the direct limits of\/ ${\cal H}^\#$ are computed as in $\cal H$.

\item[(ii)] ${\cal H}^\#$ is a locally finitely presented Grothendieck category.
\end{enumerate}
\end{prop}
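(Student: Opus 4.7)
My plan is to establish (i) first via a truncation-plus-orthogonality argument, then deduce (ii) and (iii) from the resulting cohomological bound.

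For (i), I would fix $M\in\mathcal{H}^\#$ and consider the standard truncation $\tau^{\le k-1}M$. The essential observation, powered by the weakly bounded below hypothesis $\Phi(j)=\Phi(k)$ for $j\le k$, is that both $\tau^{\le k-1}M$ and its negative shift $\tau^{\le k-1}M[-1]$ belong to $\mathcal{U}^\#$: the former because $H^i(\tau^{\le k-1}M)=H^i(M)\in\mathcal{T}_{\Phi(i)}$ for $i\le k-1$; the latter because, for $i\le k$, its degree-$i$ cohomology equals $H^{i-1}(M)\in\mathcal{T}_{\Phi(i-1)}=\mathcal{T}_{\Phi(i)}$ by the stabilisation. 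Since $M\in\mathcal{V}^\#=(\mathcal{U}^\#)^{\bot_0}[1]$, applying orthogonality with $U=\tau^{\le k-1}M[-1]$ yields $\Hom_{\mathcal{D}(R)}(\tau^{\le k-1}M,M)=0$; but the canonical truncation map induces the identity on $H^i$ for $i\le k-1$, so it must vanish there, forcing $H^i(M)=0$ in that range. For the ``in particular'' clause, $\mathcal{H}\subseteq\mathcal{D}^{\le 0}(R)$ is immediate from $\mathcal{H}\subseteq\mathcal{U}_\Phi\subseteq\mathcal{U}^\#$ combined with $\Phi(n)=\emptyset$ for $n>0$, while for the lower bound $\mathcal{H}\subseteq\mathcal{D}^{\ge -l}(R)$ the same truncation argument is rerun with $\mathcal{V}_\Phi$ in place of $\mathcal{V}^\#$, provided one first verifies $\tau^{\le -l-1}M[-1]\in\mathcal{U}_\Phi$. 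I would realise this complex as the homotopy colimit of its bounded-below truncations $\tau^{\ge -n}\tau^{\le -l-1}M[-1]$: each lies in $\mathcal{U}^\#\cap\mathcal{D}^+=\mathcal{U}_\Phi\cap\mathcal{D}^+$ by the cohomological check, and closure of aisles under homotopy colimits delivers the conclusion.

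For (ii), part (i) gives $\mathcal{H}^\#\subseteq\mathcal{D}^+(R)$. The two t-structures coincide on $\mathcal{D}^+$: their aisles agree by the hypothesis of the excerpt, and the coaisles do so too because for $V\in\mathcal{D}^{\ge m}$ and any $U\in\mathcal{U}^\#$ one has the standard identity $\Hom(U,V)=\Hom(\tau^{\ge m}U,V)$, which reduces orthogonality against $\mathcal{U}^\#$ to orthogonality against $\tau^{\ge m}U\in\mathcal{U}^\#\cap\mathcal{D}^+=\mathcal{U}_\Phi\cap\mathcal{D}^+$. Hence $\mathcal{H}^\#=\mathcal{H}\cap\mathcal{D}^+\subseteq\mathcal{H}$. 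Exactness of the inclusion follows automatically from the heart-triangle correspondence: a short exact sequence in $\mathcal{H}^\#$ is nothing but a triangle in $\mathcal{D}(R)$ with vertices in $\mathcal{H}^\#\subseteq\mathcal{H}$, which is then also a short exact sequence in $\mathcal{H}$. For AB-3, smashing of the Hrbek t-structure means $\mathcal{H}$-coproducts coincide with derived coproducts; since $\mathcal{D}^{\ge k}(R)$ is coproduct-closed, the $\mathcal{H}$-coproduct of any family in $\mathcal{H}^\#$ lands back in $\mathcal{H}^\#$, so coproducts (and hence direct limits) in $\mathcal{H}^\#$ exist and coincide with those in $\mathcal{H}$.

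Finally, for (iii), $\mathcal{H}$ is locally finitely presented Grothendieck by \cite[Theorem~8.31]{SS20}, and AB-5 transfers to $\mathcal{H}^\#$ through the exact inclusion of the previous paragraph together with the coincidence of direct limits. The step I expect to be the main obstacle is exhibiting a generating set of finitely presented objects of $\mathcal{H}^\#$: the generators of $\mathcal{H}$ supplied by the SS20 construction arise as $H_\mathcal{H}$-images of compact generators of $\mathcal{U}_\Phi$, which are bounded complexes, so the substantive point is to verify that these images actually land in $\mathcal{H}^\#\subseteq\mathcal{D}^+$. This amounts to checking that the Hrbek truncation $\tau^{\le}_{\mathcal{U}_\Phi}$ preserves bounded-belowness under the weakly bounded below hypothesis. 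Granted this, the same objects generate $\mathcal{H}^\#$ and remain finitely presented there, since direct limits are computed identically in the two categories.
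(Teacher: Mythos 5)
Your arguments for (i) and (ii) are correct and essentially the paper's: the same truncation--orthogonality trick kills $\tau^{\le k-1}M$ (the paper phrases the last step via the splitting $\tau^{>k-1}(M)\cong M\oplus\tau^{\le k-1}(M)[1]$ rather than via the induced map on cohomology, but these are equivalent), and the identification ${\cal H}^\#={\cal H}\cap\De^{\ge k}(R)$ together with smashingness gives (ii) exactly as in the paper. Your handling of the ``in particular'' clause is actually more explicit than the paper's one-line ``consequence of the previous part'': the homotopy-colimit argument showing $\tau^{\le -l-1}(M)[-1]\in{\cal U}_\Phi$ is a legitimate way to run the truncation argument against ${\cal V}_\Phi$ rather than ${\cal V}^\#$, and it is sound (aisles are closed under coproducts, extensions and positive shifts, hence under Milnor colimits).

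Part (iii) is where you have a genuine gap, and you have flagged it yourself. Your route requires showing that the Saor\'in--\v{S}\v{t}ov\'i\v{c}ek generators $H_{\cal H}(S)$, $S\in\De^c(R)\cap{\cal U}$, land in ${\cal H}^\#$, i.e.\ that $\tau^\le_{\cal U}(S[-1])$ is bounded below. This is not a routine verification: it is precisely the content of Proposition~\ref{p:Koszul_complex_in_hearts} and Theorem~\ref{t:when_hearts_coincide}(i) in the paper, whose proofs themselves invoke the present Proposition (applied to the auxiliary filtrations $\Phi_k$) to bound $H_{{\cal H}^\#_k}(S)$ below and then conclude by two-out-of-three in the approximation triangle. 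As written, your proof of (iii) is conditional on an unproved claim. The paper avoids the issue entirely by a different mechanism: the restriction $L\leqdef H_{{\cal H}^\#}\!\upharpoonright_{\cal H}\colon{\cal H}\to{\cal H}^\#$ is left adjoint to the inclusion ${\cal H}^\#\hookrightarrow{\cal H}$ (this follows from \cite[Lemma~3.1]{PS15} applied to ${\cal U}^\#$, since ${\cal H}\subseteq{\cal U}\subseteq{\cal U}^\#$), it acts as the identity on ${\cal H}^\#$, and its right adjoint preserves direct limits because those of ${\cal H}^\#$ are computed in ${\cal H}$ by part (ii). Hence $L$ sends a generating set of ${\cal H}$ to one of ${\cal H}^\#$, and by \cite[Lemma~2.5]{Kra97} it preserves finite presentability. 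This produces finitely presented generators of ${\cal H}^\#$ without ever deciding whether the original generators of ${\cal H}$ already lie in $\De^+(R)$. If you want to salvage your route instead, you must supply the boundedness of $\tau^\le_{{\cal U}^\#}(S[-1])$ for compact $S$, which in the weakly bounded below case can be extracted from part (i) plus the triangle $\tau^\le_{{\cal U}^\#}(S[-1])[1]\to S\to H_{{\cal H}^\#}(S)\buildrel+\over\to$; but that argument must actually be written down, and it is the adjoint-functor argument that the paper chose precisely to keep the proof self-contained at this stage.
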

\begin{proof}\mbox{}\vglue0pt%
\noindent(i)\enspace Let us prove that for every $M\in{\cal H}^\#$ we have $\tau^{\le k-1}(M)=0$. Notice that, by definition, ${\cal T}_n={\cal T}_k$ for every $n \le k$. Thus, $\tau^{\le k-1}(M)\in \cal U^\#[1]$ since $$
	H^j(\tau^{\le k-1}(M)[-1])=\begin{cases}
		H^{j-1}(M)\in {\cal T}_{j-1}={\cal T}_k = \cal T_j &\hbox{if $j\le k$}  \cr
		0 &\hbox{if $j>k$.}
	\end{cases}
$$
Therefore, in the exact triangle $\tau^{\le k-1}(M)\to M \to \tau^{>k-1}(M)\buildrel+\over\to$ the first edge is the zero morphism. By \cite[Corollary~1.2.7]{Nee01} we obtain the decomposition $\tau^{>k-1}(M)\cong M\oplus \tau^{\le k-1}(M)[1]$, thus our claim follows at once by additivity of the standard cohomology. Thus we have ${\cal H}^\#=({\cal U}^\#\cap({\cal U}^\#)^{\bot_0}[1])\cap\De^{\ge k}(R)$ and this latter coincides with ${\cal H}\cap\De^{\ge k}(R)$ by \cite[Lemma~3.6]{HHZ21}. The second part of the statement is a consequence of the previous part.

\noindent(ii)\enspace We recall that, by \cite[Proposition~3.2]{PS15}, the heart of any t-structure of $\De(R)$ is an AB-3~abelian category. The fact that the kernels and the cokernels of ${\cal H}^\#$ are the same as in $\cal H$ is clear by the construction of these objects (see subsection~\ref{ss:t-structures}). Let us prove that for any family $(M_i)_{i\in I}$ of objects of ${\cal H}^\#$ the coproduct $\bigoplus_{i\in I}^{({\cal H})} M_i$ belongs to ${\cal H}^\#$, whence the thesis. By \cite[Lemma~3.6]{HHZ21} the coproduct belongs to ${\cal U}^\#$. On the other hand, since $({\cal U},{\cal V})$ is a compactly generated hence smashing t-structure of $\De(R)$, we have $\bigoplus_{i\in I}^{({\cal H})} M_i=\coprod_{i\in I}M_i$, and this latter object belongs to ${\cal V}\cap\De^{\ge k}(R)$, hence to $({\cal U}^\#)^{\bot_0}[1]$ by [ibid.].

\noindent(iii)\enspace Since ${\cal H}^\#$ is an AB-3 category with direct limits computed as in $\cal H$, which is an AB-5 category, then in turn ${\cal H}^\#$ is an AB-5 abelian category. Recall now that, by \cite[Lemma~3.1]{PS15}, for any t-structure $({\cal X},{\cal Y})$ with heart $\cal A$ the restriction $H_{\cal A}\mathclose\upharpoonright_{\cal X}\colon {\cal X}\to{\cal A}$ is left adjoint to the inclusion ${\cal A}\lhook\joinrel\to{\cal X}$. We claim that the restriction $L\leqdef H_{{\cal H}^\#}\mathclose\upharpoonright_{\cal H}\colon{\cal H}\to{\cal H}^\#$ is left adjoint to the inclusion ${\cal H}^\#\lhook\joinrel\to{\cal H}$. Indeed, for all $M\in{\cal H}$ and $M'\in{\cal H}^\#$ we have
\begin{align*}
	\Hom_{{\cal H}^\#}(L(M),M') &= \Hom_{\De(R)}(H_{{\cal H}^\#}\mathclose\upharpoonright_{{\cal U}^\#}(M),M') &(M\in{\cal H}\subseteq{\cal U}\subseteq{\cal U}^\#) \cr
	&\cong \Hom_{{\cal U}^\#}(M,M') & \hbox{(by adjunction)} \cr
	&= \Hom_{\cal H}(M,M') & (M,M'\in{\cal H}).
\end{align*}
This said, let us prove that for any generator $U$ of $\cal H$, the object $L(U)$ is a generator of ${\cal H}^\#$. Let $M\in{\cal H}^\#$. Then there is a set $\alpha$ and an epimorphism $U^{(\alpha)}\to M$ in $\cal H$, whence applying the left adjoint functor $L$ we obtain an epimorphism $L(U)^{(\alpha)}\to L(M)$. Now, by definition of $L$, we have $L(M)=\tau^>_{{\cal U}^\#}(M[-1])[1]$, and being $M[-1]\in({\cal U}^\#)^{\bot_0}$ since $M\in{\cal H}^\#$, then actually $\tau^>_{{\cal U}^\#}(M[-1])=M[-1]$, i.e.\ $L(M)=M$, and we are done.

The previous paragraph also shows that the right adjoint to $L$ is a direct limit preserving functor, hence by \cite[Lemma~2.5]{Kra97} $L({\cal S})$ is a set of finitely presented generators for ${\cal H}^\#$ for any set $\cal S$ of finitely presented generators of $\cal H$. \qedhere
\end{proof}

\section{Some useful results}%
\label{s:some_useful_results}%
Let us introduce some further notions and useful results that will be crucial throughout the rest of the paper.

\begin{lem}[{\cite[Lemma~4.2(3)]{PS17}}]
Let\/ $\Phi$ be a Thomason filtration of\/ $\Spec R$ and let\/ $M$ be a complex in the associated heart $\cal H$. If\/ $m$ is the least integer such that $H^m(M)\ne0$, then%
\label{l:least_nonzero_cohomology}%
$$
	H^m(M)\in{\cal T}_m\cap{\cal F}_{m+1}\cap\ker\Ext^1_R({\cal T}_{m+2},-) \;.
$$
\end{lem}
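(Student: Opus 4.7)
The plan is to split the three required inclusions along the decomposition $M\in{\cal H}={\cal U}\cap{\cal V}$. The first, $H^m(M)\in{\cal T}_m$, is free: since the aisle ${\cal U}={\cal U}_\Phi$ is contained in $\{N\in\De(R)\mid H^n(N)\in{\cal T}_n\;\forall n\in\Z\}$ (recalled in the discussion of the two aisles of a Thomason filtration), specialising to $n=m$ suffices. The other two inclusions concern $H^m(M)$ alone and must instead exploit the coaisle condition $M[-1]\in{\cal U}^{\bot_0}$.

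As an intermediate step, I would distill the coaisle condition into the basic vanishing
\[
  \Hom_{\De(R)}(T,M[n-1])=0\qquad (T\in{\cal T}_n,\ n\in\Z).
\]
This follows by testing the stalk $T[-n]\in{\cal U}_\Phi$ (recalled just before the lemma) against $M[-1]\in{\cal U}^{\bot_0}$. With this in hand, I would transfer the vanishing from $M$ to $H^m(M)$ through the canonical truncation triangle
\[
  H^m(M)[-m]\longrightarrow M\longrightarrow \tau^{>m}(M)\stackrel{+}{\longrightarrow},
\]
which is of this special form precisely because $H^j(M)=0$ for all $j<m$. Applying $\Hom_{\De(R)}(T,-)$ with $T$ a module produces a long exact sequence that relates $\Ext^k_R(T,H^m(M))$ to $\Hom_{\De(R)}(T,M[m+k])$ up to error terms of the form $\Hom_{\De(R)}(T,\tau^{>m}(M)[l])$.

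To finish I would specialise in $k$ and $n$ independently: for $H^m(M)\in{\cal F}_{m+1}$, take $k=0$ and test against $T\in{\cal T}_{m+1}$; for $H^m(M)\in\ker\Ext^1_R({\cal T}_{m+2},-)$, take $k=1$ and test against $T\in{\cal T}_{m+2}$. The main obstacle, and essentially the only calculation, is the bookkeeping of shifts: in each case I must verify that the error terms flanking the relevant $\Ext$-group are of the form $\Hom_{\De(R)}(T,\tau^{>m}(M)[l])$ with $l\le m$, so that $\tau^{>m}(M)[l]\in\De^{\ge1}(R)$ and the term vanishes by orthogonality of the standard t-structure applied to a module $T\in\De^{\le0}(R)$. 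A quick count shows that this forces the right-most Thomason subset invoked to be $\Phi(m+2)$, matching the bound in the statement exactly.
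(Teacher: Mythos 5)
Your proposal is correct and follows essentially the same route as the paper's proof: $H^m(M)\in{\cal T}_m$ from the cohomological description of the aisle, and the remaining two memberships by testing the coaisle condition against stalks of modules in ${\cal T}_{m+1}$ and ${\cal T}_{m+2}$ respectively, transferred through the truncation triangle $H^m(M)[-m]\to M\to\tau^{>m}(M)\buildrel+\over\to$ with the flanking terms killed by standard t-structure orthogonality. The shift bookkeeping checks out (the error terms are $\Hom_{\De(R)}(T,\tau^{>m}(M)[m+k-1])$ for $k=0,1$, both vanishing since $\tau^{>m}(M)[m+k-1]\in\De^{\ge 2-k}(R)\subseteq\De^{\ge1}(R)$), so no gap remains.
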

\begin{proof}
By what we recalled before of Lemma~\ref{l:sufficient_aisles}, we only need to check that $H^m(M)$ belongs to the last two classes of the displayed intersection. By hypothesis, $H^m(M)[-m]\cong\tau^{\le m}(M)$, hence for every $X\in{\cal T}_{m+1}$ we obtain 
\begin{align*}
	\Hom_R(X,H^m(M)) &\cong\Hom_{\De(R)}(X[-m],H^m(M)[-m]) \cr
	&\cong\Hom_{\De(R)}(X[-m],\tau^{\le m}(M)) \;.
\end{align*}
The latter group is zero, indeed we have
$
	X[-m]\in{\cal U}^\#[1]\cap\De^+(R)={\cal U}[1]\cap\De^+(R)\subseteq{\cal U}[1],
$
so that the covariant hom~functor of the stalk complex applied on exact triangle $\tau^{>m}(M)[-1]\to\tau^{\le m}(M)\to M\buildrel+\over\to$ yields the claimed vanishing by the axioms of t-structure. Therefore, the least nonzero cohomology of $M$ is an object of ${\cal F}_{m+1}$.

On the other hand, by Verdier's thesis~\cite{Ver}, for every $X\in{\cal T}_{m+2}$ we have
$$
	\Ext^1_R(X,H^m(M))\cong\Hom_{\De(R)}(X[-m],H^m(M)[-m+1]),
$$
and the right-hand group is zero by the previous argument, i.e.~by applying the hom functor of $X[-m]\in{\cal U}[2]$ on the rotation of the above triangle. \qedhere
\end{proof}

In view of the previous proof, in particular of what we recalled before of Lemma~\ref{l:sufficient_aisles}, we see that the same statement holds true for complexes $M$ belonging to the AJS~heart ${\cal H}^\#$ of $\Phi$, namely for they belong to $\De^{\ge m}(R)$.

\begin{exmpl}
We show that the HRS~heart of a hereditary torsion pair of finite type of $R\lMod$ can be realised as the heart of a Thomason filtration bounded both below and above; as a byproduct of \cite{SS20}, such heart is automatically a locally finitely presented Grothendieck category. This example will be resumed in the last part of the paper.

Let $Z$ be a proper Thomason subset, and let ${\cal H}_{\boldsymbol{\tau}}$ the HRS~heart of the torsion pair $\boldsymbol{\tau}\leqdef({\cal T}_Z,{\cal F}_Z)$ corresponding to $Z$ (see Example~\ref{e:HRS-heart}). Consider now the Thomason filtration
$$
	\Phi:\Spec R\supset Z\supset\emptyset,
$$
and let us prove that ${\cal H}={\cal H}^\#={\cal H}_{\boldsymbol{\tau}}$. The first equality is guaranteed by \cite[Lemma~3.7]{Hrb20}.%
\label{e:HRS-heart&filtration}%

Let us prove that ${\cal H}\subseteq{\cal H}_{\boldsymbol{\tau}}$. For every $M\in{\cal H}$ we have $H^0(M)\in{\cal T}_Z$, so it remains to verify that $H^{-1}(M)\in{\cal F}_Z$. This follows by Lemma~\ref{l:least_nonzero_cohomology} and Proposition~\ref{p:heart_is_bounded}.

Conversely, let us prove the inclusion ${\cal H}_{\boldsymbol{\tau}}\subseteq{\cal H}$ by showing that both the torsion and torsionfree classes ${\cal F}_Z[1]$ and ${\cal T}_Z[0]$ approximating ${\cal H}_{\boldsymbol{\tau}}$ are contained in ${\cal H}$, whence the conclusion by the extension-closure of the heart. The fact that ${\cal T}_Z[0]\subseteq{\cal H}$ is clear by definition of the t-structure $({\cal U},{\cal V})$.

On the other hand, let $Y\in{\cal F}_Z$. Since $\mathop{\rm Supp}H^{-1}(Y[1])=\mathop{\rm Supp}Y$ is contained in the spectrum i.e.~in $\Phi(-1)$, whereas $\mathop{\rm Supp}H^j(Y[1])=\emptyset$ for all $j\ne-1$, we have ${\cal F}_Z[1]\subseteq{\cal U}$. 
Let now $M\in{\cal U}\subseteq\De^{\le0}(R)$; out of the exact triangle $\tau^{\le-1}(M)\to M\to H^0(M)[0]\buildrel+\over\to$ provided by the standard t-structure of $\De(R)$, applying the cohomological functor $\Hom_{\De(R)}(-,Y[0])$ we obtain the exact sequence
$$
	\Hom_{\De(R)}(H^0(M)[0],Y[0])\to\Hom_{\De(R)}(M,Y[0])
	\to\Hom_{\De(R)}(\tau^{\le-1}(M),Y[0])
$$
whence we obtain the remaining inclusion ${\cal F}_Z[1]\subseteq{\cal U}^{\bot_0}[1]$, for the left hand term is zero by \cite{Ver} and since $({\cal T}_Z,{\cal F}_Z)$ is a torsion pair in $R\lMod$, and for the right hand term being clearly zero as well.
\end{exmpl}

Let us recall some basic facts concerning Koszul complexes and their cohomology (see e.g.~\cite[Chap.~8]{Nor68}). For any finitely generated ideal $J$ one has:

\begin{itemize}
\item $K(J)\in\De^{[-n,0]}(R)\cap\De^c(R)$, where $n=\mathop{\rm rank}J$;

\item $H^0(K(J))\cong R/J$;

\item $JH^{-j}(K(J))=0$ or, equivalently, $\mathop{\rm Supp}H^{-j}(K(J))\subseteq V(J)$, for all $j=0,\ldots,n$.
\end{itemize}

\noindent The last two items tell us that the Koszul cohomologies are torsion modules w.r.t.~the torsion pair associated with $V(J)$.

The following result will be resumed in Section~\ref{s:arbitrary_thomason_filtrations} and stengthened in Theorem~\ref{t:when_hearts_coincide}.

\begin{prop}
Let\/ $\Phi$ be a Thomason filtration of\/ $\Spec R$. For any $n\in\Z$ and for any ideal\/ $J\in{\cal B}_n$, we have $H_{{\cal H}^{\vphantom\#}}(K(J)[-n])=H_{{\cal H}^\#}(K(J)[-n])$.%
\label{p:Koszul_complex_in_hearts}%
\end{prop}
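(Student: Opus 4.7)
The plan is to identify both heart-cohomologies with the same exact triangle. First, I would use that $V(J)\subseteq\Phi(n)$ so that $X\leqdef K(J)[-n]$ lies in ${\cal U}_\Phi$ (being among its designated compact generators) and, \textit{a fortiori}, in ${\cal U}^\#$; hence
\[
	H_{\cal H}(X)=\tau^>_{{\cal U}_\Phi[1]}(X)\qquad\hbox{and}\qquad H_{{\cal H}^\#}(X)=\tau^>_{{\cal U}^\#[1]}(X),
\]
so one is reduced to comparing the truncations of $X$ within the two shifted aisles.

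I would then consider the AJS~approximation triangle
\[
	A^\# \to X \to B^\# \buildrel+\over\to,
\]
with $A^\#\in{\cal U}^\#[1]$ and $B^\#=H_{{\cal H}^\#}(X)\in{\cal V}^\#$, and I would try to show that the very same triangle is the truncation triangle for the shifted compactly generated t-structure, forcing $B^\#=H_{\cal H}(X)$ by uniqueness. Since ${\cal U}_\Phi\subseteq{\cal U}^\#$ readily gives ${\cal V}^\#\subseteq{\cal V}_\Phi$, it would only remain to check that $A^\#\in{\cal U}_\Phi[1]$. For this I would invoke the shifted filtration $\Phi^{+1}(k)\leqdef\Phi(k+1)$, whose compactly generated and AJS aisles are precisely ${\cal U}_\Phi[1]$ and ${\cal U}^\#[1]$, and apply \cite[Lemma~3.6]{HHZ21} to it, obtaining ${\cal U}_\Phi[1]\cap\De^+(R)={\cal U}^\#[1]\cap\De^+(R)$. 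The whole argument is thus reduced to the boundedness statement $A^\#\in\De^+(R)$, or equivalently $B^\#\in\De^+(R)$ since $X$ is bounded.

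The latter is the main obstacle, and I would handle it cohomologically via Lemma~\ref{l:least_nonzero_cohomology}. Setting $m\leqdef\mathop{\rm rank}J$, so that $X\in\De^{[n-m,n]}(R)$, assume $B^\#\ne0$ and let $m_0$ be its least non-vanishing standard cohomology degree. Since the lemma holds verbatim for objects of ${\cal H}^\#$ (by the observation right after its proof), we have $H^{m_0}(B^\#)\in{\cal T}_{m_0}\cap{\cal F}_{m_0+1}$. If $m_0<n-m$, then $H^{m_0}(X)=0$ and the minimality of $m_0$ gives $H^{m_0-1}(B^\#)=0$, so the long exact sequence in standard cohomology of the approximation triangle collapses to an injection $H^{m_0}(B^\#)\hookrightarrow H^{m_0+1}(A^\#)$. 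Since $A^\#\in{\cal U}^\#[1]$ forces $H^{m_0+1}(A^\#)\in{\cal T}_{m_0+2}\subseteq{\cal T}_{m_0+1}$, and torsion classes are closed under subobjects, we obtain $H^{m_0}(B^\#)\in{\cal T}_{m_0+1}\cap{\cal F}_{m_0+1}=0$, a contradiction. Therefore $B^\#\in\De^{\ge n-m}(R)$, and the previous paragraph now closes the loop.
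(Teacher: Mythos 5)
Your reduction is correct and is a genuinely different organisation from the paper's: you compare the two truncation triangles of $X=K(J)[-n]$ directly, using ${\cal U}_\Phi\subseteq{\cal U}^\#$ (hence ${\cal V}^\#\subseteq{\cal V}_\Phi$) and the shifted identity ${\cal U}_\Phi[1]\cap\De^+(R)={\cal U}^\#[1]\cap\De^+(R)$, so that everything hinges on $B^\#\in\De^+(R)$; the paper instead first replaces $K(J)[-n]$ by the stalk $R/J[-n]$ and then imports the boundedness from the auxiliary weakly bounded below filtration $\Phi_n$ via Proposition~\ref{p:heart_is_bounded}(i). Up to the boundedness claim, every step of your argument checks out.

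The gap is in the boundedness step itself. You ``assume $B^\#\ne0$ and let $m_0$ be its least non-vanishing standard cohomology degree'': for a nonzero complex the existence of such an $m_0$ is \emph{equivalent} to $B^\#\in\De^+(R)$, which is precisely what you set out to prove. Your contradiction only excludes the case in which $B^\#$ is bounded below with too small a lower bound; it says nothing if $H^j(B^\#)\ne0$ for arbitrarily negative $j$. This is not a vacuous worry: $B^\#$ lies in ${\cal H}^\#$, and membership in ${\cal H}^\#$ yields no a priori lower bound on the cohomological support unless $\Phi$ is weakly bounded below, whereas the Proposition is stated for an arbitrary Thomason filtration. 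The same circularity affects your appeal to Lemma~\ref{l:least_nonzero_cohomology}, whose extension to ${\cal H}^\#$ (the remark after its proof) presupposes that the complex lies in $\De^{\ge m}(R)$ for its least nonzero degree $m$.

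The repair is short and keeps your strategy intact. For every $j<n-m$ the long exact sequence of $A^\#\to X\to B^\#\buildrel+\over\to$ gives an injection $H^j(B^\#)\hookrightarrow H^{j+1}(A^\#)\in{\cal T}_{j+2}\subseteq{\cal T}_{j+1}$, so $\tau^{\le j}(B^\#)\in{\cal U}^\#[1]$; since $B^\#\in({\cal U}^\#[1])^{\bot_0}$, the canonical map $\tau^{\le j}(B^\#)\to B^\#$ vanishes, whence $\tau^{>j}(B^\#)\cong B^\#\oplus\tau^{\le j}(B^\#)[1]$ by \cite[Corollary~1.2.7]{Nee01}, and additivity of the standard cohomologies forces $\tau^{\le j}(B^\#)=0$. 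Running over all $j<n-m$ gives $B^\#\in\De^{\ge n-m}(R)$ with no minimality assumption (this is the device used in the proof of Proposition~\ref{p:heart_is_bounded}(i)); with that substitution your proof closes.
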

\begin{proof}
For all $n$ and $J$ as stated, by \cite[Lemma~3.6]{HHZ21} we have $K(J)[-n]\in{\cal U}\cap\De^+(R)={\cal U}^\#\cap\De^+(R)$. On the other hand, we have $\tau^{\le n-1}(K(J)[-n])\in{\cal U}^\#[1]\cap\De^+(R)={\cal U}[1]\cap\De^+(R)\subseteq{\cal U}[1]$, so out of the triangle $\tau^{\le n-1}(K(J)[-n])\to K(J)[-n]\to R/J[-n]\buildrel+\over\to$, provided by the basic properties of the Koszul complex, we infer that
\begin{align*}
	H_{{\cal H}^{\vphantom\#}}(K(J)[-n]) &\cong H_{{\cal H}^{\vphantom\#}}(R/J[-n]) \cr
	\noalign{\hbox{and}}
	H_{{\cal H}^\#}(K(J)[-n]) &\cong H_{{\cal H}^\#}(R/J[-n]) \;.
\end{align*}
We now anticipate an argument which will be formalised in Section~\ref{s:arbitrary_thomason_filtrations}. There exists a (weakly bounded below) Thomason filtration $\Phi_n$ associated to $\Phi$ whose t-structure has its aisle ${\cal U}^\#_n$ and the heart ${\cal H}^\#_n$ contained, respectively, in ${\cal U}^\#_{\vphantom n}$ and ${\cal H}^\#_{\vphantom n}$ (see Lemma~\ref{l:inclusion-H_*}); moreover, $R/J[-n]\in{\cal U}^\#_n$, thus out of the triangle
$$
	\tau^\le_{{\cal U}^\#_n}(R/J[-n-1])[1]\longrightarrow
		R/J[-n]\longrightarrow H_{{\cal H}^\#_n}(R/J[-n])\buildrel+\over\longrightarrow
$$
we infer that $H_{{\cal H}^\#_{\vphantom n}}(R/J[-n])=H_{{\cal H}^\#_n}(R/J[-n])$. On the other hand, the first vertex of the triangle belongs to $\De^+(R)$ for the other two vertices do, therefore it also belongs to ${\cal U}[1]$ by the usual argument of \cite{HHZ21}. Thus, the displayed triangle also shows that $H_{{\cal H}^{\vphantom\#}_{\vphantom n}}(R/J[-n])=H_{{\cal H}^\#_n}(R/J[-n])$, and we are done. \qedhere
\end{proof}

\begin{lem}
Let\/ $\Phi$ be any Thomason filtration of\/ $\Spec R$, let\/ $\cal A$ be either $\cal H$ or the AJS heart ${\cal H}^\#$, and let $(M_i)_{i\in I}$ be a family of objects of the heart $\cal A$. Then, for all\/ $n\in\Z$,%
\label{l:cohomologies_commute}%
$$
	\bigoplus_{i\in I}H^n(M_i)\cong
		H^n\Bigl(\bigoplus_{i\in I}\vphantom{\displaystyle\bigoplus}^{\!({\cal A})}M_i\Bigr) \;.
$$
Moreover, if\/ $I$ is directed, then
$$
	\varinjlim_{i\in I} H^n(M_i)\cong H^n(\varinjlim_{i\in I}\adjust M_i) \;.
$$
\end{lem}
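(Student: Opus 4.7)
The plan is to reduce the statement to the case $\cal A={\cal H}$ and then exploit the fact that the t-structure defining $\cal H$ is compactly generated, hence smashing.

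First I would handle the reduction. Proposition~\ref{p:heart_is_bounded}(ii) asserts that ${\cal H}^\#$ is an abelian subcategory of $\cal H$ whose coproducts, and therefore also its direct limits, are computed as in $\cal H$. Hence, given any family (resp.~directed system) $(M_i)_{i\in I}$ of objects of ${\cal H}^\#$, its coproduct (resp.~direct limit) in ${\cal H}^\#$ coincides with the one in $\cal H$, and both assertions for $\cal A={\cal H}^\#$ follow at once from those for $\cal A={\cal H}$.

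For the coproduct statement with $\cal A={\cal H}$, the key observation is that the t-structure $({\cal U},{\cal V})$ corresponding to $\Phi$ is compactly generated in $\De(R)$, by the correspondence of \cite{Hrb20} recalled at the beginning of this section; consequently it is smashing, and by the discussion in subsection~\ref{ss:t-structures} the coproducts in $\cal H$ coincide with those in $\De(R)$. Since the standard cohomology $H^n\colon\De(R)\to R\lMod$ commutes with coproducts, we conclude
$$
H^n\Bigl(\bigoplus_{i\in I}\vphantom{\displaystyle\bigoplus}^{\!({\cal H})}M_i\Bigr)\cong H^n\Bigl(\coprod_{i\in I}M_i\Bigr)\cong\bigoplus_{i\in I}H^n(M_i).
$$

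For the direct limit statement I would invoke the standard presentation of a directed colimit in an AB-5 abelian category as the cokernel of the canonical ``difference of inclusions'': for a directed system $(M_i)_{i\in I}$ in $\cal H$, with connection maps $f_{ij}\colon M_i\to M_j$, there is a short exact sequence in $\cal H$
$$
0\to\bigoplus_{i\le j}\vphantom{\displaystyle\bigoplus}^{\!({\cal H})}M_i\stackrel{\phi}{\longrightarrow}\bigoplus_{i\in I}\vphantom{\displaystyle\bigoplus}^{\!({\cal H})}M_i\longrightarrow\varinjlim_{i\in I}\adjust M_i\to 0,
$$
which corresponds to an exact triangle in $\De(R)$ with the same vertices. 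Applying $H^n$ and identifying the coproducts via the first part, one obtains a long exact sequence
$$
\cdots\to\bigoplus_{i\le j}H^n(M_i)\stackrel{\alpha}{\to}\bigoplus_{i\in I}H^n(M_i)\to H^n\bigl(\varinjlim_{i\in I}\adjust M_i\bigr)\to\bigoplus_{i\le j}H^{n+1}(M_i)\stackrel{\beta}{\to}\bigoplus_{i\in I}H^{n+1}(M_i)\to\cdots
$$
in $R\lMod$, where $\alpha$ and $\beta$ are once again the canonical difference maps, now in $R\lMod$. Since $R\lMod$ is AB-5, $\beta$ is a monomorphism, hence $H^n\bigl(\varinjlim\adjust M_i\bigr)\cong\coker\alpha\cong\varinjlim_{i\in I}H^n(M_i)$.

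The main (and essentially only) nontrivial input is the smashing property of the compactly generated t-structure defining $\cal H$; once this is invoked, together with the passage through the exact triangle produced by the AB-5 presentation of the direct limit, the rest is a routine chase in the resulting long exact sequence of cohomologies. I do not anticipate any real obstacle.
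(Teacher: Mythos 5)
Your coproduct argument for $\cal A={\cal H}$ is essentially the paper's (the paper encodes the same smashing input by computing $H^n(-)\cong\Hom_{\De(R)}(R[-n],-)$ and using compactness of the stalk $R[-n]$). However, there are two genuine gaps elsewhere. First, the reduction of the case $\cal A={\cal H}^\#$ to $\cal A={\cal H}$ via Proposition~\ref{p:heart_is_bounded}(ii) is not available here: that proposition is proved only for \emph{weakly bounded below} filtrations, whereas the lemma is stated for an arbitrary Thomason filtration, for which it is not even known that ${\cal H}^\#$ sits inside $\cal H$ with the same coproducts. The paper avoids any such reduction by running the compactness computation uniformly for either heart.

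Second, and more seriously, your direct limit argument breaks down. For a general directed poset $I$ the canonical presentation
$$
	\bigoplus_{i\le j}M_i\buildrel\phi\over\longrightarrow\bigoplus_{i\in I}M_i\longrightarrow\varinjlim_{i\in I}M_i\longrightarrow0
$$
is only \emph{right} exact: $\phi$ is not a monomorphism (already for $I=\N$, all $M_i=M\ne0$ and identity connecting maps, the element with components $m$ at $(1,2)$ and $(2,3)$ and $-m$ at $(1,3)$ lies in $\ker\phi$). So there is no short exact sequence in $\cal H$ with these three terms, hence no corresponding exact triangle in $\De(R)$; and for the same reason the claim that $\beta$ is a monomorphism ``since $R\lMod$ is AB-5'' is false --- AB-5 says that $\varinjlim$ is exact, not that the difference map is monic. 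Even if you repair this by factoring $\phi$ through its image $N$ and using $0\to N\to\bigoplus^{({\cal H})}M_i\to\varinjlim^{({\cal H})}M_i\to0$, the long exact sequence then involves $H^n(N)$ and $H^{n+1}(N)$, and the image of $H^n(N)\to\bigoplus H^n(M_i)$ need not coincide with $\im\alpha$, because the epimorphism $\bigoplus_{i\le j}M_i\to N$ of $\cal H$ need not remain surjective after applying $H^n$. The paper's proof of the direct limit statement relies instead on the nontrivial input from \cite{SSV17}: $R[-n]$ is homotopically finitely presented and direct limits in the heart agree with directed homotopy colimits, so the compactness computation carries over verbatim with coproducts replaced by direct limits. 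Your elementary route does not substitute for this ingredient.
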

\begin{proof}
Since each stalk complex of the ring $R$ is a compact object of $\De(R)$, we have
\begin{align*}
	H^n\Bigl(\bigoplus_{i\in I}\vphantom{\displaystyle\bigoplus}^{\!({\cal A})}M_i\Bigr) &=
		\Hom_{\De(R)}(R[-n],\bigoplus_{i\in I}\vphantom{\displaystyle\bigoplus}^{\!({\cal A})}M_i) \cr
		&\cong \bigoplus_{i\in I}\Hom_{\De(R)}(R[-n],M_i)=\bigoplus_{i\in I}H^n(M_i) \;.
\end{align*}
Notice that this result in fact holds true for the heart of any smashing t-structure of $\De(R)$.

Let now $I$ be a directed set. By \cite[Proposition~5.4]{SSV17}, $R[-n]$ is a homotopically finitely presented object of $\De(R)$, meaning that its covariant hom~functor commutes with direct homotopy colimits; by [op.~cit., Corollary~5.8], the direct homotopy colimits of the heart $\cal H$ are canonically isomorphic to the underlying direct limits; consequently, we can repeat the proof above by replacing the coproducts of $(M_i)_{i\in I}$ with its direct limit, and we are done. \qedhere
\end{proof}

Slightly diverting from \cite{PS17}, we fix the following notation: given any Thomason filtration $\Phi$, for any $n\in\Z$ we set
\begin{align*}
	\TF_{\!n} &\leqdef {\cal T}_n\cap{\cal F}_{n+1} \cr
	\TFT_{\!n} &\leqdef {\cal T}_n\cap{\cal F}_{n+1}\cap\ker\Ext^1_R({\cal T}_{n+2},-) \;.
\end{align*}
It is readily seen that $\TF_{\!n}$ is closed under subobjects and that $\TFT_{\!n}$ is closed under kernels; moreover, we will show in Remark~\ref{r:split_in_fp(TFT)}(1) that the latter category has direct limits, so it will make sense to consider the subcategory of its finitely presented objects (mostly in order to study its quasi local coherence), which will play a crucial role in the subsequent sections.

\begin{cor}
Let\/ $\Phi$ be a Thomason filtration of\/ $\Spec R$. Then the class
\begin{align*}
	\HTF_{\!n} &\leqdef \{M\in R\lMod\mid M[-n]\in{\cal H}\} \cr
	\noalign{\hbox{coincides with}}
	\HTF^\#_{\!n} &\leqdef \{M\in R\lMod\mid M[-n]\in{\cal H}^\#\} \;.
\end{align*}
Moreover, $\HTF_{\!n}$ is a subcategory of\/ $R\lMod$ closed under direct limits, for every $n\in\mathbb{Z}$.%
\label{c:TF_n-AB5}%
\end{cor}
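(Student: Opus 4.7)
The plan is to handle the two assertions in sequence, exploiting the crucial fact that every stalk complex $M[-n]$ automatically lies in $\De^+(R)$, where the two aisles coincide by the equality ${\cal U}\cap\De^+(R)={\cal U}^\#\cap\De^+(R)$ recalled just before Lemma~\ref{l:sufficient_aisles}.

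For the inclusion $\HTF^\#_n\subseteq\HTF_n$ the argument is essentially formal: given $M[-n]\in{\cal H}^\#$, the aisle condition reads $M[-n]\in{\cal U}^\#\cap\De^+(R)={\cal U}\cap\De^+(R)\subseteq{\cal U}$, while the coaisle condition is inherited from $({\cal U}^\#)^{\bot_0}\subseteq{\cal U}^{\bot_0}$, the latter being a consequence of ${\cal U}\subseteq{\cal U}^\#$.

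The reverse inclusion $\HTF_n\subseteq\HTF^\#_n$ is the substantive one. Given $M[-n]\in{\cal H}$, the containment $M[-n]\in{\cal U}\subseteq{\cal U}^\#$ is immediate, so the task reduces to verifying the enhanced orthogonality $M[-n-1]\in({\cal U}^\#)^{\bot_0}$. For an arbitrary $X\in{\cal U}^\#$ I would apply $\Hom_{\De(R)}(-,M[-n-1])$ to the standard truncation triangle $\tau_{\le n}X\to X\to\tau_{>n}X\buildrel+\over\to$: since $\tau_{\le n}X\in\De^{\le n}(R)$ whereas $M[-n-1]$ and $M[-n-2]$ live in strictly higher degrees, both flanking groups $\Hom_{\De(R)}(\tau_{\le n}X,M[-n-1])$ and $\Hom_{\De(R)}(\tau_{\le n}X,M[-n-2])$ vanish by the standard $t$-structure of $\De(R)$, so the long exact sequence collapses to $\Hom_{\De(R)}(X,M[-n-1])\cong\Hom_{\De(R)}(\tau_{>n}X,M[-n-1])$. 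The cohomological description of ${\cal U}^\#$ ensures $\tau_{>n}X\in{\cal U}^\#\cap\De^+(R)={\cal U}\cap\De^+(R)\subseteq{\cal U}$, and combining this with $M[-n-1]\in{\cal U}^{\bot_0}$ the right-hand Hom vanishes, giving the required orthogonality.

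Finally, for the closure of $\HTF_n$ under direct limits, let $(M_i)_{i\in I}$ be a directed system in $\HTF_n$, so that each $M_i[-n]\in{\cal H}$. Applying Lemma~\ref{l:cohomologies_commute} to compute the cohomologies of the direct limit $\varinjlim_{i\in I}\adjust M_i[-n]$ taken inside ${\cal H}$, one finds cohomology $\varinjlim_i M_i$ concentrated in degree $n$ and zero elsewhere; this object is therefore isomorphic in $\De(R)$ to the stalk $(\varinjlim_i M_i)[-n]$, which consequently belongs to ${\cal H}$, so $\varinjlim_i M_i\in\HTF_n$. The main technical subtlety is the truncation step of the previous paragraph, which localises the orthogonality test onto the bounded-below truncation $\tau_{>n}X$, where the two aisles ${\cal U}$ and ${\cal U}^\#$ finally agree; once that is in place, the remainder of the argument is automatic.
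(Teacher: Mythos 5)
Your proof is correct and follows essentially the same route as the paper: the equality $\HTF_{\!n}=\HTF^\#_{\!n}$ rests on the coincidence of the two hearts on $\De^+(R)$ (which the paper compresses into one line by appealing to $\mathcal{U}\cap\De^+(R)=\mathcal{U}^\#\cap\De^+(R)$, and which your truncation argument with $\tau^{\le n}X\to X\to\tau^{>n}X$ spells out correctly), and the closure under direct limits is obtained exactly as in the paper via Lemma~\ref{l:cohomologies_commute} and the observation that a complex with cohomology concentrated in degree $n$ is a stalk. No gaps.
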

\begin{proof}
The equality between the two classes of modules follows since the corresponding stalk complexes belong to the relevant hearts and to $\De^+(R)$. Let now $(M_i)_{i\in I}\in\HTF_{\!n}$ be a direct system, so that $(M_i[-n])_{i\in I}$ is a direct system of $\cal H$. By Lemma~\ref{l:cohomologies_commute} we obtain
\begin{align*}
	H^n(\varinjlim_{i\in I}\adjust M_i[-n])
	&\cong 
		\varinjlim_{i\in I}M_i,
\end{align*}
while in any degree different from $n$ the direct limit has no cohomology. Therefore,
$$
	\varinjlim_{i\in I}\adjust M_i[-n]\cong
	\mathopen{\lower4pt\hbox{$\Bigl($}}
		\varinjlim_{i\in I}M_i
	\mathclose{\lower4pt\hbox{$\Bigr)$}}[-n]
$$
i.e.~direct limits of $\HTF_{\!n}$ are computed precisely as in $R\lMod$. \qedhere
\end{proof}

\subsection{Bounded above Thomason filtrations}%
We continue giving useful results concerning the bounded above Thomason filtrations.

\begin{lem}
Let\/ $\Phi$ be a Thomason filtration bounded above~$r$. Then%
\label{l:TF_-1=T_-1F_0}%
$$
	\HTF_{\!r-1}=\TF_{\!r-1} \;.
$$
\end{lem}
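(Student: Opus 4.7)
The plan is to work with the AJS heart ${\cal H}^\#$ rather than $\cal H$, invoking Corollary~\ref{c:TF_n-AB5} to identify $\HTF_{r-1}^\#$ with $\HTF_{r-1}$. The advantage is that ${\cal U}^\#$ is described explicitly by the cohomology conditions $H^n(N)\in{\cal T}_n$, which makes both aisle- and coaisle-membership of stalk complexes directly checkable; moreover, the hypothesis $\Phi(r+1)=\emptyset$ forces ${\cal U}^\#\subseteq\De^{\le r}(R)$.

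First I would note that for $M\in R\lMod$, the stalk $M[-(r-1)]$ belongs to ${\cal U}^\#$ if and only if $M\in{\cal T}_{r-1}$, since $M$ is its only nonzero cohomology. Next, to analyse the coaisle condition $M[-r]\in({\cal U}^\#)^{\bot_0}$, I would exploit that any $N\in{\cal U}^\#$ lies in $\De^{\le r}(R)$ and hence fits into a standard truncation triangle
\[
    \tau^{\le r-1}(N)\longrightarrow N\longrightarrow H^r(N)[-r]\buildrel+\over\longrightarrow.
\]
Applying $\Hom_{\De(R)}(-,M[-r])$ kills the outer terms $\Hom(\tau^{\le r-1}(N),M[-r])$ and $\Hom(\tau^{\le r-1}(N)[1],M[-r])$, because $\tau^{\le r-1}(N)\in\De^{\le r-1}(R)$ whereas $M[-r], M[-r-1]$ sit in $\De^{\ge r}(R)$; the orthogonality of the standard t-structure then yields a natural isomorphism
\[
    \Hom_{\De(R)}(N,M[-r])\cong\Hom_R(H^r(N),M).
\]

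Since every $X\in{\cal T}_r$ is realised as $H^r(X[-r])$ with $X[-r]\in{\cal U}^\#$, the simultaneous vanishing of these Hom groups for all $N\in{\cal U}^\#$ is equivalent to $\Hom_R(X,M)=0$ for all $X\in{\cal T}_r$, that is, to $M\in{\cal F}_r$. Combining both conditions gives $M[-(r-1)]\in{\cal H}^\#$ if and only if $M\in{\cal T}_{r-1}\cap{\cal F}_r=\TF_{r-1}$, and Corollary~\ref{c:TF_n-AB5} transports this equality to $\HTF_{r-1}$.

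The main subtlety, though not really an obstacle, is isolating the single cohomology degree that carries the coaisle obstruction; this is precisely what the upper bound $\Phi(r+1)=\emptyset$ ensures, by placing the whole aisle ${\cal U}^\#$ in $\De^{\le r}(R)$ and reducing the derived-Hom computation to a plain module-Hom computation with $H^r(N)$.
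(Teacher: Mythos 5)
Your proof is correct and follows essentially the same route as the paper's: both work in the AJS t-structure, use the bound ${\cal U}^\#\subseteq\De^{\le r}(R)$, and reduce the coaisle condition to $\Hom_R(H^r(N),M)=0$ via the standard truncation triangle and Verdier's orthogonality. The only cosmetic difference is that you package both inclusions into the single isomorphism $\Hom_{\De(R)}(N,M[-r])\cong\Hom_R(H^r(N),M)$, whereas the paper deduces the forward inclusion by citing Lemma~\ref{l:least_nonzero_cohomology} (whose proof rests on the same orthogonality, viewed from the covariant side).
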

\begin{proof}
Notice that, by boundedness of $\Phi$, we have ${\cal U}\subseteq{\cal U}^\#\subseteq\De^{\le r}(R)$.

This said, let $M\in\HTF_{\!r-1}$. Then $M=H^{r-1}(M[-r+1])$, hence by Lemma~\ref{l:least_nonzero_cohomology} we obtain $M\in{\cal T}_{r-1}\cap{\cal F}_r$.

Conversely, let us prove that the stalk concentrated in degree $r-1$ of a module $M\in{\cal T}_{r-1}\cap{\cal F}_r$ belongs to the heart associated with $\Phi$. $M[-r+1]$ belongs to ${\cal U}^\#\cap\De^+(R)$, hence it lands in $\cal U$. On the other hand, we see that $M[-r]$ belongs to the coaisle $({\cal U}^\#)^{\bot_0}$, since for every $U\in{\cal U}^\#$ the standard approximation $\tau^{\le r-1}(U)\to U\to H^r(U)[-r]\buildrel+\over\to$ (provided by the boundedness of $\Phi$) yields, by \cite{Ver}, the desired vanishing $\Hom_{\De(R)}(U,M[-r])=0$. By the usual argument of \cite{HHZ21}, we infer that $M[-r+1]\in{\cal V}$. \qedhere
\end{proof}

\begin{rem}
As we shall deduce by Proposition~\ref{p:torsion-lfp} (which does not depend on the forthcoming results; see however the comments after its proof), the torsion class corresponding to any nonempty Thomason subset is a locally finitely presented Grothendieck category. In particular, for a Thomason filtration as above, by Lemma~\ref{l:TF_-1=T_-1F_0} and \cite[Corollary~5.3]{PSV19} we have%
\label{r:fp(TF_-1)}%
$$
	\fp(\TF_{\!r-1})=\mathop{\rm add} y_r(\fp({\cal T}_{r-1}))
		=\mathop{\rm add} y_r({\cal T}_{r-1}\cap R\lmod) \;.
$$
\end{rem}

\begin{lem}
Let\/ $\Phi$ be a Thomason filtration bounded above~$r$. Then:%
\label{l:Koszul_cohomology}%

\begin{enumerate}
\item[(i)] For every $J\in{\cal B}_r$, it is $H_{{\cal H}^\#}(K(J)[-r])\cong R/J[-r]$;

\item[(ii)] For every $J\in{\cal B}_{r-1}$, it is $H_{{\cal H}^\#}(K(J)[-r+1])\cong y_r(R/J)[-r+1]$.
\end{enumerate}
\end{lem}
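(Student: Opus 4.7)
The plan is to reduce, in both parts, the computation of $H_{{\cal H}^\#}$ on the Koszul complex to a computation on suitably shifted stalk complexes of $R/J$ via standard truncation, and then in part~(ii) to further refine the answer using the torsion pair $({\cal T}_r,{\cal F}_r)$. I will exploit the simple description $N\in{\cal U}^\#[1]$ if and only if $H^j(N)\in{\cal T}_{j+1}$ for every $j\in\Z$; whence, to identify $H_{{\cal H}^\#}(M)$ for $M\in{\cal U}^\#$ it suffices to fit $M$ into a triangle $A\to M\to B\buildrel+\over\to$ with $A\in{\cal U}^\#[1]$ and $B\in({\cal U}^\#)^{\bot_0}[1]$, in which case $H_{{\cal H}^\#}(M)=B$. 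Note that the stalk $K(J)[-n]$ itself always lies in ${\cal U}^\#$ whenever $V(J)\subseteq\Phi(n)$, since every Koszul cohomology is $J$-annihilated and hence supported in $V(J)$.

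For part~(i), I start from the standard truncation triangle
$$
\tau^{\le r-1}(K(J)[-r])\longrightarrow K(J)[-r]\longrightarrow R/J[-r]\buildrel+\over\longrightarrow.
$$
The first term belongs to ${\cal U}^\#[1]$ since for $i\le r-1$ the cohomology $H^{i-r}(K(J))$ is supported in $V(J)\subseteq\Phi(r)\subseteq\Phi(i+1)$. For the third term, $\Phi(r+1)=\emptyset$ forces every $U\in{\cal U}^\#$ to live in $\De^{\le r}(R)$, so $\Hom_{\De(R)}(U,R/J[-r-1])=0$ by standard t-structure orthogonality, proving $R/J[-r]\in({\cal U}^\#)^{\bot_0}[1]$. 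This yields $H_{{\cal H}^\#}(K(J)[-r])\cong R/J[-r]$ at once.

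For part~(ii), the analogous standard truncation
$$
\tau^{\le r-2}(K(J)[-r+1])\longrightarrow K(J)[-r+1]\longrightarrow R/J[-r+1]\buildrel+\over\longrightarrow
$$
has first term in ${\cal U}^\#[1]$ by the same supports argument, this time using $V(J)\subseteq\Phi(r-1)\subseteq\Phi(i+1)$ for $i\le r-2$. However, $R/J[-r+1]$ is not in $({\cal U}^\#)^{\bot_0}[1]$ in general, because $R/J\in{\cal T}_{r-1}$ may fail to be ${\cal F}_r$-torsionfree. I fix this by decomposing $R/J$ through $({\cal T}_r,{\cal F}_r)$, obtaining the triangle
$$
x_r(R/J)[-r+1]\longrightarrow R/J[-r+1]\longrightarrow y_r(R/J)[-r+1]\buildrel+\over\longrightarrow.
$$
Its first term lies in ${\cal U}^\#[1]$ since $x_r(R/J)\in{\cal T}_r$; its third term lies in ${\cal H}^\#$ because $y_r(R/J)$, being an ${\cal F}_r$-torsionfree quotient of an object of the hereditary torsion class ${\cal T}_{r-1}$, lies in ${\cal T}_{r-1}\cap{\cal F}_r=\TF_{r-1}=\HTF_{r-1}=\HTF^\#_{r-1}$ by Lemma~\ref{l:TF_-1=T_-1F_0} and Corollary~\ref{c:TF_n-AB5}.

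Finally, splicing the two triangles by the octahedral axiom applied to the composition $K(J)[-r+1]\to R/J[-r+1]\to y_r(R/J)[-r+1]$ produces a triangle $C\to K(J)[-r+1]\to y_r(R/J)[-r+1]\buildrel+\over\to$ whose third vertex lies in ${\cal H}^\#\subseteq({\cal U}^\#)^{\bot_0}[1]$ and whose first vertex $C$ is an extension of $\tau^{\le r-2}(K(J)[-r+1])$ by $x_r(R/J)[-r+1]$, hence still belongs to the extension-closed aisle ${\cal U}^\#[1]$. This exhibits $y_r(R/J)[-r+1]$ as $H_{{\cal H}^\#}(K(J)[-r+1])$. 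The main delicate point is verifying that $y_r(R/J)[-r+1]$ actually belongs to ${\cal H}^\#$, and not merely to $({\cal U}^\#)^{\bot_0}[1]$: this is where the boundedness of $\Phi$ above~$r$ enters, through Lemma~\ref{l:TF_-1=T_-1F_0}.
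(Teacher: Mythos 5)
Your proof is correct, and it takes a different route from the paper's. The paper defines $M\leqdef H_{{\cal H}^\#}(K)$ abstractly via the triangle $U[1]\to K\to M\buildrel+\over\to$ with $U=\tau^\le_{{\cal U}^\#}(K[-1])$, and then \emph{computes} the standard cohomologies of $M$ from the long exact sequence: it kills $\tau^{\le r-1}(M)$ by the direct-summand trick of \cite[Corollary~1.2.7]{Nee01}, identifies $H^r(M)\cong R/J$ using $\Phi(r+1)=\Phi(r+2)=\emptyset$, and in part~(ii) invokes Lemma~\ref{l:least_nonzero_cohomology} to recognise $y_r(R/J)$. You instead \emph{construct} the approximation triangle of $K(J)[-n]$ with respect to $({\cal U}^\#[1],({\cal U}^\#)^{\bot_0}[2])$ explicitly — splicing the standard truncation with the $({\cal T}_r,{\cal F}_r)$-decomposition of $R/J$ via the octahedron — and read off $H_{{\cal H}^\#}$ by uniqueness of such a decomposition; your opening principle is sound, since for $K\in{\cal U}^\#$ the cone of $A\to K$ with $A\in{\cal U}^\#[1]$ automatically lands in ${\cal U}^\#$ by extension-closure, hence in ${\cal H}^\#$. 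Your version is more transparent about \emph{why} $y_r$ appears in part~(ii) and dispenses with the cohomology chase; the cost is an appeal to Lemma~\ref{l:TF_-1=T_-1F_0} and Corollary~\ref{c:TF_n-AB5} (legitimate, as they precede this lemma and do not depend on it), whereas the paper's argument is self-contained modulo Lemma~\ref{l:least_nonzero_cohomology}. Two cosmetic remarks: hereditariness of ${\cal T}_{r-1}$ is not what you need to place $y_r(R/J)$ in ${\cal T}_{r-1}$ — closure under quotients suffices; and strictly speaking you only need $y_r(R/J)[-r+1]\in({\cal U}^\#)^{\bot_0}[1]$, its membership in ${\cal H}^\#$ then being automatic from the triangle, so the "delicate point" you flag at the end could be bypassed.
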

\begin{proof}
Notice that, by definition of the aisles $\cal U$ and ${\cal U}^\#$, and by Proposition~\ref{p:Koszul_complex_in_hearts}, it makes no difference in working within the compactly generated t-structure or in the AJS~one. For the sake of clearness, we will continue in working in the AJS~t-structure.

\noindent(i)\enspace Let $J\in{\cal B}_r$. The complexes $K\leqdef K(J)[-r]$ and $M\leqdef H_{{\cal H}^\#}(K)$ fit as the vertexes of the triangle
$$
	U[1]\longrightarrow K\longrightarrow M\buildrel+\over\longrightarrow
$$
provided by the object $U\leqdef\tau^\le_{{\cal U}^\#}(K[-1])$. We will prove that $H^r(M)\cong R/J$ and that $\tau^{\le r-1}(M)=0$. Fix $j \le r-1$ and consider the exact sequence $H^j(K)\to H^j(M)\to H^{j+2}(U)$ in $R\lMod$. The Koszul cohomology $H^j(K)$ is an object of ${\cal T}_r$, hence of ${\cal T}_{j+1}$, so that $H^j(M)\in \cal T_{j+1}$ since in turn ${\cal T}_{j+1}\supseteq{\cal T}_{j+2}$. It follows $\tau^{\le r-1}(M)\in{\cal U}^\#[1]$, and from the triangle
$$
	\tau^{\le r-1}(M)\longrightarrow M\longrightarrow\tau^{>r-1}(M)\buildrel+\over\longrightarrow
$$
we deduce $\tau^{>r-1}(M)\cong M\oplus\tau^{\le r-1}(M)[1]$ by \cite[Corollary~1.2.7]{Nee01}, whence $\tau^{\le r-1}(M)[1]\in\De^{\le r-1}(R)\cap\De^{\ge r}(R)=0$. Now, the first displayed triangle yields the following exact sequence in $R\lMod$:  
$$
	H^{r+1}(U)\longrightarrow H^r(K)(\empty\cong R/J)\longrightarrow H^r(M)\longrightarrow H^{r+2}(U),
$$
whence we obtain $H^r(M)\cong R/J$ since $\Phi(r+1)=\Phi(r+2)=\emptyset$ and $U\in{\cal U}^\#$.

\noindent(ii)\enspace Let $J\in{\cal B}_{r-1}$, $K\leqdef K(J)[-r+1]$ and $M\leqdef H_{{\cal H}^\#}(K[1])$. The thesis follows as in the previous part, namely by proving that $H^j(M)=0$ for every $j\ne r-1$ and that $H^{r-1}(M)\cong y_r(R/J)$. To this aim, look at the long exact cohomology sequence arising from $U[1]\to K\to M\buildrel+\over\to$, in which $U\leqdef\tau^\le_{{\cal U}^\#}(K[-1])$, and use Lemma~\ref{l:least_nonzero_cohomology} again. \qedhere
\end{proof}

\begin{cor}
Let\/ $\Phi$ be a Thomason filtration bounded above~$r$. The following hold true for a module $X\in R\lMod$:%
\label{c:R-mod&T_0-are_in_fp(H)}%

\begin{enumerate}
\item[(i)] $X\in \fp({\cal T}_r)$ if and only if\/ $X[-r]\in\fp({\cal H})$. In particular, $H^r(\fp({\cal H}))[-r]\subseteq\fp({\cal H})$;

\item[(ii)] $X\in\fp(\HTF_{\!r-1})$ if and only if\/ $X[-r+1]\in\fp({\cal H})$.
\end{enumerate}

\noindent Moreover, if\/ $\Phi$ is of finite length, then the assertions~$\rm(i)$ and\/ $\rm(ii)$ hold true replacing $\cal H$ by ${\cal H}^\#$.
\end{cor}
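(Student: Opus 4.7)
My plan begins with the observation that stalks $X[-r]$ for $X\in{\cal T}_r$ and $X[-r+1]$ for $X\in\TF_{r-1}$ already lie in~${\cal H}$: the former because ${\cal T}_r=\HTF_r$ is immediate from the bound on~$\Phi$ and the definition of the aisle, the latter by Lemma~\ref{l:TF_-1=T_-1F_0}. Further, Corollary~\ref{c:TF_n-AB5} tells me that direct limits of stalk systems $(N_i[-r])$ or $(N_i[-r+1])$ in~${\cal H}$ coincide with the underlying module direct limits placed in the given degree. These two inputs are the workhorses of the proof.

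The "if" direction of both (i) and (ii) is then essentially formal: given $X[-r]\in\fp({\cal H})$ and a direct system $(N_i)_{i\in I}\to N$ in ${\cal T}_r$, chasing the identifications yields
\[
\Hom_R(X,N)=\Hom_{\cal H}(X[-r],N[-r])=\varinjlim_{i\in I}\Hom_{\cal H}(X[-r],N_i[-r])=\varinjlim_{i\in I}\Hom_R(X,N_i),
\]
so that $X\in\fp({\cal T}_r)$; the analogous chain handles~(ii). For the "only if" of~(i), given $X\in\fp({\cal T}_r)$ and a direct system $(M_i)\to M$ in~${\cal H}$, I would study $\Hom_{\cal H}(X[-r],M_i)=\Hom_{\De(R)}(X,M_i[r])$ via the triangle $\tau^{\le r-1}(M_i)\to M_i\to H^r(M_i)[-r]\to+$: the leading contribution of the associated hyperext spectral sequence $E_2^{p,q}=\Ext^p_R(X,H^q(M_i))$ is $\Hom_R(X,H^r(M_i))$, which commutes with direct limits by Lemma~\ref{l:cohomologies_commute} precisely because $X$ is finitely presented in~${\cal T}_r$; the residual terms $\Ext^p_R(X,H^{r-p}(M_i))$ for $p\ge1$ must then be shown to likewise commute with direct limits on the torsion classes in which the $H^{r-p}(M_i)$ lie, using both the finite-presentation of~$X$ and the constraints that $M_i\in{\cal H}$ places on its cohomologies via Lemma~\ref{l:least_nonzero_cohomology}.

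For the "in particular" clause of~(i), I would show that $M\in\fp({\cal H})$ forces $H^r(M)\in\fp({\cal T}_r)$ by testing $\Hom_R(H^r(M),-)$ on direct systems in~${\cal T}_r$, using the natural morphism $M\to H^r(M)[-r]$ coming from the standard truncation; the "if" direction proved above then closes the circle. The "only if" of~(ii) proceeds along the same lines but with one further standard-truncation step to isolate $H^{r-1}$, and the key structural input is Remark~\ref{r:fp(TF_-1)}, which realises every $X\in\fp(\TF_{r-1})$ as a direct summand of $y_r$ applied to a finitely presented object of~${\cal T}_{r-1}$, keeping the relevant higher $\Ext$-contributions manageable. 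The "moreover" clause is immediate from Theorem~\ref{t:when_hearts_coincide}: for finite-length~$\Phi$ one has ${\cal H}={\cal H}^\#$, so both~(i) and~(ii) transfer verbatim. The hardest step will be the control of these higher $\Ext$-contributions, where the interplay between the finite-presentation of~$X$ in the relevant torsion class and the structural constraints imposed on the cohomologies of $M_i\in{\cal H}$ must be carefully exploited.
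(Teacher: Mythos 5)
The backward implications, the ``in particular'' clause and the ``moreover'' clause of your sketch are fine and essentially match the paper. The genuine problem is the forward implication of (i) (and, for the same reason, of (ii)): you propose to prove $X[-r]\in\fp({\cal H})$ for $X\in\fp({\cal T}_r)$ by showing directly that $\Hom_{\De(R)}(X[-r],-)$ commutes with direct limits of ${\cal H}$, reducing via the truncation triangle and the hyperext spectral sequence to the commutation of the terms $\Ext^p_R(X,H^{r-p}(M_i))$ for $p\ge1$. But $X$ is merely a finitely presented module in a hereditary torsion class of finite type; over a general commutative ring there is no reason for $\Ext^p_R(X,-)$, $p\ge1$, to commute with direct limits, even when restricted to the torsion classes ${\cal T}_{r-p}$ in which the cohomologies of objects of ${\cal H}$ are constrained to lie. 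Indeed, the whole point of Theorem~\ref{t:comm_non-coherent}, of conditions~(3)--(4) in Corollary~\ref{c:recursive-length1}, and of Example~\ref{e:Vasconcelos} is that such commutation of higher $\Ext$'s is a nontrivial extra hypothesis of coherence type, whereas the present Corollary holds over an arbitrary commutative ring. The abutment of your spectral sequence does commute with direct limits (the statement is true), but the individual $E_2$-terms do not, so the term-by-term strategy you yourself flag as ``the hardest step'' cannot be carried out; there is no argument to supply there.

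The paper's route avoids higher $\Ext$'s altogether by exploiting compact generation. It presents $X$ by an exact row $(R/J')^n\buildrel\alpha\over\to(R/J)^m\to X\to0$ with $J,J'\in{\cal B}_r$ (Proposition~\ref{p:torsion-Thomason_subset}), observes that $R/J[-r]\cong H_{\cal H}(K(J)[-r])$ by Lemma~\ref{l:Koszul_cohomology}(i) and hence is finitely presented in ${\cal H}$ because $K(J)[-r]$ is a compact object of the aisle (this is where \cite[Lemma~6.3]{SSV17} enters), and then identifies $X[-r]$ with $\coker^{({\cal H})}(\alpha[-r])$, which is finitely presented since $\fp({\cal H})$ is closed under cokernels. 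The forward direction of~(ii) is analogous but longer: one reduces to $y_r(B)[-r+1]$ with $B\in\fp({\cal T}_{r-1})$ via Remark~\ref{r:fp(TF_-1)}, uses Lemma~\ref{l:Koszul_cohomology}(ii) to identify $H_{\cal H}(K(J)[-r+1])\cong y_r(R/J)[-r+1]$, and then carries out a diagram chase to show that the relevant cokernel in the heart is again a stalk, namely $y_r(B)[-r+1]$. To salvage your sketch, replace the spectral-sequence step by this compactness argument.
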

\begin{proof}
Notice that $T[-r]\in{\cal H}\cap{\cal H}^\#$ for all $T\in{\cal T}_r$ (the proof is similar to that in Example~\ref{e:HRS-heart&filtration}).

\noindent(i)\enspace
Let $X$ be a finitely presented object of ${\cal T}_r$ i.e.~an object of $\fp({\cal T}_r)=R\lmod\cap{\cal T}_r$. By (the proof of) Proposition~\ref{p:torsion-Thomason_subset} there exists in ${\cal T}_r\cap R\lmod$ an exact row $(R/J')^n\buildrel\alpha\over\to(R/J)^m\to X\to0$, which can be embedded in the following diagram in $\De(R)$ by taking the stalk complexes:
\[
\xymatrix{
	(\ker\alpha)[-r] \ar[r] & (R/J')^n[-r] \ar[r] & (\im\alpha)[-r] \ar[r]^-{+} \ar[d] & {} \\
	&& (R/J)^m[-r] \ar[d] \\
	&& X[-r] \ar[d]^-{+} \\
	&& {}
}
\]
By Lemma~\ref{l:Koszul_cohomology}(i) and \cite[Lemma~6.3]{SSV17}, for every $I\in{\cal B}_r$ the stalk $R/I[-r]$ is a finitely presented object of $\cal H$. Moreover, since the triangles of the diagram are in $\cal H$ (by what we noted at the beginning of the proof), then they actually are short exact sequences of $\cal H$, hence $X[-r]\cong\coker^{({\cal H})}(\alpha[-r])$ and it is finitely presented being the cokernel of a map between finitely presented complexes.

Conversely, let $X$ be a module whose stalk $X[-r]$ is a finitely presented complex of $\cal H$. Then clearly $X\in{\cal T}_r$; moreover, for all direct systems of modules $(X_i)_{i\in I}$ in ${\cal T}_r$, so that $X_i[-r]\in{\cal H}$ for all $i\in I$, by \cite{Ver} we deduce the natural isomorphism 
$$
	\varinjlim_{i\in I}\Hom_R(X,X_i) \cong
		\Hom_R(X,\varinjlim_{i\in I} X_i),
$$
whence $X\in R\lmod$ since $({\cal T}_r,{\cal F}_r)$ is a torsion pair of finite type (see \cite[Lemma~2.4]{PSV19}).

The second part of the statement readily follows by the previous one, since out of the exact triangle $\tau^{\le r-1}(B)\to B\to H^r(B)[-r]\buildrel+\over\to$ approximating a finitely presented complex $B$ of the heart $\cal H$, by \cite{Ver} we infer that $H^r(B)$ is a finitely presented object of ${\cal T}_r$.

\noindent(ii)\enspace If $X$ is a module whose stalk $X[-r+1]$ is a finitely presented complex of the heart $\cal H$, then by definition of $\HTF_{\!r-1}$ and by Corollary~\ref{c:TF_n-AB5}, Lemma~\ref{l:TF_-1=T_-1F_0} and \cite{Ver}, for every direct system of modules $(M_i)_{i\in I}$ in $\HTF_{\!r-1}$ we obtain the following commutative diagram
\[
\xymatrix{%
	\varinjlim\limits_{i\in I}\Hom_{\cal H}(X[-r+1],M_i[-r+1]) \ar[r]^-{\cong}\ar[d]_-{\cong} &
		\Hom_{\cal H}(X[-r+1],\varinjlim\limits_{i\in I}\adjust M_i[-r+1]) \ar[d]^-{\cong} \cr
	\varinjlim\limits_{i\in I}\Hom_R(X,M_i) \ar[r] &
		\Hom_R(X,\varinjlim\limits_{i\in I} M_i)
}
\]
showing that $X$ is a finitely presented object of $\HTF_{\!r-1}$.

Conversely, let $X$ be a module in $\fp(\TF_{\!r-1})=\mathop{\rm add} y_r(\fp({\cal T}_{r-1}))$ (see Remark~\ref{r:fp(TF_-1)}), so that there exists a finitely presented object $B$ of ${\cal T}_{r-1}$ such that $X\le_\oplus y_r(B)^n$ for some $n\in\mathbb{N}$, hence we shall prove the statement on $y_r(B)^n$, in particular by showing that $y_r(B)[-r+1]\in\fp({\cal H})$. By Proposition~\ref{p:torsion-Thomason_subset} there is an exact sequence $(R/J')^n\buildrel\alpha\over\to(R/J)^m\to B\to0$ in $R\lMod$ for some positive integers $m,n$ and ideals $J',J$ in ${\cal B}_{r-1}$. By Lemma~\ref{l:Koszul_cohomology}(ii) and Proposition~\ref{p:Koszul_complex_in_hearts}, we have the exact row
$$
	H_{{\cal H}^\#}(K(J')[-r+1])^n\buildrel y_r(\alpha)[-r+1]\over{%
		\relbar\joinrel\relbar\joinrel\relbar\joinrel\relbar\joinrel\longrightarrow}
	H_{{\cal H}^\#}(K(J)[-r+1])^m\to \coker^{({\cal H}^\#)}(y_r(\alpha)[-r+1])\to0
$$
in the AJS~heart ${\cal H}^\#$, whose first two terms are finitely presented objects of $\cal H$, so $\coker^{({\cal H}^\#)}(y_r(\alpha)[-r+1])$ turns out to be finitely presented of $\cal H$ once we prove that it belongs to $\De^+(R)$. Let us prove that in fact such object is a stalk complex. To prove this, consider the canonical short exact sequences of ${\cal H}^\#$
\begin{gather*}
	0\to \ker^{({\cal H}^\#)}(y_r(\alpha)[-r+1])\to H_{{\cal H}^\#}(K(J')[-r+1])^n\to
		\im^{({\cal H}^\#)}(y_r(\alpha)[-r+1])\to0 \cr
	\noalign{\hbox{and}}	
	0\to \im^{({\cal H}^\#)}(y_r(\alpha)[-r+1])\to H_{{\cal H}^\#}(K(J)[-r+1])^m\to
		\coker^{({\cal H}^\#)}(y_r(\alpha)[-r+1])\to0,
\end{gather*}
say them $0\to K\to M'\to L\to0$ and $0\to L\to M\to N\to0$ respectively. Since their middle terms are stalk complexes concentrated in degree~$r-1$, they yield $H^r(L)=0$ and $H^r(N)=0$, respectively. On the other hand, from the second exact row, we have $H^{j-1}(N)\cong H^j(L) \in \cal T_j$ for all $j\le r-2$, and $H^{r-2}(N)$ is a submodule of $H^{r-1}(L)\in \cal T_{r-1}$. Hence $\tau^{\le r-2}(N)\in{\cal U}^\#[1]$, so that $N\cong \tau^{>r-2}(N)=\tau^{\ge r-1}(N)=H^{r-1}(N)[-r+1]$. Therefore, the very first displayed exact row $M'\to M\to N\to0$ gives, by exactness,
$$
	N=\coker^{({\cal H}^\#)}(y_r(\alpha)[-r+1])\cong D[-r+1],
$$
for some $D\in\TF_{\!r-1}$; whence $D[-r+1]\in\fp({\cal H})$. Once we prove that $y_r(B)\cong D$, then we get the thesis. By the long exact sequence in cohomology of the previous two short exact sequences, we obtain the commutative diagram with exact rows:
\[
\xymatrix@C=.5em@R=1.5em{%
	& \hphantom{\im\delta} & y_r(R/J')^n \ar[rr]\ar@{->>}[dr]\ar[dd]_-{\delta} && y_r(R/J)^m \ar[rr]\ar@{=}[dd] &&
		\coker y_r(\alpha) \ar[rr]\ar@{->>}[dd]^-{q} && 0 \cr
	&&& \im\delta \ar@{>->}[ur]\ar@{>->}[dl] && \hphantom{\im\delta} && \hphantom{\im\delta} \cr
	0 \ar[rr] && H^{r-1}(L) \ar[rr] && y_r(R/J)^m \ar[rr]^-{p} && D \ar[rr] && 0
}
\]
where $\coker\delta=H^r(K)\in{\cal T}_r$ and $p$ is an epimorphism since $H^r(L)=0$. We deduce that $D\cong y_r(\coker y_r(\alpha))$; set now $C\leqdef\coker y_r(\alpha)$. On the other hand, we have the following commutative diagram with exact rows:
\[
\xymatrix{%
	0\ar[r] & x_r(R/J')^n \ar[d]_{x_r(\alpha)}\ar[r] & (R/J')^n \ar[d]^\alpha\ar[r] &
		y_r(R/J')^n \ar[d]^{y_r(\alpha)}\ar[r] & 0 \cr
	0\ar[r] & x_r(R/J)^m \ar@{->>}[d]\ar[r] & (R/J)^m \ar@{->>}[d]\ar[r] &
		y_r(R/J)^m \ar@{->>}[d]\ar[r] & 0 \cr
	& \coker x_r(\alpha) \ar[r] & B \ar[r]^g & C \ar[r] & 0
}
\]
The short exact sequence $0\to A\to B\buildrel g\over\to C\to0$ provided by the factorisation of the morphism $\coker x_r(\alpha)\to B$ through its image $A$ yields that this latter is an object of ${\cal T}_r$. Consequently, we deduce $D\cong y_r(B)$ by the Snake~Lemma applied on the following commutative diagram
\[
\xymatrix{%
	0 \ar[r] & x_r(B) \ar[r]\ar[d] & B\ar@{->>}[d]^g\ar[r] & y_r(B) \ar[r]\ar[d] & 0 \cr
	0 \ar[r] & x_r(C) \ar[r] & C \ar[r] & y_r(C) \ar[r] & 0
}
\]
and this concludes the first part of the proof.

The second part of the proof, namely that concerning weakly bounded below filtrations, is a direct consequence of Theorem~\ref{t:when_hearts_coincide}. \qedhere
\end{proof}

\subsection{A crucial example}%
\label{ss:a_crucial_example}
We conclude the section by fixing a proper Thomason subset $Z$ of $\Spec R$ and studying the heart ${\cal H}^\#_Z$ given by the weakly bounded Thomason filtration
$$
	\Phi:\cdots =Z=Z=\cdots=Z\supset\emptyset \;.
$$

\begin{prop}
Let $\Phi$ be as above. Then the heart\/ ${\cal H}^\#_Z$ is equivalent to ${\cal T}_Z^{\vphantom\#}$.%
\label{p:torsion-lfp}%
\end{prop}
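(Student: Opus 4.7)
The plan is to show that the AJS heart $\mathcal{H}^\#_Z$ consists precisely of the stalk complexes $N[0]$ with $N \in \mathcal{T}_Z$, and that the assignment $N \mapsto N[0]$ is the desired equivalence. First I would analyse the cohomological concentration of $\mathcal{H}^\#_Z$. Since the filtration $\Phi$ is weakly bounded below $0$ (i.e.\ $\Phi(-i) = Z$ for $i \ge 0$), Proposition~\ref{p:heart_is_bounded}(i) yields $\mathcal{H}^\#_Z \subseteq \mathcal{D}^{\ge 0}(R)$. Conversely, since $\Phi(n) = \emptyset$ for every $n \ge 1$, any $M \in \mathcal{U}^\#$ has $H^n(M) \in \mathcal{T}_{\Phi(n)} = 0$ for such $n$, so $\mathcal{U}^\# \subseteq \mathcal{D}^{\le 0}(R)$ and in particular $\mathcal{H}^\#_Z \subseteq \mathcal{D}^{[0,0]}(R)$. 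Thus every object of the heart is isomorphic to a stalk complex $N[0]$ in degree zero.

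Next I would characterise those $N$ for which $N[0] \in \mathcal{H}^\#_Z$. Membership in $\mathcal{U}^\#$ immediately forces $N = H^0(N[0]) \in \mathcal{T}_{\Phi(0)} = \mathcal{T}_Z$. For the converse, given $N \in \mathcal{T}_Z$, I need only verify that $N[0]$ lies in the coaisle $(\mathcal{U}^\#)^{\bot_0}[1]$, i.e.\ that $\Hom_{\mathcal{D}(R)}(U, N[-1]) = 0$ for every $U \in \mathcal{U}^\#$. Applying the standard truncation triangle
\[
\tau^{\le -1}(U) \longrightarrow U \longrightarrow H^0(U)[0] \buildrel +\over\longrightarrow
\]
and the functor $\Hom_{\mathcal{D}(R)}(-, N[-1])$, the leftmost term vanishes by the degree inequality (since $\tau^{\le -1}(U) \in \mathcal{D}^{\le -1}(R)$ while $N[-1] \in \mathcal{D}^{\ge 1}(R)$) and the rightmost term is $\Ext^{-1}_R(H^0(U), N) = 0$; hence the middle Hom is zero, as desired.

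Finally, I would assemble the equivalence. The two containments above identify $\mathcal{H}^\#_Z = \{N[0] \mid N \in \mathcal{T}_Z\}$, so the functor $\mathcal{T}_Z \to \mathcal{H}^\#_Z$, $N \mapsto N[0]$, is essentially surjective. Full faithfulness is automatic: since $\mathcal{H}^\#_Z$ is a full subcategory of $\mathcal{D}(R)$ and both sides consist of stalks in degree $0$, Hom-sets agree with those of $R\lMod$. Overall I expect no genuine obstacle here; the only point requiring care is the coaisle verification, which could conceivably be approached either by truncating in the standard t-structure (as above) or by invoking \cite[Lemma~3.6]{HHZ21} to compare with $\mathcal{U}_\Phi$ on $\mathcal{D}^+(R)$ and reduce to a direct computation with compact generators, but the truncation argument seems by far the cleanest.
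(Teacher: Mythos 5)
Your proof is correct and follows essentially the same route as the paper: the inclusion ${\cal H}^\#_Z\subseteq{\cal T}_Z[0]$ via Proposition~\ref{p:heart_is_bounded} combined with the cohomological description of ${\cal U}^\#$, and the reverse inclusion by checking the coaisle condition with the standard truncation triangle of an arbitrary $U\in{\cal U}^\#$, which is precisely the argument of Example~\ref{e:HRS-heart&filtration} that the paper's proof invokes. No gaps.
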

\begin{proof}
We have ${\cal H}^\#_Z\subseteq{\cal T}_Z^{\vphantom\#}[0]$ by Proposition~\ref{p:heart_is_bounded}; on the other hand, by a similar argument to that of Example~\ref{e:HRS-heart&filtration}, we obtain ${\cal T}_Z^{\vphantom\#}[0]\subseteq{\cal H}^\#_Z$. \qedhere
\end{proof}

Consequently, we get that for any Thomason subset $Z\ne\emptyset$, its torsion class ${\cal T}_Z$ is a locally finitely presented Grothendieck category by Proposition~\ref{p:heart_is_bounded} (see however Theorem~\ref{t:when_hearts_coincide}, in which it is proved that ${\cal H}^\#_Z$ coincides with the heart ${\cal H}_Z^{\vphantom\#}$ of the compactly generated t-structure associated with $Z$, and hence it is a locally finitely presented Grothendieck category by \cite[Theorem~8.31]{SS20}). Now, the following result completely characterises the local coherence of ${\cal H}^\#_Z$; that is, by \cite[Theorem~2.2]{GP08}, it classifies the local coherence of any hereditary torsion class of finite type in $R\lMod$.

\begin{thm}
Let $Z$ be a nonempty Thomason subset. The following statements are equivalent:%
\label{t:comm_non-coherent}%

\begin{enumerate}
\item[(a)] The torsion class ${\cal T}_Z$ is a locally coherent Grothendieck category; that is, ${\cal T}_Z\cap R\lmod$ is an exact abelian subcategory of\/ ${\cal T}_Z$.

\item[(b)] $(J:\gamma)$ is a finitely generated ideal for every $J\in{\cal B}_Z$ and for all\/ $\gamma\in R$;

\item[(c)] $R/J$ is a coherent commutative ring for every $J\in{\cal B}_Z$.
\end{enumerate}
\end{thm}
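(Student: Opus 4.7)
My plan is to reduce everything to standard module-theoretic arguments, using throughout the identification $\fp({\cal T}_Z)={\cal T}_Z\cap R\lmod$ of Proposition~\ref{p:torsion-Thomason_subset}. A useful preliminary fact, which I would verify first, is that every $M\in\fp({\cal T}_Z)$ is annihilated by some $J\in{\cal B}_Z$: being finitely generated and torsion, its annihilator lies in the Gabriel filter ${\cal G}_Z$ and contains a finitely generated member by the finite type property. Coupled with the standard fact that a module killed by a finitely generated ideal $J$ is finitely presented over $R$ if and only if it is so over $R/J$, this allows a translation between $\fp({\cal T}_Z)$ and $\bigcup_{J\in{\cal B}_Z}\fp(R/J\lmod)$.

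For $\rm(a)\Leftrightarrow(c)$: the implication $\rm(c)\Rightarrow(a)$ is immediate once, given $f:M\to N$ in $\fp({\cal T}_Z)$, one selects a common $J\in{\cal B}_Z$ annihilating both (possible by the finite-intersection axiom of the Gabriel filter plus finite type); coherence of $R/J$ yields $\ker f\in\fp(R/J\lmod)\subseteq\fp({\cal T}_Z)$. Conversely, $\rm(a)\Rightarrow(c)$ follows by taking a finitely generated ideal $I\subseteq R/J$, viewing it as a finitely generated subobject of $R/J\in\fp({\cal T}_Z)$ (since $J$ is finitely generated), and applying the standard fact that in a locally coherent Grothendieck category finitely generated subobjects of finitely presented objects are finitely presented; the annihilation by $J$ then transfers this property from $R$ to $R/J$, giving the coherence of $R/J$.

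For $\rm(b)\Leftrightarrow(c)$, the direction $\rm(c)\Rightarrow(b)$ is nearly tautological: $(J:\gamma)/J$ is the annihilator of $\bar\gamma$ in $R/J$, hence finitely generated by coherence, so $(J:\gamma)$ itself is finitely generated. For $\rm(b)\Rightarrow(c)$, I would argue by induction on the number $n$ of generators of an ideal $I=(\bar r_1,\ldots,\bar r_n)$ of $R/J$: the case $n=1$ follows from the short exact sequence $0\to(J:r_1)/J\to R/J\to(\bar r_1)\to0$ and (b); the inductive step uses the short exact sequence $0\to I'\cap(\bar r_n)\to I'\oplus(\bar r_n)\to I\to0$ with $I'=(\bar r_1,\ldots,\bar r_{n-1})$ finitely presented by induction, together with the identification $I'\cap(\bar r_n)=\bar r_n\cdot((I'+J):r_n)/J$. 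Applying (b) to the enlarged ideal $I'+J$, which is finitely generated and contains $J\in{\cal G}_Z$, hence lies in ${\cal B}_Z$, yields the finite generation of $I'\cap(\bar r_n)$, and hence $I$ is finitely presented by a standard presentation argument (kernel finitely generated and middle term finitely presented implies cokernel finitely presented).

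The main obstacle I foresee is the inductive step in $\rm(b)\Rightarrow(c)$: it relies on the easily overlooked but crucial closure property of ${\cal B}_Z$ under adjoining finitely many generators, which is exactly what lets condition (b) be applied to $I'+J$ rather than only to $J$ itself. Once this closure and the identification of $I'\cap(\bar r_n)$ as (the image of) a colon ideal are isolated, the remaining arguments are routine manipulations of finite presentations and the Gabriel filter axioms.
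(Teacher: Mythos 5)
Your proposal is correct, but it takes a genuinely different route from the paper's. The paper proves the cycle $\rm(a)\Rightarrow(b)\Rightarrow(a)$ directly and then deduces $\rm(b)\Leftrightarrow(c)$ from it; the hard implication $\rm(b)\Rightarrow(a)$ is carried out entirely inside ${\cal T}_Z$, by reducing an arbitrary epimorphism in $\fp({\cal T}_Z)$ to one of the form $(R/J)^n\to M$, peeling off summands to reach the cyclic case, pulling the resulting subobject of $R/J$ back to a finitely generated ideal $J'\supseteq J$, and inducting on the number of generators of $J'$. You instead prove $\rm(a)\Leftrightarrow(c)$ and $\rm(b)\Leftrightarrow(c)$ separately, systematically transferring everything to the factor rings $R/J$ via two observations the paper uses but does not isolate: every object of $\fp({\cal T}_Z)$ is killed by some $J\in{\cal B}_Z$ (its annihilator lies in ${\cal G}_Z$ by the finite-intersection axiom, and the filter has finite type), and finite presentation over $R$ and over $R/J$ agree for modules killed by a finitely generated $J$. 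Your $\rm(c)\Rightarrow(a)$ (choose a common annihilating $J\in{\cal B}_Z$ for source and target and quote coherence of $R/J$) and your $\rm(a)\Rightarrow(c)$ (finitely generated subobjects of finitely presented objects are finitely presented in a locally coherent category) together replace the paper's two diagram-heavy arguments, while your induction on the number of generators of an ideal of $R/J$ in $\rm(b)\Rightarrow(c)$ --- hinging, as you correctly stress, on ${\cal B}_Z$ being closed under adjoining finitely many elements (Gabriel filter axiom~(ii) plus finite generation) and on identifying $I'\cap(\bar r_n)$ with the image of the colon ideal $((I'+J):r_n)$ --- is essentially the paper's induction on the rank of $J'$ viewed from the other side of the quotient $R\to R/J$. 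Your packaging is the more elementary and makes transparent the slogan that ${\cal T}_Z$ is locally coherent precisely when all the rings $R/J$, $J\in{\cal B}_Z$, are coherent; the paper's direct $\rm(b)\Rightarrow(a)$ keeps the argument self-contained within the torsion class, which is closer in spirit to how the criterion is reused in the later sections.
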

\begin{proof}
Let us recall that ${\cal B}_Z$ is the family of finitely generated ideals in the Gabriel filter associated with the Thomason subset $Z$.

\noindent``${\rm(a)\Rightarrow(b)}$''\enspace For every $J\in{\cal B}_Z$ and for all $\gamma\in R$, the ideal $J+R\gamma$ is in ${\cal B}_Z$ hence $R/(J+R\gamma)$ is a finitely presented (torsion) module (see Proposition~\ref{p:torsion-Thomason_subset}). In turn, $(J+R\gamma)/J\cong R\gamma/(J\cap R\gamma)$ is so, being the kernel of the epimorphism $R/J\to R/(J+R\gamma)$ in ${\cal T}_Z\cap R\lmod$. The conclusion follows from the short exact sequence $0\to (J:\gamma)\to R\to R\gamma/(J\cap R\gamma)\to0$.

\noindent``${\rm(b)\Rightarrow(a)}$''\enspace 
Let $f\colon M\to M'$ be a $R$-linear map in ${\cal T}_Z\cap R\lmod$. By the well-known closure properties of this latter class of modules, we only need to verify that $\ker f$ is a finitely presented module, and clearly it suffices to consider $f$ as an epimorphism. Furthermore, from the following commutative diagram with exact rows
\[
\xymatrix{%
	0 \ar[r] & K \ar@{->>}[d]\ar[r] &
		(R/J)^n \ar[r]\ar@{->>}[d]^\alpha & M' \ar@{=}[d]\ar[r] & 0 \cr
	0 \ar[r] & \ker f \ar[r] & M \ar[r]^f & M' \ar[r] & 0
}
\]
in which the epimorphism $\alpha$ is provided by (the proof of) Proposition~\ref{p:torsion-Thomason_subset}(ii), we argue that a ``backward'' argument on the extension-closure of the finitely presented modules shows that our claim is equivalent to requiring that $\ker\alpha$ is finitely presented. Indeed, we have the following exact diagram:
\[
\xymatrix{
	0 \ar[r] & \ker(\alpha\circ\mu) \ar@{>->}[d]\ar[r] &
		(R/J)^{n-1} \ar[r]\ar@{>->}[d]^\mu &
		\im(\alpha\circ\mu) \ar@{>->}[d]\ar[r] & 0 \\
	0 \ar[r] & \ker\alpha \ar@{->>}[d]\ar[r] &
		(R/J)^n \ar[r]^-{\alpha}\ar@{->>}[d] &
		M \ar@{->>}[d]\ar[r] & 0 \\
	0 \ar[r] & C \ar[r] & R/J \ar[r] & C' \ar[r] & 0
}
\]
where $\mu$ is the canonical split monomorphism and the third exact row is given by the Snake~Lemma, so that $\ker\alpha$ is finitely presented if $C$ and $\ker(\alpha\circ\mu)$ are so. Now, once we prove that $C$ is finitely presented, we can repeat the previous argument for each $n\ge k\ge2$, achieving the validity at the base $k=2$. In other words, $\ker\alpha$ is finitely presented iff $C$ is finitely presented. Let us prove that $C$ is a finitely presented module. It is finitely generated for $\ker\alpha$ being so.
Consider now the pullback diagram
\[
\xymatrix{%
	0 \ar[r] & J \ar@{=}[d]\ar[r] & J' \ar@{>->}[d]\ar[r]\ar@{}[dr]|{\rm P.B.} &
		C \ar@{>->}[d]\ar[r] & 0 \\
	0 \ar[r] & J \ar[r] & R \ar@{->>}[d]\ar[r] &
		R/J \ar@{->>}[d]\ar[r] & 0 \\
	&& R/J' \ar@{=}[r] & R/J'
}
\]
in which $J'$ is a finitely generated ideal by extension closure, so that an element of ${\cal B}_Z$. Let us prove the claim by induction on the rank of $J'$. If $J'=R\gamma_1$, then it is $J'=J+R\gamma_1$, whence $C\cong J'\!/J\cong R\gamma_1/(J\cap R\gamma_1)$. We conclude by hypothesis~(b) applied on the short exact sequence $0\to (J:\gamma_1)\to R\to C\to0$. Notice that we just proved that for any ideal $J\in{\cal B}_Z$ and any $\gamma\in R$, then $(J+R\gamma)/J$ is finitely presented. This said, assume $J'=R\gamma_1+R\gamma_2$. Then again $J'=J+R\gamma_1+R\gamma_2$, with $J+R\gamma_1$ being in ${\cal B}_Z$, and from the exact commutative diagram
\[
\xymatrix{
	0 \ar[r] & J \ar@{=}[d]\ar[r] & J+R\gamma_1 \ar@{>->}[d]\ar[r] &
		(J+R\gamma_1)/J \ar@{>->}[d]\ar[r] & 0 \\
	0 \ar[r] & J \ar[r] & J' \ar@{->>}[d]\ar[r] &
		C \ar@{->>}[d]\ar[r] & 0 \\
	&& J'\!/(J+R\gamma_1) \ar@{=}[r] & J'\!/(J+R\gamma_1)
}
\]
we obtain, by the inductive base, that $(J+R\gamma_1)/J$ and $J'\!/(J+R\gamma_1)$ are finitely presented, hence $C$ is so by extension-closure. This argument clearly applies at every finite rank of $J'$, so $C$ is finitely presented.

\noindent``${\rm(b)\Rightarrow(c)}$''\enspace Let $J'\!/J$ be a finitely generated ideal of $R/J$ (so that $J'\!/J$ is a finitely generated module over $R$) and let us prove that it is finitely presented. $J'$ is in ${\cal G}_Z$, and by the short exact sequence $0\to J'\!/J\to R/J\to R/J'\to0$ in $R\lMod$ we deduce that $R/J'$ is a finitely presented $R$-module. By the hypothesis ``${\rm(b)\Leftrightarrow(a)}$'' we get that $J'\!/J$ is finitely presented over $R$, hence over $R/J$.

\noindent``${\rm(c)\Rightarrow(b)}$''\enspace Assume that $R/J$ is a coherent ring for each $J\in{\cal B}_Z$, and let $\gamma\in R$. By the short exact sequence $0\to(J:\gamma)\to R\to R\gamma/(J\cap R\gamma)\to0$ we shall prove that $R\gamma/(J\cap R\gamma)\cong (J+R\gamma)/J$ is a finitely presented $R$-module. $(J+R\gamma)/J$ is a finitely generated hence a finitely presented ideal of $R/J$, so there is a presentation $0\to K\to(R/J)^n\to (J+R\gamma)/J\to0$ with $n\in\N$ and $K$ a finitely generated $R/J$-module. Since the scalar restriction functor $R/J\lMod\to R\lMod$ is exact, and since $K$ is also a finitely generated $R$-module, such presentation lifts to $R\lMod$ so that $(J+R\gamma)/J$ is finitely presented, as desired. \qedhere
\end{proof}

\begin{cor}
Let\/ $R$ be a coherent commutative ring and\/ $Z$ be a Thomason subset. Then ${\cal T}_Z$ is a locally coherent Grothendieck category.%
\label{c:coherent_ring=>torsion_LC_Grothendieck}%
\end{cor}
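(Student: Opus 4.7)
The plan is to reduce to Theorem~\ref{t:comm_non-coherent} via its implication $\rm(c)\Rightarrow(a)$. Indeed, every $J\in{\cal B}_Z$ is by definition a finitely generated ideal of $R$, so the corollary will follow immediately once we verify the classical fact that the quotient of a coherent commutative ring by a finitely generated ideal is again a coherent commutative ring.

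First, fix $J\in{\cal B}_Z$; since $R$ is coherent and $J$ is finitely generated, $J$ is itself a finitely presented $R$-module. To verify coherence of $R/J$, pick an arbitrary finitely generated ideal of $R/J$, which has the form $J'\!/J$ for a suitable $R$-ideal $J'\supseteq J$. Taking as generators the union of a finite generating set of $J$ together with lifts in $R$ of finitely many generators of $J'\!/J$ shows that $J'$ is finitely generated over $R$; by coherence of $R$, it is in turn finitely presented over $R$. The short exact sequence
\[
	0\longrightarrow J\longrightarrow J'\longrightarrow J'\!/J\longrightarrow 0
\]
in $R\lMod$ has first and middle terms finitely presented over $R$, hence (by the standard extension-closure of finitely presented modules, as also recalled in \cite[Corollary~1.8]{PSV19}) so is $J'\!/J$.

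Finally, since $J'\!/J$ is annihilated by $J$, any finite presentation $R^n\to R^m\to J'\!/J\to0$ over $R$ descends under $R\otimes_R(-)\to(R/J)\otimes_R(-)$ to a finite presentation $(R/J)^n\to(R/J)^m\to J'\!/J\to0$ over $R/J$; thus $J'\!/J$ is finitely presented as an $R/J$-module, which proves that $R/J$ is coherent. Applying Theorem~\ref{t:comm_non-coherent} then yields the local coherence of ${\cal T}_Z$.

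The argument is entirely formal, and the only minor subtlety is the passage from finite presentation over $R$ to finite presentation over $R/J$ in the last step, which relies exactly on $J$ being finitely generated; no genuine obstacle is expected.
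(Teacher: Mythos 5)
Your proposal is correct and follows the same route as the paper: reduce to Theorem~\ref{t:comm_non-coherent} via its implication $\rm(c)\Rightarrow(a)$, using that $R/J$ is coherent for every finitely generated $J$ over a coherent ring (the paper simply cites this classical fact from Lam's book, whereas you verify it). One tiny point: the step deducing that $J'\!/J$ is finitely presented from $0\to J\to J'\to J'\!/J\to0$ is not an instance of extension-closure (which goes from the two outer terms to the middle one) but of the equally standard fact that a quotient of a finitely presented module by a finitely generated submodule is finitely presented; the conclusion is correct, only the label is off.
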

\begin{proof}
It follows by the previous Theorem, since any factor ring $R/J$ is coherent for every finitely generated ideal $J$ (see \cite[(c) p.~143]{Lam99}). \qedhere
\end{proof}

As we will show in the next example, not all non-coherent commutative rings satisfy the equivalent conditions of Theorem~\ref{t:comm_non-coherent}.

\begin{exmpl}
In \cite[Appendix~A]{BP19} the second named author and D.~Bravo consider the ring $R\leqdef\Z\oplus (\Z/2\Z)^{(\N)}$, whose sum is componentwise and its multiplication is defined by%
\label{e:Vasconcelos}%
$$
	(m,a)\cdot (n,b)\leqdef(mn,mb+na+ab),
$$
where $ma\leqdef(ma_1,ma_2,\ldots)$ and $ab\leqdef(a_1b_1,a_2b_2,\ldots)$. In [op.~cit., Lemma~A.1] it is proved that $R$ is a commutative non-coherent ring, namely for the ideal generated by any $(2m,a)$ is finitely generated and not finitely presented. This fact entails at once a (somehow trivial) example of a Thomason subset whose torsion class is not a locally coherent Grothendieck category, namely $\Spec R$ itself, since the resulting torsion class is $R\lMod$.

Nonetheless, let us show that, over $R$ as above, there are proper Thomason subsets of $\Spec R$ and finitely generated ideals which do not satisfy Theorem~\ref{t:comm_non-coherent}(b). For instance, consider
$$
	J\leqdef R(0,e_1),\quad Z\leqdef V(J),\quad\hbox{and}\quad \gamma\leqdef(2,e_2)\in R,
$$
where $e_n$ is the standard basis vector of $(\Z/2\Z)^{(\N)}$, so that $J\in{\cal B}_Z$. We compute:
\begin{align*}
	J &= \{(m,a)\cdot(0,e_1)\mid (m,a)\in R\} \cr
	&= \{(0,(m+a_1)e_1)\mid (m,a)\in R\} \cr
	\noalign{\hbox{and}}
	(J:\gamma) &= \{(m,a)\in R\mid (m,a)(2,e_2)\in J\} \cr
	&= \{(m,a)\in R\mid (2m,me_2+ae_2)\in J\} \cr
	&= \{(m,a)\in R\mid (2m,(m+a_2)e_2)\in J\} \cr
	&= \mathop{\rm Ann}\nolimits_R(\gamma) \;.
\end{align*}
Now, out of the presentation $0\to\mathop{\rm Ann}\nolimits_R(\gamma)\to R\to R\gamma\to0$,
since $R\gamma$ is not finitely presented (\cite[Lemma~A.1]{BP19}), then $(J:\gamma)$ is not finitely generated, as claimed.
\end{exmpl}

\section{Arbitrary Thomason filtrations}%
\label{s:arbitrary_thomason_filtrations}%
Let $\Phi$ be any Thomason filtration of $\Spec R$, let $({\cal U},{\cal V})$ be the corresponding compactly generated t-structure with heart $\cal H$, and let ${\cal U}^\#$ be the induced AJS~aisle with heart ${\cal H}^\#$. For any $k\in\Z$ we define
$$
	\Phi_k(n)\leqdef \begin{cases}
		\Phi(k) &\hbox{if $n<k$} \cr
		\Phi(n) &\hbox{if $n\ge k$}.
	\end{cases}
$$
Thus, $\Phi_k$ is a weakly bounded below~$k$ Thomason filtration, naturally associated with $\Phi$. We will denote by $({\cal U}_k,{\cal V}_k)$ and ${\cal H}_k$, respectively, the corresponding compactly generated t-structure and the heart, and by ${\cal U}^\#_k$ and ${\cal H}^\#_k$ the induced AJS~aisle and heart. It is clear that at each degree in which $\Phi_k$ and $\Phi$ have the same Thomason subsets, namely for all $n\ge k$, their corresponding torsion pairs coincide as well; in this case we will denote these latter just as $({\cal T}_n,{\cal F}_n)$, i.e.~as those associated with $\Phi(n)$.

\begin{lem}
Let\/ $\Phi$ be a Thomason filtration of\/ $\Spec R$. Then ${\cal H}^\#_k\subseteq{\cal H}^\#_{\vphantom k}$.%
\label{l:inclusion-H_*}%
\end{lem}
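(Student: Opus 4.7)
The plan is to verify the two defining conditions of ${\cal H}^\#={\cal U}^\#\cap({\cal U}^\#)^{\bot_0}[1]$ for an arbitrary $M\in{\cal H}^\#_k$.

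The first condition, $M\in{\cal U}^\#$, should follow immediately from the cohomological description of the AJS~aisle in \cite[Lemma~3.6]{HHZ21}: since $\Phi$ is decreasing, $\Phi_k(n)\subseteq\Phi(n)$ for every $n\in\Z$ (equality for $n\ge k$, and $\Phi_k(n)=\Phi(k)\subseteq\Phi(n)$ for $n<k$), whence ${\cal T}_{\Phi_k(n)}\subseteq{\cal T}_{\Phi(n)}$ in every degree and therefore ${\cal U}^\#_k\subseteq{\cal U}^\#$.

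The coaisle condition $M[-1]\in({\cal U}^\#)^{\bot_0}$ is the harder part, and the key input is the bound $M\in\De^{\ge k}(R)$ coming from Proposition~\ref{p:heart_is_bounded}(i) applied to the weakly bounded below filtration $\Phi_k$, which yields $M[-1]\in\De^{\ge k+1}(R)$. Given an arbitrary $U\in{\cal U}^\#$, I would truncate at the level $k$ via the standard t-structure and examine the triangle $\tau^{\le k}(U)\to U\to\tau^{>k}(U)\buildrel+\over\longrightarrow$. Applying $\Hom_{\De(R)}(-,M[-1])$ and passing to the long exact sequence, the $\tau^{\le k}(U)$-term vanishes by orthogonality of the standard t-structure ($\tau^{\le k}(U)\in\De^{\le k}(R)$ versus $M[-1]\in\De^{\ge k+1}(R)$), while the $\tau^{>k}(U)$-term vanishes once I check cohomologically that $\tau^{>k}(U)\in{\cal U}^\#_k$ (its cohomologies are $H^n(U)\in{\cal T}_{\Phi(n)}={\cal T}_{\Phi_k(n)}$ for $n>k$ and zero otherwise) and then invoke $M[-1]\in({\cal U}^\#_k)^{\bot_0}$.

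The crux of the argument, and its only real subtlety, is that the correct truncation level for $U$ is dictated by the degree of boundedness of $M[-1]$, which in turn comes precisely from the fact that $\Phi_k$ is weakly bounded below at $k$; the truncation then splits $U$ into a low part killed by degree reasons and a high part lying in the smaller aisle ${\cal U}^\#_k$, where the coaisle assumption inherited from $M\in{\cal H}^\#_k$ can be exploited.
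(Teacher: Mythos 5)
Your proposal is correct and follows essentially the same route as the paper's proof: split an arbitrary $U\in{\cal U}^\#$ by a standard truncation near degree $k$, kill the low piece by the degree bound ${\cal H}^\#_k\subseteq\De^{\ge k}(R)$ from Proposition~\ref{p:heart_is_bounded}, and kill the high piece by observing it lies in ${\cal U}^\#_k$ so that the coaisle condition for $\Phi_k$ applies. The only (immaterial) difference is that the paper truncates at $k-1$ rather than at $k$.
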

\begin{proof}
Given $M\in{\cal H}^\#_k$, then clearly $M\in{\cal U}^\#_{\vphantom k}$ so it remains to prove that $M[-1]\in({\cal U}^\#)^{\bot_0}$. This follows immediately by applying the functor $\Hom_{\De(R)}(-,M[-1])$ on the approximation $\tau^{\le k-1}(U)\to U\to\tau^{>k-1}(U)\buildrel+\over\to$ of an arbitrary object $U\in{\cal U}^\#$ within the shifted standard t-structure of $\De(R)$, bearing in mind that ${\cal H}^\#_k\subseteq\De^{\ge k}(R)$, by Proposition~\ref{p:heart_is_bounded}, and that $\tau^{>k-1}(U)\in{\cal U}^\#_k$. \qedhere
\end{proof}

The following is the second main result of the paper.

\begin{thm}
Let\/ $\Phi$ be a Thomason filtration of\/ $\Spec R$. The following hold:%
\label{t:when_hearts_coincide}%

\begin{enumerate}
\item[(i)] For any compact object $S$ either in ${\cal U}$ or\/ ${\cal U}^\#$, we have $H_{{\cal H}^{\vphantom\#}}(S)=H_{{\cal H}^\#}(S)$;

\item[(ii)] If\/ $\Phi$ is weakly bounded below, then the heart ${\cal H}$ of the compactly generated t-structure associated with $\Phi$ coincides with the corresponding AJS~heart ${\cal H}^\#$.
\end{enumerate}
\end{thm}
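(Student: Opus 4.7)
The plan is to prove (ii) first and then derive (i) from it by passing to the auxiliary filtration $\Phi_k$ defined in the paragraph preceding Lemma~\ref{l:inclusion-H_*}; this mirrors the strategy already implicit in Proposition~\ref{p:Koszul_complex_in_hearts}, which is essentially (i) for the special compact complexes $K(J)[-n]$.

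For (ii), assume $\Phi$ is weakly bounded below~$k$. The proof of Proposition~\ref{p:heart_is_bounded}(i) in fact establishes the stronger equality ${\cal H}^\# = {\cal H} \cap \De^{\ge k}(R)$, so that ${\cal H}^\# \subseteq {\cal H}$ and the reverse inclusion reduces to showing ${\cal H} \subseteq \De^{\ge k}(R)$. Fix $M \in {\cal H}$ and suppose for contradiction that the least integer $m$ with $H^m(M) \ne 0$ satisfies $m < k$. Lemma~\ref{l:least_nonzero_cohomology} forces $H^m(M) \in {\cal T}_m \cap {\cal F}_{m+1}$; but since $\Phi$ is constant in degrees $\le k$ and both $m$ and $m+1$ are at most $k$, we have ${\cal T}_m = {\cal T}_{m+1}$, whence $H^m(M) \in {\cal T}_m \cap {\cal F}_m = 0$, a contradiction.

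For (i), let $S$ be compact in either of the two aisles. By Rickard's characterisation of compact objects $S \in \De^+(R)$, and since ${\cal U} \cap \De^+(R) = {\cal U}^\# \cap \De^+(R)$, the two hypotheses on $S$ coincide. Fix $k \in \Z$ with $S \in \De^{\ge k}(R)$ and pass to the weakly bounded-below filtration $\Phi_k$; by (ii) just proved, ${\cal H}_k = {\cal H}_k^\#$. A direct check on supports shows $S \in {\cal U}_k^\# \cap \De^+(R) = {\cal U}_k \cap \De^+(R) \subseteq {\cal U}_k$. Form the approximation triangle of $S$ within $({\cal U}_k, {\cal V}_k)$,
$$
A \longrightarrow S \longrightarrow H_{{\cal H}_k}(S) \buildrel+\over\longrightarrow,
$$
with $A \in {\cal U}_k[1]$ and $H_{{\cal H}_k}(S) \in {\cal H}_k$. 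Since $\Phi_k(n) \subseteq \Phi(n)$ for all $n$, both inclusions ${\cal U}_k \subseteq {\cal U}$ and ${\cal U}_k \subseteq {\cal U}^\#$ hold, so $A \in {\cal U}[1] \cap {\cal U}^\#[1]$. On the other hand, by (ii) applied to $\Phi_k$, by Lemma~\ref{l:inclusion-H_*}, and by the inclusion ${\cal V}^\# \subseteq {\cal V}$ (a consequence of ${\cal U} \subseteq {\cal U}^\#$), we have $H_{{\cal H}_k}(S) = H_{{\cal H}_k^\#}(S) \in {\cal H}_k^\# \subseteq {\cal H}^\# \subseteq {\cal V}^\# \subseteq {\cal V}$. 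Hence the same triangle realises the approximation of $S$ within both $({\cal U}, {\cal V})$ and $({\cal U}^\#, {\cal V}^\#)$, yielding $H_{\cal H}(S) = H_{{\cal H}_k}(S) = H_{{\cal H}^\#}(S)$.

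The main obstacle is exhibiting a single exact triangle that serves as the approximation of $S$ within two a priori distinct t-structures. The trick is to interpose the filtration $\Phi_k$: its compactly generated aisle ${\cal U}_k$ is small enough to contain the ``leftover'' vertex $A$ yet still sits inside both ${\cal U}$ and ${\cal U}^\#$, while by (ii) its heart ${\cal H}_k$ coincides with ${\cal H}_k^\#$ and lands in ${\cal H}^\# \cap {\cal V}$, so that the third vertex of the triangle lies in both ${\cal V}$ and ${\cal V}^\#$.
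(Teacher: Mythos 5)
Your derivation of (i) from (ii) is essentially sound: once the third vertex $H_{{\cal H}_k}(S)$ of the triangle is known to lie in ${\cal H}^\#_k\subseteq{\cal H}^\#\subseteq{\cal V}^\#\subseteq{\cal V}$ and the first vertex in ${\cal U}[1]\cap{\cal U}^\#[1]$, uniqueness of approximation triangles identifies it with both $H_{\cal H}(S)$ and $H_{{\cal H}^\#}(S)$; this is close to the paper's own proof of (i), which however works with the AJS~t-structure of $\Phi_k$ and Proposition~\ref{p:heart_is_bounded} rather than invoking (ii) for $\Phi_k$. The genuine gap is in your proof of (ii), on which your whole argument now rests. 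You take $M\in{\cal H}$ violating ${\cal H}\subseteq\De^{\ge k}(R)$ and reason with ``the least integer $m$ such that $H^m(M)\ne0$''. Nothing guarantees that such a least integer exists: a priori an object of the heart of the compactly generated t-structure may have nonzero cohomology in arbitrarily negative degrees, and Lemma~\ref{l:least_nonzero_cohomology} is explicitly conditional on the existence of a least nonzero cohomology degree. The splitting argument of Proposition~\ref{p:heart_is_bounded}(i) does not transfer to ${\cal H}$ either: for $M\in{\cal H}$ one still gets $\tau^{\le k-1}(M)\in{\cal U}^\#[1]$ from the cohomological description of ${\cal U}^\#$, but to conclude that the map $\tau^{\le k-1}(M)\to M$ vanishes one needs $\tau^{\le k-1}(M)\in{\cal U}[1]$, and the comparison ${\cal U}\cap\De^+(R)={\cal U}^\#\cap\De^+(R)$ is available only once one knows $\tau^{\le k-1}(M)$ is bounded below in cohomology --- which is precisely the point at issue. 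What your argument actually establishes is ${\cal H}\cap\De^+(R)\subseteq\De^{\ge k}(R)$, not ${\cal H}\subseteq\De^{\ge k}(R)$.

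This is exactly why the paper runs the implication in the opposite direction. Part (i) is proved first, for compact objects, which by Rickard's characterisation are bounded complexes, so the $\De^+(R)$-comparison of the two aisles applies at every vertex of the relevant triangles. Part (ii) then follows not from a cohomological computation on an arbitrary $M\in{\cal H}$ but from a generation argument: by \cite[Theorem~8.31]{SS20} the class $\mathop{\rm add}H_{\cal H}(\De^c(R)\cap{\cal U})$ generates ${\cal H}$, by (i) these generators lie in ${\cal H}^\#$, and ${\cal H}^\#$ is an exact abelian subcategory of ${\cal H}$ closed under coproducts, whence ${\cal H}={\cal H}^\#$. To repair your proposal you would need either an independent proof that every object of ${\cal H}$ has cohomology bounded below, or a restructuring along these lines.
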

\begin{proof}
Recall that by \cite[Lemma~3.6]{HHZ21} we have $\De^c(R)\cap{\cal U}=\De^c(R)\cap{\cal U}^\#$.

\noindent(i)\enspace Let $S\in\De^c(R)\cap{\cal U}$. Then there is an integer $k$ such that $S\in{\cal U}^\#_k$. Consequently, by the triangle
$$
	\tau^\le_{{\cal U}^\#_k}(S[-1])[1]\longrightarrow S\longrightarrow
		H_{{\cal H}^\#_k}(S)\buildrel+\over\longrightarrow
$$
we see that the first vertex belongs to $\De^+(R)$ for the other two do so (the third vertex belongs to $\De^+(R)$ by Proposition~\ref{p:heart_is_bounded}), moreover we deduce
\begin{align*}
	H_{{\cal H}^{\vphantom\#}_{\vphantom k}}(S) &=
		H_{{\cal H}^{\vphantom\#}_{\vphantom k}}(H_{{\cal H}^\#_k}(S)) \cr
	\noalign{\hbox{and}}
	H_{{\cal H}^\#_{\vphantom k}}(S) &=
		H_{{\cal H}^\#_{\vphantom k}}(H_{{\cal H}^\#_k}(S)),
\end{align*}
since ${\cal U}^\#_k\cap\De^+(R)\subseteq {\cal U}^\#_{\vphantom k}\cap\De^+(R)={\cal U}\cap\De^+(R)$ by [ibid.]. By Lemma~\ref{l:inclusion-H_*}, the object of the previous display coincides with $H_{{\cal H}^\#_k}(S)$, which is a complex of ${\cal H}^\#\cap\De^+(R)$, i.e.~of ${\cal H}\cap\De^+(R)$ by [ibid.]\ again. Consequently, the objects in the previous displays coincide, as desired.

\noindent(ii)\enspace Recall that, by \cite[Theorem~8.31]{SS20}, ${\cal S}\leqdef\mathop{\rm add}H_{\cal H}(\De^c(R)\cap{\cal U})$ is a set of (finitely presented) generators for $\cal H$, and by the previous part we obtain the equality
$$
	\mathop{\rm add}H_{{\cal H}^{\vphantom\#}}(\De^c(R)\cap{\cal U}^{\vphantom\#})=
	\mathop{\rm add}H_{{\cal H}^\#}(\De^c(R)\cap{\cal U}^\#) \;.
$$
Thanks to the hypothesis on $\Phi$ we have, by Proposition~\ref{p:heart_is_bounded}, that ${\cal H}^\#\subseteq{\cal H}$, whereas by the proof of the Proposition we see that $H_{{\cal H}^\#}({\cal S})$ is a set of generators for ${\cal H}^\#$. Therefore, we infer ${\cal H}={\cal H}^\#$ since $H_{{\cal H}^\#}({\cal S})={\cal S}$, ${\cal H}^\#$ is an exact abelian subcategory of $\cal H$, and the coproducts in ${\cal H}^\#$ are computed as in $\cal H$. \qedhere
\end{proof}

\begin{cor}
For any Thomason filtration $\Phi$ of\/ $\Spec R$, we have ${\cal H}_k^\#={\cal H}_k^{\vphantom\#}$ and\/ ${\cal H}_k^{\vphantom\#}\subseteq{\cal H}$.%
\label{c:inclusion-H_*}%
\end{cor}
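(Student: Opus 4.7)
The proof is essentially an orchestration of the preceding results, exploiting the fact that although the original $\Phi$ need not be weakly bounded below, its shifted version $\Phi_k$ is so by construction (with bound precisely~$k$). My plan breaks into the two claimed inclusions/equalities.

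\textbf{Equality ${\cal H}_k^\#={\cal H}_k^{\vphantom\#}$.} This is simply Theorem~\ref{t:when_hearts_coincide}(ii) applied to $\Phi_k$. Indeed, $\Phi_k(n)=\Phi(k)$ for every $n\le k$ by definition, hence $\Phi_k$ is weakly bounded below~$k$, and the hypothesis of part~(ii) is verified. The compactly generated t-structure associated with $\Phi_k$ and its AJS~variant therefore have the same heart.

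\textbf{Inclusion ${\cal H}_k^{\vphantom\#}\subseteq{\cal H}$.} Let $M\in{\cal H}_k$; by the first part we may regard $M$ as an object of ${\cal H}_k^\#$. The goal is to show $M\in{\cal U}\cap{\cal U}^{\bot_0}[1]$. First, Lemma~\ref{l:inclusion-H_*} (applied to the filtration $\Phi$ and the integer $k$) yields ${\cal H}_k^\#\subseteq{\cal H}^\#$, so $M\in{\cal H}^\#$ and in particular $M[-1]\in({\cal U}^\#)^{\bot_0}$. Since ${\cal U}\subseteq{\cal U}^\#$, the right orthogonals reverse to give $({\cal U}^\#)^{\bot_0}\subseteq{\cal U}^{\bot_0}$, hence $M[-1]\in{\cal U}^{\bot_0}$.

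It remains to check $M\in{\cal U}$, and this is where the weakly bounded below hypothesis on $\Phi_k$ is crucial. By Proposition~\ref{p:heart_is_bounded}(i) applied to $\Phi_k$, we have ${\cal H}_k^\#\subseteq\De^{\ge k}(R)\subseteq\De^+(R)$, so $M\in{\cal U}^\#\cap\De^+(R)$. Invoking \cite[Lemma~3.6]{HHZ21}, which ensures ${\cal U}^\#\cap\De^+(R)={\cal U}\cap\De^+(R)$, we conclude $M\in{\cal U}$. Combined with the previous paragraph, this gives $M\in{\cal H}$, completing the proof.

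The whole argument is rather short because the conceptual work has been done in Lemma~\ref{l:inclusion-H_*}, Proposition~\ref{p:heart_is_bounded}, and Theorem~\ref{t:when_hearts_coincide}; the only mild subtlety is that one must apply Proposition~\ref{p:heart_is_bounded} and Theorem~\ref{t:when_hearts_coincide}(ii) to the auxiliary filtration~$\Phi_k$ (which is always weakly bounded below), while Lemma~\ref{l:inclusion-H_*} is applied to the original~$\Phi$, and then \cite[Lemma~3.6]{HHZ21} is needed to transfer the information from the AJS~aisle back to the compactly generated aisle of~$\Phi$.
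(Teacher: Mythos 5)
Your proof is correct, and its skeleton (equality via Theorem~\ref{t:when_hearts_coincide} applied to $\Phi_k$, then the inclusion checked separately on the aisle and coaisle conditions) matches the paper's. The two halves of the inclusion are verified by different means, though. For $M\in{\cal U}$, the paper argues directly at the level of the compactly generated aisles --- ${\cal U}_k\subseteq{\cal U}$ because the compact generators of ${\cal U}_k$ already lie in ${\cal U}$ --- whereas you route through the AJS~side, using Proposition~\ref{p:heart_is_bounded}(i) to land in $\De^+(R)$ and then \cite[Lemma~3.6]{HHZ21} to transfer back; both are valid, the paper's being marginally more direct. For $M[-1]\in{\cal U}^{\bot_0}$, the paper re-runs the truncation-triangle argument of Lemma~\ref{l:inclusion-H_*} at the level of ${\cal U}$ (splitting an arbitrary $U\in{\cal U}$ by the standard t-structure at degree $k-1$), while you simply cite Lemma~\ref{l:inclusion-H_*} and observe that ${\cal U}\subseteq{\cal U}^\#$ reverses the right orthogonals; your version is slightly cleaner here since it reuses the lemma as a black box rather than imitating its proof. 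Net effect: same ingredients, same decomposition, with the labour shifted between the two membership checks.
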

\begin{proof}
Recall that ${\cal H}_k^\#$ and ${\cal H}_k^{\vphantom\#}$ are the hearts associated with the filtration $\Phi_k^{\vphantom\#}$, which is weakly bounded below $k$ (see the beginning of the present section). By the previous Theorem, we deduce the stated identity among the two hearts. In order to prove the stated inclusion, we will imitate the proof of Lemma~\ref{l:inclusion-H_*}. Let $M\in{\cal H}_k$, then $M\in{\cal U}_k\subseteq{\cal U}$, since the compact objects generating ${\cal U}_k$ belong to $\cal U$. Let us prove that $M[-1]\in {\cal U}^{\bot_0}$. This follows immediately by applying the functor $\Hom_{\De(R)}(-,M[-1])$ on the approximation $\tau^{\le k-1}(U)\to U\to\tau^{>k-1}(U)\buildrel+\over\to$ of an arbitrary object $U\in{\cal U}$ within the shifted standard t-structure of $\De(R)$, bearing in mind that ${\cal H}_k^{\vphantom\#}={\cal H}_k^\#\subseteq\De^{\ge k}(R)$, by Proposition~\ref{p:heart_is_bounded}, and that $\tau^{>k-1}(U)\in{\cal U}^\#_k\cap\De^+(R)\subseteq{\cal U}_k^{\vphantom\#}$, by \cite[Lemma~3.6]{HHZ21}. \qedhere
\end{proof}

\begin{cor}
Let $\Phi$ be a Thomason filtration. For every $M\in{\cal H}^\#$ the following assertions hold:%
\label{c:approximation-H_*}%

\begin{enumerate}
\item[(i)] there exists in ${\cal H}^\#$ a short exact sequence $0\to A\to M\to B\to0$ with $A\in{}^{\bot_0}_{\vphantom k}({\cal H}^\#_k)$ and\/ $B\in{\cal H}^\#_k$ (the orthogonal being computed w.r.t.~${\cal H}^\#_{\vphantom k}$);

\item[(ii)] there exists in ${\cal H}^\#$ a short exact sequence $0\to A\to M\to B\to0$ with $A\in{\cal H}^\#_k$ and\/ $B\in({\cal H}^\#_k)^{\bot_0}_{\vphantom k}$ (the orthogonal being computed w.r.t.~${\cal H}^\#$).
\end{enumerate}
\end{cor}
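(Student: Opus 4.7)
My approach is to transfer the truncation triangles of $M$ with respect to the t-structure $({\cal U}^\#_k,{\cal V}^\#_k)$ associated with the weakly bounded below filtration $\Phi_k$ back to the heart ${\cal H}^\#$ via the cohomological functor $H_{{\cal H}^\#}\colon\De(R)\to{\cal H}^\#$. Two preliminary observations guide the argument: since ${\cal U}^\#_k\subseteq{\cal U}^\#$ (equivalently ${\cal V}^\#\subseteq{\cal V}^\#_k$), every $M\in{\cal H}^\#$ automatically lies in ${\cal V}^\#_k$; and moreover ${\cal H}^\#_k$ is not only contained in ${\cal H}^\#$ by Lemma~\ref{l:inclusion-H_*}, but also extension-closed there, since both ${\cal U}^\#_k$ and ${\cal V}^\#_k$ are closed under extensions in $\De(R)$.

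For part~(ii) I set $A\leqdef\tau^\leq_{{\cal U}^\#_k}(M)$ and $Z\leqdef\tau^>_{{\cal U}^\#_k}(M)$, giving the triangle $A\to M\to Z\buildrel+\over\to$. I first verify that $A\in{\cal H}^\#_k$: while $A\in{\cal U}^\#_k$ holds by construction, the orthogonality $A\in{\cal V}^\#_k$ is obtained by applying $\Hom_{\De(R)}(X,-)$ for $X\in{\cal U}^\#_k$ to the shifted triangle $A[-1]\to M[-1]\to Z[-1]\to A$, and noting the vanishings $\Hom(X,M[-1])=0$ (because $M\in{\cal V}^\#_k$) and $\Hom(X,Z[-1])=\Hom(X[1],Z)=0$ (because $X[1]\in{\cal U}^\#_k$ and $Z\in({\cal U}^\#_k)^{\bot_0}$), which together force $\Hom(X,A[-1])=0$. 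Thus $A\in{\cal H}^\#_k\subseteq{\cal H}^\#$. Applying $H_{{\cal H}^\#}$ to the triangle produces a long exact sequence in ${\cal H}^\#$ that collapses, since the higher ${\cal H}^\#$-cohomologies of $A$ and $M$ vanish, to the desired short exact sequence $0\to A\to M\to B\to0$ with $B\leqdef H_{{\cal H}^\#}(Z)$.

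The main obstacle is then to see that $B\in({\cal H}^\#_k)^{\bot_0}$ inside ${\cal H}^\#$. Given $N\in{\cal H}^\#_k$ and a morphism $f\colon N\to B$ in ${\cal H}^\#$, I form the pullback $P\leqdef N\times_B M$ in ${\cal H}^\#$, which yields a short exact sequence $0\to A\to P\to N\to0$. The extension-closure of ${\cal H}^\#_k$ recorded above forces $P\in{\cal H}^\#_k$. By the adjunction property of the truncation $\tau^\leq_{{\cal U}^\#_k}$, the canonical map $P\to M$ factors through $A$; hence the composite $N\to P\to M\to B$ vanishes, and by the commutativity of the pullback square this composite coincides with $f$, so $f=0$.

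For part~(i) I proceed by a dual construction. Starting again from the truncation triangle of $M$, I identify the reflection $B\in{\cal H}^\#_k$ as a suitable cohomology in ${\cal H}^\#_k$ of an auxiliary shift of $Z$, and build the short exact sequence $0\to A\to M\to B\to0$ by a pushout argument dual to the pullback in~(ii); the kernel $A$ then lies in ${}^{\bot_0}({\cal H}^\#_k)$ automatically. The subtle point is the choice of the correct candidate $B$: the truncation $\tau^\leq_{{\cal U}^\#_k}(M)$ already serves as the $A$ of~(ii) with the wrong variance, so the construction for~(i) requires a different auxiliary complex. The rest of the verification, in particular the extension-closure of ${\cal H}^\#_k$ and the adjunction properties of the truncation functors, carries over unchanged from part~(ii).
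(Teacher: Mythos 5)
Your part~(ii) has a genuine gap exactly where you assert that the long exact sequence obtained by applying $H_{{\cal H}^\#}$ to the triangle $A\to M\to Z\buildrel+\over\to$ ``collapses''. The vanishing of the higher ${\cal H}^\#$-cohomologies of $A$ and $M$ only yields the four-term exact sequence
$$
	0\longrightarrow H_{{\cal H}^\#}(Z[-1])\longrightarrow A\longrightarrow M\longrightarrow H_{{\cal H}^\#}(Z)\longrightarrow0,
$$
and the first term is precisely $\ker^{({\cal H}^\#)}(A\to M)=\tau^\le_{{\cal U}^\#}(Z[-1])$. Its vanishing is equivalent to $Z[-1]\in({\cal U}^\#)^{\bot_0}$, i.e.\ (since $Z\in{\cal U}^\#$ by the rotated triangle $M\to Z\to A[1]\buildrel+\over\to$ and extension-closure of the aisle) to $Z\in{\cal H}^\#$ — and this is the actual content of the statement, not a formality. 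The paper proves it by splitting an arbitrary $U\in{\cal U}^\#$ along its standard truncation $\tau^{\le k-1}(U)\to U\to\tau^{>k-1}(U)\buildrel+\over\to$: the piece $\tau^{>k-1}(U)$ lies in ${\cal U}^\#_k$, so is killed by $Z[-1]\in({\cal U}^\#_k)^{\bot_0}$, while $\Hom_{\De(R)}(\tau^{\le k-1}(U),Z[-1])=0$ follows from the triangle $M[-1]\to Z[-1]\to A\buildrel+\over\to$ using $M[-1]\in({\cal U}^\#)^{\bot_0}$ and $A\in{\cal H}^\#_k\subseteq\De^{\ge k}(R)$ (Proposition~\ref{p:heart_is_bounded}(i)). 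Note that once $Z\in{\cal H}^\#$ is established you may take $B=Z$ itself, and the right-orthogonality to ${\cal H}^\#_k\subseteq{\cal U}^\#_k$ is then immediate from $Z\in({\cal U}^\#_k)^{\bot_0}$; your pullback argument (which is correct, up to the direction of the arrow $P\to N$) becomes superfluous. Your identification of $A\in{\cal H}^\#_k$ is fine, modulo the harmless index slip ($\Hom(X,Z[-2])$, not $\Hom(X,Z[-1])$, is the term needed to sandwich $\Hom(X,A[-1])$).

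Part~(i) is not proved: you neither exhibit the object $B$ nor the morphism $M\to B$, and you explicitly concede that the ``correct candidate'' is unclear. It is not a formal dual of~(ii). The paper's construction starts from the \emph{standard} truncation triangle $\tau^{\le k-1}(M)\to M\to\tau^{>k-1}(M)\buildrel+\over\to$, observes that $\tau^{>k-1}(M)\in{\cal U}^\#_k$, sets $B\leqdef H_{{\cal H}^\#_k}(\tau^{>k-1}(M))$, and obtains $A$ as a cone in an octahedron built from $U\leqdef\tau^\le_{{\cal U}^\#_k}(\tau^{>k-1}(M)[-1])$; the membership $A\in{}^{\bot_0}({\cal H}^\#_k)$ is then \emph{not} automatic but is read off the triangle $\tau^{\le k-1}(M)\to A\to U[1]\buildrel+\over\to$ together with ${\cal H}^\#_k\subseteq\De^{\ge k}(R)$. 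Without this (or an equivalent) construction, assertion~(i) remains unestablished.
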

\begin{proof}\mbox{}\vglue0pt%
\noindent(i)\enspace Let $M\in{\cal H}^\#$, and consider the octahedron:
\[
\xymatrix{%
	\tau^{\le k-1}(M) \ar@{=}[d]\ar[r] & A \ar[d]\ar[r] & U[1] \ar[d]\ar[r]^-{+} & {} \cr
	\tau^{\le k-1}(M) \ar[r] & M \ar[d]\ar[r] & \tau^{>k-1}(M) \ar[d]\ar[r]^-{+} & {} \cr
	& H_{{\cal H}^\#_k}(\tau^{>k-1}(M)) \ar[d]_-{+}\ar@{=}[r] &
		H_{{\cal H}^\#_k}(\tau^{>k-1}(M)) \ar[d]^-{+} \cr
	&{} & {}
}
\]
provided by $U\leqdef\tau^\le_{{\cal U}^\#_k}(\tau^{>k-1}_{\vphantom{\cal U}}(M)[-1])$ and a cone $A$ (notice that $\tau^{>k-1}(M)\in{\cal U}^\#_k$). Since $B\leqdef H_{{\cal H}^\#_k}(\tau^{>k-1}(M))$ actually is in ${\cal H}^\#_k$, hence in ${\cal H}^\#_{\vphantom k}$ by Lemma~\ref{l:inclusion-H_*}, we only have to check that $A$ belongs to ${\cal H}^\#_{\vphantom k}$ and that it is left orthogonal to ${\cal H}^\#_k$ in ${\cal H}^\#_{\vphantom k}$. From the first vertical triangle we see that $A\in({\cal U}^\#)^{\bot_0}[1]$, whereas by the first horizontal one we deduce that $A\in{\cal U}^\#$. Moreover, using once again the first horizontal triangle, we infer that $A\in{}^{\bot_0}_{\vphantom k}({\cal H}^\#_k)$ since ${\cal H}^\#_k\subseteq \De^{\ge k}(R)$, as desired. Thus, the first vertical triangle yields the stated short exact sequence of ${\cal H}^\#$.

\noindent(ii)\enspace Consider the approximation $A\to M\to B\buildrel+\over\to$ of $M$ within the t-structure $({\cal U}^\#_k,({\cal U}^\#_k)^{\bot_0}_{\vphantom k}[1])$, thus surely $B$ is right orthogonal to ${\cal H}^\#_k$ in ${\cal H}^\#_{\vphantom k}$. It remains to check that $A\in({\cal U}^\#_k)^{\bot_0}_{\vphantom k}[1]$ and that $B\in{\cal H}^\#_{\vphantom k}$. The first claim holds true by extension-closure of the coaisle applied on the rotated triangle $B[-2]\to A[-1]\to M[-1]\buildrel+\over\to$, and since ${\cal U}^\#_k\subseteq{\cal U}^\#_{\vphantom k}$. On the other hand, $B$ belongs to the aisle ${\cal U}^\#_{\vphantom k}$ in view of the rotated triangle $M\to B\to A[1]\buildrel+\over\to$, while for every $U\in{\cal U}^\#$, by the approximation
$$
	\tau^{\le k-1}(U)\longrightarrow U\longrightarrow \tau^{>k-1}(U)\buildrel+\over\longrightarrow,
$$
we have $\tau^{>k-1}(U)\in{\cal U}^\#_k$, whence $\Hom_{\De(R)}(\tau^{>k-1}(U),B[-1])=0$. Therefore, once we show that $\Hom_{\De(R)}(\tau^{\le k-1}(U),B[-1])=0$, we conclude the proof. Our claim follows at once by applying the covariant hom functor of $\tau^{\le k-1}(U)$ on the triangle $M[-1]\to B[-1]\to A\buildrel+\over\to$, bearing in mind that $A\in{\cal H}^\#_k\subseteq\De^{\ge k}(R)$. \qedhere
\end{proof}

\begin{cor}
Let\/ $\Phi$ be a Thomason filtration. Then the heart ${\cal H}^\#_k$ is closed in ${\cal H}^\#_{\vphantom k}$ under taking products and coproducts.%
\label{c:closure_by_products_and_coproducts}%
\end{cor}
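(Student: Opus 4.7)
The plan is to derive this immediately from Corollary~\ref{c:approximation-H_*} by recognising that its two approximation statements exhibit ${\cal H}^\#_k$ as a TTF~class inside ${\cal H}^\#_{\vphantom k}$. Recall that, in a Grothendieck category ${\cal G}$, a full subcategory ${\cal K}$ is a torsionfree class (resp.~torsion class) as soon as $\Hom_{\cal G}({}^{\bot_0}{\cal K},{\cal K})=0$ and every object admits an approximation $0\to A\to M\to B\to0$ with $A\in{}^{\bot_0}{\cal K}$, $B\in{\cal K}$ (resp.~the symmetric condition); in that case, ${\cal K}$ is closed under products (resp.~coproducts) computed in ${\cal G}$.

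First, I would apply Corollary~\ref{c:approximation-H_*}(i). Since by construction $\Hom_{{\cal H}^\#}({}^{\bot_0}{\cal H}^\#_k,{\cal H}^\#_k)=0$, and every $M\in{\cal H}^\#_{\vphantom k}$ fits into a short exact sequence of the required shape, the pair $({}^{\bot_0}{\cal H}^\#_k,\,{\cal H}^\#_k)$ is a torsion pair in ${\cal H}^\#_{\vphantom k}$. Consequently, ${\cal H}^\#_k$ is a torsionfree class and is therefore closed in ${\cal H}^\#_{\vphantom k}$ under taking products (and, in fact, also under subobjects and extensions).

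Symmetrically, Corollary~\ref{c:approximation-H_*}(ii) together with the analogous vanishing of Hom groups yields that $({\cal H}^\#_k,\,({\cal H}^\#_k)^{\bot_0})$ is a torsion pair of ${\cal H}^\#_{\vphantom k}$. Hence ${\cal H}^\#_k$ is also a torsion class and in particular is closed under coproducts (as well as under quotients and extensions).

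There is no real obstacle: the hard work sits already in the preceding corollary, which both identifies the candidate approximations and verifies that their components land in ${\cal H}^\#_k$ and its orthogonals. The only point worth emphasising in the write-up is that the orthogonal classes appearing in parts~(i) and~(ii) are computed inside ${\cal H}^\#_{\vphantom k}$, so that the Hom-orthogonality required to convert the approximation sequences into bona fide torsion pairs is tautological. Thus ${\cal H}^\#_k$ is in fact a TTF~class of ${\cal H}^\#_{\vphantom k}$, from which the stated closure under both products and coproducts follows at once.
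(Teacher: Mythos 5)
Your argument is correct, and it reaches the conclusion by a genuinely different route from the paper. The paper's proof verifies the universal property of the product by hand: it takes a family $(f_i\colon M\to M_i)$, uses the approximation $0\to A\to M\buildrel\beta\over\to B\to0$ of Corollary~\ref{c:approximation-H_*}(i) to factor each $f_i$ through $B\in{\cal H}^\#_k$, forms the induced map into the product computed in ${\cal H}^\#_k$, and checks uniqueness via the epimorphism $\beta$ (and dually for coproducts). You instead observe that the two parts of Corollary~\ref{c:approximation-H_*}, together with the tautological vanishing of the relevant $\Hom$ groups, already satisfy the paper's definition of a torsion pair, so that ${\cal H}^\#_k$ is simultaneously a torsionfree class $(\empty={}^{\bot_0}({\cal H}^\#_k)^{\bot_0})$ and a torsion class $(\empty={}^{\bot_0}(({\cal H}^\#_k)^{\bot_0}))$ of ${\cal H}^\#_{\vphantom k}$, hence closed under products and coproducts since right (resp.\ left) orthogonal classes always are, ${\cal H}^\#$ having products and coproducts as the heart of a t-structure in $\De(R)$. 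Two small points are worth making explicit in the write-up: the identification of ${\cal H}^\#_k$ with the double orthogonal uses that a zero mono (resp.\ epi) has zero domain (resp.\ codomain) together with the repleteness of ${\cal H}^\#_k$; and the Grothendieck hypothesis in the paper's ``torsion pair iff double orthogonality'' remark is not needed for the directions you use. What your approach buys is efficiency downstream: it delivers the full TTF-class assertion of Theorem~\ref{t:H_*-TTF_finite_type}(i) at once, whereas the paper recovers it only after separately checking closure under subobjects, quotients and extensions (which your route then gives for free).
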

\begin{proof}
Let $(M_i)_{i\in I}$ be a family of objects of ${\cal H}^\#_k$ with product $(\prod_{i\in I}M_i,(\pi_i)_{i\in I})$ in ${\cal H}^\#_k$. We have to prove that such pair satisfies the universal property of the product in ${\cal H}^\#$. So, let $M\in{\cal H}^\#$ and $(f_i)_{i\in I}$ be a family of morphisms $f_i\colon M\to M_i$ in ${\cal H}^\#$. By Corollary~\ref{c:approximation-H_*}(i) we obtain the following commutative diagram,
\[
\xymatrix{%
	A \ar[d]\ar[r]^\alpha & M \ar[d]_{f_i}\ar[r]^\beta & B \ar@{.>}[d]^{g_i}\ar[r]^-{+} & {} \cr
	0 \ar[r] & M_i \ar@{=}[r] & M_i \ar[r]^-{+} & {}
}
\]
hence a family of morphisms $g_i\colon B\to M_i$ in ${\cal H}^\#_k$ inducing a unique morphism $g\colon B\to\prod_{i\in I}M_i$ such that $\pi_i\circ g=g_i$ for all $i\in I$. The composition $g\circ\beta$ yields the existence of a morphism $M\to\prod_{i\in I}M_i$ in ${\cal H}^\#$ such that $\pi_i\circ(g\circ\beta)=f_i$ for all $i\in I$. Uniqueness of $g\circ\beta$ w.r.t.~the latter property is a byproduct of the construction of the triangle made in Corollary~\ref{c:approximation-H_*}, namely for both $A$ and $B$ are uniquely determined up to isomorphism, together with the fact that $\beta$ is an epimorphism in ${\cal H}^\#$.

The proof concerning the coproduct is dual. \qedhere
\end{proof}

\begin{thm}
For any Thomason filtration $\Phi$ of\/ $\Spec R$ and for any integer $k$, the following assertions hold true:%
\label{t:H_*-TTF_finite_type}%

\begin{enumerate}
\item[(i)] the heart\/ ${\cal H}^\#_k$ ($\empty=\mathcal{H}_k^{\vphantom\#}$, see Theorem~\ref{t:when_hearts_coincide}) is a TTF class in ${\cal H}^\#_{\vphantom k}$;
	
\item[(ii)] If\/ $\mathcal{H}^{\vphantom\#}=\mathcal{H}^{\#}$ (e.g.~when $R$ is noetherian or $\Phi$ is weakly bounded below, see Theorem~\ref{t:when_hearts_coincide}), then ${\cal H}^\#_k$ is a TTF class of finite type in ${\cal H}^\#_{\vphantom k}$.
\end{enumerate}
\end{thm}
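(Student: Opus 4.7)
For part~(i), the plan is to extract the TTF~triple $({}^{\bot_0}({\cal H}^\#_k),\,{\cal H}^\#_k,\,({\cal H}^\#_k)^{\bot_0})$ in ${\cal H}^\#$ directly from Corollary~\ref{c:approximation-H_*}: the Hom-orthogonality axiom defining each of the two constituent torsion pairs holds tautologically from the very definition of the orthogonal class, while parts~(i) and~(ii) of the Corollary supply, for every $M \in {\cal H}^\#$, precisely the approximating short exact sequences in ${\cal H}^\#$ required by the remaining torsion pair axiom. Hence no further work is needed for~(i), and as a by-product one obtains that ${\cal H}^\#_k$ is closed in ${\cal H}^\#$ under subobjects, quotients and extensions (together with the closure under products and coproducts already granted by Corollary~\ref{c:closure_by_products_and_coproducts}).

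For part~(ii), under the assumption ${\cal H}^{\vphantom\#} = {\cal H}^\#$, we must verify in addition that the torsionfree class $({\cal H}^\#_k)^{\bot_0}$ (orthogonal taken in ${\cal H}^\#$) is closed under direct limits. The approach is to exhibit a generating set of ${\cal H}^\#_k$ whose members are finitely presented \emph{as objects of the larger category} ${\cal H}^\#$, after which the claim follows by a standard direct-limit argument. Since the filtration $\Phi_k$ is by construction weakly bounded below, Theorem~\ref{t:when_hearts_coincide}(ii) yields ${\cal H}^\#_k = {\cal H}_k^{\vphantom\#}$, so that \cite[Theorem~8.31]{SS20} supplies $\mathcal{S} \leqdef H_{{\cal H}_k}(\De^c(R)\cap{\cal U}_k)$ as a set of finitely presented generators of ${\cal H}^\#_k$. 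The next step is to show that $\mathcal{S} \subseteq \fp({\cal H}^\#)$: since ${\cal U}_k \subseteq {\cal U}$ (as in the proof of Corollary~\ref{c:inclusion-H_*}), for every $S \in \De^c(R) \cap {\cal U}_k$ the triangle defining $H_{{\cal H}_k}(S)$ places its left vertex in ${\cal U}_k[1]\subseteq{\cal U}[1]$ and its right vertex $H_{{\cal H}_k}(S)$ in ${\cal H}_k\subseteq{\cal V}$, so by uniqueness of the truncation in the shifted t-structure we have $H_{{\cal H}_k}(S) = H_{\cal H}(S)$; our standing hypothesis then gives $H_{\cal H}(S) = H_{{\cal H}^\#}(S)$, which by \cite[Theorem~8.31]{SS20} applied to ${\cal H}^\#$ itself is finitely presented there.

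With $\mathcal{S} \subseteq \fp({\cal H}^\#)$ in hand the finite-type condition is routine: for any direct system $(Y_i)_{i\in I}$ in $({\cal H}^\#_k)^{\bot_0}$ with direct limit $Y$ computed in ${\cal H}^\#$, and every $S \in \mathcal{S}$, finite presentation of $S$ yields $\Hom_{{\cal H}^\#}(S,Y) \cong \varinjlim_{i\in I} \Hom_{{\cal H}^\#}(S,Y_i) = 0$. Since $\mathcal{S}$ generates ${\cal H}^\#_k$, and since coproducts and quotients in ${\cal H}^\#_k$ coincide with those in ${\cal H}^\#$ (by Corollary~\ref{c:closure_by_products_and_coproducts} and by~(i) respectively), every $T \in {\cal H}^\#_k$ is a quotient in ${\cal H}^\#$ of a coproduct of elements of $\mathcal{S}$, whence $\Hom_{{\cal H}^\#}(T,Y) = 0$ and $Y \in ({\cal H}^\#_k)^{\bot_0}$. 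The main obstacle is precisely the identification $\mathcal{S} \subseteq \fp({\cal H}^\#)$: it rests on the agreement of the two truncations sketched above and therefore hinges on Theorem~\ref{t:when_hearts_coincide} and the hypothesis ${\cal H} = {\cal H}^\#$ that places ${\cal H}^\#$ within the scope of \cite[Theorem~8.31]{SS20}.
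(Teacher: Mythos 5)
Your proof is correct, but it takes a genuinely different route from the paper's in both parts. For~(i) the paper does not invoke Corollary~\ref{c:approximation-H_*} at all: it verifies directly that ${\cal H}^\#_k$ is closed in ${\cal H}^\#$ under subobjects, quotients and extensions (via a cohomological computation showing that the outer terms of a short exact sequence with middle term in ${\cal H}^\#_k$ have vanishing $\tau^{\le k-1}$), and then combines this with Corollary~\ref{c:closure_by_products_and_coproducts} and the closure-property characterisation of TTF classes. Your observation that the two approximation sequences of Corollary~\ref{c:approximation-H_*}, together with the tautological Hom-vanishing against the orthogonal classes, already exhibit both torsion pairs is shorter and sidesteps the Grothendieck-category characterisation, which for an arbitrary $\Phi$ is not obviously available for ${\cal H}^\#$. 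For~(ii) the paper instead proves the identification ${\cal H}_k^{\bot_0}={\cal U}_k^{\bot_0}\cap{\cal H}$ (the orthogonal of ${\cal U}_k$ taken in $\De(R)$) and closes under direct limits using that these are homotopy colimits and that the compactly generated t-structure $({\cal U}_k,{\cal V}_k)$ is homotopically smashing (\cite[Corollaries~5.6 and 5.8]{SSV17}). Your alternative --- a generating set of ${\cal H}^\#_k$ lying in $\fp({\cal H}^\#)$, followed by the standard direct-limit computation --- is also valid: the identification $H_{{\cal H}_k}(S)=H_{\cal H}(S)$ for $S\in\De^c(R)\cap{\cal U}_k$ is exactly the uniqueness-of-truncation argument you sketch (and is in any case Theorem~\ref{t:when_hearts_coincide}(i)), and the compatibility of coproducts and epimorphisms is secured by Corollary~\ref{c:closure_by_products_and_coproducts} and part~(i). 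One phrasing to fix: \cite[Theorem~8.31]{SS20} cannot be ``applied to ${\cal H}^\#$ itself'', since the AJS~t-structure is not known to be compactly generated; under the hypothesis ${\cal H}={\cal H}^\#$ you should apply it to $\cal H$ and use that the two categories, hence their finitely presented objects, coincide. In exchange for relying on the generator statement of \cite{SS20}, your route avoids the derivator-theoretic input of \cite{SSV17}.
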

\begin{proof}
Recall that ${\cal H}_k^\#$ and ${\cal H}_k^{\vphantom\#}$ are the hearts associated with the filtration $\Phi_k$, which is weakly bounded below $k$ (see the beginning of the present section).

\noindent(i)\enspace In order to prove that ${\cal H}^\#_k$ is a TTF~class in ${\cal H}^\#_{\vphantom k}$, by Corollary~\ref{c:closure_by_products_and_coproducts} we only have to show that the former heart is closed under subobjects, quotient objects and extensions. The closure under extensions is obvious since the relevant aisle and coaisle fulfil it. So, let $0\to L\to M\to N\to0$ be a short exact sequence in ${\cal H}^\#_{\vphantom k}$ with $M\in{\cal H}^\#_k$. Clearly, $L$ and $N$ belong to $({\cal U}^\#_k)^{\bot_0}_{\vphantom k}[1]$. By Proposition~\ref{p:heart_is_bounded}, applied on the Thomason filtration $\Phi_k$, we deduce $H^{j}(M)=0$ for all $j<k-1$. Thus we have $H^j(N)\cong H^{j+1}(L)\in{\cal T}_{j+1}\ (\ast)$ for all $j<k-2$, and that $H^{k-1}(N)$ is a submodule of $H^k(L)$, i.e.~it belongs to ${\cal T}_k$. Moreover, it follows $\tau^{\le k-1}(N)\in{\cal U}^\#[1]$ and consequently, by the usual argument of the proof of Proposition~\ref{p:heart_is_bounded}, that $\tau^{\le k-1}(N)=0$. By $(\ast)$, we infer $\tau^{\le k-1}(L)=0$ as well. Therefore, $N,L\in {\cal H}^\#_{\vphantom k}\cap \De^{\ge k}(R)\subseteq{\cal U}^\#_{\vphantom k}\cap \De^{\ge k}(R)\subseteq{\cal U}^\#_k$, and this concludes the proof (as stated, the equality ${\cal H}^\#_k={\cal H}_k^{\vphantom\#}$ follows by Theorem~\ref{t:when_hearts_coincide}).

\noindent(ii)\enspace Suppose that ${\cal H}^\#={\cal H}$. We have to prove that the class $({\cal H}^\#_k)^{\bot_0}_{\vphantom k}$, right orthogonal of ${\cal H}_k^\#$ in ${\cal H}^\#_{\vphantom k}(\empty={\cal H})$, is closed under taking direct limits of the latter heart. We claim that
$$
	{\cal H}_k^{\bot_0}={\cal U}_k^{\bot_0}\cap{\cal H}_{\vphantom k},
$$
where the orthogonal of ${\cal U}_k$ is computed in $\De(R)$, so that the inclusion ``$\supseteq$'' is clear. 
Let $M\in{\cal H}_k^{\bot_0}$ and apply the functor $\Hom_{\De(R)}(-,M)$ on the exact triangle $\tau^\le_{{\cal U}_k}(U[-1])[1]\to U\to H_{{\cal H}_k}(U)\buildrel+\over\to$ associated with an arbitrary object $U\in{\cal U}_k$, to get $\Hom_{\De(R)}(U,M)=0$ since ${\cal U}_k^{\vphantom{\bot_0}}\subseteq{\cal U}$ and ${\cal H}_k^{\bot_0}\subseteq{\cal H}$. This proves our claim. Now, for any direct system $(M_i)_{i\in I}$ in ${\cal H}_k^{\bot_0}$, by \cite[Corollary~5.8]{SSV17} we have the natural isomorphism
$$
	\varinjlim_{i\in I}\adjust M_i
	\cong\varhocolim_{i\in I} M_i;
$$ 
moreover, since $({\cal U}_k,{\cal V}_k)$ is a compactly generated hence homotopically smashing t-structure of $\De(R)$, by \cite[Corollary~5.6]{SSV17} we infer that $\varinjlim_{i\in I}^{({\cal H})}M_i\in{\cal H}\cap{\cal U}_k^{\bot_0}={\cal H}_k^{\bot_0}$. Therefore, the class ${\cal H}_{ k}^{\bot_0}$ is closed under direct limits taken in $\cal H$. Eventually, we conclude by our assumption together with the identities ${\cal H}_k^\#={\cal H}_k^{\vphantom\#}$ and 
\begin{align*}
	({\cal H}_k^{\#})^{\bot_0}_{\vphantom k} &=
		\{M\in {\cal H}^\#_{\vphantom k}\mid \Hom_{\De(R)}({\cal H}_k^{\#},M)=0\} \cr
	&= \{M\in {\cal H}\mid \Hom_{\De(R)}({\cal H}_k^{\vphantom{\bot_0}},M)=0\}={\cal H}_k^{\bot_0} \;. \qedhere
\end{align*}
\end{proof}

\begin{cor}
Let\/ $\Phi$ be a Thomason filtration bounded above~$r$ and weakly bounded below. Then ${\cal T}_r[-r]$ is a TTF~class of finite type in ${\cal H}^\#$.%
\label{c:T_0-TTF_in_H}
\end{cor}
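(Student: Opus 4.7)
The plan is to recognise $\mathcal{T}_r[-r]$ as the auxiliary heart $\mathcal{H}_r^\#$ associated with the filtration $\Phi_r$ (in the sense of Section~\ref{s:arbitrary_thomason_filtrations}) and then invoke Theorem~\ref{t:H_*-TTF_finite_type}(ii).

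First I would check that Theorem~\ref{t:H_*-TTF_finite_type}(ii) applies. This requires $\mathcal{H} = \mathcal{H}^\#$, which is granted by Theorem~\ref{t:when_hearts_coincide}(ii) because $\Phi$ is weakly bounded below. Applied with $k = r$, the theorem tells us that $\mathcal{H}_r^\#$ is a TTF class of finite type in $\mathcal{H}^\#$.

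Next, I would identify $\mathcal{H}_r^\#$ with $\mathcal{T}_r[-r]$. Since $\Phi$ is bounded above $r$, the definition of $\Phi_r$ gives $\Phi_r(n) = \Phi(r)$ for $n \le r$ and $\Phi_r(n) = \emptyset$ for $n > r$, so $\Phi_r$ is, up to a shift by $r$ in cohomological degree, precisely the filtration treated in Subsection~\ref{ss:a_crucial_example} with $Z = \Phi(r)$. Consequently, the argument of Proposition~\ref{p:torsion-lfp}, applied in the shifted setting, yields $\mathcal{H}_r^\# = \mathcal{T}_r[-r]$: the inclusion $\mathcal{H}_r^\# \subseteq \mathcal{T}_r[-r]$ follows from Proposition~\ref{p:heart_is_bounded}(i) combined with the vanishing $\Phi_r(n) = \emptyset$ for $n > r$ (which forces $\mathcal{U}_r^\# \subseteq \mathcal{D}^{\le r}(R)$, hence $\mathcal{H}_r^\# \subseteq \mathcal{D}^{[r,r]}(R)$), while the reverse inclusion is verified by the argument of Example~\ref{e:HRS-heart&filtration}.

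The only mild obstacle is the bookkeeping of the degree shift, but this is routine: the proofs of Proposition~\ref{p:torsion-lfp} and Example~\ref{e:HRS-heart&filtration} are purely degree-wise and hence transfer verbatim by shifting all cohomological degrees by $r$. Thus the combination of these two steps gives the claim.
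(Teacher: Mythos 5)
Your proposal is correct and follows essentially the same route as the paper: the paper's proof likewise observes that boundedness above $r$ forces ${\cal H}^\#_r={\cal T}_r[-r]$ (via the crucial example of Subsection~\ref{ss:a_crucial_example}) and then invokes Theorem~\ref{t:H_*-TTF_finite_type}, with the weakly-bounded-below hypothesis guaranteeing ${\cal H}={\cal H}^\#$ so that part~(ii) applies. Your write-up merely makes the degree-shift bookkeeping and the verification of the hypothesis of Theorem~\ref{t:H_*-TTF_finite_type}(ii) more explicit than the paper does.
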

\begin{proof}
Thanks to the boundedness of $\Phi$, we have ${\cal H}^\#_r={\cal T}_r^{\vphantom\#}[-r]$, so the conclusion follows by the previous Theorem. Notice that in this case the left constituent of the TTF~triple is $(\tau^{\le r-1}({\cal H}^\#), {\cal T}_r[-r])$, for there are no nonzero morphisms between the members of the pair and, by Corollary~\ref{c:approximation-H_*}(i), for every $M\in{\cal H}^\#$ its standard approximation $\tau^{\le r-1}(M)\to M\to H^r(M)[-r]\buildrel+\over\to$ yields a functorial short exact sequence in ${\cal H}^\#$. \qedhere
\end{proof}

\begin{rem}
The existence in ${\cal H}^\#$ of TTF~triples carries useful information, both on the members of the triples and on the local coherence of ${\cal H}^\#$ itself. More precisely:%
\label{r:lc-TTF_finite_type}%

\begin{enumerate}
\item By Corollary~\ref{c:approximation-H_*}(i), the torsion class ${}^{\bot_0}({\cal H}^\#_k)$ consists of those complexes $M$ of ${\cal H}^\#_{\vphantom k}$ which fit in an exact triangle $\tau^{\le k-1}(M)\to M\to U[1]\buildrel+\over\to$ for some object $U\in{\cal U}^\#_k$.

\item The torsion class ${\cal H}^\#_k$ is a locally finitely presented category by Proposition~\ref{p:heart_is_bounded}. Moreover, if $\Phi$ is weakly bounded below, then we have $\fp({}^{\bot_0}_{\vphantom k}({\cal H}^\#_k)),\fp({\cal H}^\#_k)\subseteq\fp({\cal H}^\#_{\vphantom k})$ by \cite[Lemma~2.4]{PSV19}. Furthermore, by Theorem~\ref{t:lc-TTF_finite_type}, both  ${}^{\bot_0}_{\vphantom k}({\cal H}^\#_k)$ and ${\cal H}^\#_k$ are quasi locally coherent categories in case ${\cal H}^\#$ is locally coherent.

\item If $\Phi$ is weakly bounded below, then thanks to Theorem~\ref{t:when_hearts_coincide} and \cite[Lemma~3.6]{HHZ21}, for any module $X\in{\cal T}_j$ the triangle
$$
	U[1]\longrightarrow X[-j]\longrightarrow H_{\cal H}(X[-j])\buildrel+\over\longrightarrow
$$
can be taken either with $U\in{\cal U}$ or $U\in{\cal U}^\#$, since both $X[-j]$ and $H_{\cal H}(X[-j])$ belong to $\De^+(R)$.

\item In order to distinguish the torsion radicals and coradicals of each torsion pair $({}^{\bot_0}_{\vphantom k}({\cal H}^\#_k),{\cal H}^\#_k)$ of ${\cal H}^\#_{\vphantom k}$ to those of each torsion pair $({\cal T}_k,{\cal F}_k)$ of $R\lMod$ we dealt with so far, we will use the following notation
$$
	{}^{\bot_0}_{\vphantom k}({\cal H}^\#_k)\coreflective[{\bf x}_k]
		{\cal H}^\#_{\vphantom k}\reflective[{\bf y}_k]{\cal H}^\#_k;
$$
furthermore, we will drop the index in case the value of the integer is clear from the context.
\end{enumerate}
\end{rem}

We conclude this section with another remark that, together with Remark~\ref{r:lc-TTF_finite_type}(3), will be frequently used in the subsequent section.

\begin{rem}
Let $\Phi$ be any Thomason filtration, and $k\in\Z$. Then the composition $H^{-k}\circ H_{\cal H}\circ[k]$ defines a functor%
\label{r:functorial_monomorphism_sigma}%
\begin{align*}
	\Sigma^{-k}\colon \TF_{\!-k} &\longrightarrow \TFT_{\!-k} \cr
	X &\longmapsto H^{-k}(H_{\cal H}(X[k]))
\end{align*}
equipped with a functorial monomorphism $\sigma\colon\id\to\Sigma^{-k}$ such that $\coker\sigma_X\in{\cal T}_{-k+2}$. Indeed, for every $X\in\TF_{\!-k}$, i.e.\ $X\in{\cal T}_{-k}\cap{\cal F}_{-k+1}$, its stalk $X[k]$ is an object of ${\cal U}\cap{\cal U}_{-k}$ (see Remark~\ref{r:lc-TTF_finite_type}(3)), hence $H_{{\cal H}_{\vphantom{-k}}}(X[k])\cong H_{{\cal H}_{-k}}(X[k])$, so that the least nonzero cohomology of the latter complex is at degree $-k$ by Proposition~\ref{p:heart_is_bounded}. Therefore, by Lemma~\ref{l:least_nonzero_cohomology}, $\Sigma^{-k}$ is well-defined on objects. Let now $f\colon X\to X'$ be a morphism in $\TF_{\!-k}$. Then we have a diagram
\[
\xymatrix{%
	U[1] \ar[r]\ar@{.>}[d] & X[k] \ar[d]^-{f[k]}\ar[r] & H_{\cal H}(X[k]) \ar[r]^-{+}\ar@{.>}[d]^h & \cr
	U'[1] \ar[r] & X'[k] \ar[r] & H_{\cal H}(X'[k]) \ar[r]^-{+} & 
}
\]
for some $U,U'\in{\cal U}$, which can be completed to a morphism of triangles since the composition $U[1]\to X[k]\buildrel f[k]\over\to X'[k]\to H_{\cal H}(X'[k])$ is the zero map. We have $h=H_{\cal H}(f[k])$, hence $\Sigma^{-k}$ actually is a functor. This said, apply the standard cohomology $H^{-k}$ on the first triangle of the previous commutative diagram, to obtain the exact sequence
$$
	0\longrightarrow H^{-k}(U[1])\longrightarrow X\buildrel\sigma_X\over\longrightarrow\Sigma^{-k}(X)
		\longrightarrow H^{-k+1}(U[1])\longrightarrow0
$$
in which $H^{-k}(U[1])\in{\cal T}_{-k+1}\cap{\cal F}_{-k+1}=0$ by assumption on $X$. Therefore, the $\sigma_X$'s are monomorphisms, moreover they form a natural transformation in view of the construction of the functor $\Sigma^{-k}$. Finally, $\coker\sigma_X\in{\cal T}_{-k+2}$ being isomorphic to $H^{-k+1}(U[1])$.
\end{rem}

\begin{lem}
Let\/ $\Phi$ be a Thomason filtration, $k\in\Z$ and\/ $X\in{\cal T}_{-k}$. Consider the following assertions:%
\label{l:SigmaT}%

\begin{enumerate}
\item[(a)] $H_{\cal H}(X[k])\in\fp({\cal H})$;

\item[(b)] $H_{\cal H}(y_{-k+1}(X)[k])\in\fp({\cal H})$;

\item[(c)] $H_{\cal H}(\Sigma^{-k}(y_{-k+1}(X))[k])\in\fp({\cal H})$;

\item[(d)] $\Sigma^{-k}(y_{-k+1}(X))\in\fp(\TFT_{\!-k})$.
\end{enumerate}

\noindent Then ``${\rm(a)\Leftrightarrow(b)\Leftrightarrow(c)\Rightarrow(d)}$''; moreover, if\/ $\Phi$ is weakly bounded below, then all the assertions are equivalent. In this case, the subclass of\/ ${\cal T}_{-k}$ of modules satisfying the previous equivalent conditions will be denoted by $\Sigma{\cal T}_{-k}$.
\end{lem}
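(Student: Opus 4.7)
The approach rests on the key observation that statements~(a), (b) and~(c) concern the finite presentation in $\cal H$ of one and the same object, namely $M\leqdef H_{\cal H}(X[k])$, while (d) concerns its bottom cohomology $Z\leqdef\Sigma^{-k}(y_{-k+1}(X))=H^{-k}(M)$ living in $\TFT_{\!-k}$. The proofs of (a)$\Leftrightarrow$(b) and (b)$\Leftrightarrow$(c) will both rest on the fact that $H_{\cal H}=\tau^>_{{\cal U}[1]}\circ\tau^\le_{\cal U}$ annihilates every object of ${\cal U}[1]$. For (a)$\Leftrightarrow$(b), the plan is to apply $H_{\cal H}$ to the triangle induced by the shift $[k]$ of the short exact sequence $0\to x_{-k+1}(X)\to X\to y_{-k+1}(X)\to0$: since $x_{-k+1}(X)\in{\cal T}_{-k+1}$, the stalk $x_{-k+1}(X)[k-1]$ lies in $\cal U$, so $x_{-k+1}(X)[k]\in{\cal U}[1]$ and the leftmost vertex of the resulting triangle in $\cal H$ vanishes, yielding $H_{\cal H}(X[k])\cong H_{\cal H}(y_{-k+1}(X)[k])$. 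The step (b)$\Leftrightarrow$(c) is identical in spirit, now invoking the short exact sequence $0\to y_{-k+1}(X)\to\Sigma^{-k}(y_{-k+1}(X))\to C\to0$ from Remark~\ref{r:functorial_monomorphism_sigma}, whose cokernel lies in ${\cal T}_{-k+2}$; hence $C[k]\in{\cal U}[2]\subseteq{\cal U}[1]$ and $H_{\cal H}(C[k])=0$.

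For (c)$\Rightarrow$(d), the first step is to recognise $\Sigma^{-k}$ as the left adjoint to the inclusion $\TFT_{\!-k}\hookrightarrow\TF_{\!-k}$: the two conditions $\TFT_{\!-k}\subseteq{\cal F}_{-k+1}\cap\ker\Ext^1_R({\cal T}_{-k+2},-)$ are exactly what makes $\sigma$ the unit of the adjunction, and force $\sigma$ to be an isomorphism on $\TFT_{\!-k}$ itself. As a consequence, direct limits in $\TFT_{\!-k}$ are obtained by applying $\Sigma^{-k}$ to the corresponding $R$-module limits, which already belong to $\TF_{\!-k}$. Given a direct system $(Z_i)_{i\in I}$ in $\TFT_{\!-k}$ with colimit $W$ and writing $N_i\leqdef H_{\cal H}(Z_i[k])\in\cal H$, the left-adjointness of $H_{\cal H}\colon\cal U\to\cal H$ together with the argument of (b)$\Leftrightarrow$(c) applied to $\varinjlim_i Z_i\in\TF_{\!-k}$ yields $\varinjlim_{i\in I}\adjust N_i\cong H_{\cal H}(W[k])$. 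Combining the adjunction $\Hom_{\cal H}(M,-)\cong\Hom_{\De(R)}(Z[k],-)$ with the hypothesis $M\in\fp(\cal H)$ then transfers the finite presentation from $M$ in $\cal H$ to $Z$ in $\TFT_{\!-k}$, through the canonical comparison with $\Hom_R(Z,-)$.

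Under the weakly-bounded-below assumption, the same machinery will be run backwards to obtain (d)$\Rightarrow$(c). The crucial input is that Proposition~\ref{p:heart_is_bounded}(i) confines $\cal H\subseteq\De^{\ge l}(R)$; Lemma~\ref{l:cohomologies_commute} then ensures the $-k$-th cohomology of a direct limit in $\cal H$ is the $R$-module colimit of the pointwise $-k$-cohomologies, and Lemma~\ref{l:least_nonzero_cohomology} shows that such colimits automatically lie in $\TFT_{\!-k}$, so direct limits in $\TFT_{\!-k}$ coincide with $R$-module direct limits for systems originating there. Pulling back along the adjunction, the finite presentation of $Z$ in $\TFT_{\!-k}$ propagates to that of $M$ in $\cal H$.

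The hardest part of the proof will be the careful identification of $\Hom_{\De(R)}(Z[k],N)$ with $\Hom_R(Z,H^{-k}(N))$ for $N\in\cal H$: since the $\TFT_{\!-k}$-condition imposes only the first-degree Ext vanishing, the stalk $Z[k]$ typically fails to lie in $\cal H$, and one has to control the spectral-sequence contributions from the higher cohomologies of $N$ lying strictly below degree~$-k$. The weakly-bounded-below hypothesis is exactly what rules these contributions out and makes the transfer of finite presentation reversible.
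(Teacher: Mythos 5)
Your treatment of $\mathrm{(a)}\Leftrightarrow\mathrm{(b)}\Leftrightarrow\mathrm{(c)}$ is exactly the paper's: kill $x_{-k+1}(X)[k]\in{\cal U}[1]$ and $\coker(\sigma)[k]\in{\cal U}[2]$ under $H_{\cal H}$. For $\mathrm{(c)}\Rightarrow\mathrm{(d)}$ you take a genuinely different route: the paper reduces to Lemma~\ref{l:fp(H)&TFT} applied to the weakly bounded below sub-filtration $\Phi_{-k}$ (using $H_{{\cal H}}(S[k])=H_{{\cal H}_{-k}}(S[k])$ and $H^{-k}(H_{\cal H}(S[k]))=S$), whereas you re-derive the needed transfer from the reflector $\Sigma^{-k}\dashv\iota$ and the adjunction $\Hom_{\cal H}(H_{\cal H}(U),-)\cong\Hom_{\De(R)}(U,-)$. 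That can be made to work, but the isomorphism $\varinjlim_{i}^{({\cal H})}H_{\cal H}(Z_i[k])\cong H_{\cal H}\bigl((\varinjlim_i Z_i)[k]\bigr)$ is not a formal consequence of left-adjointness (the colimit of the stalks is not taken in ${\cal U}$); it needs the coproduct-preservation argument with the colimit-defining triangle, as carried out in Section~\ref{s:applications}.

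The genuine gap is in $\mathrm{(d)}\Rightarrow\mathrm{(c)}$. You claim that the weakly-bounded-below hypothesis ``rules out'' the contributions to $\Hom_{\De(R)}(Z[k],N)$ coming from cohomologies of $N$ in degrees strictly below $-k$, so that $\Hom_{\De(R)}(Z[k],N)\cong\Hom_R(Z,H^{-k}(N))$ for all $N\in{\cal H}$. This is false: weak boundedness below only gives ${\cal H}\subseteq\De^{\ge l}(R)$ for the weak lower bound $l$, which is in general strictly smaller than $-k$, and objects of $\cal H$ do have cohomology in degrees $l,\dots,-k-1$; these contribute $\Ext^{j}_R(Z,H^{-k-j}(N))$-terms ($j\ge1$) that destroy the identification, so finite presentation of $Z$ in $\TFT_{\!-k}$ cannot be pulled back directly to finite presentation of $M$ against arbitrary direct systems of $\cal H$. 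Relatedly, your appeal to Lemma~\ref{l:least_nonzero_cohomology} to place $\varinjlim H^{-k}(N_i)$ in $\TFT_{\!-k}$ only applies when $-k$ is the least nonzero degree of the limit, which again fails for general systems. The actual role of the hypothesis is different: it forces ${\cal H}={\cal H}^\#$, hence (Theorem~\ref{t:H_*-TTF_finite_type}(ii)) that ${\cal H}_{-k}$ is a TTF class \emph{of finite type} in $\cal H$, so that $\fp({\cal H}_{-k})\subseteq\fp({\cal H})$. The correct argument first establishes $M=H_{{\cal H}_{-k}}(S[k])\in\fp({\cal H}_{-k})$ --- where every object \emph{is} concentrated in degrees $\ge-k$ and your comparison with $\Hom_R(Z,H^{-k}(-))$ is valid --- and only then transports finite presentation into $\cal H$ via this inclusion. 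Without that intermediate step your implication does not go through.
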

\begin{proof}\mbox{}\vglue0pt%
\noindent``${\rm(a)\Leftrightarrow(b)}$''\enspace Consider the approximation $0\to x_{-k+1}(X)\to X\to y_{-k+1}(X)\to0$ of $X$ within the torsion pair $({\cal T}_{-k+1},{\cal F}_{-k+1})$ of $R\lMod$. Then $x_{-k+1}(X)[k]\in{\cal U}[1]$ hence applying the functor $H_{\cal H}$ on the triangle involving the stalk complexes of the sequence, we obtain $H_{\cal H}(X[k])\cong H_{\cal H}(y_{-k+1}(X)[k])$, and we are done.

\noindent``${\rm(b)\Leftrightarrow(c)}$''\enspace Since $y_{-k+1}(X)\in\TF_{\!-k}$, in view of Remark~\ref{r:functorial_monomorphism_sigma} we have a short exact sequence
$$
	0\longrightarrow y_{-k+1}(X)\longrightarrow \Sigma^{-k}(y_{-k+1}(X))\longrightarrow
		\coker\sigma_{y_{-k+1}(X)}\longrightarrow0
$$
say it $0\to Y\to S\to C\to0$ for short, in which $C\in{\cal T}_{-k+2}$. By applying the functor $H_{\cal H}$ on the triangle involving the stalk complexes of such sequence, we obtain $H_{\cal H}(Y[k])\cong H_{\cal H}(S[k])$, whence the thesis.

\noindent``${\rm(c)\Rightarrow(d)}$''\enspace The proof is similar to that of Theorem~\ref{t:when_hearts_coincide}. In view of the previous notation, we have $S[k]\in{\cal U}^\#_{-k}$, hence $H_{{\cal H}\vphantom{{}^\#_{-k}}}(S[k])=H_{{\cal H}^\#_{-k}}(S[k])$. We now apply Lemma~\ref{l:fp(H)&TFT} on the heart ${\cal H}^\#_{-k}$, obtaining that
$
	H^{-k}(H_{{\cal H}^\#_{-k}}(S[k]))\in\fp(\TFT_{\!-k})
$, but this latter object coincides with $S$ by Lemma~\ref{l:on_length_l+1}(ii). Notice that, even if our filtration $\Phi_{-k}$ is not of finite length, the arguments of the cited forthcoming results still hold true for the cohomologies involved in the present proof.

If $\Phi$ is weakly bounded below, we have ${\cal H}^\#_{-k}={\cal H}_{-k}^{\vphantom\#}$ and ${\cal H}^\#_{\vphantom k}={\cal H}$, by Theorem~\ref{t:when_hearts_coincide}. By Remark~\ref{r:lc-TTF_finite_type}(2) we have that $\fp({\cal H}_{-k})\subseteq\fp({\cal H})$, eventually by Lemma~\ref{l:fp(H)&TFT} applid on ${\cal H}_{-k}$ we infer that ``${\rm(d)\Rightarrow(c)}$''. \qedhere
\end{proof}

\section{Thomason filtrations of finite length}%
\label{s:thomason_filtrations_of_finite_length}%
The present section is devoted to deepen the approximation theory of the AJS~heart ${\cal H}^\#$ associated with a Thomason filtration $\Phi$ of finite length (see Definition~\ref{d:bounded_Thomason_filtrations}), in order to characterise its local coherence. In this vein, the main tools we have at our disposal are given by the TTF~classes of finite type ${\cal H}^\#_k$ detected in Theorem~\ref{t:H_*-TTF_finite_type}, for they allow to specialise Theorem~\ref{t:lc-TTF_finite_type}, and by Theorem~\ref{t:when_hearts_coincide}, which ensures that all these hearts coincide with those of the compactly generated t-structures corresponding to $\Phi$; in view of this latter result, we will make the identifications
$$
	{\cal H}={\cal H}^\#_{\vphantom k}\qquad\hbox{and}\qquad
	{\cal H}_k^{\vphantom\#}={\cal H}^\#_k\quad \hbox{for all $k\in\Z$},
$$
so that, within ${\cal H}={\cal H}^\#$, we also have ${}^{\bot_0}({\cal H}^\#_k)={}^{\bot_0}_{\vphantom k}{\cal H}_k$ and $({\cal H}^\#_k)^{\bot_0}_{\vphantom k}={\cal H}_k^{\bot_0}$. Bearing in mind Remark~\ref{r:lc-TTF_finite_type}, it is then natural to seek for a recursive argument, namely a result which takes in account the local coherence of each heart ${\cal H}_k$. Therefore, we set $l+1$ to be the length of $\Phi$.

\begin{lem}
Let\/ $\Phi$ be a Thomason filtration of length~$l+1$. Then:%
\label{l:on_length_l+1}%

\begin{enumerate}
\item[(i)] For every $X\in{\cal T}_{-l-1}$, we have $H_{\cal H}(X[l+1])\in{}^{\bot_0}{\cal H}_{-l}$;

\item[(ii)] For every $X\in\TFT_{\!-l-1}$ there exist $U\in{\cal U}_{-l+2}$ and a triangle $U[1]\to X[l+1]\to H_{\cal H}(X[l+1])\buildrel+\over\to$. In particular, $H^{-l-1}(H_{\cal H}(X[l+1]))=X$.

\item[(iii)] for all\/ $M\in{}^{\bot_0}{\cal H}_{-l}$, there exists in $\cal H$ a functorial short exact sequence $0\to L\to W\to M\to0$, in which  $L\in{\cal H}_{-l+1}$ and\/ $W\cong H_{\cal H}(X[l+1])$, where $X=H^{-l-1}(M)$;

\item[(iv)] ${}^{\bot_0}{\cal H}_{-l}=\mathop{\rm Gen}(H_{\cal H}(K(J)[l+1])\mid J\in{\cal B}_{-l-1})$.
\end{enumerate}
\end{lem}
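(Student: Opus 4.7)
The plan is to prove the four assertions in order, with (ii) feeding into (iii), and (i) together with (iii) feeding into (iv). For (i), I would use that $X[l+1]\in{\cal U}\cap\De^+(R)$, which follows from $X\in{\cal T}_{-l-1}$ and the weakly bounded below assumption on $\Phi$. By the adjunction realising $H_{\cal H}\mathclose\upharpoonright_{\cal U}$ as left adjoint to the inclusion ${\cal H}\hookrightarrow{\cal U}$ (Proposition~\ref{p:heart_is_bounded}(iii)), for any $N\in{\cal H}_{-l}$ one has $\Hom_{\cal H}(H_{\cal H}(X[l+1]),N)\cong\Hom_{\De(R)}(X[l+1],N)$; since Proposition~\ref{p:heart_is_bounded}(i) applied to $\Phi_{-l}$ places $N\in\De^{\ge-l}(R)$, this Hom vanishes by the standard $t$-structure of $\De(R)$.

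For (ii), I would invoke the defining triangle $U[1]\to X[l+1]\to H_{\cal H}(X[l+1])\buildrel+\over\to$ together with Remark~\ref{r:functorial_monomorphism_sigma} to produce the natural monomorphism $\sigma_X\colon X\hookrightarrow\Sigma^{-l-1}(X)=H^{-l-1}(H_{\cal H}(X[l+1]))$ with $\coker\sigma_X\in{\cal T}_{-l+1}$. The Ext-vanishing condition packaged in $\TFT_{\!-l-1}$ splits the short exact sequence $0\to X\to\Sigma^{-l-1}(X)\to\coker\sigma_X\to0$; Lemma~\ref{l:least_nonzero_cohomology} applied to $H_{\cal H}(X[l+1])$ (whose least nonzero cohomology is at $-l-1$) places $\Sigma^{-l-1}(X)\in{\cal F}_{-l}$, and subobject-closure of ${\cal F}_{-l}$ forces the split summand $\coker\sigma_X\in{\cal T}_{-l+1}\cap{\cal F}_{-l}=0$. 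So $\sigma_X$ is an isomorphism, and the cohomology LES of the defining triangle then forces $H^n(U)=0$ for $n\le-l+1$; together with $U\in{\cal U}$ (whence $H^n(U)\in{\cal T}_n$ for all $n$), this is exactly the cohomological characterisation of ${\cal U}_{-l+2}$.

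For (iii), I would first reduce to $M\ne0$, in which case $H^{-l-1}(M)=X\ne0$ is forced---otherwise $M\in{\cal H}\cap\De^{\ge-l}(R)={\cal H}_{-l}$ would contradict $M\in{}^{\bot_0}{\cal H}_{-l}$. Lemma~\ref{l:least_nonzero_cohomology} then places $X\in\TFT_{\!-l-1}$, so (ii) applies. Use the adjunction of (i) to turn the canonical inclusion $X[l+1]=\tau^{\le-l-1}(M)\hookrightarrow M$ into a morphism $f\colon W\to M$ in $\cal H$, and form the octahedron on $X[l+1]\buildrel\eta\over\to W\buildrel f\over\to M$: combining the triangle of (ii) (with $V_0\in{\cal U}_{-l+2}$) and the triangle $X[l+1]\to M\to U^M[1]\buildrel+\over\to$ extracted from Remark~\ref{r:lc-TTF_finite_type}(1) (with $U^M\in{\cal U}^\#_{-l}$, so $U^M[1]=\tau^{\ge-l}(M)$) yields a triangle $V_0[2]\to U^M[1]\to\mathop{\rm cone}(f)\buildrel+\over\to$. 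Setting $L\leqdef\mathop{\rm cone}(f)[-1]$, the main obstacle will be a careful cohomological LES computation showing $L\in{\cal H}_{-l+1}$: the sharp vanishing $H^n(V_0)=0$ for $n\le-l+1$ obtained in (ii), combined with the standard cohomology of $\tau^{\ge-l}(M)$ and the orthogonality $M\in{}^{\bot_0}{\cal H}_{-l}$, should confine $\mathop{\rm cone}(f)$ to $\De^{\ge-l}(R)$ and verify the aisle and coaisle conditions characterising ${\cal H}_{-l+1}$. Functoriality is automatic from that of $X=H^{-l-1}(M)$, $W=H_{\cal H}(X[l+1])$, $U^M$, and the naturality of the octahedral construction.

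For (iv), the inclusion ``$\supseteq$'' extends the argument of (i): $K(J)[l+1]\in{\cal U}\cap\De^{\le-l-1}(R)$ for $J\in{\cal B}_{-l-1}$ (all Koszul cohomologies are in ${\cal T}_{V(J)}\subseteq{\cal T}_{-l-1}$), so the adjunction argument of (i) places $H_{\cal H}(K(J)[l+1])\in{}^{\bot_0}{\cal H}_{-l}$, and closure of the torsion class under quotients and coproducts yields $\mathop{\rm Gen}(\cdots)\subseteq{}^{\bot_0}{\cal H}_{-l}$. For ``$\subseteq$'', I would use (iii) to realise $M$ as a quotient of $W=H_{\cal H}(X[l+1])$ in $\cal H$. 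By Proposition~\ref{p:torsion-Thomason_subset}(ii), $X$ admits an epimorphism $\bigoplus_iR/J_i\twoheadrightarrow X$ in $R\lMod$ with $J_i\in{\cal B}_{-l-1}$; lifting to stalk complexes and applying the colimit-preserving $H_{\cal H}$ produces $\bigoplus_i\adjust H_{\cal H}(R/J_i[l+1])\twoheadrightarrow W$ in $\cal H$. Finally, Proposition~\ref{p:Koszul_complex_in_hearts} (together with the identification $H_{\cal H}(K(J)[-n])\cong H_{\cal H}(R/J[-n])$ proved there) identifies $H_{\cal H}(K(J_i)[l+1])\cong H_{\cal H}(R/J_i[l+1])$, giving the desired cover of $M$ by a coproduct of $H_{\cal H}(K(J)[l+1])$'s.
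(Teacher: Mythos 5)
Parts (i), (ii) and (iv) of your proposal are correct. In (i) you take a genuinely shorter route than the paper: rather than verifying the structural characterisation of ${}^{\bot_0}{\cal H}_{-l}$ from Remark~\ref{r:lc-TTF_finite_type}(1) by a cohomology computation, you check orthogonality directly through the adjunction $\Hom_{\cal H}(H_{\cal H}(X[l+1]),N)\cong\Hom_{\De(R)}(X[l+1],N)$ and the vanishing $\Hom_{\De(R)}(\De^{\le-l-1}(R),\De^{\ge-l}(R))=0$; this is legitimate because Theorem~\ref{t:H_*-TTF_finite_type} identifies ${}^{\bot_0}{\cal H}_{-l}$ with the left orthogonal of ${\cal H}_{-l}$ computed in ${\cal H}$. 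Part (ii) is the paper's argument repackaged through Remark~\ref{r:functorial_monomorphism_sigma}: the splitting of $0\to X\to\Sigma^{-l-1}(X)\to\coker\sigma_{X}\to0$ and the vanishing $\coker\sigma_X\in{\cal T}_{-l+1}\cap{\cal F}_{-l}=0$ are exactly the paper's two vanishings $H^{-l}(U)=0=H^{-l+1}(U)$. Part (iv) coincides with the paper's proof.

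The soft spot is (iii), where you diverge from the paper and leave the decisive step asserted rather than proved. The paper does not verify $L\in{\cal H}_{-l+1}$ cohomologically: it observes that $U^M=\tau^{\ge-l}(M)[-1]$ lies in ${\cal U}_{-l}\cap\De^{\ge-l+1}(R)\subseteq{\cal U}_{-l+1}$ and defines $L\leqdef H_{{\cal H}_{-l+1}}(U^M)$ via the truncation triangle of $U^M$, so $L\in{\cal H}_{-l+1}$ by construction; the octahedron then produces $W$ and identifies it with $H_{\cal H}(X[l+1])$. In your construction $L$ is the cone of $V_0[1]\to U^M$, and the long exact sequence you invoke gives $H^n(L)=0$ for $n\le-l-1$, but at degree $-l$ it only yields a monomorphism $H^{-l}(L)\to H^{-l+2}(V_0)$, hence $H^{-l}(L)\in{\cal T}_{-l+2}$ by hereditariness --- not $H^{-l}(L)=0$. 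None of the ingredients you list closes this. It can be closed: extension-closure of the aisle in $V_0[1]\to U^M\to L\buildrel+\over\to$ and of the coaisle in $M[-2]\to L[-1]\to W[-1]\buildrel+\over\to$ gives $L\in{\cal H}$, hence $L\in{\cal H}\cap\De^{\ge-l}(R)={\cal H}_{-l}$; then Lemma~\ref{l:least_nonzero_cohomology} applied to $L$ places $H^{-l}(L)\in{\cal F}_{-l+1}$, and ${\cal T}_{-l+1}\cap{\cal F}_{-l+1}=0$ forces $H^{-l}(L)=0$. Finally, the functoriality of $0\to L\to W\to M\to0$ should be credited to the functoriality of $f$ (the adjoint of the truncation map $\tau^{\le-l-1}(M)\to M$) and of kernels in the abelian category ${\cal H}$, not to the octahedral construction, which is not natural.
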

\begin{proof}
We will often exploit the characterisation of the torsion class ${}^{\bot_0}{\cal H}_{-l}$ deduced from Corollary~\ref{c:approximation-H_*} (see Remark~\ref{r:lc-TTF_finite_type}(1)).

\noindent(i)\enspace Given $X\in{\cal T}_{-l-1}$, let $M\leqdef H_{\cal H}(X[l+1])$ and consider the exact triangle $U[1]\to X[l+1]\to M\buildrel+\over\to$ given by some object $U\in{\cal U}$ (see Remark~\ref{r:lc-TTF_finite_type}(3)). Let us show that $M$ satisfies the aforementioned characterisation of the torsion class ${}^{\bot_0}{\cal H}_{-l}$. Applying the standard cohomology on the above triangle we obtain $H^j(M)\cong H^{j+2}(U)$ for all $j\ge-l+1$ and that $H^{-l}(M)\le H^{-l+2}(U)$, where the former two are modules in the torsion class ${\cal T}_{j+2}$. We claim that $\tau^{\ge-l}(M)[-1]\in{\cal U}_{-l}$, whence the conclusion thanks to the triangle
$$
	H^{-l-1}(M)[l+1]\longrightarrow M\longrightarrow \tau^{\ge-l}(M)\buildrel+\over\longrightarrow \;.
$$
Indeed, we have
$$
	H^j(\tau^{\ge-l}(M)[-1])=H^{j-1}(\tau^{\ge-l}(M))=
	\begin{cases}
		0 &\hbox{if $j-1<-l$}, \cr
		H^{j-1}(M) &\hbox{if $j-1\ge-l$,}
	\end{cases}
$$
hence, when $j-1\ge-l$, we have $H^{j-1}(M)\cong H^{j+1}(U)\in{\cal T}_{j+1}\subseteq{\cal T}_j$, as desired.

\noindent(ii)\enspace Let $X\in\TFT_{\!-l-1}$ and $U[1]\to X[l+1]\to M\buildrel+\over\to$ as in part~(i). The long exact sequence in standard cohomology yields
$$
	0\longrightarrow H^{-l}(U)\longrightarrow X\longrightarrow
		H^{-l-1}(M)\longrightarrow H^{-l+1}(U)\longrightarrow0
$$
in which in fact $H^{-l}(U)=0$ for it belongs simultaneously to ${\cal T}_{-l}$ and ${\cal F}_{-l}$ by assumption on $X$. Moreover, the resulting extension of $H^{-l-1}(M)$ is split by assumption on $X$ again, meaning that $H^{-l+1}(U)=0$ as well. Consequently, $U\in\De^{\ge-l+2}(R)\cap{\cal U}$, as desired.

\noindent(iii)\enspace Let $M\in{}^{\bot_0}{\cal H}_{-l}$, so that by Proposition~\ref{p:heart_is_bounded} and Remark~\ref{r:lc-TTF_finite_type}(1) there exists $U\in{\cal U}_{-l}$ and an exact triangle $H^{-l-1}(M)[l+1]\to M\to U[1]\buildrel+\over\to$, in which we set $X\leqdef H^{-l-1}(M)$. The long exact sequence in standard cohomology yields in particular $U\in{\cal U}_{-l}\cap\De^{[-l+1,0]}(R)\subseteq{\cal U}_{-l+1}$. The usual triangle of $U$ w.r.t.\ the heart of $({\cal U}_{-l+1},{\cal V}_{-l+1})$ gives the following octahedron
\[
\xymatrix{%
	U'[1] \ar@{=}[r]\ar[d] & U'[1] \ar[d] \cr
	U \ar[r]\ar[d] & X[l+1] \ar[r]\ar[d] & M \ar[r]^-{+}\ar@{=}[d] & \cr
	L \ar[r]\ar[d]_+ & W \ar[d]^+\ar[r] & M \ar[r]^-{+} & \cr
	&
}
\]
for some $U'\in{\cal U}_{-l+1}$, so that $L\cong H_{{\cal H}_{-l+1}}(U)$, and a cone $W$, which actually belongs to $\cal H$ by extension-closure applied on the second horizontal triangle. Applying the t-cohomological functor $H_{\cal H}$ on the second vertical triangle we obtain $W\cong H_{\cal H}(X[l+1])$, hence the former triangle is a functorial short exact sequence of $\cal H$; indeed, it is the image under $H_{\cal H}$ of the first horizontal triangle, which is in turn functorial.

\noindent(iv)\enspace Let $M$ and $X$ be as in part~(iii). By Proposition~\ref{p:torsion-Thomason_subset} we know that there exist a family $(J_i)_{i\in I}$ of finitely generated ideals in  the Gabriel filter associated with the torsion class ${\cal T}_{-l-1}$, and an epimorphism $\varphi\colon\bigoplus_{i\in I}(R/J_i)^{(\alpha_i)}\to X$. Applying $H_{\cal H}$ on the associated triangle of the stalk complexes concentrated in degrees $-l-1$, bearing in mind that it commutes with coproducts of $\De(R)$, we obtain the exact sequence of $\cal H$
$$
	H_{\cal H}(\ker(\varphi)[l+1])\longrightarrow \bigoplus_{i\in I}H_{\cal H}(R/J_i[l+1])^{(\alpha_i)}\longrightarrow
		\smash{\overbrace{H_{\cal H}(X[l+1])}^{\textstyle\empty\cong W}}\longrightarrow0 \;.
$$
Thus, our claim follows once we prove that $H_{\cal H}(K(J)[l+1])\cong H_{\cal H}(R/J[l+1])$ for all $J\in{\cal B}_{-l-1}$, since $M$ is an epimorphic image of $W$ in $\cal H$. Shifting by $l+1$ the standard approximation $\tau^{\le-1}(K(J))\to K(J)\to R/J[0]\buildrel+\over\to$ of the Koszul complex $K(J)$, we see that $\tau^{\le-1}(K(J))[l+1]=(\tau^{\le-1}(K(J)[l]))[1]$ is an object of the aisle ${\cal U}[1]$. Therefore, applying the functor $H_{\cal H}$ on the resultin triangle, we conclude. \qedhere
\end{proof}

\begin{rem}\mbox{\label{r:split_in_fp(TFT)}}\vglue0pt%
\begin{enumerate}
\item For all Thomason filtrations of finite length and $k\in\Z$, the class $\TFT_{\!k}$ is closed under direct limits (of $R\lMod$). Indeed, let $(X_i)_{i\in I}$ be a direct system in $\TFT_{\!k}$, and for all $i\in I$ consider $H_{{\cal H}_k}(X_i[-k])\cong H_{{\cal H}_{\vphantom k}}(X_i[-k])$; since $\Phi_k$ is weakly bounded below, by using the proof of Lemma~\ref{l:on_length_l+1}(ii) we get that $H^k(H_{\cal H}(X_i[-k]))\cong X_i$. On the other hand, $\varinjlim_{i\in I}^{({\cal H})} H_{\cal H}(X_i[-k])$ belongs to ${\cal H}_k$, which in turn is contained in $\De^{\ge k}(R)$, and consequently $H^k(\varinjlim_{i\in I}^{({\cal H})} H_{\cal H}(X_i[-k]))\in\TFT_{\!k}$ by Lemma~\ref{l:least_nonzero_cohomology}. But this latter module is isomorphic to $\varinjlim_{i\in I} H^k(H_{\cal H}(X_i[-k]))\cong \varinjlim_{i\in I} X_i$, as desired (see also the proof of Corollary~\ref{c:TF_n-AB5}).

\item For every $X\in\TFT_{\!-l-1}$ and $M\in{\cal H}_{-l+1}$ we have $\Ext^1_{\cal H}(H_{\cal H}(X[l+1]),M)=0$. Indeed, by Lemma~\ref{l:on_length_l+1}(ii) there are $U\in{\cal U}_{-l+2}$ and a triangle $U[1]\to X[l+1]\to H_{\cal H}(X[l+1])\buildrel+\over\to$, hence applying $\Hom_{\De(R)}(-,M[1])$ on the triangle we obtain, by \cite{Ver}, the desired vanishing of the ext-group since in the exact sequence
$$
	\Hom_{\De(R)}(U[2],M[1])\longrightarrow \Ext^1_{\cal H}(H_{\cal H}(X[l+1]),M)\longrightarrow
	\Hom_{\De(R)}(X[l+1],M[1])
$$
the first term is zero by axioms of t-structure, as well as the third since $M[1]\in\De^{\ge-l}(R)$.
\end{enumerate}
\end{rem}

\begin{lem}
Let\/ $\Phi$ be a Thomason filtration of length $l+1$, and $X\in{\cal T}_{-l-1}$. Then $H_{\cal H}(X[l+1])\in\fp({\cal H})$ if and only if the functor $\Hom_R(X,-)$ commutes with direct limits of direct systems in $\TFT_{\!-l-1}$.

In particular, for all\/ $B\in\fp(\TFT_{\!-l-1})$ we have $H_{\cal H}(B[l+1])\in\fp({\cal H})$.%
\label{l:fp(H)&TFT}%
\end{lem}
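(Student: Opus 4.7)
Set $W := H_\mathcal{H}(X[l+1])$. The plan is to reduce to the case $X \in \TFT_{-l-1}$, to establish a natural isomorphism
$\Hom_\mathcal{H}(W, M) \cong \Hom_R(X, H^{-l-1}(\mathbf{x}(M)))$ for $M \in \mathcal{H}$ (where $\mathbf{x}$ is the torsion radical of the TTF triple supplied by Theorem~\ref{t:H_*-TTF_finite_type}), and then to exploit that both $\mathbf{x}$ and $H^{-l-1}$ commute with direct limits in $\mathcal{H}$. For the reduction, put $\tilde X := \Sigma^{-l-1}(y_{-l}(X)) \in \TFT_{-l-1}$ (Remark~\ref{r:functorial_monomorphism_sigma}). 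By Lemma~\ref{l:SigmaT} ((a)$\Leftrightarrow$(c)) the conditions $W \in \fp(\mathcal{H})$ and $H_\mathcal{H}(\tilde X[l+1]) \in \fp(\mathcal{H})$ are equivalent. In parallel, the short exact sequences $0 \to x_{-l}(X) \to X \to y_{-l}(X) \to 0$ and $0 \to y_{-l}(X) \to \tilde X \to C \to 0$ (the latter with $C \in \mathcal{T}_{-l+1}$), together with $\TFT_{-l-1} \subseteq \mathcal{F}_{-l+1} \cap \ker\Ext^1_R(\mathcal{T}_{-l+1}, -)$, produce natural isomorphisms $\Hom_R(X, Y) \cong \Hom_R(y_{-l}(X), Y) \cong \Hom_R(\tilde X, Y)$ for every $Y \in \TFT_{-l-1}$. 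Hence the commutation-with-direct-limits property on $\TFT_{-l-1}$-systems transfers between $X$ and $\tilde X$, and we may henceforth assume $X \in \TFT_{-l-1}$.

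Now for the key natural isomorphism, since $W \in {}^{\bot_0}\mathcal{H}_{-l}$ by Lemma~\ref{l:on_length_l+1}(i), the torsion approximation $0\to\mathbf{x}(M)\to M\to\mathbf{y}(M)\to0$ in $\mathcal{H}$ immediately gives $\Hom_\mathcal{H}(W, M) \cong \Hom_\mathcal{H}(W, \mathbf{x}(M))$. Lemma~\ref{l:on_length_l+1}(iii) applied to $\mathbf{x}(M)$ then yields a functorial short exact sequence $0 \to L \to W' \to \mathbf{x}(M) \to 0$ with $L \in \mathcal{H}_{-l+1}$ and $W' = H_\mathcal{H}(H^{-l-1}(\mathbf{x}(M))[l+1])$. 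Using $\mathcal{H}_{-l+1} \subseteq \mathcal{H}_{-l}$ (Lemma~\ref{l:inclusion-H_*} applied to the filtration $\Phi_{-l}$, noting $(\Phi_{-l})_{-l+1} = \Phi_{-l+1}$), one gets $\Hom_\mathcal{H}(W, L) = 0$; moreover $\Ext^1_\mathcal{H}(W, L) = 0$ by Remark~\ref{r:split_in_fp(TFT)}(2), the place where $X \in \TFT_{-l-1}$ is crucial. The long exact sequence then collapses to $\Hom_\mathcal{H}(W, \mathbf{x}(M)) \cong \Hom_\mathcal{H}(W, W')$, and the adjunction $H_\mathcal{H}|_\mathcal{U} \dashv (\text{inclusion})$ further rewrites this as $\Hom_{\De(R)}(X[l+1], W')$. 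Finally, since $H^{-l-1}(\mathbf{x}(M)) \in \TFT_{-l-1}$ by Lemma~\ref{l:least_nonzero_cohomology}, Lemma~\ref{l:on_length_l+1}(ii) supplies a triangle $U'[1] \to H^{-l-1}(\mathbf{x}(M))[l+1] \to W' \buildrel+\over\longrightarrow$ with $U' \in \mathcal{U}_{-l+2} \cap \De^{\geq -l+2}(R)$; applying $\Hom_{\De(R)}(X[l+1], -)$ kills both neighbouring terms for purely degree reasons ($X[l+1]$ is a stalk at $-l-1$, while $U'[1], U'[2]$ lie in $\De^{\geq -l+1}(R), \De^{\geq -l}(R)$ respectively), giving $\Hom_{\De(R)}(X[l+1], W') \cong \Hom_R(X, H^{-l-1}(\mathbf{x}(M)))$ as desired.

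With the natural isomorphism in hand, both $\mathbf{x}$ (by Theorem~\ref{t:H_*-TTF_finite_type}(ii), the torsion pair $({}^{\bot_0}\mathcal{H}_{-l}, \mathcal{H}_{-l})$ being of finite type) and $H^{-l-1}$ (by Lemma~\ref{l:cohomologies_commute}) commute with direct limits in $\mathcal{H}$. Feeding any direct system $(M_i)_{i \in I}$ of $\mathcal{H}$ through the isomorphism therefore reduces the question whether $\Hom_\mathcal{H}(W, -)$ commutes with $\varinjlim^{(\mathcal{H})}_i$ to whether $\Hom_R(X, -)$ commutes with $\varinjlim_i$ applied to the induced system $(H^{-l-1}(\mathbf{x}(M_i)))_{i \in I}$ in $\TFT_{-l-1}$ (nonzero terms lying in $\TFT_{-l-1}$ by Lemma~\ref{l:least_nonzero_cohomology}). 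The $(\Leftarrow)$ direction is immediate; for $(\Rightarrow)$, any direct system $(Y_i)_{i \in I}$ in $\TFT_{-l-1}$ is realised by $M_i := H_\mathcal{H}(Y_i[l+1])$, since by Lemma~\ref{l:on_length_l+1}(i)(ii) one has $\mathbf{x}(M_i) = M_i$ and $H^{-l-1}(M_i) = Y_i$. The ``in particular'' assertion then follows from the definition of $\fp(\TFT_{-l-1})$, using Remark~\ref{r:split_in_fp(TFT)}(1) to ensure $\TFT_{-l-1}$ has direct limits. The main obstacle in this plan is the middle paragraph---specifically, the bookkeeping of torsion-theoretic orthogonalities and, above all, the invocation of Remark~\ref{r:split_in_fp(TFT)}(2) to force $\Ext^1_\mathcal{H}(W, L) = 0$; it is precisely this $\Ext^1$-splitness, built into the definition of $\TFT_{-l-1}$, that makes the reduction of the first paragraph unavoidable.
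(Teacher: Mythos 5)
Your proof is correct, but it takes a genuinely longer route than the paper's. The paper obtains the key natural isomorphism $\Hom_{\cal H}(H_{\cal H}(X[l+1]),N)\cong\Hom_R(X,H^{-l-1}(N))$ for \emph{every} $N\in{\cal H}$ and \emph{every} $X\in{\cal T}_{-l-1}$ in two short steps: the defining triangle $U[1]\to X[l+1]\to H_{\cal H}(X[l+1])\buildrel+\over\to$ kills the $U$-terms against objects of ${\cal V}$ (equivalently, one invokes the adjunction $H_{\cal H}\mathclose\upharpoonright_{\cal U}\dashv\iota$), and the standard truncation triangle $H^{-l-1}(N)[l+1]\to N\to\tau^{>-l-1}(N)\buildrel+\over\to$ together with ${\cal H}\subseteq\De^{[-l-1,0]}(R)$ kills the remaining terms by pure degree reasons. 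Your argument instead routes $\Hom_{\cal H}(W,M)$ through ${\bf x}(M)$ and the object $W'$ of Lemma~\ref{l:on_length_l+1}(iii), which forces you to invoke the $\Ext^1$-vanishing of Remark~\ref{r:split_in_fp(TFT)}(2) and therefore to first reduce to $X\in\TFT_{\!-l-1}$ via $\Sigma^{-l-1}$ and Lemma~\ref{l:SigmaT}. That reduction and the TTF machinery are sound (and you are right that only the implication $\rm(a)\Leftrightarrow(c)$ of Lemma~\ref{l:SigmaT} may be used here --- its proof of $\rm(c)\Rightarrow(d)$ forward-references the present lemma, so citing that part would be circular), but they are superfluous: the adjunction applied directly to $M$, combined with the same degree argument you already run for $W'$, yields the isomorphism at once, with no hypothesis on $X$ beyond $X\in{\cal T}_{-l-1}$ and no appeal to the torsion pair $({}^{\bot_0}{\cal H}_{-l},{\cal H}_{-l})$. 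The remainder of your argument --- commutation of ${\bf x}$ and $H^{-l-1}$ with direct limits, realisation of an arbitrary system in $\TFT_{\!-l-1}$ as $(H_{\cal H}(Y_i[l+1]))_{i\in I}$ for the forward implication, and the ``in particular'' clause via Remark~\ref{r:split_in_fp(TFT)}(1) --- matches the paper's in substance.
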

\begin{proof}\mbox{}\vglue0pt%
\noindent``$\Rightarrow$''\enspace Let $X\in{\cal T}_{-l-1}$ and suppose that $H_{\cal H}(X[l+1])$ is a finitely presented object of $\cal H$. Let $(X_i)_{i\in I}$ be a direct system in $\TFT_{\!-l-1}$. For each complex $X_i[l+1]\in{\cal U}$ we have a triangle $U_i[1]\to X_i[l+1]\to H_{\cal H}(X_i[l+1])\buildrel+\over\to$, say $M_i$ its last vertex, for some $U_i\in{\cal U}$,
in which $X_i\cong H^{-l-1}(M_i)$ for all $i\in I$, by Lemma~\ref{l:on_length_l+1}(ii). On the other hand, out of the triangle $U[1]\to X[l+1]\to M\buildrel+\over\to$ corresponding to $X[l+1]$, we obtain the commutative diagram with exact rows
\[
\xymatrix{%
	0\ar[r] & \smash[b]{\varinjlim\limits_{i\in I}}\Hom_{\De(R)}(M,M_i) \ar[r]\ar[d]_-{\cong} &
		\smash[b]{\varinjlim\limits_{i\in I}}\Hom_{\De(R)}(X[l+1],M_i) \ar[r]\ar[d] & 0 \cr
	0 \ar[r] & \Hom_{\De(R)}(M,\varinjlim\limits_{i\in I}\adjust M_i) \ar[r] &
		\Hom_{\De(R)}(X[l+1],\varinjlim\limits_{i\in I}\adjust M_i) \ar[r] & 0
}
\]
in which the left hand vertical homomorphism is bijective by hypothesis, thus the right hand one is so. Eventually, by Proposition~\ref{p:heart_is_bounded} we have a triangle
$$
	H^{-l-1}(\varinjlim_{i\in I}\adjust M_i)[l+1]\longrightarrow
		\varinjlim_{i\in I}\adjust M_i\longrightarrow
		\tau^{>-l-1}(\varinjlim_{i\in I}\adjust M_i)\buildrel+\over\longrightarrow
$$
whose first vertex is $(\varinjlim_{i\in I}H^{-l-1}(M_i))[l+1]$ since the standard cohomologies commute with direct limits, hence applying $\Hom_{\De(R)}(X[l+1],-)$ on such triangle we see, by \cite{Ver}, that the right hand isomorphism of the previous diagram actually is
$$
	\varinjlim_{i\in I}\Hom_R(X,H^{-l-1}(M_i))\longrightarrow\Hom_R(X,\varinjlim_{i\in I}H^{-l-1}(M_i))
$$
i.e.~the desired one showing that $\Hom_R(X,-)$ commutes with direct limits of direct systems in $\TFT_{\!-l-1}$.

\noindent``$\Leftarrow$''\enspace Let $X\in{\cal T}_{-l-1}$ be a module whose functor $\Hom_R(X,-)$ commutes with direct limits of direct systems in $\TFT_{\!-l-1}$. Let $(M_i)_{i\in I}$ be a direct system in $\cal H$, and consider the direct system of approximating triangles $(H^{-l-1}(M_i)[l+1]\to M_i\to\tau^{>-l-1}(M_i)\buildrel+\over\to)_{i\in I}$ in $\De(R)$. Applying $\Hom_{\De(R)}(X[l+1],-)$ we obtain, as in the previous part of the proof, the commutative diagram with exact rows
\[
\xymatrix{%
	0 \ar[r] & \smash[b]{\varinjlim\limits_{i\in I}}\Hom_R(X,H^{-l-1}(M_i)) \ar[r]\ar[d]_-{\cong} &
		\smash[b]{\varinjlim\limits_{i\in I}}\Hom_{\De(R)}(X[l+1],M_i) \ar[r]\ar[d] & 0 \cr
	0 \ar[r] & \Hom_R(X,\varinjlim\limits_{i\in I}H^{-l-1}(M_i)) \ar[r] &
		\Hom_{\De(R)}(X[l+1],\varinjlim\limits_{i\in I}\adjust M_i) \ar[r] & 0
}
\]
in which the left hand vertical homomorphism is bijective by hypothesis. Eventually, applying the functors $\Hom_{\De(R)}(-,M_i)$'s on the usual triangle $U[1]\to X[l+1]\to M\buildrel+\over\to$ (see the previous part of the proof), we obtain again that the right hand isomorphism of the previous diagram is the desired one. \qedhere
\end{proof}

\begin{cor}
Let\/ $\Phi$ be a Thomason filtration of length $l+1$. For every $B\in\fp(\TFT_{\!-l-1})$, there exist $n\in\N$, ideals $J_1,\ldots,J_n\in{\cal B}_{-l-1}$, and%
\label{c:fp(H)&TFT}%

\begin{enumerate}
\item[(i)] an epimorphism in $\cal H$
$$
	\bigoplus_{k=1}^nH_{\cal H}(K(J_k)[l+1])
		\relbar\joinrel\twoheadrightarrow H_{\cal H}(B[l+1]);
$$

\item[(ii)] integers $k_1,\ldots,k_n$, and a homomorphism in $R\lMod$
$$
	f\colon\bigoplus_{i=1}^n \Sigma^{-l-1}(y_{-l}(R/J_i)^{k_i})\longrightarrow B
$$
with $\coker f\in{\cal T}_{-l}$.
\end{enumerate}
\end{cor}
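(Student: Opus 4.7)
My plan for part (i) is to combine three results proved earlier in this section. Since $B \in \fp(\TFT_{-l-1})$, the last assertion of Lemma~\ref{l:fp(H)&TFT} gives $H_{\cal H}(B[l+1]) \in \fp({\cal H})$. Because $B$ lies in ${\cal T}_{-l-1}$ in particular, Lemma~\ref{l:on_length_l+1}(i) places this complex in the torsion class ${}^{\bot_0}{\cal H}_{-l}$ of ${\cal H}$, which by Lemma~\ref{l:on_length_l+1}(iv) coincides with $\mathop{\rm Gen}(H_{\cal H}(K(J)[l+1])\mid J \in {\cal B}_{-l-1})$. An epimorphism in ${\cal H}$ onto $H_{\cal H}(B[l+1])$ from a coproduct of such complexes thus exists, and since the target is finitely generated (being finitely presented in the Grothendieck category ${\cal H}$), a finite subcoproduct $\bigoplus_{k=1}^n H_{\cal H}(K(J_k)[l+1])$ already surjects, yielding (i).

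For part (ii), I will complete the ${\cal H}$-epimorphism $f$ of (i) to a triangle $M \to N \to C \buildrel+\over\to$ in $\De(R)$; since $f$ is epic in ${\cal H}$, one has $C \cong \ker^{({\cal H})}(f)[1]$ with $\ker^{({\cal H})}(f) \in {\cal H}$. The crucial identification is that $H^{-l-1}(M)$ decomposes as $\bigoplus_k \Sigma^{-l-1}(y_{-l}(R/J_k))$. To establish this, I use two successive triangles whose first vertices lie in ${\cal U}[1]$ and therefore vanish under $H_{\cal H}$: first the standard truncation $\tau^{\le-1}(K(J_k))[l+1] \to K(J_k)[l+1] \to R/J_k[l+1] \buildrel+\over\to$ (as in the proof of Lemma~\ref{l:on_length_l+1}(iv)), and then the triangle induced by $0 \to x_{-l}(R/J_k) \to R/J_k \to y_{-l}(R/J_k) \to 0$, whose first vertex $x_{-l}(R/J_k)[l+1]$ belongs to ${\cal U}[1]$ because $x_{-l}(R/J_k) \in {\cal T}_{-l}$. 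These yield $H_{\cal H}(K(J_k)[l+1]) \cong H_{\cal H}(y_{-l}(R/J_k)[l+1])$, whose $H^{-l-1}$ is $\Sigma^{-l-1}(y_{-l}(R/J_k))$ by definition (Remark~\ref{r:functorial_monomorphism_sigma}). Since $H^{-l-1}(N) = H^{-l-1}(H_{\cal H}(B[l+1])) = B$ by Lemma~\ref{l:on_length_l+1}(ii), I will take the required module map to be $H^{-l-1}(f)$; grouping repeated $J_k$'s into multiplicities $k_i$ then recovers the statement's formulation.

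To control the cokernel of this module map, I apply the standard cohomology functor $H^{-l-1}$ to the triangle, producing the exact sequence
\[
H^{-l-1}(M) \longrightarrow H^{-l-1}(N) \longrightarrow H^{-l-1}(C) \longrightarrow H^{-l}(M),
\]
whence an embedding of $\coker(H^{-l-1}(f))$ into $H^{-l-1}(C) = H^{-l}(\ker^{({\cal H})}(f))$. Since $\ker^{({\cal H})}(f) \in {\cal H} \subseteq {\cal U}$, the latter module lies in ${\cal T}_{-l}$, and because $({\cal T}_{-l},{\cal F}_{-l})$ is hereditary the cokernel itself is an object of ${\cal T}_{-l}$, completing (ii). The main obstacle is the cohomological identification of $H^{-l-1}(H_{\cal H}(K(J_k)[l+1]))$, requiring two successive ``clean-ups'' that both rely on recognising a submodule in ${\cal T}_{-l}$ and translating this into aisle membership after the shift by $l+1$; once past this step, the remainder is routine long-exact-sequence bookkeeping.
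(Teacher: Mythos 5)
Your argument is correct and follows the paper's proof in all essentials: part~(i) is the same (use Lemma~\ref{l:fp(H)&TFT} to get $H_{\cal H}(B[l+1])\in\fp({\cal H})$, the generation statement of Lemma~\ref{l:on_length_l+1}(iv), and finite generation to pass to a finite subcoproduct), and for part~(ii) you take the same morphism, namely $H^{-l-1}$ of the epimorphism from~(i), with the same identifications of its source (via the two truncation triangles underlying Remark~\ref{r:functorial_monomorphism_sigma} and Lemma~\ref{l:SigmaT}) and of its target $B$ via Lemma~\ref{l:on_length_l+1}(ii). The only divergence is the last step: the paper factors the module map through its image, applies $H_{\cal H}$ to the resulting stalk triangles to get $H_{\cal H}(\coker(f)[l+1])=0$, and concludes that $\coker(f)[l+1]\cong U[1]$ for some $U\in{\cal U}$, whence $\coker f\cong H^{-l}(U)\in{\cal T}_{-l}$; you instead embed $\coker f$ into $H^{-l}(\ker^{({\cal H})}(p))$ via the long exact cohomology sequence of the cone triangle and invoke the hereditariness of $({\cal T}_{-l},{\cal F}_{-l})$. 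Both routes are sound, and yours is marginally more direct.
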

\begin{proof}\mbox{}\vglue0pt
\noindent(i)\enspace By Lemma~\ref{l:fp(H)&TFT} we know that $H_{\cal H}(B[l+1])$ is a finitely presented object of the heart. On the other hand, by Lemma~\ref{l:on_length_l+1}(iv) there are families $(J_i)_{i\in I}$ of ideals in ${\cal B}_{-l-1}$, a set $\varLambda$ and an epimorphism
$$
	p\colon \Bigl(\bigoplus_{i\in I}H_{\cal H}(K(J_i)[l+1])\Bigr)^{\!(\varLambda)}
		\relbar\joinrel\twoheadrightarrow H_{\cal H}(B[l+1])\;.
$$
For every finite subset $\bar I\subset I$, every $i\in\bar I$, and every finite subset $A\subset\varLambda$, consider the composition
$$
	H_{\cal H}(K(J_i)[l+1])^{(A)}
		\buildrel\varepsilon_i^A\over\longrightarrow
	\Bigl(\bigoplus_{i\in I}H_{\cal H}(K(J_i)[l+1])\Bigr)^{\!(\varLambda)}
		\buildrel p\over{\relbar\joinrel\twoheadrightarrow}
	H_{\cal H}(B[l+1])
$$
where $\varepsilon_i^A$ is the split monomorphism. Then
$$
	H_{\cal H}(B[l+1])=\im p=
	\sum_{\substack{i\in\bar I \cr \bar I\subset I\cr \hidewidth A\subset\varLambda\hidewidth}}
	\im(p\circ\varepsilon_i^A),
$$
hence being the former a finitely presented complex, there exist finite subsets $\bar I\subset I$ and $A\subset\varLambda$ such that $H_{\cal H}(B[l+1])=\sum_{i\in\bar I}\im(p\circ\varepsilon_i^A)$, as desired.

\noindent(ii)\enspace Let $p$ be as in part~(i) and define $f\leqdef H^{-l-1}(p)$. In view of the proof of Lemma~\ref{l:on_length_l+1}(iv), in the heart $\cal H$ we have exact rows
\begin{gather*}
	H_{\cal H}(\ker(f)[l+1])\longrightarrow \bigoplus_{i=1}^n H_{\cal H}(R/J_i[l+1])
		\buildrel\beta\over\longrightarrow H_{\cal H}(\im(f)[l+1])\longrightarrow0 \cr
	\noalign{\hbox{and}}
	H_{\cal H}(\im(f)[l+1])\buildrel\alpha\over\longrightarrow H_{\cal H}(B[l+1])\longrightarrow
		H_{\cal H}(\coker(f)[l+1])\longrightarrow0
\end{gather*}
in which $\alpha\circ\beta=p$, whence $\alpha$ is an epimorphism, so that $H_{\cal H}(\coker(f)[l+1])=0$. Consequently, the usual triangle of $\De(R)$ ending in this latter complex of $\cal H$ shows that $\coker(f)[l+1]$ is isomorphic to the object $U[1]$ for some $U\in{\cal U}$, meaning that $\coker f\cong H^{-l-1}(U[1])\in{\cal T}_{-l}$. \qedhere
\end{proof}

We now pass to consider some necessary conditions to the local coherence of the heart of a Thomason filtration of finite length. 

\begin{prop}
Let\/ $\Phi$ be a Thomason filtration of length $l+1$. If\/ $\cal H$ is a locally coherent Grothendieck category and\/ $P\in{}^{\bot_0}{\cal H}_{-l}$, then $P\in\fp({}^{\bot_0}{\cal H}_{-l})$ if and only if the following conditions hold true:%
\label{p:lc-fp(C)_char}%

\begin{enumerate}
\item[(i)] $H^{-l-1}(P)\in\fp(\TFT_{\!-l-1})$;

\item[(ii)] $\Hom_{\De(R)}(\tau^{\ge-l}(P)[-1],-)$ commutes with direct limits of direct systems in ${\cal H}_{-l+1}$.
\end{enumerate}
\end{prop}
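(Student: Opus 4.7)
The plan is to exploit the short exact sequence
\[
0\to L\to W\to P\to 0
\]
in $\cal H$ furnished by Lemma~\ref{l:on_length_l+1}(iii), where $W\leqdef H_{\cal H}(H^{-l-1}(P)[l+1])$ and $L\leqdef H_{{\cal H}_{-l+1}}(\tau^{\ge-l}(P)[-1])\in{\cal H}_{-l+1}$. The first step is to rephrase each of (i) and (ii) as a finite-presentation statement on $W$ and $L$, respectively. Setting $X\leqdef H^{-l-1}(P)$, Lemma~\ref{l:least_nonzero_cohomology} gives $X\in\TFT_{\!-l-1}$, so Lemma~\ref{l:fp(H)&TFT} supplies the equivalence ${\rm(i)}\Leftrightarrow W\in\fp({\cal H})$. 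For (ii), the adjunction $H_{{\cal H}_{-l+1}}|_{{\cal U}_{-l+1}}\dashv\mathrm{incl.}$, together with Corollary~\ref{c:closure_by_products_and_coproducts} (which ensures that direct limits in ${\cal H}_{-l+1}$ are computed as in $\cal H$), identifies (ii) with the fp-ness of $L$ in ${\cal H}_{-l+1}$. Since $\Phi$ has finite length, Theorem~\ref{t:H_*-TTF_finite_type}(ii) applies, and local coherence together with Remark~\ref{r:lc-TTF_finite_type}(2) yield $\fp({\cal H}_{-l+1}),\,\fp({}^{\bot_0}{\cal H}_{-l})\subseteq\fp({\cal H})$; in particular $P\in\fp({}^{\bot_0}{\cal H}_{-l})$ is equivalent to $P\in\fp({\cal H})$.

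Sufficiency is then immediate: (i) and (ii) furnish $W,L\in\fp({\cal H})$, and the abelian character of $\fp({\cal H})$ in the locally coherent $\cal H$ forces $P\in\fp({\cal H})$, hence $P\in\fp({}^{\bot_0}{\cal H}_{-l})$.

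For necessity, the hard step is to derive (ii) from $P\in\fp({\cal H})$. Using Lemma~\ref{l:inclusion-H_*} applied to the filtration $\Phi_{-l}$ (whose $(-l+1)$-truncation coincides with $\Phi_{-l+1}$) one has ${\cal H}_{-l+1}\subseteq{\cal H}_{-l}$; combined with $W,P\in{}^{\bot_0}{\cal H}_{-l}$ (from the hypothesis and Lemma~\ref{l:on_length_l+1}(i)), torsion-theoretic orthogonality yields $\Hom_{\cal H}(W,N)=\Hom_{\cal H}(P,N)=0$ for every $N\in{\cal H}_{-l+1}$, while Remark~\ref{r:split_in_fp(TFT)}(2) gives $\Ext^1_{\cal H}(W,N)=0$. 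The long exact Ext sequence associated to $0\to L\to W\to P\to 0$ therefore collapses into a natural isomorphism $\Hom_{\cal H}(L,N)\cong\Ext^1_{\cal H}(P,N)$, functorial in $N\in{\cal H}_{-l+1}$. Since $P$ is finitely presented in the locally coherent category $\cal H$, $\Ext^1_{\cal H}(P,-)$ commutes with direct limits in $\cal H$, hence along direct systems in ${\cal H}_{-l+1}$; this transfers the same commutation property to $\Hom_{\cal H}(L,-)$, giving (ii). Once (ii) is at hand, $L\in\fp({\cal H}_{-l+1})\subseteq\fp({\cal H})$, and extension-closure of $\fp({\cal H})$ applied to the short exact sequence produces $W\in\fp({\cal H})$, which is (i) via Lemma~\ref{l:fp(H)&TFT}. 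The main obstacle is precisely engineering the two $\Hom$-vanishings and the $\Ext^1$-vanishing on ${\cal H}_{-l+1}$ so that the long exact sequence collapses into the above isomorphism: this is what allows the finite-presentation of $P$ to be read off as that of $L$.
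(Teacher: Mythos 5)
Your proof is correct and follows essentially the same route as the paper's: the same short exact sequence $0\to L\to W\to P\to0$ from Lemma~\ref{l:on_length_l+1}(iii), the identification of condition~(i) with $W\in\fp({\cal H})$ via Lemma~\ref{l:fp(H)&TFT}, and the collapse of the long exact $\Ext$-sequence (using $W,P\in{}^{\bot_0}{\cal H}_{-l}$ and Remark~\ref{r:split_in_fp(TFT)}(2)) into the natural isomorphism $\Hom_{\cal H}(L,-)\cong\Ext^1_{\cal H}(P,-)$ on the relevant subheart, from which finite presentation is transferred. The only (harmless) deviations are cosmetic: you restrict to ${\cal H}_{-l+1}$ and deduce (ii) before (i), and you identify condition~(ii) with the finite presentation of $L$ via the adjunction $H_{{\cal H}_{-l+1}}\mathclose\upharpoonright_{{\cal U}_{-l+1}}\dashv\mathrm{incl.}$ rather than the paper's explicit truncation-triangle computation.
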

\begin{proof}\mbox{}\vglue0pt%
\noindent``$\Rightarrow$''\enspace Let $P\in\fp({}^{\bot_0}{\cal H}_{-l})$. By Lemma~\ref{l:on_length_l+1}(iii) there exists $L\in{\cal H}_{-l+1}$ and a short exact sequence $0\to L\to H_{\cal H}(X[l+1])\to P\to0$ in $\cal H$, in which $X\leqdef H^{-l-1}(P)$. Set $W\leqdef H_{\cal H}(X[l+1])$, and consider the exact sequence of covariant functors
$$
	0\to\Hom_{\cal H}(P,-)\to \Hom_{\cal H}(W,-)\to\Hom_{\cal H}(L,-)\to \Ext^1_{\cal H}(P,-)\to\Ext^1_{\cal H}(W,-) \;.
$$
When we restrict these functors to ${\cal H}_{-l+2}$, we obtain $\Hom_{\cal H}(W,-)\mathclose\upharpoonright=0$ by Lem\-ma~\ref{l:on_length_l+1}(i), hence $\Hom_{\cal H}(P,-)\mathclose\upharpoonright=0$, moreover $\Ext^1_{\cal H}(W,-)\mathclose\upharpoonright=0$ by Remark~\ref{r:split_in_fp(TFT)}(2). Therefore, there is a natural isomorphism $\Hom_{\cal H}(L,-)\mathclose\upharpoonright\cong\Ext^1_{\cal H}(P,-)\mathclose\upharpoonright$, and by local coherence of $\cal H$ together with \cite[Proposition~3.5(2)]{Sao17} we get that $L\in\fp({\cal H}_{-l+2})\subseteq\fp({\cal H})$. By extension-closure of $\fp({\cal H})$ (see \cite[Corollary~1.8]{PSV19}), we have that $W$ is a finitely presented object of $\cal H$, whence $X\in\fp(\TFT_{\!-l-1})$ by Lemma~\ref{l:fp(H)&TFT}. This proves part~(i), so let us show part~(ii). By Proposition~\ref{p:heart_is_bounded} we have an exact triangle $H^{-l-1}(P)[l+1]\to P\to\tau^{\ge-l}(P)\buildrel+\over\to$, say $X[l+1]$ the first vertex, as in part~(i). By Remark~\ref{r:lc-TTF_finite_type}(1), we know that $\tau^{\ge-l}(P)\in{\cal U}[1]$. Thus, by applying the functor $H_{\cal H}$ on such triangle we obtain the exact row
$$
	0\longrightarrow H_{\cal H}(\tau^{\ge-l}(P)[-1])\longrightarrow H_{\cal H}(X[l+1])\longrightarrow P\longrightarrow0,
$$
which actually coincides with the short exact sequence $0\to L\to W\to P\to0$ provided by Lemma~\ref{l:on_length_l+1}(iii). Therefore, by local coherence of $\cal H$ together with Lemma~\ref{l:fp(H)&TFT}, $H_{\cal H}(\tau^{\ge-l}(P)[-1])$ is a finitely presented complex of the heart. Moreover, we have the triangle $U[1]\to\tau^{\ge-l}(P)[-1]\to H_{\cal H}(\tau^{\ge-l}(P)[-1])\buildrel+\over\to$ provided by $U\leqdef\tau^\le_{\cal U}(\tau^{\ge-l}_{\vphantom{\cal U}}(P)[-2])$, so given a direct system $(M_i)_{i\in I}$ of complexes in ${\cal H}_{-l+1}$ and applying the functors
$$
	F\leqdef \varinjlim_{i\in I}\Hom_{\De(R)}(-,M_i)\qquad\hbox{and}\quad
	G\leqdef \Hom_{\De(R)}(-,\varinjlim_{i\in I}\adjust M_i)
$$
on the previous triangle, we obtain the commutative diagram with exact rows
\[
\xymatrix{%
	0 \ar[r] & F(H_{\cal H}(\tau^{\ge-l}(P)[-1])) \ar[d]_-{\cong}\ar[r] &
		F(\tau^{\ge-l}(P)[-1]) \ar[d]\ar[r] & 0 \cr
	0 \ar[r] & G(H_{\cal H}(\tau^{\ge-l}(P)[-1])) \ar[r] & G(\tau^{\ge-l}(P)[-1]) \ar[r] & 0
}
\]
yielding the thesis. Notice that in this way we proved that our condition~(ii) is equivalent to $L\in\fp({\cal H})$.

\noindent``$\Leftarrow$''\enspace Let $P\in{}^{\bot_0}{\cal H}_{-l}$ and consider the short exact sequence $0\to L\to W\to P\to0$ of $\cal H$ provided by Lemma~\ref{l:on_length_l+1}(iii). Then $L\in\fp({\cal H})$ by what we said at the end of the proof of the previous part~(ii), whereas $W\in\fp({\cal H})$ by Lemma~\ref{l:fp(H)&TFT}. Therefore, $P$ is finitely presented as well, being a cokernel of a morphism in $\fp({\cal H})$. \qedhere
\end{proof}

\begin{cor}
Let\/ $\Phi$ be a weakly bounded below Thomason filtration such that its heart $\cal H$ is a locally coherent Grothendieck category. If\/ $B\in\fp({\cal H})$ and\/ $m$ is the least nonzero cohomology degree of\/ $B$, then we have $H^m(B)\in  \fp(\TFT_{\!m})$.%
\label{c:lc-fp(C)_char}%
\end{cor}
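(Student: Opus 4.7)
The strategy is to adapt the argument of the ``$(\mathrm{a})\!\Rightarrow\!(\mathrm{b})(i)$'' direction of Proposition~\ref{p:lc-fp(C)_char} to a suitable finitely presented subobject $A$ of $B$ whose cohomology is concentrated in degrees $\ge m$. First, Lemma~\ref{l:least_nonzero_cohomology} yields $X\leqdef H^m(B)\in\TFT_m$, so only the finite-presentation property remains. Because $\Phi$ is weakly bounded below, Theorem~\ref{t:when_hearts_coincide} gives $\mathcal{H}=\mathcal{H}^\#$, and by Theorem~\ref{t:H_*-TTF_finite_type} the heart $\mathcal{H}_{m+1}$ is a TTF~class of finite type in $\mathcal{H}$. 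Local coherence of $\mathcal{H}$ together with Theorem~\ref{t:lc-TTF_finite_type} ensures that the left constituent $({}^{\bot_0}\mathcal{H}_{m+1},\mathcal{H}_{m+1})$ restricts to $\fp(\mathcal{H})$, so that the canonical decomposition of $B$ lives in $\fp(\mathcal{H})$:
\[
0\longrightarrow A\longrightarrow B\longrightarrow D\longrightarrow 0,\qquad A\in\fp({}^{\bot_0}\mathcal{H}_{m+1}),\ D\in\fp(\mathcal{H}_{m+1}).
\]
Since $D\in\mathcal{H}_{m+1}\subseteq\De^{\ge m+1}(R)$, the standard cohomology long exact sequence yields $H^j(A)=0$ for $j<m$ and $H^m(A)\cong X$; in particular $\tau^{\le m}(A)=X[-m]$ is a stalk.

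This bottom-concentration of the cohomology of $A$ is precisely what is needed to carry the arguments of Section~\ref{s:thomason_filtrations_of_finite_length}---originally set in the finite-length case---over to $A$: the sole role of the finite-length hypothesis in Lemmas~\ref{l:on_length_l+1}(ii)--(iii) and~\ref{l:fp(H)&TFT} is to guarantee that certain bottom truncations are stalks, which in our case follows from $A\in\De^{\ge m}(R)$ (and the same bottom-boundedness propagates to the auxiliary complex $H_\mathcal{H}(X[-m])$ and to direct limits in $\TFT_m$). Applying Lemma~\ref{l:on_length_l+1}(iii) to $A$ thus produces a functorial short exact sequence in $\mathcal{H}$
\[
0\longrightarrow L\longrightarrow W\longrightarrow A\longrightarrow 0
\]
with $L\in\mathcal{H}_{m+2}$ and $W\cong H_\mathcal{H}(X[-m])$.

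Now we retrace the proof of the ``$(\mathrm{a})\!\Rightarrow\!(\mathrm{b})(i)$'' implication of Proposition~\ref{p:lc-fp(C)_char} with $A$ playing the role of $P$. The analog of Lemma~\ref{l:on_length_l+1}(i) gives $W\in{}^{\bot_0}\mathcal{H}_{m+1}$, whence $\Hom_\mathcal{H}(W,-)\mathclose\upharpoonright_{\mathcal{H}_{m+3}}=0$, while the analog of Remark~\ref{r:split_in_fp(TFT)}(2) produces $\Ext^1_\mathcal{H}(W,-)\mathclose\upharpoonright_{\mathcal{H}_{m+2}}=0$. Combining these vanishings with the long exact $\Ext$-sequence induced by $0\to L\to W\to A\to 0$, one obtains a natural isomorphism $\Hom_\mathcal{H}(L,-)\cong\Ext^1_\mathcal{H}(A,-)$ upon restriction to $\mathcal{H}_{m+3}$. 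Since $A\in\fp(\mathcal{H})$ and $\mathcal{H}$ is locally coherent, the right-hand side commutes with direct limits of $\mathcal{H}_{m+3}$ by \cite[Proposition~3.5(2)]{Sao17}; hence $L\in\fp(\mathcal{H}_{m+2})\subseteq\fp(\mathcal{H})$. Extension-closure of $\fp(\mathcal{H})$ then implies $W\in\fp(\mathcal{H})$, and finally the ``$\Rightarrow$'' direction of Lemma~\ref{l:fp(H)&TFT}, combined with the equivalences of Lemma~\ref{l:SigmaT} (noting $y_{m+1}(X)=X$ since $X\in\mathcal{F}_{m+1}$, and $\Sigma^m(X)=X$ since $X\in\TFT_m$), translates $W=H_\mathcal{H}(X[-m])\in\fp(\mathcal{H})$ into $X\in\fp(\TFT_m)$.

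The main obstacle is the need to verify that the finite-length statements of Lemmas~\ref{l:on_length_l+1} and~\ref{l:fp(H)&TFT} and Remark~\ref{r:split_in_fp(TFT)}(2) genuinely adapt to the weakly bounded below setting when applied to the specific complexes arising from $A$; this reduces to checking at each step that all relevant bottom truncations are stalk complexes, exactly as in the finite-length case.
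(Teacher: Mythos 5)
Your proof is correct and follows essentially the same route as the paper: the paper likewise splits $B$ along the torsion pair $({}^{\bot_0}{\cal H}_{m+1},{\cal H}_{m+1})$ (working inside ${\cal H}_m$, which packages your bottom-boundedness observations automatically via Proposition~\ref{p:heart_is_bounded}) and then runs the proof of Proposition~\ref{p:lc-fp(C)_char} on the torsion part, whose $m$-th cohomology is identified with $H^m(B)$ exactly as you do. The only slip is cosmetic: since $L\in{\cal H}_{m+2}$, to conclude $L\in\fp({\cal H}_{m+2})$ you should restrict the $\Ext$-sequence to ${\cal H}_{m+2}$ rather than ${\cal H}_{m+3}$ (both vanishings already hold there), an off-by-one that mirrors the indexing in the paper's own write-up of Proposition~\ref{p:lc-fp(C)_char}.
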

\begin{proof}
By definition of $m$ and by Proposition~\ref{p:heart_is_bounded}, we have $B\in{\cal H}_m$. Moreover, since $\cal H$ is locally coherent, so is ${\cal H}_m$ being a TTF class of finite type. In particular, the torsion pair $({}^{\bot_0}{\cal H}_{m+1},{\cal H}_{m+1})$ of ${\cal H}_m$ restricts to $\fp({\cal H}_m)$ (see Theorem~\ref{t:lc-TTF_finite_type}), hence the approximation $0\to {\bf x}(B)\to B\to {\bf y}(B)\to0$ of $B$ within the torsion pair (see Remark~\ref{r:lc-TTF_finite_type}(4)) actually is in $\fp({\cal H}_m)$. By the proof of Proposition~\ref{p:lc-fp(C)_char}, we get $H^m({\bf x}(B))\in\fp(\TFT_{\!m})$, and being ${\bf y}(B)\in{\cal H}_{m+1}\subseteq\De^{\ge m+1}(R)$, it follows $H^m(B)\cong H^m({\bf x}(B))$, and we are done. \qedhere
\end{proof}

\begin{prop}
Let\/ $\Phi$ be a Thomason filtration of length~$l+1$. If the heart $\cal H$ is locally coherent, then%
\label{p:lc-necessary_conditions_TFT}%

\begin{enumerate}
\item[(i)] $\fp(\TFT_{\!-l-1})$ is closed under kernels {\rm(}in $R\lMod${\rm)};

\item[(ii)] For all\/ $B\in\fp(\TFT_{\!-l-1})$, there exists a $R$-linear map
$$
	f\colon\bigoplus_{i=1}^n \Sigma^{-l-1}(y_{-l}(R/J_i)^{k_i})\longrightarrow B
$$
with $\coker f\in\Sigma{\cal T}_{-l}$;

\item[(iii)] For all morphisms $f$ in $\fp(\TFT_{\!-l-1})$ with $\coker f\in{\cal T}_{-l}$, then $\coker f\in\Sigma{\cal T}_{-l}$.
\end{enumerate}
\end{prop}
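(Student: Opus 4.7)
My plan is to apply the functor $H_{\cal H}(-[l+1])$ to $f$: setting $g\leqdef H_{\cal H}(f[l+1])$, Lemma~\ref{l:fp(H)&TFT} yields a morphism in $\fp({\cal H})$, so local coherence ensures $\ker^{({\cal H})}(g),\coker^{({\cal H})}(g)\in\fp({\cal H})$. I will use throughout the identifications $H^{-l-1}(H_{\cal H}(X[l+1]))=X$, $H^{-l-1}(H_{\cal H}(Y[l+1]))=Y$ from Lemma~\ref{l:on_length_l+1}(ii), together with $H^{-l-1}(g)=f$, obtained by applying standard cohomology $H^{-l-1}$ to the two short exact sequences $0\to\ker^{({\cal H})}(g)\to H_{\cal H}(X[l+1])\to\im^{({\cal H})}(g)\to0$ and $0\to\im^{({\cal H})}(g)\to H_{\cal H}(Y[l+1])\to\coker^{({\cal H})}(g)\to0$ in $\cal H$ and tracing the factorisation of $f$ through $X\twoheadrightarrow\im f\hookrightarrow Y$.

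For (i), I would first verify $\ker f\in\TFT_{-l-1}$: subobject closure of $\TF_{-l-1}$ gives $\ker f\in\TF_{-l-1}$, while applying $\Hom_R(\mathcal{T}_{-l+1},-)$ to $0\to\ker f\to X\to\im f\to0$ and exploiting the vanishings $\Hom_R(\mathcal{T}_{-l+1},X)=\Hom_R(\mathcal{T}_{-l+1},\im f)=0$ (from $X,\im f\subseteq Y\in\mathcal{F}_{-l}\subseteq\mathcal{F}_{-l+1}$) together with $\Ext^1_R(\mathcal{T}_{-l+1},X)=0$ yields the remaining vanishing $\Ext^1_R(\mathcal{T}_{-l+1},\ker f)=0$. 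The above cohomology sequences then give $H^{-l-1}(\ker^{({\cal H})}(g))=\ker f$; since the least non-zero cohomology of $\ker^{({\cal H})}(g)\in\fp({\cal H})$ is at degree $-l-1$, Corollary~\ref{c:lc-fp(C)_char} concludes $\ker f\in\fp(\TFT_{-l-1})$.

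For (iii), I would analyse $\coker^{({\cal H})}(g)\in\fp({\cal H})$ analogously. Lemma~\ref{l:least_nonzero_cohomology} together with the observation that $H^{-l-1}(\coker^{({\cal H})}(g))$ is at once a quotient of $C\in\mathcal{T}_{-l}$ and (if least non-zero cohomology) an object of $\mathcal{F}_{-l}$ forces $H^{-l-1}(\coker^{({\cal H})}(g))=0$. Hence $\coker^{({\cal H})}(g)$ belongs to ${\cal H}_{-l}$ with least non-zero cohomology at degree $-l$, so Corollary~\ref{c:lc-fp(C)_char} gives $H^{-l}(\coker^{({\cal H})}(g))\in\fp(\TFT_{-l})$. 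The key remaining step, and the main obstacle of the proof, is the identification $H^{-l}(\coker^{({\cal H})}(g))\cong\Sigma^{-l}(y_{-l+1}(C))$: this comparison would proceed by juxtaposing the long exact heart-cohomology sequences associated to the triangles $X[l+1]\to Y[l+1]\to\mathrm{cone}(f[l+1])$ and $\im f[l+1]\to Y[l+1]\to C[l+1]$, exploiting the vanishing $H_{\cal H}(C[l+1])=0$ (since $C\in\mathcal{T}_{-l}$ forces $C[l+1]\in{\cal U}[1]$) and the functorial structure of $\Sigma^{-l}$ from Remark~\ref{r:functorial_monomorphism_sigma}. Once this identification is secured, Lemma~\ref{l:SigmaT}\,(a)$\Leftrightarrow$(d) at degree $-l$ delivers $C\in\Sigma\mathcal{T}_{-l}$.

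For (ii), Corollary~\ref{c:fp(H)&TFT}(ii) produces a map of the stated form with $\coker f\in\mathcal{T}_{-l}$; each summand $\Sigma^{-l-1}(y_{-l}(R/J_i)^{k_i})$ lies in $\fp(\TFT_{-l-1})$ by Lemma~\ref{l:SigmaT} applied to $R/J_i$, for $H_{\cal H}(R/J_i[l+1])\cong H_{\cal H}(K(J_i)[l+1])$ (Proposition~\ref{p:Koszul_complex_in_hearts}) is finitely presented in $\cal H$ by compactness of $K(J_i)$ (cf.\ \cite[Lemma~6.3]{SSV17}). Thus $f$ is a morphism of $\fp(\TFT_{-l-1})$, and applying (iii) yields $\coker f\in\Sigma\mathcal{T}_{-l}$, as required.
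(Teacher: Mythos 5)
Your part~(i) is correct and essentially parallel to the paper's argument: the paper routes through the quasi local coherence of ${}^{\bot_0}{\cal H}_{-l}$ and Proposition~\ref{p:lc-fp(C)_char}, whereas you check $\ker f\in\TFT_{\!-l-1}$ by hand and then apply Corollary~\ref{c:lc-fp(C)_char} to $\ker^{({\cal H})}(g)$; both work. Your reduction of (ii) to (iii) via Corollary~\ref{c:fp(H)\&TFT}(ii) and Lemma~\ref{l:SigmaT} is also legitimate (the paper goes the other way, proving (ii) directly and extracting (iii) from its proof), so everything hinges on (iii).

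Part~(iii), however, breaks down exactly at the step you flag as the main obstacle. If $C\leqdef\coker f\in{\cal T}_{-l}$, then $\mathop{\rm Cone}(f[l+1])[-1]$ has cohomologies $\ker f\in{\cal T}_{-l-1}$ in degree $-l-1$ and $C\in{\cal T}_{-l}$ in degree $-l$, hence lies in ${\cal U}^\#\cap\De^+(R)={\cal U}$; thus $\mathop{\rm Cone}(f[l+1])\in{\cal U}[1]$, and since $\mathop{\rm Cone}(g)$ differs from it by an object of ${\cal U}[2]$, also $\mathop{\rm Cone}(g)\in{\cal U}[1]$. Consequently $\coker^{({\cal H})}(g)=H_{\cal H}(\mathop{\rm Cone}(g))=0$, i.e.\ $g$ is an epimorphism in $\cal H$ (this is exactly the observation made in the proof of Corollary~\ref{c:recursive-length2}). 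Your proposed identification $H^{-l}(\coker^{({\cal H})}(g))\cong\Sigma^{-l}(y_{-l+1}(C))$ would therefore force $\Sigma^{-l}(y_{-l+1}(C))=0$, hence $C\in{\cal T}_{-l+1}$ by the monomorphism $\sigma$ of Remark~\ref{r:functorial_monomorphism_sigma}, which is false in general; indeed, the vanishing $H_{\cal H}(C[l+1])=0$ that you yourself invoke is what kills the cokernel. The information about $C$ lives one step earlier in the heart-cohomology sequence, namely in $H^{-1}_{\cal H}(C[l+1])=H_{\cal H}(C[l])$, which feeds into $\ker^{({\cal H})}(g)$ rather than into the cokernel. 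Recovering $\Sigma^{-l}(y_{-l+1}(C))\in\fp(\TFT_{\!-l})$ from there is the genuinely delicate part of the paper's proof: one forms the exact sequence $H_{\cal H}(B[l])\to H_{\cal H}(C[l])\to H_{\cal H}(L[l+1])\to H_{\cal H}(B[l+1])\to0$ with $L=\im f$, shows that the image $H'$ of the middle map is finitely presented (using that $H_{\cal H}(B[l])\in{\cal H}_{-l+2}$ together with local coherence), applies Corollary~\ref{c:lc-fp(C)_char} to $H'$, and only then identifies $H^{-l}(H_{\cal H}(C[l]))$ with $\Sigma^{-l}(y_{-l+1}(C))$ via the triangle $U[1]\to C[l]\to H_{\cal H}(C[l])\buildrel+\over\longrightarrow$. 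None of this is supplied by your sketch, so (iii), and with it (ii), remains unproven.
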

\begin{proof}\mbox{}\vglue0pt%
\noindent(i)\enspace Given $f\colon B\to B'$ a homomorphism in $\fp(\TFT_{\!-l-1})$, we have to show that $\ker f\in\fp(\TFT_{\!-l-1})$. Consider the following diagram in $\De(R)$ obtained by approximating the stalk complexes of the modules within the t-structure $({\cal U},{\cal V})$:
\[
\xymatrix{%
	U[1] \ar[r]^-{a}\ar@{.>}[d] & B[l+1] \ar[r]\ar[d]^{f[l+1]} & H_{\cal H}(B[l+1]) \ar[r]^-{+}\ar@{.>}[d]^q & \cr
	U'[1] \ar[r] & B'[l+1] \ar[r]^-{b'} & H_{\cal H}(B'[l+1]) \ar[r]^-{+} & 
}
\]
Since $b'\circ f[l+1]\circ a=0$, the dotted vertical maps actually exist and they complete the diagram to a morphism of triangles (see e.g.\ \cite[Proposition~1.4.5]{Mil}). Now, by Lemma~\ref{l:on_length_l+1}(i) we have that $H_{\cal H}(B[l+1])\reqdef M$ and $H_{\cal H}(B'[l+1])\reqdef M'$ are complexes of ${}^{\bot_0}{\cal H}_{-l}$, whereas by Lemma~\ref{l:fp(H)&TFT} we have that $q$ is a morphism in $\fp({}^{\bot_0}{\cal H}_{-l})$. This said, by the hypothesis of quasi local coherence of ${}^{\bot_0}{\cal H}_{-l}$ we infer that ${\bf x}(\ker^{({\cal H})}(q))$ is a finitely presented object of ${}^{\bot_0}{\cal H}_{-l}$, so that of $\cal H$. Moreover, notice that $H^{-l-1}(q)=f$, and that the standard cohomology sequences associated with the following two sequences of $\cal H$
\[
\xymatrix{%
	0 \ar[r] & {\bf x}(\ker^{({\cal H})}(q)) \ar[r] & \ker^{({\cal H})}(q) \ar[r]\ar@{>->}[d] &
		{\bf y}(\ker^{({\cal H})}(q)) \ar[r] & 0 \cr
	&& M \ar[d]^-{q} \cr
	&& M'
}
\]
yield
\begin{align*}
	H^{-l-1}({\bf x}(\ker^{({\cal H})}(q)) &= H^{-l-1}(\ker^{({\cal H})}(q)) \cr
	&=\ker H^{-l-1}(q)=\ker f,
\end{align*}
where the second equality follows by applying the functor $H^{-l-1}$ to commutative diagram of $\cal H$ obtained by the factorisation of $q$ through its kernel and image. Now, since $\ker f=H^{-l-1}({\bf x}(\ker^{({\cal H})}(q)))$, by Proposition~\ref{p:lc-fp(C)_char} we infer that $\ker f\in\fp(\TFT_{\!-l-1})$, as desired.

\noindent(ii)\enspace We already know the existence of a homomorphism $f\colon\bigoplus_{i=1}^n\Sigma^{-l-1}(y_{-l}(R/J_i)^{k_i})\to B$ having cokernel in ${\cal T}_{-l}$ (see Corollary~\ref{c:fp(H)&TFT}(ii)). Let us rename the corresponding canonical exact sequence by $0\to K\to N\buildrel f\over\to B\to C\to0$, and let $L\leqdef\im f$. Since $N,B\in\fp(\TFT_{\!-l-1})$, by part~(i) we know that $K\in\fp(\TFT_{\!-l-1})$ as well. In turn, $L\in\TF_{\!-l-1}$ and $H_{\cal H}(L[l+1])$ is finitely presented being a cokernel in $\fp({\cal H})$, by Lemma~\ref{l:fp(H)&TFT}. On the other hand, since $C\in{\cal T}_{-l}$, we have $C[l+1]\in{\cal U}[1]$ whence $H_{\cal H}(C[l+1])=0$. This said, in the heart we have the commutative diagram with exact row
\[
\xymatrix@C=.5em{%
	H_{\cal H}(B[l]) \ar[rr]\ar@{->>}[dr] && H_{\cal H}(C[l]) \ar[rr]\ar@{->>}[dr] &&
		H_{\cal H}(L[l+1]) \ar[rr] && H_{\cal H}(B[l+1]) \ar[rr] && 0 \cr
	& H^{\hphantom\prime} \ar@{>->}[ur] && H' \ar@{>->}[ur] && \hphantom{H'} && \hphantom{H}
}
\]
in which $H_{\cal H}(B[l])\in{\cal H}_{-l+2}$ (as we will show at the end of the proof), so that also $H$ belongs to such TTF class of $\cal H$; then $H_{\cal H}(C[l])\in{}^{\bot_0}{\cal H}_{-l+1}$ by an adaptation of Lemma~\ref{l:on_length_l+1}(i), so that also $H'$ belongs to such torsion class of $\cal H$; eventually, the remaining terms of the diagram belong to ${}^{\bot_0}{\cal H}_{-l}\cap\fp({\cal H})$ by Lemmata~\ref{l:on_length_l+1}(i) and~\ref{l:fp(H)&TFT}. Since $\cal H$ is locally coherent by assumption, we infer that $H'\in\fp({\cal H})$. Now we take the standard cohomologies of the extension of $H_{\cal H}(C[l])$ to see that $H^{-l}(H_{\cal H}(C[l]))\cong H^{-l}(H')\in\fp(\TFT_{\!-l})$ by Corollary~\ref{c:lc-fp(C)_char}. On the other hand, the standard cohomology sequence of the triangle $U[1]\to C[l]\to H_{\cal H}(C[l])\buildrel+\over\to$ provided by some object $U\in{\cal U}$ yields 
$$
	H^{-l+1}(U)\longrightarrow C \longrightarrow H^{-l}(H_{\cal H}(C[l])) \longrightarrow
		H^{-l+2}(U) \longrightarrow 0
$$
and the canonical factorisations of the first two homomorphisms give indeed the approximation of $C$ within the torsion pair $({\cal T}_{-l+1},{\cal F}_{-l+1})$. Therefore, $H^{-l}(H_{\cal H}(C[l]))=\Sigma^{-l}(y_{-l+1}(C))$ by means of the monomorphism induced by the natural transformation $\sigma$ (see Remark~\ref{r:functorial_monomorphism_sigma}), and we are done.

As announced above, let us now show that $H_{\cal H}(B[l])\in{\cal H}_{-l+2}$. By Lemma~\ref{l:on_length_l+1}(ii) there exist $U\in{\cal U}_{-l+2}$ and a triangle $U[1]\to B[l+1]\to H_{\cal H}(B[l+1])\buildrel+\over\to$, whence $H_{\cal H}(H_{\cal H}(B[l+1])[j])=0$ for $j=-1,-2$, meaning that $H_{\cal H}(B[l])=H_{\cal H}(U)=H_{{\cal H}_{-l+2}}(U)$, as claimed.

\noindent(iii)\enspace This is a consequence of the proof of part~(ii). \qedhere
\end{proof}

\subsection{A characterisation of the local coherence}%
We are now ready to state and prove the characterisation of the local coherence of the heart associated with a Thomason filtration of finite length of the prime spectrum of a commutative ring.

\begin{thm}
Let\/ $\Phi$ be a Thomason filtration of length~$l+1$. Then $\cal H$ is a locally coherent Grothendieck category if and only if the following conditions hold true:%
\label{t:recursive}%

\begin{enumerate}
\item ${\cal H}_{-l}$ is a locally coherent Grothendieck category;

\item For every $B\in\fp(\TFT_{\!-l-1})$, the functor $\Ext^1_{\cal H}(H_{\cal H}(B[l+1]),-)$ commutes with direct limits of direct systems in ${\cal H}_{-l}$;

\item For every $B\in\fp(\TFT_{\!-l-1})$, the functor $\Ext^1_{\cal H}(H_{\cal H}(B[l+1]),-)$ commutes with direct limits of direct systems in ${}^{\bot_0}{\cal H}_{-l}$;

\item For all\/ $P\in\fp({}^{\bot_0}{\cal H}_{-l})$, in the functorial short exact sequence $0\to L\to W\to P\to0$ of Lemma~\ref{l:on_length_l+1}, we have $L\in\fp({\cal H})$;

\item The torsion pair $({}^{\bot_0}{\cal H}_{-l},{\cal H}_{-l})$ restricts to $\fp({\cal H})$.
\end{enumerate}
\end{thm}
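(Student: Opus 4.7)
The plan is to apply Theorem \ref{t:lc-TTF_finite_type} to the TTF~triple of finite type $({}^{\bot_0}{\cal H}_{-l},{\cal H}_{-l},{\cal H}_{-l}^{\bot_0})$ in ${\cal H}$, whose existence is guaranteed by Theorem \ref{t:H_*-TTF_finite_type}(ii) since a finite length filtration is weakly bounded below. Because Proposition \ref{p:heart_is_bounded}(iii) combined with Theorem \ref{t:when_hearts_coincide} makes ${\cal H}$ locally finitely presented, Theorem \ref{t:lc-TTF_finite_type} reduces local coherence of ${\cal H}$ to the three conditions (i), (ii), (iii)$'$ of its part~(c); the task is then to match these with (1)--(5).

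In the ``$\Rightarrow$'' direction, (1) is \cite[Theorem~2.16]{Her97}/\cite[Theorem~2.6]{Kra97} applied to the torsionfree class ${\cal H}_{-l}$ of finite type, (5) is exactly (iii)$'$, and (4) is extracted from the proof of Proposition \ref{p:lc-fp(C)_char}, which shows that its hypothesis~(ii) is equivalent to $L\in\fp({\cal H})$. For~(2), given $B\in\fp(\TFT_{\!-l-1})$, Lemmata \ref{l:fp(H)&TFT} and \ref{l:on_length_l+1}(i) place $W\leqdef H_{\cal H}(B[l+1])$ in $\fp({\cal H})\cap{}^{\bot_0}{\cal H}_{-l}$; combined with (5) and \cite[Lemma~2.4]{PSV19}, this gives $W\in\fp({}^{\bot_0}{\cal H}_{-l})$, so (2) becomes a special case of condition~(ii) of Theorem \ref{t:lc-TTF_finite_type}. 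Condition~(3) is immediate from \cite[Proposition~3.5(2)]{Sao17}, since $W\in\fp({\cal H})$ and ${\cal H}$ is locally coherent.

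For the converse, we verify (i), (ii), (iii)$'$ of Theorem \ref{t:lc-TTF_finite_type}(c); again (iii)$'$ is (5), and the quasi local coherence of ${\cal H}_{-l}$ follows from (1). The intermediate key lemma is the characterization that $P\in\fp({}^{\bot_0}{\cal H}_{-l})$ if and only if $H^{-l-1}(P)\in\fp(\TFT_{\!-l-1})$ and $L\in\fp({\cal H})$, with $L$ as in Lemma \ref{l:on_length_l+1}(iii). The ``$\supseteq$'' direction uses Lemma \ref{l:fp(H)&TFT} to put $W\in\fp({\cal H})$, extension-closure (\cite[Corollary~1.8]{PSV19}) to get $P\in\fp({\cal H})$, and closure of ${}^{\bot_0}{\cal H}_{-l}$ under direct limits in ${\cal H}$ to upgrade this to $\fp({}^{\bot_0}{\cal H}_{-l})$; the ``$\subseteq$'' direction combines (4) with extension-closure and Lemma \ref{l:fp(H)&TFT}. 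Condition~(ii) of Theorem \ref{t:lc-TTF_finite_type} then follows by applying the Five Lemma to the direct-limit comparison of the Ext long exact sequence of $0\to L\to W\to P\to0$ for $(M_i)_{i\in I}$ in ${\cal H}_{-l}$, using commutation of $\Hom(W,-)$ and $\Hom(L,-)$ (finite presentability), of $\Ext^1(W,-)$ by (2), and of $\Ext^1(L,-)$ by (1) through \cite[Proposition~3.5(2)]{Sao17} together with the identification $\Ext^1_{\cal H}(L,M)=\Ext^1_{{\cal H}_{-l}}(L,M)$ for $M\in{\cal H}_{-l}$ (as ${\cal H}_{-l}$ is closed under extensions in ${\cal H}$).

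The principal and most delicate remaining step is the quasi local coherence of ${}^{\bot_0}{\cal H}_{-l}$. Via the characterization above, given a morphism $f\colon P\to P'$ in $\fp({}^{\bot_0}{\cal H}_{-l})$, it reduces to showing that $\ker_R H^{-l-1}(f)\in\fp(\TFT_{\!-l-1})$ and to a parallel assertion for the $L$-parts. The plan is to imitate the Snake Lemma argument of Proposition \ref{p:lc-necessary_conditions_TFT}(i) in the converse direction: lift $f$ through $H_{\cal H}(-[l+1])$ to a morphism $q\colon W\to W'$ in $\fp({}^{\bot_0}{\cal H}_{-l})$ and read off the desired kernel from ${\bf x}(\ker^{({\cal H})}q)$ via Lemma \ref{l:on_length_l+1}(ii). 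The decisive input here is condition~(3): coupled with (2) and the direct-limit-compatible torsion-torsionfree decomposition available because $({}^{\bot_0}{\cal H}_{-l},{\cal H}_{-l})$ is of finite type, it compensates for the absence of a priori local coherence of ${\cal H}$ by securing the commutation of $\Ext^1_{\cal H}(W,-)$ with direct limits of arbitrary systems in ${\cal H}$, so as to produce $\ker^{({\cal H})}q\in\fp({\cal H})$ and, through (5) and the characterization, the sought finite presentability.
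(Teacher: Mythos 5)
Your overall strategy coincides with the paper's: both directions are routed through Theorem~\ref{t:lc-TTF_finite_type} applied to the TTF~triple of finite type $({}^{\bot_0}{\cal H}_{-l},{\cal H}_{-l},{\cal H}_{-l}^{\bot_0})$ of Theorem~\ref{t:H_*-TTF_finite_type}, the forward implication is handled the same way, and your verification of condition~(ii) of Theorem~\ref{t:lc-TTF_finite_type} via the five lemma on $0\to L\to W\to P\to0$ matches the paper exactly.

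The gap is in the step you yourself flag as the delicate one, namely the quasi local coherence of ${}^{\bot_0}{\cal H}_{-l}$. You assert that conditions~(2) and~(3), together with the finite-type decomposition $0\to x(M_i)\to M_i\to y(M_i)\to 0$, ``secure the commutation of $\Ext^1_{\cal H}(W,-)$ with direct limits of \emph{arbitrary} direct systems in $\cal H$.'' The natural five-lemma argument for this compares the long exact sequences
$$
	\Hom(W,y(M_i))\to\Ext^1(W,x(M_i))\to\Ext^1(W,M_i)\to\Ext^1(W,y(M_i))\to\Ext^2(W,x(M_i)),
$$
and to get an isomorphism at the middle term you need the comparison map $\varinjlim\Ext^2_{\cal H}(W,x(M_i))\to\Ext^2_{\cal H}(W,\varinjlim x(M_i))$ to be injective. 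Conditions~(2) and~(3) say nothing about $\Ext^2$, and no analogue of \cite[Proposition~1.6]{PSV19} (which only covers $\Ext^1$ of a finitely presented object) is available here; so this claim is unsubstantiated, and the subsequent conclusion $\ker^{({\cal H})}(q)\in\fp({\cal H})$ does not follow as stated.

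The paper circumvents exactly this obstacle by never asking $\Ext^1_{\cal H}(W,-)$ to commute with arbitrary direct limits of $\cal H$. After reducing to an epimorphism $\beta\colon W\to W'$ and setting $H\leqdef\ker^{({\cal H})}(\beta)$, it produces (via the Snake Lemma applied to the $H_{\cal H}$-images of the triangles attached to $\ker f\to X\to\im f$ and $\coker f[-1]\to\im f\to X'$, where $f=H^{-l-1}(\beta)$) the torsion decomposition $0\to M'\to H\to M\to 0$ with $M'\in{}^{\bot_0}{\cal H}_{-l}$ and $M\in{\cal H}_{-l}$, and then shows $M'\in\fp({}^{\bot_0}{\cal H}_{-l})$ by applying hypothesis~(3) to the specific sequence $0\to M'\to W\to H_{\cal H}(N[l+1])\to0$ and $M\in\fp({\cal H}_{-l})$ by applying hypothesis~(2) to $0\to M\to H_{\cal H}(N[l+1])\to W'\to0$; each five-lemma argument only involves direct systems inside one constituent of the torsion pair, where (2) and (3) genuinely apply, and \cite[Lemma~2.4]{PSV19} then upgrades finite presentability in each constituent to finite presentability in $\cal H$. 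You would need to replace your ``arbitrary direct limits'' claim by this (or an equivalent) decomposition to close the argument.
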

\begin{proof}
Let us assume that the heart $\cal H$ associated with $\Phi$ is a locally coherent Grothendieck category, and let us show that the five stated conditions hold true. (1) ${\cal H}_{-l}$ is a locally coherent Grothendieck category since it is a TTF class of finite type in $\cal H$. (2) and~(3) follow by Lemma~\ref{l:fp(H)&TFT} and \cite[Proposition~3.5(2)]{Sao17}. (4)~follows by the proof of Proposition~\ref{p:lc-fp(C)_char}. (5)~holds true by hypothesis on $\cal H$ and since the torsion pair $({\cal H}_{-l}^{\vphantom{\bot_0}},{\cal H}_{-l}^{\bot_0})$ is of finite type.

Conversely, let us show that the five stated conditions imply the local coherence of the heart $\cal H$. More in details, we want to exploit Theorem~\ref{t:lc-TTF_finite_type} which characterises the local coherence of an arbitrary Grothendieck category equipped with a TTF~triple of finite type. Notice that hypothesis~(iii)' of Theorem~\ref{t:lc-TTF_finite_type} coincides with our hypothesis~(5)

Concerning condition~(i) of Theorem~\ref{t:lc-TTF_finite_type}, thanks to our hypothesis~(1) we need to check that the torsion class ${}^{\bot_0}{\cal H}_{-l}$ is quasi locally coherent. We know that $\cal H$ is a locally finitely presented Grothendieck category by Proposition~\ref{p:heart_is_bounded} and Theorem~\ref{t:when_hearts_coincide}, and by imitating the proof of ``${\rm(a)\Rightarrow(b)}$'' in Theorem~\ref{t:lc-TTF_finite_type} we deduce that ${}^{\bot_0}{\cal H}_{-l}$ is locally finitely presented as well, thus it remains to prove that $\fp({}^{\bot_0}{\cal H}_{-l})$ is closed under taking kernels; in particular, it suffices to check that for every epimorphism $p\colon P\to P'$ in $\fp({}^{\bot_0}{\cal H}_{-l})$, we have ${\bf x}(\ker^{({\cal H})}(f))\in\fp({}^{\bot_0}{\cal H})\subseteq\fp({\cal H})$. The following diagram provided by Lemma~\ref{l:on_length_l+1}(iii)
\[
\xymatrix{%
	0 \ar[r] & L \ar@{.>}[d]_\alpha\ar[r] & W \ar@{.>}[d]^\beta\ar[r] & P \ar[r]\ar@{->>}[d]^p & 0 \cr
	0 \ar[r] & L' \ar[r] & W' \ar[r] & P' \ar[r] & 0
}
\]
can be completed to a commutative diagram, since in $\De(R)$ the composition $W\to P\buildrel p\over\to P'\to L[1]$ yields an element of $\Ext^1_{\cal H}(W,L)$, which is zero by Remark~\ref{r:split_in_fp(TFT)}(2); consequently $\alpha$ is defined by the universal property of the kernel. By condition~(4), the objects $L,L'$ are finitely presented complexes of $\cal H$, while $W,W'$ are so by extension-closure. Moreover, since $W'\in{}^{\bot_0}{\cal H}_{-l}$ by Lemma~\ref{l:on_length_l+1}(i), then $\beta$ is an epimorphism since its cokernel in $\cal H$ is a quotient in both the torsion classes ${}^{\bot_0}{\cal H}_{-l}$ and ${\cal H}_{-l}$. The Snake~Lemma applied on the previous commutative diagram gives us the exact row
$$
	0\longrightarrow \ker^{({\cal H})}(\alpha)\longrightarrow \ker^{({\cal H})}(\beta)\longrightarrow
		\ker^{({\cal H})}(p)\longrightarrow \coker^{({\cal H})}(\alpha)\longrightarrow0
$$
in which the outer terms are finitely presented by hypotheses~(1). This said, by \cite[Corollary~1.8]{PSV19} and hypothesis~(5), our claim will follow once we check that $H\leqdef\ker^{({\cal H})}(\beta)$ is a finitely presented object. Let $X=H^{-l-1}(P)$ so that $W=H_{\cal H}(X[l+1])$ (similarly for $W'$), and consider $f\leqdef H^{-l-1}(\beta)$, with $K\leqdef\ker f$, $N\leqdef\im f$ and $C\leqdef\coker f$. By applying the functor $H_{\cal H}$ to the triangles $K[l+1]\to X[l+1]\to N[l+1]\buildrel+\over\to$ and $C[l]\to N[l+1]\to X'[l+1]\buildrel+\over\to$ obtained out of the canonical short exact sequences in $R\lMod$ associated to $f$, we get the commutative diagram of $\cal H$ with exact rows
\[
\xymatrix@C=.35em@R=1em{%
	&& \hphantom{H_{\cal H}(K)} && H_{\cal H}(K[l+1]) \ar[dd]\ar@{->>}[ld] \cr
	& M' \ar@{=}[rr]\ar@{>->}[dr] && M' \ar@{>->}[dr] && \hphantom{H_{\cal H}(K)}  && \hphantom{H_{\cal H}(K)} \cr
	0 \ar[rr] && H^{\hphantom\prime} \ar[rr]\ar[dd]\ar@{->>}[rd] && W \ar[rr]^-{\beta}\ar@{->>}[dd]^-{\delta} &&
		W' \ar[rr]\ar@{=}[dd] && 0 \cr
	& \hphantom{H_{\cal H}(K)} && M \ar@{>->}[dr] \cr
	&& *+[l]{H_{\cal H}(C[l])} \ar[rr]\ar@{->>}[ur] && H_{\cal H}(N[l+1]) \ar[rr]^-{\alpha} && W' \ar[rr] && 0\cr	
}
\]
in which $\beta=\alpha\circ\delta$ (this also implies $H_{\cal H}(C[l+1])=0$ i.e.~that $C\in{\cal T}_{-l}$, whence $H_{\cal H}(C[l])\in{\cal H}_{-l}$) and the epimorphism $H\to M$ is provided by the universal property of the kernel. Moreover, by the Snake~Lemma, the image $M'$ of the morphism $H_{\cal H}(K[l+1])\to W$ induces the short exact sequence $0\to M'\to H\to M\to0$, which actually is the approximation of $H$ within the torsion pair $({}^{\bot_0}{\cal H}_{-l},{\cal H}_{-l})$ (see Lemma~\ref{l:on_length_l+1}(i)). Thus, we reduced our claim to check that $M',M\in\fp({\cal H})$. We have $M'\in\fp({}^{\bot_0}{\cal H}_{-l})\subseteq\fp({\cal H})$ by hypothesis~(3) applied on the short exact sequence $0\to M'\to W\to H_{\cal H}(N[l+1])\to0$, whereas $M\in\fp({\cal H}_{-l})\subseteq\fp({\cal H})$ thanks to hypothesis~(2) applied on the short exact sequence $0\to M\to H_{\cal H}(N[l+1])\to W'\to0$.

Eventually, let us prove that Theorem~\ref{t:lc-TTF_finite_type}(ii) holds true. Given $P\in\fp({}^{\bot_0}{\cal H}_{-l})$ we have a short exact sequence $0\to L\to W\to P\to0$ by Lemma~\ref{l:on_length_l+1}(iii), in which $L\in\fp({\cal H})$ by hypothesis~(4). The sequence yields an exact row
\begin{multline*}
	0\to \Hom_{\cal H}(P,-)\to \Hom_{\cal H}(W,-)\to \Hom_{\cal H}(L,-)\relbar\joinrel\cdots \cr
		\cdots\joinrel\to\Ext^1_{\cal H}(P,-)\to\Ext^1_{\cal H}(W,-)\to\Ext^1_{\cal H}(L,-)
\end{multline*}
of covariant functors ${\cal H}\to\Ab$. When restricted to ${\cal H}_{-l}$, the first three functors commute with direct limits, whereas the last two do so respectively by hypotheses~(2) and~(1), and by \cite[Proposition~3.5(2)]{Sao17}, so that $\Ext^1_{\cal H}(P,-)\mathclose\upharpoonright_{{\cal H}_{-l}}$ commutes with the desired direct limits.
\end{proof}

\begin{rem}
The previous Theorem provides a recursive argument for the construction of a Thomason filtration of finite length whose heart is a locally coherent Grothendieck category. However, one practical issue is to check conditions~(2) and~(3) when the length of the filtration, i.e.~$l$, is greater than $2$. Nonetheless, for $0\le l\le2$ (that are values involving interesting classes of abelian categories, e.g.~torsion classes of $R\lMod$ and certain HRS~hearts, as we have already seen), the conditions of the Theorem simplify so that most of them can be rephrased in module-theoretic ones, as we will show in the following results.
\end{rem}

The length zero case has been treated in subsection~\ref{ss:a_crucial_example}, and it consists in a characterisation of the local coherence of an arbitrary hereditary torsion class of finite type in $R\lMod$.

\begin{cor}
Let $\Phi$ be a Thomason filtration of length~$1$. Then $\cal H$ is a locally coherent Grothendieck category if and only if the following conditions are satisfied:%
\label{c:recursive-length1}%

\begin{enumerate}
\item ${\cal T}_0$ is locally coherent;

\item For all\/ $P\in\fp(\TFT_{\!-1})$, the functor $\Hom_R(P,-)$ commutes direct limits of direct systems in ${\cal T}_0$;

\item For all\/ $P\in\fp(\TFT_{\!-1})$, the functor $\Ext^1_R(P,-)$ commutes direct limits of direct systems in $\TFT_{\!-1}$;

\item For all\/ $Q\in\fp({\cal T}_0)$, the functor $\Ext^2_R(Q,-)$ commutes with direct limits of direct systems in $\TFT_{\!-1}$.
\end{enumerate}
\end{cor}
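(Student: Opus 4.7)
The plan is to apply Theorem~\ref{t:recursive} with $l=0$, so that the filtration of length~$1$ reads $\Phi:\cdots=\Phi(-1)\supsetneq\Phi(0)\supsetneq\emptyset$. Since $\Phi$ is weakly bounded below, $\cal H=\cal H^\#$ by Theorem~\ref{t:when_hearts_coincide} and $\cal H$ is a locally finitely presented Grothendieck category by Proposition~\ref{p:heart_is_bounded}. First I would identify the auxiliary data for $l=0$: $\cal H_{-l}=\cal H_0\cong\cal T_0$ by Proposition~\ref{p:torsion-lfp}; $\cal H_{-l+1}=\cal H_1=0$ since $\Phi_1$ is the zero filtration; $\TFT_{-l-1}=\TFT_{-1}=\cal T_{-1}\cap\cal F_0$ because $\cal T_1=0$ trivialises the $\Ext^1$-defining condition; and for every $B\in\TFT_{-1}$ the isomorphism $H_{\cal H}(B[1])\cong B[1]$ follows from Lemma~\ref{l:on_length_l+1}(ii), since the auxiliary object $U$ appearing there lies in $\cal U_{-l+2}=\cal U_2=0$.

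With these identifications, conditions (1)--(4) of Theorem~\ref{t:recursive} translate directly into conditions (1)--(3) of the corollary plus a vacuous clause. Condition (1) is exactly~(1); via $\Ext^1_{\cal H}(B[1],X[0])\cong\Hom_R(B,X)$, condition (2) coincides with~(2); via $\Ext^1_{\cal H}(B[1],X[1])\cong\Ext^1_R(B,X)$ together with the shift equivalence ${}^{\bot_0}\cal H_0\cong\TFT_{-1}[1]$, condition (3) coincides with~(3); and condition (4) holds vacuously, as $L\in\cal H_1=0$ forces $L=0$. It remains to relate condition~(5) of Theorem~\ref{t:recursive} to condition~(4) of the corollary, given (1)--(3).

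For this I would apply Theorem~\ref{t:lc-TTF_finite_type} to the TTF triple $({}^{\bot_0}\cal H_0,\cal H_0,\cal H_0^{\bot_0})$ of $\cal H$, which is of finite type by Theorem~\ref{t:H_*-TTF_finite_type}. Since $\cal H$ is locally finitely presented, parts (b) and (c) of that theorem are equivalent, so under their shared conditions (i) and (ii) the clause (b)(iii) becomes equivalent to (c)(iii)$'$. Now (c)(iii)$'$ is precisely condition~(5) of Theorem~\ref{t:recursive}, whereas (b)(iii) --- rewritten via $\Ext^1_{\cal H}(Q[0],X[1])\cong\Ext^2_R(Q,X)$ for $Q\in\fp(\cal T_0)$ and $X\in\TFT_{-1}$ --- is precisely condition~(4) of the corollary. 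Condition (b)(ii) coincides with~(2), while (b)(i) requires $\cal T_0$ and $\TFT_{-1}$ to be quasi locally coherent: the former follows from~(1), while the latter is extracted from~(1)--(3) by running the Snake Lemma argument in the proof of Theorem~\ref{t:recursive} at $l=0$, where the degeneration $L=0$ reduces matters to showing $\ker f\in\fp(\TFT_{-1})$ for every morphism $f$ in $\fp(\TFT_{-1})$.

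The hardest step I anticipate is precisely the quasi local coherence of $\TFT_{-1}$ from (1)--(3): a morphism $f$ in $\fp(\TFT_{-1})$ has image $\im f\in\TFT_{-1}$ and cokernel $\coker f\in\cal T_{-1}$, and neither is a priori finitely presented in the appropriate category. Running the $5$-lemma on the $\Ext^1_R$-long exact sequences attached to $0\to\im f\to B'\to\coker f\to 0$ requires splitting $\coker f$ through its $\cal T_0$-torsion subobject and then controlling the $\cal T_0$-part via~(1) together with the vanishing $\Hom_R(\cal T_0,\cal F_0)=0$, while the $y_0$-part is handled by~(3).
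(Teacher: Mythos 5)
Your reduction to Theorem~\ref{t:recursive} with $l=0$, the identifications ${\cal H}_0={\cal T}_0[0]$, ${}^{\bot_0}{\cal H}_0=\TFT_{\!-1}[1]$, $H_{\cal H}(B[1])=B[1]$, the translation of conditions (1)--(3), and the vacuity of condition~(4) of the Theorem all agree with the paper. The problem is in how you relate condition~(5) of Theorem~\ref{t:recursive} to condition~(4) of the corollary. Your plan hinges on having hypothesis (b)(i) of Theorem~\ref{t:lc-TTF_finite_type} available, i.e.\ on the quasi local coherence of ${}^{\bot_0}{\cal H}_0\cong\TFT_{\!-1}$, which you claim to extract from (1)--(3) alone. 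The sketch you give does not close this: for a morphism $f$ in $\fp(\TFT_{\!-1})$ the five-lemma on $0\to\im f\to B'\to\coker f\to0$ forces you to control $\Ext^1_R(\coker f,-)$ \emph{and} $\Ext^2_R(\coker f,-)$ on direct systems of $\TFT_{\!-1}$; splitting $\coker f$ as $0\to x_0(\coker f)\to\coker f\to y_0(\coker f)\to0$, the module $x_0(\coker f)$ is a torsion submodule of a finitely generated module and need not be finitely presented (or even finitely generated) in ${\cal T}_0$, so condition~(1) does not apply to it, and the $\Ext^2_R$ term of the ${\cal T}_0$-part is exactly what condition~(4) --- not (1)--(3) --- is designed to control. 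So the claimed derivation from (1)--(3) is at best incomplete, and even granting (4) you would essentially be re-proving the hardest part of Theorem~\ref{t:recursive} rather than quoting it.

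The paper avoids this entirely. It proves directly that condition~(4) of the corollary implies condition~(5) of Theorem~\ref{t:recursive}: for $B\in\fp({\cal H})$ one takes the approximation $0\to H^{-1}(B)[1]\to B\to H^0(B)[0]\to0$ in $(\TFT_{\!-1}[1],{\cal T}_0[0])$, observes that $H^0(B)[0]\in\fp({\cal H})$ holds \emph{unconditionally} by Corollary~\ref{c:R-mod&T_0-are_in_fp(H)}(i), and runs the five-lemma on the $\varinjlim\Ext^k_{\cal H}(-,X_i[1])\to\Ext^k_{\cal H}(-,\varinjlim X_i[1])$ comparison, where the only nontrivial input is that $\varinjlim\Ext^1_{\cal H}(H^0(B)[0],X_i[1])\to\Ext^1_{\cal H}(H^0(B)[0],\varinjlim X_i[1])$ is bijective --- which via $\Ext^1_{\cal H}(Q[0],X[1])\cong\Ext^2_R(Q,X)$ is precisely condition~(4). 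With all five conditions of Theorem~\ref{t:recursive} then in hand, the quasi local coherence of ${}^{\bot_0}{\cal H}_0$ is delivered by that theorem's proof rather than having to be established beforehand. I would rework your argument along these lines: either adopt the paper's direct implication ``(4) $\Rightarrow$ Theorem~\ref{t:recursive}(5)'', or, if you insist on going through Theorem~\ref{t:lc-TTF_finite_type} directly, supply a complete proof of the quasi local coherence of $\TFT_{\!-1}$ using \emph{all} of (1)--(4).
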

\begin{proof}
First, notice that $H_{\cal H}(P[1])=P[1]$ for all $P\in\TFT_{\!-1}$, that ${\cal H}_0={\cal T}_0[0]$ and that ${}^{\bot_0}{\cal H}_0=\TFT_{\!-1}[1]$. Thus, the stated conditions~(1), (2) and~(3) are exactly the corresponding ones of Theorem~\ref{t:recursive}, since $l=0$. In turn, condition~(4) of the Theorem is clearly satisfied since $L\in{\cal H}_1=0$ (see Lemma~\ref{l:on_length_l+1}). Let us check condition~(5) of the Theorem. We claim that it is implied by our condition~(4). Let $B\in\fp({\cal H})$ and consider its approximation $0\to H^{-1}(B)[1]\to B\to H^0(B)[0]\to0$ within the torsion pair $({}^{\bot_0}{\cal H}_0,{\cal H}_0)=(\TFT_{\!-1}[1],{\cal T}_0[0])$; we have to prove that the outer terms are finitely presented objects of $\cal H$. We recall that $H^0(B)[0]\in\fp({\cal H})$ by Corollary~\ref{c:R-mod&T_0-are_in_fp(H)}(i); in particular, we have $H^0(B)\in\fp({\cal T}_0)$. Let $(X_i)_{i\in I}$ be a direct system of modules in $\TFT_{\!-1}$. Applying the functors
$$
	F^k\leqdef \varinjlim_{i\in I}\Ext^k_{\cal H}(-,X_i[1])\quad\hbox{and}\quad
	G^k\leqdef \Ext^k_{\cal H}(-,\varinjlim_{i\in I}X_i[1])\qquad (k\in\N\cup\{0\})
$$
on the previous approximation, say it $0\to Y[1]\to B\to X[0]\to0$ for short, we obtain the following commutative diagram with exact rows
\[
\xymatrix{%
	0 \ar[r] & F^0(X[0]) \ar[d]_-{f_1}\ar[r] & F^0(B) \ar[d]^-{f_2}\ar[r] &
		F^0(Y[1]) \ar[d]^-{f_3}\ar[r] & F^1(X[0]) \ar[d]^-{f_4}\ar[r] & F^1(B) \ar[d]^-{f_5} \cr
	0 \ar[r] & G^0(X[0]) \ar[r] & G^0(B) \ar[r] & G^0(Y[1]) \ar[r] & G^1(X[0]) \ar[r] & G^1(B)
}
\]
in which, using \cite{Ver,BBD82}, $f_1$ is an isomorphism by Corollary~\ref{c:R-mod&T_0-are_in_fp(H)}(i), $f_2$ is iso and $f_5$ is monic, and $f_4$ is an isomorphism by hypothesis~(4), so we are done by the Five~Lemma.

In order to conclude, it remains to prove that if $\cal H$ is locally coherent, then our hypothesis~(4) is satisfied. Let $Q\in\fp({\cal T}_0)$. By Corollary~\ref{c:R-mod&T_0-are_in_fp(H)}(i) again, we have $Q[0]\in\fp({\cal H})$, hence $\Ext^1_{\cal H}(Q[0],-)$ preserves direct limits by \cite[Proposition~3.5(2)]{Sao17}; in particular, it commutes with direct limits of $\TFT_{\!-1}[1]$, which is our thesis by \cite{Ver,BBD82}. \qedhere
\end{proof}

\begin{cor}
Let $\Phi$ be a Thomason filtration of length~$2$. Then $\cal H$ is a locally coherent Grothendieck category if and only if the following conditions are satisfied:%
\label{c:recursive-length2}%

\begin{enumerate}
\item ${\cal H}_{-1}$ is locally coherent (cf.~Corollary~\ref{c:recursive-length1});

\item For all\/ $P\in\fp(\TFT_{\!-2})$, the functor $\Hom_R(P,-)$ preserves direct limits of direct systems in $\TFT_{\!-1}$;

\item The following conditions hold true:

\begin{enumerate}
\item[(3.i)] For all\/ $J\in{\cal B}_{-2}$, the functor $\Ext^1_{\cal H}(\Sigma^{-2}(y_{-1}(R/J))[2],-)$ preserves direct limits of direct systems in ${}^{\bot_0}{\cal H}_{-1}$;

\item[(3.ii)] $\fp(\TFT_{\!-2})$ is closed under kernels in $R\lMod$.

\item[(3.iii)] For all morphisms $f$ in $\fp(\TFT_{\!-2})$, we have $\Sigma^{-2}(\im f)/{\im f}\in R\lmod$;
\end{enumerate}

\item For all exact sequences of\/ $R\lMod$ of the form $0\to Y\to M\buildrel f\over\to N\to X\to0$ such that $Y\in\fp(\TFT_{\!-2})$, $X\in\fg({\cal T}_0)$ and\/ $\mathop{\rm Cone}(f[1])\in{\cal H}$, we have $X\in\fp({\cal T}_0)$.

\item For all\/ $P\in\fp({\cal H})$, the following conditions hold true:

\begin{enumerate}
\item[(5.i)] $H^{-2}(P)\in\fp(\TFT_{\!-2})$;

\item[(5.ii)] $x_0(H^{-1}(P))\in\fp({\cal T}_0)$.
\end{enumerate}%
\end{enumerate}
\end{cor}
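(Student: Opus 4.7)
The plan is to apply Theorem~\ref{t:recursive} with $l=1$ and translate each of its five abstract conditions into the module-theoretic ones of the corollary. A crucial initial observation is that for a length-$2$ filtration $\Phi$ bounded above $0$, the auxiliary filtration $\Phi_{1}$ is identically empty, so $\mathcal{U}_{1}=0$. By Lemma~\ref{l:on_length_l+1}(ii) (with $l=1$, i.e.\ $-l+2 = 1$), this gives $U=0$ and therefore $H_{\mathcal{H}}(B[2])=B[2]$ for every $B\in\TFT_{\!-2}$. Consequently, for $P\in\fp({}^{\bot_0}\mathcal{H}_{-1})$ the functorial sequence of Lemma~\ref{l:on_length_l+1}(iii) reads $0\to L_{0}[0]\to H^{-2}(P)[2]\to P\to 0$ with $L_{0}\in\mathcal{T}_{0}$, and comparing standard cohomologies identifies $L_{0}\cong H^{-1}(P)$; this simplification is what drives every subsequent translation.

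Conditions (1) and (2) are almost immediate. (1) is a literal restatement of condition~(1) of the theorem, since $\mathcal{H}_{-1}$ is the heart of the length-$1$ filtration $\Phi_{-1}$ and may itself be analysed via Corollary~\ref{c:recursive-length1}. For (2), any $M\in\mathcal{H}_{-1}\subseteq\De^{[-1,0]}(R)$ sits in a standard triangle $H^{-1}(M)[1]\to M\to H^{0}(M)[0]\to^{+}$; applying $\Hom_{\De(R)}(B[2],-[1])$ and using Verdier's isomorphism together with the orthogonality axioms of the t-structure yields the natural identification $\Ext^{1}_{\mathcal{H}}(B[2],M)\cong\Hom_{R}(B,H^{-1}(M))$. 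Since direct limits in $\mathcal{H}_{-1}$ coincide with those in $\mathcal{H}$ (Theorem~\ref{t:H_*-TTF_finite_type} and Corollary~\ref{c:closure_by_products_and_coproducts}) and commute with standard cohomology (Lemma~\ref{l:cohomologies_commute}), and as $H^{-1}$ traces out $\TFT_{\!-1}$ when $M$ ranges in $\mathcal{H}_{-1}$, condition~(2) of the theorem is equivalent to (2) of the corollary.

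The translation of condition~(3) of the theorem is the heart of the argument. Using Lemma~\ref{l:on_length_l+1}(iv) together with Corollary~\ref{c:fp(H)&TFT}, every $P\in\fp({}^{\bot_0}\mathcal{H}_{-1})$ admits an epimorphism $\bigoplus_{i} H_{\mathcal{H}}(K(J_{i})[2])\twoheadrightarrow P$ coming from the explicit presentation of Proposition~\ref{p:lc-necessary_conditions_TFT}(ii); hence the Ext-preservation condition reduces to the same property tested on the distinguished generators $\Sigma^{-2}(y_{-1}(R/J)^{k})[2]$, producing (3.i). Necessity of (3.ii) (closure of $\fp(\TFT_{\!-2})$ under kernels) is precisely Proposition~\ref{p:lc-necessary_conditions_TFT}(i), whereas (3.iii) comes from Proposition~\ref{p:lc-necessary_conditions_TFT}(iii): for $f$ in $\fp(\TFT_{\!-2})$ the cokernel $\coker f\in\mathcal{T}_{-1}$ must land in $\Sigma\mathcal{T}_{-1}$, and via Lemma~\ref{l:SigmaT} and the local coherence of $\mathcal{H}_{-1}$ provided by (1) this is equivalent to $\Sigma^{-2}(\im f)/\im f$ being finitely generated (finite generation in $\mathcal{T}_{0}$ upgrading to finite presentation thanks to (1)). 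Conversely, given (3.i)--(3.iii), one performs the standard dévissage of a direct system in ${}^{\bot_0}\mathcal{H}_{-1}$ through the sequences of Lemma~\ref{l:on_length_l+1}(iii), reducing to Ext on the generators and to $\Hom_{R}$-preservation already controlled by (2).

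Conditions (4) and (5) of the corollary correspond to conditions (4) and (5) of the theorem. For (4), the exact sequence $0\to Y\to M\to^{f} N\to X\to 0$ with $\mathop{\rm Cone}(f[1])\in\mathcal{H}$ encodes exactly the objects $P=\mathop{\rm Cone}(f[1])\in\fp({}^{\bot_0}\mathcal{H}_{-1})$ in the correspondence described in the setup: $W=Y[2]$, $L=X[0]$, so Corollary~\ref{c:R-mod&T_0-are_in_fp(H)}(i) translates $L\in\fp(\mathcal{H})$ into $X\in\fp(\mathcal{T}_{0})$. For (5), decomposing $P\in\fp(\mathcal{H})$ via the triangle $\mathbf{x}(P)\to P\to \mathbf{y}(P)\to^{+}$ and using $H^{-2}(P)=H^{-2}(\mathbf{x}(P))$, $x_{0}(H^{-1}(P))=H^{-1}(\mathbf{x}(P))$, the restriction of the TTF torsion pair to $\fp(\mathcal{H})$ is equivalent to the conjunction of Proposition~\ref{p:lc-fp(C)_char}(i) for $\mathbf{x}(P)$ (giving (5.i)) and the finite presentation of $L_{0}=H^{-1}(\mathbf{x}(P))$ inside $\mathcal{T}_{0}$ (giving (5.ii)). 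The principal obstacle will be the sufficiency part of condition~(3): reassembling the three module-theoretic conditions into the full Ext-preservation on ${}^{\bot_0}\mathcal{H}_{-1}$ requires a careful interplay between the functorial monomorphism $\sigma$ of Remark~\ref{r:functorial_monomorphism_sigma} and the local coherence of $\mathcal{H}_{-1}$ to control the finiteness of the cohomology of direct limits of complexes lying in the twisted class $\Sigma^{-2}(\TF_{\!-2})$.
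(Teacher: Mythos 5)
Your overall strategy --- apply Theorem~\ref{t:recursive} with $l=1$ and translate each condition, starting from the observation that ${\cal U}_1=0$ forces $H_{\cal H}(B[2])=B[2]$ for $B\in\TFT_{-2}$, hence $W=H^{-2}(P)[2]$ and $L\cong H^{-1}(P)[0]$ in the sequence of Lemma~\ref{l:on_length_l+1}(iii) --- is exactly the paper's, and your treatments of conditions (1), (2), (4) and (5) match the paper's in substance (modulo the auxiliary steps needed to pass between ``$P\in\fp({}^{\bot_0}{\cal H}_{-1})$'' and the $4$-term exact sequences of condition (4), which the paper supplies via the fact that a quotient of a finitely presented object by a finitely generated subobject is finitely presented, and via (5.i) in the converse direction). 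However, your handling of condition (3) contains two genuine gaps. The first is the sufficiency of (3.i)--(3.iii) for condition (3) of the theorem, which is the crux of the whole proof and which you leave as an announced ``principal obstacle''. An epimorphism from $\bigoplus_i H_{\cal H}(K(J_i)[2])$ onto $B[2]$ does not by itself reduce the preservation of direct limits by $\Ext^1_{\cal H}(B[2],-)$ to the generators: one must prove that the kernel $H$ of the induced epimorphism $N[2]\to B[2]$ in ${\cal H}$, with $N=\bigoplus_i\Sigma^{-2}(y_{-1}(R/J_i)^{k_i})$, is finitely presented. The paper does this by taking the map $f\colon N\to B$ of Corollary~\ref{c:fp(H)&TFT}(ii), using (3.ii) to get $\ker f\in\fp(\TFT_{-2})$, and extracting via the Snake Lemma an extension $0\to\im^{({\cal H})}(v[2])\to H\to y_{-1}(C)[1]\to0$ whose first term is finitely presented because $H_{\cal H}(L[1])\cong\coker(\sigma_L)[0]$ is so by (3.iii) and Corollary~\ref{c:R-mod&T_0-are_in_fp(H)}(i), and whose second term is finitely presented by condition (2). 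None of this is a routine d\'evissage, and it is precisely where all three subconditions interact.

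The second gap is your derivation of the necessity of (3.iii) from Proposition~\ref{p:lc-necessary_conditions_TFT}(iii). With $l=1$ that statement concerns morphisms $f$ whose cokernel already lies in ${\cal T}_{-1}$ and asserts $\coker f\in\Sigma{\cal T}_{-1}$, i.e.\ a finiteness property of $H_{\cal H}(\coker(f)[1])$, equivalently of $\Sigma^{-1}(y_0(\coker f))$. This is a different object from $\Sigma^{-2}(\im f)/\im f=\coker\sigma_{\im f}$, which lives in ${\cal T}_0$, and moreover the hypothesis $\coker f\in{\cal T}_{-1}$ is not available for an arbitrary morphism of $\fp(\TFT_{-2})$. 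The paper instead argues directly: from the exact sequence $0\to H_{\cal H}(\im(f)[1])\to\ker(f)[2]\to B[2]\to H_{\cal H}(\im(f)[2])\to0$ of ${\cal H}$, whose middle terms are finitely presented by (3.ii), local coherence forces the outer terms to be finitely presented; one then identifies $H_{\cal H}(\im(f)[1])\cong\coker(\sigma_{\im f})[0]$ and concludes by Corollary~\ref{c:R-mod&T_0-are_in_fp(H)}(i). You should replace your appeal to Proposition~\ref{p:lc-necessary_conditions_TFT}(iii) with this computation.
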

\begin{proof}
It is clear that our hypothesis~(1) corresponds exactly to condition~(1) of Theorem~\ref{t:recursive}.

Let us prove that our hypothesis~(2) is equivalent to Theorem~\ref{t:recursive}(2). Notice again that for all $P\in\TFT_{\!-2}$ we have $H_{\cal H}(P[2])=P[2]$. This said, any direct system $(M_i)_{i\in I}$ of ${\cal H}_{-1}$ is approximated by $(0\to H^{-1}(M_i)[1]\to M_i\to H^0(M_i)[0]\to0)_{i\in I}$ within the left constituent of the TTF~triple given by the TTF~class ${\cal H}_0$ (see the proof of Corollary~\ref{c:recursive-length1}). Thus, by applying the cohomological functor $\Hom_{\De(R)}(P[2],-)$ on the direct limit of the previous approximation and using \cite{Ver,BBD82}, we obtain the commutative diagram with exact rows
\[
\xymatrix{%
	0 \ar[r] & \varinjlim\limits_{i\in I}\Hom_R(P,H^{-1}(M_i)) \ar[d]\ar[r] &
		\varinjlim\limits_{i\in I}\Ext^1_{\cal H}(P[2],M_i) \ar[d]\ar[r] & 0 \cr
	0 \ar[r] & \Hom_R(P,\varinjlim\limits_{i\in I}H^{-1}(M_i)) \ar[r] &
		\Ext^1_{\cal H}(P[2],\varinjlim\limits_{i\in I}M_i) \ar[r] & 0
}
\]
which shows the desired equivalence, since for all $M\in{\cal H}_{-1}$ and $Y\in\TFT_{\!-1}$, we have $H^{-1}(M)\in\TFT_{\!-1}$ and $Y[1]\in{\cal H}_{-1}$.

Let us show that Theorem~\ref{t:recursive} implies our condition~(3).

\noindent(3.i)\enspace Let $J\in{\cal B}_{-2}$. By the approximating triangle $\tau^{\le-1}(K(J)[2])\to K(J)[2]\to R/J[2]\buildrel+\over\to$ of the Koszul complex $K(J)[2]$ within the standard t-structure of $\De(R)$, since the first vertex belongs to ${\cal U}[3]$ by the proof of Lemma~\ref{l:on_length_l+1}(iv), we obtain $H_{\cal H}(K(J)[2])\cong H_{\cal H}(R/J[2])$, and these are finitely presented objects of $\cal H$ by \cite[Lemma~6.3]{SSV17}. Let us call $M$ such complex; it fits in an exact triangle $U[1]\to K(J)[2]\to M\buildrel+\over\to$ provided by some $U\in{\cal U}$, whose standard cohomology exact sequence yields
$$
	0\longrightarrow H^{-1}(U)\longrightarrow R/J\buildrel d\over\longrightarrow H^{-2}(M)\longrightarrow
		H^0(U)\longrightarrow0 \;.
$$
On the one hand we infer that $M$ is a stalk, i.e.\ $M\cong H^{-2}(M)[2]$, whence in turn $H^{-2}(M)\in\fp(\TFT_{\!-2})$ by Lemma~\ref{l:fp(H)&TFT}; on the other hand, we have $\im d\in\TF_{\!-2}$ and $H^0(U)\in{\cal T}_0$, thus
$$
	H^{-2}(M)\cong\Sigma^{-2}(\im d)\cong\Sigma^{-2}(y_{-1}(R/J))
$$
and we conclude by Lemma~\ref{l:SigmaT} and \cite[Proposition~3.5(2)]{Sao17}.

\noindent(3.ii)\enspace It follows by Proposition~\ref{p:lc-necessary_conditions_TFT}(i).

\noindent(3.iii)\enspace Let $f\colon B\to B'$ be a morphism in $\fp(\TFT_{\!-2})$. In view of Remark~\ref{r:functorial_monomorphism_sigma}, we have to prove that $\coker\sigma_{\im f}$ is a finitely presented $R$-module. We have $\ker f\in\fp(\TFT_{\!-2})$ by part~(3.ii), so by the exact sequence
$$
	0\longrightarrow H_{\cal H}(\im(f)[1])\longrightarrow\ker(f)[2]\longrightarrow B[2]\longrightarrow H_{\cal H}(\im(f)[2])\longrightarrow0
$$
of the heart $\cal H$ we obtain that the outer terms are finitely presented, in particular we infer $\Sigma^{-2}(\im f)\in\fp(\TFT_{\!-2})$ by Lemma~\ref{l:SigmaT}. On the other hand, from the short exact sequence $0\to\im f\to\Sigma^{-2}(\im f)\to\coker\sigma_{\im f}\to0$ we obtain the triangle
$$
	\Sigma^{-2}(\im f)[0]\longrightarrow\coker(\sigma_{\im f})[0]\longrightarrow
		\im(f)[1]\longrightarrow \Sigma^{-2}(\im f)[1]
$$
whence
$$
	H_{\cal H}(\im(f)[1])\cong H_{\cal H}(\coker(\sigma_{\im f})[0])=\coker(\sigma_{\im f})[0]
$$
and the latter term belongs to $\fp({\cal H})$. Then, by Corollary~\ref{c:R-mod&T_0-are_in_fp(H)}(i) we obtain that $\coker\sigma_{\im f}\in R\lmod$, as desired.

Conversely, let us prove that our hypotheses~(2) and~(3) imply Theorem~\ref{t:recursive}(3). Let $B\in\fp(\TFT_{\!-2})$. By Corollary~\ref{c:fp(H)&TFT}(ii) there exists an $R$-linear map
$$
	f\colon\bigoplus_{i=1}^n\Sigma^{-2}(y_{-1}(R/J_i)^{k_i})\longrightarrow B,
$$
which we rename $f\colon N\to B$, with cokernel $C\in{\cal T}_{-1}$ and kernel $K\in\fp(\TFT_{\!-2})$ by hypothesis~(3.ii). Let $f=\mu\circ\beta$ be the canonical factorisation of $f$ through its image $L$. Consider the following commutative diagram with exact rows in $\cal H$
\[
\xymatrix{%
	&&&& H_{\cal H}(C[1]) \ar@{>->}[d]^-{\gamma} \cr
	0 \ar[r] & H_{\cal H}(L[1]) \ar[r]^-{\lambda} & K[2] \ar[d]\ar[r]^-{v[2]} & N[2] \ar@{=}[d]\ar[r]^-{H_{\cal H}(\beta[2])} &
		H_{\cal H}(L[2]) \ar@{->>}[d]^-{H_{\cal H}(\mu[2])}\ar[r] & 0 \cr
	& 0 \ar[r] & H \ar[r] & N[2] \ar[r]^-{f[2]} & B[2] \ar[r] & 0
}
\]
in which $f[2]$ is an epimorphism since its cone in $\De(R)$ belongs to ${\cal U}[1]$, whereas $\lambda$ and $\gamma$ are monomorphisms since $H_{\cal H}(N[1])=0$ and $H_{\cal H}(B[1])=0$, respectively. Moreover, notice that $H_{\cal H}(C[1])\cong y_{-1}(C)[1]$, in particular it belongs to ${\cal H}_{-1}$. The Snake~Lemma yields a short exact sequence $0\to \im^{({\cal H})}(v[2])\to H\to y_{-1}(C)[1]\to0$ in which the outer terms are finitely presented objects, as we now show. On the one hand, $\im^{({\cal H})}(v[2])$ is finitely presented for being a cokernel in $\fp({\cal H})$; indeed, $H_{\cal H}(L[1])\cong\coker(\sigma_L)[0]$ is finitely presented by hypothesis~(3.iii) and Corollary~\ref{c:R-mod&T_0-are_in_fp(H)}(i). On the other hand, we have
$$
	H_{\cal H}(L[2])=H_{\cal H}(\Sigma^{-2}(L)[2])=\Sigma^{-2}(L)[2]
$$
and $\Sigma^{-2}(L)\in\fp(\TFT_{\!-2})$ by Lemma~\ref{l:SigmaT}; moreover, by our condition~(2) (i.e.~Theorem~\ref{t:recursive}(2)) in view of the exact column of the previous diagram, we infer that $y_{-1}(C)[1]\in\fp({\cal H}_{-1})\subseteq\fp({\cal H})$. By extension-closure, we have $H\in\fp({\cal H})$ as well. Thus, the second exact row of the previous diagram induces the exact sequence of covariant functors
\begin{multline*}
	0\to\Hom_{\cal H}(B[2],-)\to\Hom_{\cal H}(N[2],-)
		\to\Hom_{\cal H}(H,-)\relbar\joinrel\cdots \cr
	\cdots\joinrel\to \Ext^1_{\cal H}(B[2],-)\to
			\Ext^1_{\cal H}(N[2],-)\to\Ext^1_{\cal H}(H,-)
\end{multline*}
in which, since $\Ext^1_{\cal H}(N[2],-)$ restricted to ${}^{\bot_0}{\cal H}_{-1}$ preserves direct limits by~(3.i), then also $\Ext^1_{\cal H}(B[2],-)\mathclose\upharpoonright$ does so, as desired.

Let us prove that Theorem~\ref{t:recursive} implies our condition~(4). First notice that if $X\in\fg({\cal T}_0)$, then there exists $B\in\fp({\cal T}_0)$ and an epimorphism $p\colon B\to X$, whence a short exact sequence $0\to\ker(p)[0]\to B[0]\to X[0]\to0$ in $\cal H$, which shows that $X[0]\in\fg({\cal H})$. Let now $0\to Y\to M\buildrel f\over\to N\to X\to0$ be as in the statement. Then we obtain the following diagram of $\De(R)$
\[
\xymatrix{%
	&& Y[2] \ar[d] \cr
	M[1] \ar[r]^-{f[1]} & N[1] \ar[r] & \mathop{\rm Cone}(f[1]) \ar[r]^-{+}\ar[d] & {} \cr
	&& X[1] \ar[d]^-{+} \cr
	&& {}
}
\]
and the rotation of the vertical triangle is a short exact sequence of $\cal H$ by hypothesis on the cone. In particular, by $0\to X[0]\to Y[2]\to\mathop{\rm Cone}(f[1])\to0$, being $X[0]\in\fg({\cal H})$ and $Y[2]\in\fp({\cal H})$ (see Lemma~\ref{l:SigmaT}(d)), we infer that $\mathop{\rm Cone}(f[1])$ is a finitely presented object of $\cal H$. By \cite[Proposition~3.5(2)]{Sao17}, the functor $\Ext^1_{\cal H}(\mathop{\rm Cone}(f[1]),-)$ commutes with direct limits, in particular those of ${\cal T}_0[0]$, but the relevant restriction of the functor is naturally isomorphic to $\Hom_R(X,-)\mathclose\upharpoonright_{{\cal T}_0}$, and we are done.

Let us prove that our conditions~(4) and~(5.i) imply Theorem~\ref{t:recursive}(4). Let $P\in\fp({}^{\bot_0}{\cal H}_{-1})$, and consider the associated short exact sequence $0\to L\buildrel\varepsilon\over\to W\to P\to0$ as in Lemma~\ref{l:on_length_l+1}(iii), so that with $L\in{\cal H}_0$ and $W=H^{-2}(P)[2]$. By hypothesis~(5.i) and Lemma~\ref{l:SigmaT} we know that $W\in\fp({\cal H})$, thus $L\in\fg({\cal H})$. Therefore, there exists an epimorphism $Q\to L$ originating in a finitely presented complex $Q$ of $\cal H$, whence we have the epimorphism $H^0(Q)\to H^0(L)$ originating in $H^0(Q)\in\fp({\cal T}_0)$, whence $H^0(L)\in\fg({\cal T}_0)$. Now, since $\varepsilon$ is a morphism in
$$
	\Hom_{\De(R)}(L,W)\cong \Hom_{\De(R)}(H^0(L)[0],H^{-2}(W)[2])\cong \Ext^2_R(H^0(L),H^{-2}(W)),
$$
then it is represented by an exact sequence
$$
	0\longrightarrow H^{-2}(W)\longrightarrow X_2\buildrel f\over\longrightarrow X_1\longrightarrow
		H^0(L)\longrightarrow0
$$
of $R\lMod$, in which $\mathop{\rm Cone}(f[1])\cong B$. By~(4.ii), we deduce that $H^0(L)\in\fp({\cal T}_0)$, i.e.\ $L\cong H^0(L)[0]\in\fp({\cal H})$ by Corollary~\ref{c:R-mod&T_0-are_in_fp(H)}(i).

It remains to treat condition~(5). Part~(5.i) has been proved in Corollary~\ref{c:lc-fp(C)_char}. On the other hand, for any $P\in\fp({\cal H})$ consider the approximation $0\to{\bf x}(P)\to P\to{\bf y}(P)\to0$ within the torsion pair $({}^{\bot_0}{\cal H}_{-1},{\cal H}_{-1})$. Its cohomology long exact sequence breaks up in the following exact rows of $R\lMod$:
\begin{gather*}
	0\longrightarrow H^{-2}({\bf x}(P))\longrightarrow H^{-2}(P)\longrightarrow 0 \cr
	0\longrightarrow H^{-1}({\bf x}(P))\longrightarrow H^{-1}(P)\longrightarrow
		H^{-1}({\bf y}(P))\longrightarrow0 \cr
	0\longrightarrow H^0(P)\longrightarrow H^0({\bf y}(P))\longrightarrow 0
\end{gather*}
where the only non-trivial fact is that $H^0({\bf x}(P))=0$, but this follows since the epimorphism ${\bf x}(P)\to H^0({\bf x}(P))[0]$ is zero by axiom of torsion pair. This said, we have $H^{-1}({\bf x}(P))\in{\cal T}_0$ since ${\bf x}(P)\in{}^{\bot_0}{\cal H}_{-1}$, and $H^{-1}({\bf y}(P))\in\TFT_{\!-1}\subseteq{\cal F}_0$. Therefore, by the second displayed exact row we deduce $H^{-1}({\bf x}(P))\cong x_0(H^{-1}(P))$. Moreover, by rotating the approximation of ${\bf x}(P)$ within the standard t-structure of $\De(R)$ we obtain the short exact sequence
$$
	0\longrightarrow x_0(H^{-1}(P))[0]\longrightarrow H^{-2}(P)[2]\longrightarrow
		{\bf x}(P)\longrightarrow0
$$
of $\cal H$. Now, bearing in mind part~(5.i), if $\cal H$ is locally coherent, then our condition~(5.ii) holds true by Corollary~\ref{c:R-mod&T_0-are_in_fp(H)}(i); conversely, if $x_0(H^{-1}(P))\in\fp({\cal T}_0)$, then ${\bf x}(P)\in\fp({\cal H})$ for being a cokernel of a morphism in $\fp({\cal H})$. \qedhere
\end{proof}

\section{Applications}%
\label{s:applications}%
We apply Corollary~\ref{c:recursive-length1} in the case of the HRS~heart ${\cal H}_{\boldsymbol{\tau}}$ associated with a hereditary torsion pair of finite $\boldsymbol{\tau}\leqdef({\cal T},{\cal F})$ in $R\lMod$; indeed, in Example~\ref{e:HRS-heart&filtration} we saw that ${\cal H}_{\boldsymbol{\tau}}$ can be realised as the AJS~heart associated with the Thomason filtration length~$1$
$$
	\Phi:\Spec R\supset Z\supset\emptyset,
$$
where $Z$ is the Thomason subset that corresponds to the torsion class $\cal T$.

Firstly, we see that for an arbitrary torsion pair $\boldsymbol{\tau}$ of $R\lMod$, one necessary condition for the local coherence of ${\cal H}_{\boldsymbol{\tau}}$ is that $\boldsymbol{\tau}$ must be hereditary of finite type. Notice that this follows by \cite[Proposition~2.6]{HS17} since the locally finite presentability of the heart is equivalent to ${\cal T}=\varinjlim\fp({\cal T})$ by \cite{PSV19}; however, we now achieve such result with a different argument.

\begin{prop}
Let\/ $\boldsymbol{\tau}\leqdef({\cal T},{\cal F})$ be any torsion pair in $R\lMod$. If the associated HRS~heart\/ ${\cal H}_{\boldsymbol{\tau}}$ is a locally finitely presented Grothendieck category, then $\boldsymbol{\tau}$ is hereditary (of finite type).%
\label{p:Ht_LC=>t_her_ft}%
\end{prop}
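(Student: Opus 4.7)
The plan is to prove the two claims in succession. For the parenthetical ``of finite type'', since a locally finitely presented Grothendieck category is in particular Grothendieck, the main theorem of \cite{PS16a} immediately yields that $\boldsymbol{\tau}$ is of finite type; in particular $\cal F$ is closed under direct limits in $R\lMod$, and the torsion radical and coradical commute with direct limits.

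For the hereditariness of $\boldsymbol{\tau}$, I would show directly that $\cal T$ is closed under taking submodules. Given $T\in\cal T$ and $S\hookrightarrow T$, the strategy is to exhibit $S$ as a direct limit of torsion submodules using the LFP structure. Concretely, write $T[0]=\varinjlim_{\alpha\in I}\adjust M_\alpha$ with $M_\alpha\in\fp({\cal H}_{\boldsymbol{\tau}})$. Because $\boldsymbol{\tau}$ has just been shown to be of finite type, the standard cohomological functors $H^{-1},H^0\colon{\cal H}_{\boldsymbol{\tau}}\to R\lMod$ commute with direct limits computed in the heart (cf.~Lemma~\ref{l:cohomologies_commute}), so $T=\varinjlim H^0(M_\alpha)$ with each $H^0(M_\alpha)\in\cal T$, while $\varinjlim H^{-1}(M_\alpha)=0$. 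By AB-$5$ exactness of $R\lMod$, pulling $S$ back along the colimit cone gives submodules $S_\alpha\le H^0(M_\alpha)$ with $S=\varinjlim S_\alpha$; the problem reduces to showing $S_\alpha\in\cal T$ for each $\alpha$, after which the closure of $\cal T$ under direct limits (finite type) gives $S\in\cal T$.

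The main obstacle is precisely the reduction step: forcing submodules of the cohomology modules $H^0(M_\alpha)$ coming from finitely presented $M_\alpha$ to be torsion. To attack it I would use a concrete description of $\fp({\cal H}_{\boldsymbol{\tau}})$ along the lines of~\cite{PSV19}: every finitely presented object of the HRS heart is quasi-isomorphic to a two-term complex $Y\buildrel d\over\to X$ with $X\in R\lmod$ and $Y$ a finitely generated submodule of a finitely presented module; consequently $H^0(M_\alpha)=X/d(Y)$ is a finitely presented torsion quotient of a finitely presented module. Any submodule $S_\alpha$ lifts along $X\to X/d(Y)$ to a submodule of~$X$; applying the torsion--torsionfree decomposition~$0\to t\to\id\to y\to 0$ on this lift, together with $\boldsymbol{\tau}$-finite type and the explicit form of the $M_\alpha$'s, should force the torsion--free quotient of $S_\alpha$ to vanish. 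Achieving this step cleanly, without passing through the equivalence $\mathcal{T}=\varinjlim\fp(\mathcal{T})$ of~\cite{PSV19} and the characterisation of~\cite{HS17}, is the genuinely technical part of the argument, and is the point at which the promised ``different argument'' enters.
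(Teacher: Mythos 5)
The first half of your argument is fine: the finite-type claim does follow immediately from the Grothendieck characterisation of \cite{PS15,PS16a}, exactly as in the paper. The problem is the hereditariness. Your ``reduction'' to showing $S_\alpha\in{\cal T}$ for $S_\alpha\le H^0(M_\alpha)$ with $M_\alpha\in\fp({\cal H}_{\boldsymbol{\tau}})$ is not a reduction at all --- it is the original problem (submodules of certain torsion modules are torsion) restated for a generating family, and the step you defer (``should force the torsionfree quotient of $S_\alpha$ to vanish'') is precisely where the proof has to happen. More seriously, the strategy cannot be completed with the tools you list, because none of them uses commutativity of $R$, whereas the statement is \emph{false} over noncommutative rings: for a classical tilting module $T$ over an Artin algebra, the torsion pair $(\mathop{\rm Gen}T,\ker\Hom(T,-))$ has HRS heart equivalent to a module category (hence locally finitely presented Grothendieck), yet $\mathop{\rm Gen}T$ is typically not closed under submodules. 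So any correct argument must inject a genuinely commutative input.

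The paper does this as follows: from local finite presentability one gets ${\cal T}=\varinjlim({\cal T}\cap R\lmod)$ by \cite[Theorem~6.1]{PSV19}, hence $\boldsymbol{\tau}$ is a tCG torsion pair by \cite[Theorem~3.3]{BP18}, i.e.\ the HRS t-structure is compactly generated in $\De(R)$; Hrbek's classification \cite[Lemma~3.7]{Hrb20} then identifies its aisle with ${\cal U}_\Phi$ for a Thomason filtration $\Phi$, and comparing zeroth cohomologies of stalks gives ${\cal T}={\cal T}_{\Phi(0)}$, which is hereditary by construction. The commutativity enters through the Thomason/Gabriel-filter classification. If you want to avoid that route you would need some other result with the same commutative content (e.g.\ \cite[Proposition~2.6]{HS17}, which the paper explicitly mentions as the alternative); a purely formal manipulation of finitely presented objects of the heart will not suffice.
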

\begin{proof}
By~\cite{PS15} the torsion pair is necessarily of finite type; moreover, since ${\cal H}_{\boldsymbol{\tau}}$ is locally finitely presented, by \cite[Theorem~6.1]{PSV19} we have in particular ${\cal T}=\varinjlim({\cal T}\cap R\lmod)$. Therefore, ${\cal T}\cap R\lmod$ is a set (up to isomorphism), whose right orthogonal in $R\lMod$ coincides with $\cal F$, hence by \cite[Theorem~3.3]{BP18} $\boldsymbol{\tau}$ is a {\it tCG~torsion pair\/}; that is, its HRS~t-structure $({\cal U}_{\boldsymbol{\tau}},{\cal V}_{\boldsymbol{\tau}})$ in $\De(R)$ is compactly generated. Consequently, by \cite[Lemma~3.7]{Hrb20} there exists a Thomason filtration $\Phi$ such that $({\cal U}_{\boldsymbol{\tau}},{\cal V}_{\boldsymbol{\tau}})=({\cal U}_\Phi,{\cal V}_\Phi)$. We claim that ${\cal T}={\cal T}_{\Phi(0)}$, whence $\cal T$ turns out to be a hereditary torsion class. This readily follows thanks to the equality ${\cal U}_{\boldsymbol{\tau}}={\cal U}_\Phi$, namely by taking the 0th~cohomology of the stalk $X[0]$ for a module $X$ either in ${\cal T}$ or in ${\cal T}_{\Phi(0)}$ (see Remark~\ref{r:lc-TTF_finite_type}(3)).
\end{proof}

We recall that the converse of the previous result is known in the literature (see \cite[Theorem~2.2]{GP08} and \cite{Hrb20,PSV19,SS20}).

\begin{cor}
Let\/ $\boldsymbol{\tau}\leqdef({\cal T},{\cal F})$ be a torsion pair in\/ $R\lMod$, say with adjunctions
$$
	{\cal T} \coreflective[x] R\lMod \reflective[y]{\cal F} \;.
$$
The associated HRS~heart ${\cal H}_{\boldsymbol{\tau}}$ is a locally coherent Grothendieck category if and only if\/ $\boldsymbol{\tau}$ is hereditary of finite type and the following four conditions hold:%
\label{c:HRS_heart_LC_characterisation}%

\begin{enumerate}
\item[(i)] The torsion class $\cal T$ is locally coherent;

\item[(ii)] For every $B\in R\lmod$, the functor $\Hom_R(y(B),-)$ commutes with direct limits of direct systems in $\cal T$;

\item[(iii)] For all\/ $B\in R\lmod$, the functor $\Ext^1_R(y(B),-)$ commutes with direct limits of direct systems of\/ $\cal F$;

\item[(iv)] For every finitely generated ideal $J$ in the Gabriel filter associated with $\cal T$, the functor $\Ext^2_R(R/J,-)$ commutes with direct limits of\/ $\cal F$.
\end{enumerate}
\end{cor}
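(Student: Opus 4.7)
The plan is to deduce the corollary from Corollary~\ref{c:recursive-length1} by specialising it to the length-$1$ Thomason filtration
$$
\Phi\colon\Spec R\supset Z\supset\emptyset
$$
identified in Example~\ref{e:HRS-heart&filtration} as giving rise to the HRS heart ${\cal H}_{\boldsymbol{\tau}}$. To enter that framework I first justify restricting to $\boldsymbol{\tau}$ hereditary of finite type: if ${\cal H}_{\boldsymbol{\tau}}$ is locally coherent it is in particular locally finitely presented Grothendieck, so Proposition~\ref{p:Ht_LC=>t_her_ft} forces $\boldsymbol{\tau}$ to be hereditary of finite type; then \cite[Theorem~2.2]{GP08} produces a Thomason subset $Z\subseteq\Spec R$ with ${\cal T}={\cal T}_Z$, and Example~\ref{e:HRS-heart&filtration} identifies ${\cal H}_{\boldsymbol{\tau}}$ with the heart ${\cal H}$ of $\Phi$.

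Next, I would compute the relevant classes in this length-$1$ instance: ${\cal T}_0={\cal T}$, ${\cal T}_{-1}=R\lMod$, ${\cal T}_1=0$, so $\TFT_{\!-1}={\cal F}$, with $\fp(\TFT_{\!-1})=\mathop{\rm add}\{y(B)\mid B\in R\lmod\}$ by Remark~\ref{r:fp(TF_-1)}, and $\fp({\cal T}_0)={\cal T}\cap R\lmod$ by Proposition~\ref{p:torsion-Thomason_subset}(i). With these identifications, condition~(1) of Corollary~\ref{c:recursive-length1} is exactly (i); by additivity of $\Hom_R(-,M)$ and $\Ext^1_R(-,M)$ and the closure of the commutation property under finite direct sums and direct summands, conditions~(2) and~(3) become (ii) and (iii); and since each $R/J$ with $J\in{\cal B}_Z$ belongs to $\fp({\cal T})$ by Proposition~\ref{p:torsion-Thomason_subset}, condition~(4) restricted to such generators yields (iv).

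The hard part will be the converse step for the fourth condition, i.e.~upgrading (iv) into the apparently stronger statement of Corollary~\ref{c:recursive-length1}(4), namely the commutation of $\Ext^2_R(Q,-)$ with direct limits in $\cal F$ for \emph{every} $Q\in\fp({\cal T})$. To tackle this I would use Proposition~\ref{p:torsion-Thomason_subset}(ii) to realise $Q$ as the cokernel of an epimorphism from $\bigoplus_{i=1}^n R/J_i$ with $J_i\in{\cal B}_Z$, whose kernel $K$ lies again in $\fp({\cal T})$ thanks to the local coherence of $\cal T$ granted by~(i). The long exact Ext sequence coupled with the five lemma then reduces the commutation for $Q$ to the $R/J_i$ case (available by (iv)) and to the behaviour of $\Ext^1_R(K,-)$ restricted to $\cal F$. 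For this delicate piece, choosing a free presentation $0\to K'\to R^n\to K\to0$ and exploiting the vanishing $\Hom_R(K,F)=0$ for $F\in\cal F$, together with $\Hom_R(K',F)=\Hom_R(y(K'),F)$, one rewrites $\Ext^1_R(K,F)$ as a cokernel controlled by $\Hom_R(y(K'),-)$, and the latter functor behaves well under direct limits in $\cal F$ by conditions~(ii)--(iii) applied to $y(K')$; a dévissage/induction on the syzygy complexity of $Q$ then closes the bootstrap and Corollary~\ref{c:recursive-length1} can be invoked to conclude.
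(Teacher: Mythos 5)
Your overall architecture is exactly the paper's: reduce to Corollary~\ref{c:recursive-length1} via the length-$1$ filtration of Example~\ref{e:HRS-heart&filtration} (after invoking Proposition~\ref{p:Ht_LC=>t_her_ft} for the necessity of $\boldsymbol\tau$ being hereditary of finite type), note that $\TFT_{\!-1}={\cal F}$ with $\fp(\TFT_{\!-1})=\mathop{\rm add}y(R\lmod)$ so that conditions~(1)--(3) there translate into (i)--(iii) here, and bootstrap (iv) up to Corollary~\ref{c:recursive-length1}(4) by presenting $Q\in\fp({\cal T})$ as a quotient of a finite sum of $R/J_i$'s with finitely presented torsion kernel (local coherence of $\cal T$) and running the five lemma on the $\Ext^1$--$\Ext^2$ long exact sequence. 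All of this matches the paper and is correct.

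The gap is in your treatment of the ``delicate piece'', namely the commutation of $\Ext^1_R(K,-)$ (and, for the five lemma, also of $\Ext^1_R(Q,-)$ and $\Ext^1_R(R/J_i,-)$) with direct limits of ${\cal F}$. You take a free presentation $0\to K'\to R^n\to K\to0$ and propose to apply conditions~(ii)--(iii) to $y(K')$; but those conditions only concern $y(B)$ with $B\in R\lmod$, and $K'$ is merely finitely generated: $K$ is finitely presented over $R$, but $R$ is not assumed coherent, so its syzygy $K'$ need not be finitely presented, and no induction on syzygies can repair this. Consequently $y(K')$ need not belong to $\fp({\cal F})=\mathop{\rm add}y(R\lmod)$, and for $K'$ only finitely generated the comparison map for $\Hom_R(K',-)=\Hom_R(y(K'),-)$ on a direct limit is merely injective; passing to the cokernel this yields injectivity, but not surjectivity, of the comparison map for $\Ext^1_R(K,-)$, whereas the five lemma at the $\Ext^2_R(Q,-)$ spot requires it to be bijective. (A smaller slip: condition~(ii) is about direct limits in $\cal T$, not in $\cal F$; the $\cal F$-version is automatic from $\fp({\cal F})=\mathop{\rm add}y(R\lmod)$, but again only for $B\in R\lmod$.) The paper sidesteps all of this by staying in the heart: $K\in\fp({\cal T})$ gives $K[0]\in\fp({\cal H})$ by Corollary~\ref{c:R-mod&T_0-are_in_fp(H)}(i), whence $\Ext^1_R(K,Y)\cong\Hom_{\cal H}(K[0],Y[1])$ by \cite{Ver} commutes with direct limits of $\cal F$, these being computed in $\cal H$ as in $R\lMod$ (Corollary~\ref{c:TF_n-AB5}); the last ingredient, injectivity of the comparison map for $\Ext^2_R(K,-)$, is \cite[Proposition~1.6]{PSV19}. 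Replacing your syzygy argument by this observation closes the proof.
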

\begin{proof}
The necessity of the torsion pair being hereditary and of finite type has been proved in Proposition~\ref{p:Ht_LC=>t_her_ft}; this said, we shall prove the present Corollary by showing that the listed four conditions are equivalent to the corresponding ones of Corollary~\ref{c:recursive-length1}.

It is clear that our hypothesis~(i) is precisely Corollary~\ref{c:recursive-length1}(1). On the other hand, we have $\TFT_{\!-1}=\TF_{\!-1}={\cal F}_0$, thus $\fp(\TFT_{\!-1})=\mathop{\rm add}y(R\lmod)$ (see Remark~\ref{r:fp(TF_-1)}). The previous equality together with the additivity of the bifunctors $\Hom_R(-\mathbin,-)$ and $\Ext^1_R(-\mathbin,-)$ shows that also our hypotheses~(ii) and~(iii) are equivalent to the corresponding conditions of Corollary~\ref{c:recursive-length1}. Moreover, it is clear that Corollary~\ref{c:recursive-length1}(4) implies our condition~(iv). Let us prove that our hypotheses~(i) and~(iv) imply Corollary~\ref{c:recursive-length1}(4). Let $Q\in\fp({\cal T})$ and let $(Y_i)_{i\in I}$ be a direct systems of modules in $\cal F$. By Proposition~\ref{p:torsion-Thomason_subset} there exist a finitely generated ideal $J$ in the Gabriel filter of the torsion pair $({\cal T},{\cal F})$ and a short exact sequence $0\to X\to (R/J)^n\to Q\to0$, for some $n\in\N$ and $X$ a torsion module (we have $X\in\fp({\cal T})$ by~(i)). By applying the functors
$$
	L^k\leqdef \varinjlim_{i\in I}\Ext^k_R(-,Y_i)\quad\hbox{and}\quad
	\varGamma^k\leqdef \Ext^k_R(-,\varinjlim_{i\in I}Y_i)\qquad (k\ge1)
$$
on the above short exact sequence, we get the commutative diagram with exact rows
\[
\xymatrix@C=1.5em{%
	0 \ar[r] & L^1(Q) \ar[d]_{g_1}\ar[r] & L^1((R/J)^n) \ar[d]^{g_2}\ar[r] & L^1(X) \ar[d]^{g_3}\ar[r] &
		L^2(Q) \ar[d]^{g_4}\ar[r] & L^2((R/J)^n) \ar[d]^{g_5}\ar[r] & L^2(X) \ar[d]^{g_6} \cr
	0 \ar[r] & \varGamma^1(Q) \ar[r] & \varGamma^1((R/J)^n) \ar[r] & \varGamma^1(X) \ar[r] &
		\varGamma^2(Q) \ar[r] & \varGamma^2((R/J)^n) \ar[r] & \varGamma^2(X)
}
\]
in which $g_1,g_2,g_3$ are isomorphisms since $Q[0],(R/J)^n[0],X[0]\in\fp({\cal H})$ by Corollary~\ref{c:R-mod&T_0-are_in_fp(H)}(i), $g_5$ is iso by condition~(iv), while $g_6$ is a monomorphism by \cite[Proposition~1.6]{PSV19}, so that $g_4$ is iso as well. This concludes the proof.\qedhere
\end{proof}

\begin{rem}
A more general characterisation of the local coherence of the HRS~hearts has been achieved in \cite[Sec.~7]{PSV19} in the context of locally finitely presented Grothendieck categories.
\end{rem}

\begin{exmpl}
Let us exhibit an example of hereditary torsion pair of finite type whose HRS~heart is not locally coherent. Consider the non-coherent commutative ring $R\leqdef\mathbb{Z}\oplus(\mathbb{Z}/2\mathbb{Z})^{(\N)}$ introduced in Example~\ref{e:Vasconcelos}. For any nonzero tuple $a\in(\Z/2\Z)^{(\N)}$, the non~unitary element $e\leqdef(1,a)$ is idempotent, and the ideal $J\leqdef Re$ is idempotent as well.
Therefore, $J$ gives rise to a TTF~triple $({\cal E},{\cal T},{\cal F})$ in $R\lMod$ which is {\it split\/}; that is (see \cite[Proposition~VI.8.5]{Ste75}), in which ${\cal E}={\cal F}$ and both the torsion pairs $({\cal T},{\cal F})$ and $({\cal F},{\cal T})$ are hereditary. In particular, $({\cal T},{\cal F})$ is of finite type for being ${\cal F}=\ker\Hom_R(R/J,-)$; moreover it restricts to $R\lmod$ being split. We claim that it does not have a locally coherent HRS~heart. Assume, by contradiction, that such $\cal H$ is locally coherent. Since the ring $R$ is non-coherent, there exists an $R$-linear epimorphism $f\colon M\to N$ in $R\lmod$ such that $\ker f$ is not finitely presented. In the exact row
$$
	0\longrightarrow \ker x(f)\longrightarrow \ker f\buildrel a\over\longrightarrow
		\ker y(f)\longrightarrow\coker x(f)\longrightarrow0
$$
provided by the Snake~Lemma, we have $\ker y(f)\in\fp({\cal F})$ by conditions~(ii) and~(iii) of the previous Corollary. On the other hand, $\ker x(f),\coker x(f)\in\fp({\cal T})$ since the torsion pair is split, so that $x(M)$ and $x(N)$ are finitely presented objects of $\cal T$, which is locally coherent (as proved in the previous Corollary). By hypotheses~(i) and~(ii) we infer that $\im a\in R\lmod$, thus we get the contradiction $\ker f\in R\lmod$ by the extension-closure of the finitely presented modules.%
\label{e:torsion-pair-NO-coh-HRS}%
\end{exmpl}

\begin{exmpl}
Let us exhibit an example of a quasi locally coherent category in which the finitely presented objects do not form an abelian category. Consider the ordinary torsion pair $({\cal T},{\cal F})$ formed by the torsion and the torsionfree abelian groups. Since $\Z$ is noetherian, the torsion pair is of finite type and restricts to the finitely presented $\Z$-modules, hence by \cite[Theorem~5.2]{Sao17} its HRS~heart $\cal H$ is locally coherent. By Example~\ref{e:HRS-heart&filtration}, we know that there exists a proper Thomason subset $Z$ of $\Spec\Z$ such that $\cal H$ is the heart of the Thomason filtration $\Spec\Z\supset Z\supset\emptyset$. We then obtain a TTF~triple of finite type
$$
	({}^{\bot_0}_{\vphantom Z}{\cal H}_Z^{\vphantom{\bot_0}},{\cal H}_Z^{\vphantom{\bot_0}},{\cal H}_Z^{\bot_0})
$$
in which ${}^{\bot_0}{\cal H}_Z\;(\empty\cong{\cal F}[1])$ is a quasi locally coherent category by Theorem~\ref{t:lc-TTF_finite_type}. We claim that this torsion class is not locally coherent, i.e.~that the subcategory $\fp({}^{\bot_0}{\cal H}_Z)$ is not abelian. By contradiction, assume that this is the case and let $M\in{\cal T}$ be a finitely presented nonzero $\Z$-module, say it presented by the exact sequence $\Z^n\buildrel f\over\to\Z^m\to M\to0$. Now, $\Z^n[1]$ and $\Z^m[1]$ are finitely presented objects of $\cal H$ (\cite[Lemma~6.3]{SSV17}), therefore from the exact row
$$
	0\longrightarrow \ker^{({}^{\bot_0}{\cal H}_Z)}(f[1])\longrightarrow \Z^n[1]\buildrel f[1]\over\longrightarrow
		\Z^m[1]\longrightarrow \coker^{({\cal H})}(f[1])\longrightarrow0
$$
which lives in $\fp({}^{\bot_0}{\cal H}_Z)$ by quasi local coherence, we would have
$$
	\coker^{({\cal H})}\ker^{({}^{\bot_0}{\cal H}_Z)}(f[1]) \cong
		\ker^{({}^{\bot_0}{\cal H}_Z)}\coker^{({\cal H})}(f[1]) \;.
$$
On the other hand, the canonical short exact sequences in $\Ab$ given by the factorisation of $f$ through its image yield, once one takes the shifted stalk complexes, the following commutative diagram with exact rows in $\cal H$,
\[
\xymatrix{%
	&&& M[0] \ar@{>->}[d] \cr
	0 \ar[r] & (\ker f)[1] \ar@{>.>}[d]\ar[r] & \Z^n[1] \ar@{=}[d]\ar[r] & (\im f)[1] \ar@{->>}[d]\ar[r] & 0 \cr
	0 \ar[r] & \ker^{({\cal H})}(f[1]) \ar[r] & \Z^n[1] \ar[r]^{f[1]} & \Z^m[1] \ar@{.>}[r] & 0
}
\]
whence we see that $f[1]$ is an epimorphism, so that
\begin{align*}
	\ker^{({}^{\bot_0}{\cal H}_Z)}\coker^{({\cal H})}(f[1]) &\cong \Z^m[1], \cr
	\noalign{\hbox{and being}}
	\coker^{({\cal H})}\ker^{({}^{\bot_0}{\cal H}_Z)}(f[1]) &\cong (\im f)[1],
\end{align*}
we conclude $M[0]=0$, contradiction.%
\label{e:qlc-non-lc}%
\end{exmpl}

\subsection{When the ring is coherent}%
When the ring is coherent, our previous characterisations furtherly lighten, as we shall prove in Corollary~\ref{c:coherent_ring_LC_characterisation}. Let us start with an interesting example, which can be deduced formerly by \cite[Theorem~5.2]{Sao17}.

\begin{exmpl}
Let $R$ be a commutative coherent ring, and let $({\cal X},{\cal Y})$ be a torsion pair in the abelian category $R\lmod$; by \cite[p.~1666]{CB94} $(\varinjlim{\cal X},\varinjlim{\cal Y})\reqdef({\cal T},{\cal F})$ is a torsion pair (of finite type) in $R\lMod$. We claim that the associated HRS~heart in $\De(R)$ is a locally coherent Grothendieck category, namely by showing that the torsion pair is hereditary and satisfies the four conditions of the previous Corollary.

The torsion pair $({\cal T},{\cal F})$ is hereditary by the same argument of the proof of Proposition~\ref{p:Ht_LC=>t_her_ft}, namely for it is a $t$CG~torsion pair.

\noindent(i)\enspace Since $R\lMod$ is a locally coherent Grothendieck category, then $\cal T$ is so (see Remark~\ref{r:lc-TTF_finite_type}).

On the other hand, the torsion pair $({\cal T},{\cal F})$ restricts to $R\lmod$, so $y(B)$ is a finitely presented module for all $B\in R\lmod$, whence it is clear that conditions~(ii), (iii) and~(iv) of Corollary~\ref{c:HRS_heart_LC_characterisation} hold true, since $R$ is coherent.
\end{exmpl}

\begin{cor}
Let\/ $R$ be a commutative coherent ring and\/ $\boldsymbol{\tau}\leqdef({\cal T},{\cal F})$ be a torsion pair in $R\lMod$. Then the HRS~heart ${\cal H}_{\boldsymbol\tau}$ is a locally coherent Grothendieck category if and only if%
\label{c:coherent_ring_LC_characterisation}%

\begin{enumerate}
\item[(i)] The torsion pair $\boldsymbol{\tau}$ is hereditary of finite type;

\item[(ii)] For all\/ $B\in R\lmod$, the functor $\Hom_R(y(B),-)$ commutes with direct limits of direct systems in $\cal T$;

\item[(iii)] For all\/ $B\in R\lmod$, the functor $\Ext^1_R(y(B),-)$ commutes with direct limits of direct systems of\/ $\cal F$.
\end{enumerate}
\end{cor}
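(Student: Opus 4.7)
The strategy is to deduce the corollary from the characterisation of Corollary~\ref{c:HRS_heart_LC_characterisation}; the gain under the coherence hypothesis on $R$ is that conditions~(i) and~(iv) of that corollary become automatic, leaving only the hereditariness of finite type of $\boldsymbol{\tau}$ together with our conditions~(ii) and~(iii).

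For the ``only if'' direction, if ${\cal H}_{\boldsymbol{\tau}}$ is locally coherent, then in particular it is a locally finitely presented Grothendieck category, so Proposition~\ref{p:Ht_LC=>t_her_ft} gives that $\boldsymbol{\tau}$ is hereditary of finite type; meanwhile conditions~(ii) and~(iii) of the present statement are exactly conditions~(ii) and~(iii) of Corollary~\ref{c:HRS_heart_LC_characterisation}, so they transfer directly.

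For the ``if'' direction, I would show that the four module-theoretic conditions of Corollary~\ref{c:HRS_heart_LC_characterisation} are all satisfied. Conditions~(ii) and~(iii) of that corollary match ours verbatim. Condition~(i) of that corollary, namely that ${\cal T}={\cal T}_Z$ is locally coherent for the Thomason subset $Z$ corresponding to $\boldsymbol{\tau}$, is precisely the content of Corollary~\ref{c:coherent_ring=>torsion_LC_Grothendieck} applied under the coherence assumption on $R$. The remaining condition~(iv) requires that, for every finitely generated ideal $J$ in the Gabriel filter of $\boldsymbol{\tau}$, the functor $\Ext^2_R(R/J,-)$ commute with direct limits in ${\cal F}$; I would obtain this as a much stronger statement, namely that $\Ext^n_R(R/J,-)$ commutes with arbitrary direct limits in $R\lMod$ for every $n\ge0$. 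Indeed, since $R$ is coherent, $\fp(R\lMod)=R\lmod$ is an abelian subcategory of $R\lMod$, hence closed under taking kernels of maps between its objects; an inductive syzygy construction then produces a projective resolution of $R/J$ by finitely generated (equivalently, finitely presented) projective $R$\nobreakdash-modules. Since each $\Hom_R(P,-)$ with $P$ finitely generated projective commutes with direct limits, the usual argument via the $E_2$-spectral sequence or via the Five Lemma applied degree by degree shows that $\Ext^n_R(R/J,-)$ commutes with direct limits in all of $R\lMod$, whence a fortiori in ${\cal F}$.

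The only mildly subtle step is the promotion from ``finitely presented'' to ``admits a resolution by finitely generated projectives'' for $R/J$, which is exactly where the coherence of $R$ enters and which makes the Ext-preservation in condition~(iv) of Corollary~\ref{c:HRS_heart_LC_characterisation} free of charge. Once this is in hand, the present corollary follows by direct application of Corollary~\ref{c:HRS_heart_LC_characterisation}.
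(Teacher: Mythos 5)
Your proposal is correct and follows the same route the paper takes (the paper leaves the proof implicit, reducing to Corollary~\ref{c:HRS_heart_LC_characterisation} and noting that over a coherent ring conditions~(i) and~(iv) of that corollary come for free, via Corollary~\ref{c:coherent_ring=>torsion_LC_Grothendieck} and the fact that $R/J$ admits a resolution by finitely generated projectives). Your explicit justification of the $\Ext^2$-preservation via the $\mathop{\rm FP}_\infty$ property of finitely presented modules over a coherent ring is exactly the intended argument.
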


\begin{question}
Let $R$ be a commutative coherent ring and $({\cal T},{\cal F})$ be a torsion pair whose HRS~heart is a locally coherent Grothendieck category. Then does the torsion pair necessarily restrict to $R\lmod$?
\end{question}

\subsection{When the torsion pair is stable}%
We equip the torsion pairs of $R\lMod$ with a homological condition, i.e.~we consider the case of stable torsion pairs, so that even the torsion classes are closed under taking injective envelopes. As we shall see, such a homological condition translates into a finiteness one and, in particular, the necessary and sufficient conditions for the local coherence of the involved HRS~hearts furtherly simplify. In fact, our assumption is consistent and independent from the previous subsection, thanks to Example~\ref{e:torsion-pair-NO-coh-HRS}, which exhibits a non-trivial TTF~triple $({\cal F},{\cal T},{\cal F})$ over a non-coherent commutative ring, in which $({\cal T},{\cal F})$ is stable for $({\cal F},{\cal T})$ being hereditary. 

We need some auxiliary preliminary results, which in fact specialise the conditions of Corollary~\ref{c:HRS_heart_LC_characterisation} within the stability assumption.

\begin{lem}
If\/ $({\cal T},{\cal F})$ is a stable hereditary torsion pair of\/ $R\lMod$, then for every $X,Y\in{\cal T}$ we have $\Ext^k_R(X,Y)\cong\Ext^k_{\cal T}(X,Y)$, for all\/ $k\in\N\cup\{0\}$.%
\label{l:stable-auxiliary1}
\end{lem}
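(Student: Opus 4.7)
The plan is to exploit stability to produce, for any $Y\in{\cal T}$, an injective resolution in $R\lMod$ whose terms lie in $\cal T$ and are simultaneously injective objects of $\cal T$; then the two Ext-computations collapse to the same cochain complex.

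First, I would observe that the inclusion $\iota\colon{\cal T}\hookrightarrow R\lMod$ is a fully faithful exact functor (this is Remark~\ref{r:fp_additive_category}(2) together with the fact that ${\cal T}$ is hereditary, so monomorphisms in ${\cal T}$ are the same as monomorphisms in $R\lMod$ with domain and codomain in ${\cal T}$). The next preliminary step is the key lemma: if $E\in{\cal T}$ is injective in $R\lMod$, then $E$ is injective in $\cal T$. Indeed, given a monomorphism $A\hookrightarrow B$ in $\cal T$ and a morphism $A\to E$, we may regard them in $R\lMod$ (by exactness and fully faithfulness of $\iota$) and obtain an extension $B\to E$, which again by fully faithfulness is a morphism in $\cal T$.

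Second, for $Y\in{\cal T}$ I would build an injective coresolution $0\to Y\to E^0\to E^1\to\cdots$ in $R\lMod$ with each $E^i\in{\cal T}$ by taking $E^0$ to be the injective envelope of $Y$ in $R\lMod$, which lies in ${\cal T}$ by stability, and then iterating: at each step, $\coker(E^{i-1}\to E^i)$ (equivalently the cosyzygy) is a quotient of an object of ${\cal T}$, hence lies in ${\cal T}$ by closure under quotients, so we can again apply stability to its injective envelope in $R\lMod$. By the preliminary step, every $E^i$ is then injective in ${\cal T}$, so the same complex is also an injective coresolution of $Y$ in ${\cal T}$.

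Third, for $X\in{\cal T}$ I would compute
\begin{align*}
	\Ext^k_R(X,Y) &= H^k\bigl(\Hom_R(X,E^\bullet)\bigr), \cr
	\Ext^k_{\cal T}(X,Y) &= H^k\bigl(\Hom_{\cal T}(X,E^\bullet)\bigr),
\end{align*}
and conclude using the fact that $\Hom_R(X,E^i)=\Hom_{\cal T}(X,E^i)$ for each $i$, since $\iota$ is fully faithful. I do not expect a genuine obstacle here; the only point requiring care is verifying that $E^0$ being injective in $R\lMod$ and lying in ${\cal T}$ does make it injective in ${\cal T}$, which is immediate once one notices that ${\cal T}$ inherits the exact structure of $R\lMod$.
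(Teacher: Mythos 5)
Your proof is correct, and it takes a genuinely more elementary route than the paper's. The paper works in the derived category: it forms the adjoint pair ${\bf L}\jmath\colon\De({\cal T})\rightleftarrows\De(R)\colon{\bf R}x$ of derived functors of the inclusion and the torsion radical, and then uses that the homotopically injective coresolution of a stalk $Y[n]$ can be computed equivalently in $\De({\cal T})$ or $\De(R)$ (this is where stability enters), together with \cite{Ver} to identify both hom-groups in the derived categories with the respective Ext-groups. Your argument extracts the underlying classical content of that step: stability plus closure of $\cal T$ under quotients lets you build, degree by degree, an injective coresolution of $Y$ in $R\lMod$ all of whose terms lie in $\cal T$; full faithfulness and exactness of the inclusion (which for kernels uses hereditariness, as you note) make these terms injective in $\cal T$ and make the coresolution exact there, so the two Ext-groups are the cohomologies of the literally identical complex $\Hom_R(X,E^\bullet)=\Hom_{\cal T}(X,E^\bullet)$. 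What the derived-category formulation buys the paper is uniformity with the surrounding machinery (it applies verbatim to complexes, not just modules, and dovetails with the adjunction formalism used elsewhere); what your version buys is self-containedness, avoiding homotopically injective resolutions and \cite[Proposition~2.28]{NS14} altogether. One small point worth making explicit in a final write-up: $\Ext^k_{\cal T}$ should be read as the derived functor (equivalently Yoneda) Ext of the Grothendieck category $\cal T$, and your construction in particular re-proves that $\cal T$ has enough injectives, so computing it from your coresolution is legitimate.
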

\begin{proof}
By the adjuction $\smash[b]{\jmath:{\cal T}\coreflective R\lMod:x}$ and by \cite[Proposition~2.28]{NS14}, we have the adjoint pair
$$
	{\bf L}\jmath:\De({\cal T})\longrightleftarrows\De(R):{\bf R}x
$$
of derived functors. In particular, for all $X,Y\in{\cal T}$, regarding the stalk of $X$ as an object of $\De({\cal T})$ and the stalk of $Y$ as a complex of $\De(R)$, being $x$ an exact functor by hereditariness, we have the natural isomorphism
\begin{align*}
	\Hom_{\De(R)}({\bf L}\jmath(X[0]),Y[n]) &\cong
		\Hom_{\De({\cal T})}(X[0],{\bf R}x(Y[n])) \cr
	&= \Hom_{\De({\cal T})}(X[0],x({\bf i}Y[n])) \cr
	&\cong \Hom_{\De({\cal T})}(X[0],{\bf i}Y[n]) \cr
	&\cong \Hom_{\De({\cal T})}(X[0],Y[n]),
\end{align*}
where ${\bf i}$ is the homotopically injective coresolution functor, computed equivalently either on $\De(R)$ or in $\De({\cal T})$, for $\cal T$ being a stable torsion class and an exact subcategory of $R\lMod$. By \cite{Ver}, the latter group of the display is isomorphic to $\Ext^n_{\cal T}(X,Y)$, so we claim that the first displayed group is isomorphic to $\Ext^n_R(X,Y)$. Indeed, we have
\begin{align*}
	\Hom_{\De(R)}({\bf L}\jmath(X[0]),Y[n]) &= \Hom_{\De(R)}(\jmath({\bf p}X[0]),Y[n]) \cr
	&\cong \Hom_{\De(R)}({\bf p}X[0],Y[n]) \cr
	&\cong \Hom_{\De(R)}(X[0],Y[n]),
\end{align*}
where ${\bf p}\colon\De(R)\to\Ho(R)$ is the homotopically projective resolution functor. \qedhere
\end{proof}

\begin{lem}
Let $({\cal T},{\cal F})$ be a stable torsion pair of $R\lMod$. Assume that conditions~$\rm(i)$ and~$\rm(iv)$ of Corollary~\ref{c:HRS_heart_LC_characterisation} hold true. Then, for every finitely generated ideal\/ $J$ in the Gabriel filter associated with $\cal T$, it is $R/J\in\mathop{\rm FP}_3(R)$, i.e.~the functors $\Ext^k_R(R/J,-)$ commute with direct limits for $k=0,1,2$.%
\label{l:stable-auxiliary2}%
\end{lem}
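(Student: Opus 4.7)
The plan is to verify that $\Ext^k_R(R/J,-)$ commutes with direct limits for $k=0,1,2$ by decomposing each direct system via the canonical torsion–torsionfree approximation and handling the two resulting contributions separately. The case $k=0$ is immediate, as $J$ being finitely generated forces $R/J\in R\lmod$. For $k=1,2$, given a direct system $(M_i)_{i\in I}$ in $R\lMod$, the canonical sequence $0\to x(M_i)\to M_i\to y(M_i)\to 0$ passes to the direct limit exactly (since $({\cal T},{\cal F})$ is of finite type, both $x$ and $y$ commute with $\varinjlim$). The associated Ext long exact sequence, together with the vanishing $\Hom_R(R/J, y(M_i))=0$, compared via the Five Lemma with the analogous sequence for $\varinjlim M_i$, reduces the problem to two independent commutations: on one side, $\Ext^k_R(R/J,-)$ on direct systems in $\cal T$ for $k=1,2,3$; on the other, $\Ext^k_R(R/J,-)$ on direct systems in $\cal F$ for $k=1,2$.

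The $\cal T$--part is handled by stability. By Lemma~\ref{l:stable-auxiliary1}, $\Ext^k_R(R/J, X)\cong \Ext^k_{\cal T}(R/J, X)$ for every $X\in\cal T$ and $k\ge 0$. Since $R/J$ lies in $\fp({\cal T})={\cal T}\cap R\lmod$ (Proposition~\ref{p:torsion-Thomason_subset}(i)) and $\cal T$ is locally coherent by hypothesis~(i), the functors $\Ext^k_{\cal T}(R/J,-)$ commute with direct limits in $\cal T$ for all $k$: the case $k=1$ is \cite[Proposition~3.5(2)]{Sao17}, and higher $k$ follow from the standard fact that finitely presented objects in a locally coherent Grothendieck category are $\mathop{\rm FP}_\infty$. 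The $\cal F$--part at $k=2$ is exactly hypothesis~(iv).

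The main obstacle is the $\cal F$--part at $k=1$, which I address by invoking the HRS heart $\cal H$ of $({\cal T},{\cal F})$. By Example~\ref{e:HRS-heart&filtration}, $\cal H$ coincides with the heart of the Thomason filtration $\Spec R\supset Z\supset \emptyset$ of length one, where $Z$ is the Thomason subset corresponding to $\cal T$. Corollary~\ref{c:R-mod&T_0-are_in_fp(H)}(i) then gives $R/J[0]\in\fp({\cal H})$. Finite type of $({\cal T},{\cal F})$ ensures that ${\cal F}[1]$ is closed under direct limits in $\cal H$ and that these limits are computed as $(\varinjlim -)[1]$; combined with the identification
\[
	\Hom_{\cal H}(R/J[0], Y[1])=\Hom_{\De(R)}(R/J[0], Y[1])=\Ext^1_R(R/J, Y)
\]
for every $Y\in\cal F$, the fp property of $R/J[0]$ in $\cal H$ immediately yields $\varinjlim\Ext^1_R(R/J, Y_i)\cong \Ext^1_R(R/J, \varinjlim Y_i)$ for any direct system $(Y_i)$ in $\cal F$. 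The Five Lemma then assembles the $\cal T$-- and $\cal F$--commutations to establish $R/J\in\mathop{\rm FP}_3(R)$.
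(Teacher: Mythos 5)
Your proposal is correct and follows essentially the same route as the paper: decompose each $M_i$ by its torsion approximation, handle the torsion contributions in degrees $1,2,3$ via Lemma~\ref{l:stable-auxiliary1} together with the local coherence of $\cal T$, use hypothesis~(iv) for the torsionfree part in degree~$2$ and the fact that $R/J[0]\in\fp({\cal H})$ (via $\Ext^1_R(R/J,Y)\cong\Hom_{\cal H}(R/J[0],Y[1])$) for the torsionfree part in degree~$1$, and conclude with the Five~Lemma. The only difference is cosmetic: you spell out why $\Ext^k_{\cal T}(R/J,-)$ commutes with direct limits for $k\ge2$ (fp objects in a locally coherent category are $\mathop{\rm FP}_\infty$), which the paper leaves implicit.
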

\begin{proof}
Let $(M_i)_{i\in I}$ be a direct system in $R\lMod$ and consider the direct system $(0\to X_i\to M_i\to Y_i\to0)_{i\in I}$ formed by the approximations of its members within $({\cal T},{\cal F})$. Since $R/J$ is a finitely presented torsion module (so that $R/J[0]$ is a finitely presented object of $\cal H$), by applying the functors
$$
	L^k\leqdef \varinjlim_{i\in I}\Ext^k_R(R/J,-)\quad\hbox{and}\quad
	\varGamma^k\leqdef \Ext^k_R(R/J,\varinjlim_{i\in I}(-))\qquad (k\in\N\cup\{0\})
$$
on the latter direct system, we find at once that $L^0(M_i)\cong\varGamma^0(M_i)$; moreover, in the following commutative diagram with exact rows,
\[
\xymatrix@C=1.5em{%
	L^1(X_i) \ar[d]_{f_1}\ar@{>->}[r] & L^1(M_i) \ar[d]^{f_2}\ar[r] & L^1(Y_i) \ar[d]^{f_3}\ar[r] &
		L^2(X_i) \ar[d]^{f_4}\ar[r] & L^2(M_i) \ar[d]^{f_5}\ar[r] &
			L^2(Y_i) \ar[d]^{f_6}\ar[r] & L^3(X_i) \ar[d]^{f_7} \cr
	\varGamma^1(X_i) \ar@{>->}[r] & \varGamma^1(M_i) \ar[r] & \varGamma^1(Y_i) \ar[r] &
		\varGamma^2(X_i) \ar[r] & \varGamma^2(M_i) \ar[r] & \varGamma^2(Y_i) \ar[r] & \varGamma^3(X_i)
}
\]
the canonical maps $f_1,f_4,f_7$ are isomorphisms by hypothesis~(i) and Lemma~\ref{l:stable-auxiliary1}, $f_6$ is an isomorphism by hypothesis~(iv), while $f_3$ is iso as well by \cite{Ver,BBD82} and since $R/J[0]\in\fp({\cal H})$. Therefore, by the Five~Lemma we deduce that $f_2$ and $f_5$ are isomorphisms, as desired. \qedhere
\end{proof}

\begin{rem}
By the proof of \cite[Lemma~2.14]{GT12} every indexing set $I$ is the union of a well-ordered chain of directed subposets $(I_\alpha\mid\alpha<\lambda)$, where each $I_\alpha$ has cardinality less than $I$. Moreover, for every direct system $(M_i)_{i\in I}$ of $R$-modules,
$(\smash[b]{\varinjlim\limits_{\hidewidth i\in I_\alpha\hidewidth}M_i}\mid\alpha<\lambda)$
is a well-ordered direct system satisfying%
\label{r:GT12-Lemma2.14}
$$
	\varinjlim_{i\in I}M_i=\varinjlim_{\alpha<\lambda}\varinjlim_{i\in I_\alpha}M_i \;.
$$
\end{rem}

\begin{lem}
Let\/ $({\cal T},{\cal F})$ be a stable torsion pair in $R\lMod$. Assume that condition~$\rm(ii)$ of Corollary~\ref{c:HRS_heart_LC_characterisation} holds true. Then, for every $B\in R\lmod$ and every direct system $(M_i)_{i\in I}$ in $\cal T$, the canonical homomorphism
$$
	\varinjlim_{i\in I}\Ext^1_R(y(B),M_i)\longrightarrow
	\Ext^1_R(y(B),\varinjlim_{i\in I}M_i)
$$
is injective.%
\label{l:stable-auxiliary3}%
\end{lem}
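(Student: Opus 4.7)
The plan is to observe that stability trivialises the statement by making the source of the canonical homomorphism vanish. The key ingredient I would extract is the following vanishing: for any stable torsion pair $({\cal T},{\cal F})$ in $R\lMod$, one has $\Ext^1_R(F,T)=0$ whenever $F\in{\cal F}$ and $T\in{\cal T}$. The proof is a standard injective-envelope argument. Take the injective envelope $E(T)$ of $T$ in $R\lMod$; by stability $E(T)\in{\cal T}$, and since torsion classes are closed under quotient objects, $E(T)/T\in{\cal T}$ as well. Applying $\Hom_R(F,-)$ to the short exact sequence $0\to T\to E(T)\to E(T)/T\to0$ then sandwiches $\Ext^1_R(F,T)$ between $\Hom_R(F,E(T)/T)=0$ (hom from a torsionfree to a torsion object) and $\Ext^1_R(F,E(T))=0$ (injectivity of $E(T)$), yielding the claimed vanishing.

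Specialising to $F=y(B)\in{\cal F}$ and $T=M_i$ for each $i\in I$ gives $\Ext^1_R(y(B),M_i)=0$ for every $i$, whence $\varinjlim_{i\in I}\Ext^1_R(y(B),M_i)=0$. The canonical homomorphism into $\Ext^1_R(y(B),\varinjlim_{i\in I}M_i)$ is then a map out of the zero group, hence trivially injective. The target itself need not vanish, since the torsion pair is not assumed to be of finite type in the statement, so $\varinjlim_{i\in I}M_i$ may leave ${\cal T}$; this is precisely what prevents a stronger ``isomorphism'' conclusion.

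I would also flag, for the reader, that condition~(ii) of Corollary~\ref{c:HRS_heart_LC_characterisation} plays no essential role in the argument: it is in fact vacuous, since every $\Hom_R(y(B),M_i)$ already vanishes by the defining property $\Hom_R({\cal F},{\cal T})=0$ of the torsion pair. The lemma should therefore be read as recording the simplification that stability affords when addressing the $\Ext$-preservation conditions in Corollary~\ref{c:HRS_heart_LC_characterisation}, rather than as a genuinely delicate computation; the only mild subtlety is keeping track of the fact that the target of the map may be nonzero.
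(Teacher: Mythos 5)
Your argument collapses at its first step: the claimed vanishing $\Ext^1_R(F,T)=0$ for $F\in{\cal F}$, $T\in{\cal T}$ is false, and the reason is that you have reversed the torsion-pair axiom. A torsion pair satisfies $\Hom_R({\cal T},{\cal F})=0$, i.e.\ morphisms \emph{from} torsion \emph{to} torsionfree objects vanish; the group $\Hom_R(F,E(T)/T)$ in your long exact sequence goes from a torsionfree object to a torsion object and has no reason to be zero (already $\Hom_\Z(\Z,\Z/2\Z)\ne0$ for the classical, stable torsion pair on abelian groups). The sandwich therefore only yields $\Ext^1_R(F,T)\cong\coker\bigl(\Hom_R(F,E(T))\to\Hom_R(F,E(T)/T)\bigr)$, which can be nonzero: for the stable pair (torsion, torsionfree) in $\Z\lMod$ one has $\Ext^1_\Z(\Q,\bigoplus_{n\ge1}\Z/p^n\Z)\ne0$, since an unbounded reduced torsion group is not cotorsion. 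The same sign error underlies your closing remark that condition~(ii) of Corollary~\ref{c:HRS_heart_LC_characterisation} is vacuous: $\Hom_R(y(B),M_i)$ is a Hom from ${\cal F}$ to ${\cal T}$ and does not vanish in general, so condition~(ii) is a genuine hypothesis --- and it is used essentially in the actual proof.

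The lemma cannot be trivialised this way. The correct route is: first treat well-ordered directed systems by embedding $(M_i)_{i\in I}$ into a direct system of short exact sequences $0\to M_i\to E_i\to C_i\to0$ with $E_i$ the injective envelope of $M_i$ (\cite[Lemma~3.5]{Hrb20}); stability keeps $E_i$ and $C_i$ inside ${\cal T}$, injectivity of each $E_i$ gives $\varinjlim_{i}\Ext^1_R(y(B),M_i)\cong\varinjlim_i\coker\bigl(\Hom_R(y(B),E_i)\to\Hom_R(y(B),C_i)\bigr)$, and condition~(ii) identifies this with $\coker\bigl(\Hom_R(y(B),\varinjlim E_i)\to\Hom_R(y(B),\varinjlim C_i)\bigr)$, which is exactly the kernel of $\Ext^1_R(y(B),\varinjlim M_i)\to\Ext^1_R(y(B),\varinjlim E_i)$ and hence injects into $\Ext^1_R(y(B),\varinjlim M_i)$. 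The general case then follows by writing $I$ as a well-ordered union of smaller directed subposets (Remark~\ref{r:GT12-Lemma2.14}) and using the AB-5 condition in $\Ab$. You would need to rebuild your proof along these lines.
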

\begin{proof}
We formerly prove the statement in case $I$ is a well~ordered directed poset.

If $I$ is a finite set, there exist indices $\bar\imath,\bar\jmath\in I$ such that $\varinjlim_{i\in I}M_i=M_{\bar\imath}$ and $\varinjlim_{i\in I}\Ext^1_R(y(B),M_i)=\Ext^1_R(y(B),M_{\bar\jmath})$; moreover, there exists $k\ge\bar\imath,\bar\jmath$ making the displayed canonical map an isomorphism indeed.

If $I$ is infinite, by \cite[Lemma~3.5]{Hrb20} there exists a direct system $(0\to M_i\to E_i\to C_i\to0)_{i\in I}$ in which $E_i$ is the injective envelope of $M_i$, so that the direct system is in $\cal T$ by the stability hypothesis. Therefore, the canonical homomorphism displayed in the statement factors through the kernel of the map
$$
	\Ext^1_R(y(B),\varinjlim_{i\in I}M_i)\longrightarrow
	\Ext^1_R(y(B),\varinjlim_{i\in I}E_i)
$$
by means of an isomorphism, thanks to the Snake~Lemma and the assumption on $y(B)$ (similarly to the proof \cite[Proposition~1.6]{PSV19}). In other words, our statement is true for well~ordered directed posets.

This said, the general case follows as soon as we write $I=\bigcup_{\alpha<\lambda}I_\alpha$ as in Remark~\ref{r:GT12-Lemma2.14}; indeed, by the argument of the previous part (applied twice) and by $\rm AB\mathchar`-5$ condition of abelian groups, we obtain the following composition of monomorphisms
$$
	\varinjlim_{\alpha<\lambda}\varinjlim_{i\in I_\alpha}\Ext^1_R(y(B),M_i)\lhook\joinrel\longrightarrow
	\varinjlim_{\alpha<\lambda}\Ext^1_R(y(B),\!\varinjlim_{i\in I_\alpha}M_i)\lhook\joinrel\longrightarrow
	\Ext^1_R(y(B),\!\varinjlim_{\alpha<\lambda}\varinjlim_{i\in I_\alpha}M_i),
$$
which coincides with the natural map of the statement. \qedhere
\end{proof}

\begin{cor}
Let\/ $({\cal T},{\cal F})$ be a stable torsion pair in $R\lMod$. Then its HRS~heart $\cal H$ is a locally coherent Grothendieck category if and only if the torsion pair is of finite type and the following three conditions hold:

\begin{enumerate}
\item[(i)] $\fp({\cal T})\subseteq\mathop{\rm FP}_3(R)$;

\item[(ii)] $\fp({\cal F})\subseteq R\lmod$;

\item[(iii)] For all\/ $B\in R\lmod$, the functor $\Ext^1_R(y(B),-)$ commutes with direct limits of direct systems of\/ $\cal F$.
\end{enumerate}
\end{cor}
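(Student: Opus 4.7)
The plan is to reduce to Corollary~\ref{c:HRS_heart_LC_characterisation}, applied to the length~$1$ Thomason filtration $\Spec R\supset Z\supset\emptyset$ of Example~\ref{e:HRS-heart&filtration}, by showing that, under stability (which in particular entails hereditariness), the three listed conditions together with ``of finite type'' are equivalent to the four conditions of that Corollary. The three auxiliary Lemmas~\ref{l:stable-auxiliary1}--\ref{l:stable-auxiliary3} will transfer information between the two lists, with Lemma~\ref{l:stable-auxiliary1} used inside Lemma~\ref{l:stable-auxiliary2}, which in turn is used in the forward direction for~(i); Lemma~\ref{l:stable-auxiliary3} is the key input for the forward direction of~(ii).

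For the implication from our three conditions to those of Corollary~\ref{c:HRS_heart_LC_characterisation}, condition~(iii) is literally the same, and condition~(iv) is immediate from $R/J\in\fp({\cal T})\subseteq\mathop{\rm FP}_3(R)$, which forces $\Ext^2_R(R/J,-)$ to preserve all direct limits. Condition~(ii) follows from our~(ii) trivially, since $y(B)\in R\lmod$ means $\Hom_R(y(B),-)$ commutes with every direct limit. For condition~(i) (local coherence of~$\cal T$), hereditariness places $\ker f$, $\im f$ and $\coker f$ in $\cal T$ for any $f\colon X\to Y$ in $\fp({\cal T})\subseteq\mathop{\rm FP}_3(R)$; the cokernel is finitely presented (hence in $\fp({\cal T})\subseteq\mathop{\rm FP}_3(R)$) since it is a quotient of $Y\in R\lmod$ by a finitely generated submodule, and then the standard closure properties of $\mathop{\rm FP}_n$-modules (notably, for $0\to A\to B\to C\to 0$, $B\in\mathop{\rm FP}_n$ and $C\in\mathop{\rm FP}_{n+1}$ imply $A\in\mathop{\rm FP}_n$) applied first to $0\to\im f\to Y\to\coker f\to 0$ and then to $0\to\ker f\to X\to\im f\to 0$ give $\ker f\in\mathop{\rm FP}_1\cap{\cal T}=\fp({\cal T})$.

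Conversely, assume the conditions of Corollary~\ref{c:HRS_heart_LC_characterisation}. Condition~(iii) is again identical. For~(i), Lemma~\ref{l:stable-auxiliary2} gives $R/J\in\mathop{\rm FP}_3(R)$ for every $J\in{\cal B}_Z$; the local coherence of $\cal T$ from that corollary's~(i) then yields, for an arbitrary $X\in\fp({\cal T})$, a two-step resolution $Q_1\to Q_0\to X\to 0$ by finite coproducts of such $R/J$'s with kernels in $\fp({\cal T})$, and an iterated application of the $\mathop{\rm FP}_n$-closure (in the form ``$A\in\mathop{\rm FP}_{n-1}$, $B\in\mathop{\rm FP}_n$ in $0\to A\to B\to C\to 0$ imply $C\in\mathop{\rm FP}_n$'') propagates $\mathop{\rm FP}_3$ from the $Q_i$'s down to~$X$. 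For~(ii), given a direct system $(M_i)_{i\in I}$ in $R\lMod$, apply $\Hom_R(y(B),-)$ to the functorial approximations $0\to x(M_i)\to M_i\to y(M_i)\to 0$; taking $\varinjlim$ and comparing with the analogous sequence for $\varinjlim M_i$, the natural map at $\varinjlim\Hom_R(y(B),x(M_i))$ is bijective by condition~(ii) of Corollary~\ref{c:HRS_heart_LC_characterisation}, the one at $\varinjlim\Hom_R(y(B),y(M_i))$ is bijective since $y(B)\in\fp({\cal F})$, and the one at $\varinjlim\Ext^1_R(y(B),x(M_i))$ is injective by Lemma~\ref{l:stable-auxiliary3}. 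A routine Five~Lemma chase on these data produces the bijection $\varinjlim\Hom_R(y(B),M_i)\cong\Hom_R(y(B),\varinjlim M_i)$, whence $y(B)\in R\lmod$.

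The main obstacle is precisely this final derivation of~(ii): while all other implications rely on rather formal bookkeeping with the $\mathop{\rm FP}_n$-calculus and Lemma~\ref{l:stable-auxiliary2}, the passage from the weak form of commuting with $\cal T$-colimits only (given by Corollary~\ref{c:HRS_heart_LC_characterisation}(ii)) to the strong form $y(B)\in R\lmod$ genuinely needs the injectivity statement Lemma~\ref{l:stable-auxiliary3}, which is where the stability hypothesis enters in an essential way. Without stability, the chase breaks at the $\Ext^1$-term indexed by the torsion parts~$x(M_i)$, and no comparably clean reformulation of~(ii) seems available.
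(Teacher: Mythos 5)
Your proposal is correct and follows essentially the same route as the paper: both directions are obtained by matching the three stated conditions against those of Corollary~\ref{c:HRS_heart_LC_characterisation}, with Lemma~\ref{l:stable-auxiliary2} supplying $R/J\in\mathop{\rm FP}_3(R)$ for the backward direction of~(i) and Lemma~\ref{l:stable-auxiliary3} supplying the injectivity at the $\Ext^1$-term in the Five~Lemma chase that yields $y(B)\in R\lmod$. The only cosmetic difference is that in the forward direction for local coherence of $\cal T$ you run a slightly more detailed $\mathop{\rm FP}_n$-calculus for arbitrary morphisms where the paper simply observes that kernels of epimorphisms in $\fp({\cal T})$ are finitely presented; both are fine.
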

\begin{proof}
We shall prove that the stated conditions are equivalent to the ones of Corollary~\ref{c:HRS_heart_LC_characterisation}. Let us start by proving that our three hypotheses imply the conditions of the Corollary.

\noindent(i)\enspace By Proposition~\ref{p:torsion-lfp}, Proposition~\ref{p:Ht_LC=>t_her_ft}, and \cite{SS20}, $\cal T$ is a locally finitely presented Grothendieck category. It remains to show that $\fp({\cal T})$ is an exact abelian subcategory of $\cal T$, and this follows by condition~(i), namely for the kernel of any epimorphism in $\fp({\cal T})$ is finitely presented as well.

\noindent(ii)\enspace It follows immediately by our hypothesis~(ii).

\noindent(iv)\enspace It follows immediately by our hypothesis~(i).

Let us prove that the four conditions of Corollary~\ref{c:HRS_heart_LC_characterisation} imply our hypotheses~(i) and~(ii).

\noindent(i)\enspace Let $B\in\fp({\cal T})$; by Corollary~\ref{c:R-mod&T_0-are_in_fp(H)} there exist finitely generated ideals $J',J$ in the Gabriel filter associated with $\cal T$ and an exact row $(R/J')^n\buildrel\alpha\over\to (R/J)^m\to B\to0$ for some $n,m\in\N$. By Lemma~\ref{l:stable-auxiliary2}, $R/J$ and $R/J'$ are objects of $\mathop{\rm FP}_3(R)$, thus, being $\ker\alpha$ a finitely presented torsion module by~Corollary~\ref{c:HRS_heart_LC_characterisation}(i), in view e.g.~of \cite{BPz16} we infer that $\im\alpha\in\mathop{\rm FP}_2(R)$ and consequently that $B\in\mathop{\rm FP}_3(R)$. 

\noindent(ii)\enspace Since $\fp({\cal F})=\mathop{\rm add}y(R\lmod)$, we shall prove our assertion~(ii) on torsionfree modules of the form $y(B)$, where $B\in R\lmod$. Let $(M_i)_{i\in I}$ be a direct system in $R\lMod$ and consider its approximation $(0\to X_i\to M_i\to Y_i\to0)_{i\in I}$ within $({\cal T},{\cal F})$. By applying the functors
$$
	L^k\leqdef \varinjlim_{i\in I}\Ext^1_R(y(B),-)\quad\hbox{and}\quad
	\varGamma^k\leqdef \Ext^k_R(y(B),\varinjlim_{i\in I}(-))\qquad(k\in\N\cup\{0\})
$$
on the latter direct system, we obtain
\[
\xymatrix{%
	0 \ar[r] & L^0(X_i) \ar[d]_{g_1}\ar[r] & L^0(M_i) \ar[d]^{g_2}\ar[r] & L^0(Y_i) \ar[d]^{g_3}\ar[r] &
		L^1(X_i) \ar[d]^{g_4} \cr
	0 \ar[r] & \varGamma^0(X_i) \ar[r] & \varGamma^0(M_i) \ar[r] & \varGamma^0(Y_i) \ar[r] & \varGamma^1(X_i)
}
\]
in which $g_1$ is an isomorphism by Corollary~\ref{c:HRS_heart_LC_characterisation}(ii), $g_3$ is isomorphism since $y(B)\in\fp({\cal F})$, and $g_4$ is monic by Lemma~\ref{l:stable-auxiliary3}. By the Five~Lemma, we conclude that $y(B)$ is a finitely presented module.
\end{proof}

\subsection{On restrictable t-structures over commutative noetherian rings}%
Let $R$ be a commutative noetherian ring. Recall that $\De^b_{R\lmod}(R)$ is the triangulated subcategory of $\De(R)$ formed by the bounded complexes with finitely presented cohomologies; moreover, being $R$ noetherian, we have $\De^b_{R\lmod}(R)\cong\De^b_{\vphantom R}(R\lmod)$ canonically. A compactly generated t-structure $({\cal U},{\cal V})$ of $\De(R)$ is said to {\em restrict to $\De^b_{R\lmod}(R)$\/} if, for every complex $M\in\De^b_{R\lmod}(R)$, in the canonical triangle
$$
	\tau^\le_{\cal U}(M)\longrightarrow M\longrightarrow
		\tau^>_{\cal U}(M)\buildrel+\over\longrightarrow
$$
also the two outer vertices belong to $\De^b_{R\lmod}(R)$. In this case, the image of ${\cal U}\cap\De^b_{R\lmod}(R)$ under the aforementioned canonical equivalence is an aisle in $\De^b(R\lmod)$. Saor\'\i n proved in \cite[Theorem~6.3]{Sao17} that a compactly generated t-structure $({\cal U},{\cal V})$ of $\De(R)$ which restricts to $\De^b_{R\lmod}(R)$ has a locally coherent heart $\cal H$, with ${\cal H}\cap\De^b_{R\lmod}(R)$ as the abelian subcategory of its finitely presented objects.

We want to recover Saor\'\i n's result for Thomason filtration of finite length; that is, we want to check that in case the commutative ring $R$ is noetherian, given a Thomason filtration $\Phi$ of length $l+1$ whose (compactly generated) t-structure $({\cal U},{\cal V})$ restricts to $\De^b_{R\lmod}(R)$, then its heart is locally coherent and $\fp({\cal H})={\cal H}\cap \De^b_{R\lmod}(R)$.

\begin{lem}
Let\/ $R$ be a commutative noetherian ring. For every Thomason filtration $\Phi$ of\/ $\Spec R$, if\/ $X\in\fp({\cal T}_k)$, then $H_{\cal H}(X[-k])\in\fp({\cal H})$.
\end{lem}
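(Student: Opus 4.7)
The plan is to reduce the statement to the cyclic case $X=R/J$ and then exploit the compactness of the Koszul complex $K(J)$. Since $R$ is noetherian, $\fp({\cal T}_k)={\cal T}_k\cap R\lmod$ by Proposition~\ref{p:torsion-Thomason_subset}(i), and any $X\in\fp({\cal T}_k)$ will admit a two-step presentation $(R/J_1)^{n_1}\to(R/J_0)^{n_0}\to X\to0$ in $\fp({\cal T}_k)$, with $J_0,J_1$ (automatically finitely generated) ideals of the Gabriel filter ${\cal G}_{\Phi(k)}$: the epimorphism $(R/J_0)^{n_0}\twoheadrightarrow X$ is provided by Proposition~\ref{p:torsion-Thomason_subset}(ii), its kernel $K$ is finitely generated (hence finitely presented, by noetherianness) and torsion (by hereditariness), so another application of the Proposition yields $(R/J_1)^{n_1}\twoheadrightarrow K$.

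I first treat the cyclic case $X=R/J$. The Koszul complex $K(J)[-k]$ is a bounded complex of finitely generated projectives, hence a compact object of $\De(R)$; its only nonzero cohomologies, concentrated in degrees $k-n,\ldots,k$ where $n$ is the rank of $J$, are annihilated by $J$ and thus supported in $V(J)\subseteq\Phi(k)$, so that $K(J)[-k]\in\De^c(R)\cap{\cal U}$ and consequently $H_{\cal H}(K(J)[-k])\in\fp({\cal H})$ by \cite[Theorem~8.31]{SS20}. To identify this with $H_{\cal H}(R/J[-k])$, I apply $H_{\cal H}$ to the canonical triangle $\tau^{\le-1}(K(J))[-k]\to K(J)[-k]\to R/J[-k]\buildrel+\over\to$ and check that both $\tau^{\le-1}(K(J))[-k]$ and its one-step upward shift satisfy the cohomology test $H^j(-)\in{\cal T}_{j+1}$---using that $H^{-i}(K(J))\in{\cal T}_{V(J)}\subseteq{\cal T}_k$ together with the monotonicity of $\Phi$---so that both are killed by $H_{\cal H}$; the long exact sequence then collapses to the desired isomorphism $H_{\cal H}(K(J)[-k])\cong H_{\cal H}(R/J[-k])$.

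For the general case, I apply $H_{\cal H}$ to the triangle $K[-k]\to(R/J_0)^{n_0}[-k]\to X[-k]\to K[-k+1]$: the shift $K[-k+1]$ has its unique nonzero cohomology $K\in{\cal T}_k={\cal T}_{(k-1)+1}$ placed in degree $k-1$, so $K[-k+1]\in{\cal U}[1]$ and $H_{\cal H}(K[-k+1])=0$, trimming the long exact sequence to a right-exact row
\[
	H_{\cal H}(K[-k])\longrightarrow H_{\cal H}((R/J_0)^{n_0}[-k])\longrightarrow H_{\cal H}(X[-k])\longrightarrow0
\]
in $\cal H$. Composing with the epimorphism $H_{\cal H}((R/J_1)^{n_1}[-k])\twoheadrightarrow H_{\cal H}(K[-k])$ produced the same way from $(R/J_1)^{n_1}\twoheadrightarrow K$ exhibits $H_{\cal H}(X[-k])$ as the $\cal H$-cokernel of a morphism between two finitely presented objects of $\cal H$; since $\cal H$ is a locally finitely presented Grothendieck category (by \cite[Theorem~8.31]{SS20}) and $\fp({\cal H})$ is thus closed under cokernels, the conclusion follows.

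The principal obstacle I foresee is the bookkeeping in the Koszul vanishing check---specifically, verifying that both $\tau^{\le-1}(K(J))[-k]$ and its one-step upward shift sit inside ${\cal U}[1]$---since this is where the decreasing nature of $\Phi$ and the torsion containment $V(J)\subseteq\Phi(k)$ must be correctly combined, without appealing to any boundedness hypothesis on the filtration.
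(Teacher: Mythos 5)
Your proof is correct, but it follows a genuinely different route from the paper's. The paper disposes of this lemma in two lines: it first transfers the problem to the heart ${\cal H}_k$ of the weakly bounded below filtration $\Phi_k$, using Theorem~\ref{t:H_*-TTF_finite_type} (so that $\fp({\cal H}_k)\subseteq\fp({\cal H})$), and then invokes Lemma~\ref{l:fp(H)&TFT}: since $R$ is noetherian and $X$ is finitely presented, $\Hom_R(X,-)$ commutes with all direct limits, hence in particular with those of direct systems in $\TFT_{\!k}$, which is exactly the criterion of that lemma. You instead argue directly in $\cal H$: reduce to cyclics via a two-step presentation $(R/J_1)^{n_1}\to(R/J_0)^{n_0}\to X\to0$ (available by noetherianness and Proposition~\ref{p:torsion-Thomason_subset}), handle the cyclic case through the compactness of $K(J)[-k]$ together with the identification $H_{\cal H}(K(J)[-k])\cong H_{\cal H}(R/J[-k])$ (which is essentially Proposition~\ref{p:Koszul_complex_in_hearts} and the argument in Lemma~\ref{l:on_length_l+1}(iv), so you are re-deriving a fact the paper already has), and conclude by exhibiting $H_{\cal H}(X[-k])$ as a cokernel of a morphism between finitely presented objects of the locally finitely presented category $\cal H$. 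All the vanishing checks you flag as delicate do go through: each complex you need to kill is bounded and satisfies $H^j(-)\in{\cal T}_{j+1}$, hence lies in ${\cal U}^\#[1]\cap\De^+(R)={\cal U}[1]\cap\De^+(R)\subseteq{\cal U}[1]$, and in fact membership of $\tau^{\le-1}(K(J))[-k]$ in ${\cal U}[1]$ already forces the same for its shift, so your second check is automatic. What each approach buys: the paper's is shorter given its TTF machinery, but it leans on Lemma~\ref{l:fp(H)&TFT}, which is stated for finite-length filtrations and must be argued to extend to $\Phi_k$; your argument is self-contained, stays entirely inside the compactly generated t-structure, and isolates the only use of noetherianness (finite generation of kernels), so it would generalise verbatim to any $X$ admitting such a two-step presentation by cyclics over a non-noetherian ring.
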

\begin{proof}
Notice that it suffices to prove the statement for the heart ${\cal H}_k$, instead of $\cal H$, thanks to Theorem~\ref{t:H_*-TTF_finite_type}. This said, we conclude by Lemma~\ref{l:fp(H)&TFT} using the hypotheses on $R$ and $X$.
\end{proof}

\begin{lem}
Let\/ $R$ be a commutative noetherian ring, and let\/ $\Phi$ be a Thomason filtration whose t-structure $({\cal U},{\cal V})$ restricts to $\De^b_{R\lmod}(R)$. For all integers $k\in\Z$, if\/ $X\in{\cal T}_k$ is such that $H_{\cal H}(X[-k])\in\fp({\cal H})$, then\/ $H_{\cal H}(X[-k])\in\De^b_{R\lmod}(R)$.
\end{lem}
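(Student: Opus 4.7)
\emph{The plan is} to establish the stronger assertion $\fp({\cal H})\subseteq\De^b_{R\lmod}(R)$, from which the lemma follows at once. The key input is the explicit set of finitely presented generators of ${\cal H}$ produced by Saor\'\i n--\v S\v tov\'\i\v cek.

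\emph{First}, I would recall from the proof of Theorem~\ref{t:when_hearts_coincide} (which invokes~\cite[Theorem~8.31]{SS20}) that ${\cal S}:=\mathop{\rm add}H_{\cal H}(\De^c(R)\cap{\cal U})$ is a set of finitely presented generators of ${\cal H}$, and then check the inclusion ${\cal S}\subseteq\De^b_{R\lmod}(R)$. Every compact object of $\De(R)$ is a perfect complex by Rickard's theorem, and since $R$ is noetherian every perfect complex lies in $\De^b_{R\lmod}(R)$. Because the latter subcategory is closed under shifts, the restrictability hypothesis for $({\cal U},{\cal V})$ is automatically inherited by every shift $({\cal U}[n],{\cal V}[n])$; consequently the truncation functors $\tau^{\le}_{\cal U},\tau^{>}_{\cal U},\tau^{\le}_{{\cal U}[1]},\tau^{>}_{{\cal U}[1]}$ all preserve $\De^b_{R\lmod}(R)$. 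For every compact $T\in{\cal U}$ this gives
$$H_{\cal H}(T)=\tau^{>}_{{\cal U}[1]}\tau^{\le}_{\cal U}(T)\in\De^b_{R\lmod}(R),$$
and closing under finite direct sums and summands (which preserve $\De^b_{R\lmod}(R)$) yields ${\cal S}\subseteq\De^b_{R\lmod}(R)$.

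\emph{Second}, given an arbitrary $M\in\fp({\cal H})$, I would invoke the standard cokernel presentation valid in any locally finitely presented Grothendieck category with a fixed fp generating set (cf.~\cite{Kra97}): one first produces an epimorphism $\alpha\colon S_0\twoheadrightarrow M$ with $S_0\in{\cal S}$, combining a presentation of $M$ by generators with the fp-ness of $M$ (which forces any epimorphism from an arbitrary coproduct of generators to restrict to a finite subcoproduct); the kernel $K:=\ker\alpha$ is then finitely generated, hence admits an epimorphism from some $S_1\in{\cal S}$, producing a morphism $f\colon S_1\to S_0$ in ${\cal H}$ with $M\cong\coker^{({\cal H})}(f)$. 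By the description of cokernels recalled in Subsection~\ref{ss:t-structures}, $\coker^{({\cal H})}(f)\cong H_{\cal H}(C)$ where $C$ is a cone of $f$ in $\De(R)$. Since $S_0,S_1\in\De^b_{R\lmod}(R)$ and this is a triangulated subcategory of $\De(R)$, we have $C\in\De^b_{R\lmod}(R)$, and the preservation of $\De^b_{R\lmod}(R)$ by $H_{\cal H}$ established in the first step gives $M\cong H_{\cal H}(C)\in\De^b_{R\lmod}(R)$.

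\emph{The main obstacle} is the cokernel presentation in the second step: while standard for locally fp Grothendieck categories with a fixed fp generating set, it requires a careful use of the fp-ness of $M$ both to pass from an epimorphism out of an arbitrary coproduct of generators to one from a finite subcoproduct and to conclude that the ensuing kernel is finitely generated. The remaining ingredients---Rickard's characterisation of compact complexes of $\De(R)$, the restrictability of the shifted t-structures, and the explicit formula for cokernels in the heart---are essentially direct verifications.
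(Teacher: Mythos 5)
Your proof is correct, but it follows a genuinely different route from the paper's. The paper argues ``locally'': it writes $X=\varinjlim_{i}X_i$ over the finitely generated submodules, uses the finite‑type property of the torsion pairs and the functor $\Sigma^{k}$ to show $H_{\cal H}(X[-k])\cong\varinjlim_i^{({\cal H})}H_{\cal H}(X_i[-k])$, extracts from the fp hypothesis an index $\bar\imath$ with $H_{\cal H}(X[-k])\cong H_{\cal H}(y_{k+1}(X_{\bar\imath})[-k])$, and only then applies restrictability to the truncation triangle of the single stalk complex $y_{k+1}(X_{\bar\imath})[-k-1]\in\De^b_{R\lmod}(R)$. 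You instead prove the global inclusion $\fp({\cal H})\subseteq\De^b_{R\lmod}(R)$: restrictability (inherited by all shifts of the t-structure) makes $H_{\cal H}=\tau^{>}_{{\cal U}[1]}\circ\tau^{\le}_{\cal U}$ preserve $\De^b_{R\lmod}(R)$, the SS20 generators $H_{\cal H}(\De^c(R)\cap{\cal U})$ land there because compacts are perfect hence (over a noetherian ring) in $\De^b_{R\lmod}(R)$, and the standard two‑step presentation of an fp object as $\coker^{({\cal H})}(f)\cong H_{\cal H}(\mathop{\rm Cone}f)$ with $f$ between finite sums of generators finishes, since $\De^b_{R\lmod}(R)$ is triangulated. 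Both arguments use fp-ness and restrictability in the same essential way; yours is shorter and actually yields one half of the paper's final corollary ($\fp({\cal H})\subseteq{\cal H}\cap\De^b_{R\lmod}(R)$), at the cost of leaning more heavily on \cite[Theorem~8.31]{SS20} and on the (standard, but worth spelling out) facts that the kernel of an epimorphism between fp objects is finitely generated and that finite coproducts and direct summands in ${\cal H}$ agree with those in $\De(R)$. The paper's proof, while longer, stays closer to the module‑theoretic data of $\Phi$ and produces the extra information that $H_{\cal H}(X[-k])$ is already computed by a single finitely generated submodule of $X$.
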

\begin{proof}
Let $(X_i)_{i\in I}$ be the canonical direct system of finitely generated submodules of $X$ in ${\cal T}_k$, such that $X=\varinjlim_{i\in I}X_i$. Since each torsion pair associated with $\Phi$ is of finite type, we have
$$
	y_{k+1}(X)\cong \varinjlim_{i\in I}y_{k+1}(X_i);
$$
and the direct system $(y_{k+1}(X_i))_{i\in I}$ lives in $\TF_{\!k}$, and $y_{k+1}(X)$ as well. Therefore, in view of Remark~\ref{r:functorial_monomorphism_sigma}, we get a commutative diagram with exact rows,
\[
\xymatrix{%
	0 \ar[r] & y_{k+1}(X_i) \ar[r]^-{\sigma_i}\ar[d] &
		\Sigma^{k}(y_{k+1}(X_i)) \ar[r]\ar[d] & C_i \ar[r]\ar[d] & 0 \cr
	0 \ar[r] & y_{k+1}(X) \ar[r]^-{\sigma_X} &
		\Sigma^{k}(y_{k+1}(X)) \ar[r] & C_X \ar[r] & 0
}
\]
where $C_X,C_i\in{\cal T}_{k+2}$, for all $i\in I$, and the vertical $R$-linear maps are the canonical ones. Moreover, we have 
$$
	\Sigma^{k}(y_{k+1}(X))\cong H^k(H_{\cal H}(X[-k])) \;.
$$
We claim that the vertical $R$-linear map of the previous diagram is an isomorphism. First, observe that the functor $H_{\cal H}\colon\De(R)\to{\cal H}$ commutes with coproduct (see \cite[Lemma~3.3]{AHMV17}). This said, consider the following diagram of triangles of $\De(R)$,
\[
\xymatrix{%
	&& \hidewidth(\ker f)[-k+1]\hidewidth \ar[d] \cr
	\coprod\limits_{i\le j}X_{ji}[-k] \ar[r]^-{f[-k]} & \coprod\limits_{i\in I}X_i[-k] \ar[r] &
		Z \ar[d]\ar[r]^-{+} & \cr
	&& \hidewidth X[-k]\hidewidth \ar[d]^-{+} \cr
	&& {}
}
\]
where $f\colon\bigoplus_{i\le j}X_{ji}\to\bigoplus_{i\in I}X_i$ is the colimit-defining homomorphism of $X$ in $R\lMod$. Applying $H_{\cal H}$ on the diagram, we obtain the following one with exact rows
\[
\xymatrix{%
	\bigoplus\limits_{i\le j}H_{\cal H}(X_{ji}[-k]) \ar[d]_-{\cong}\ar[r] &
		\bigoplus\limits_{i\in I}H_{\cal H}(X_i[-k]) \ar[d]^-{\cong}\ar[r] &
		\varinjlim\limits_{i\in I}\adjust H_{\cal H}(X_i[-k]) \ar@{.>}[d]^-{\cong}\ar[r] & 0 \cr
	H_{\cal H}(\coprod\limits_{i\le j}X_{ji}[-k]) \ar[r] &
		H_{\cal H}(\coprod\limits_{i\in I}X_i[-k]) \ar[r] &
		H_{\cal H}(Z) \ar[d]^-{\cong}\ar[r] & 0 \cr
	&& \hidewidth H_{\cal H}(X[-k])\hidewidth
}
\]
where the dotted morphism is given by the universal property of the cokernel, and the last vertical morphism is iso since $(\ker f)[-k+1]\in{\cal U}[1]$. The diagram show that $H_{\cal H}(X[-k])\cong \varinjlim_{i\in I}^{({\cal H})}H_{\cal H}(X_i[-k])$, and being the former a finitely presented object of $\cal H$, its identity morphism factors through $H_{\cal H}(X_{\bar\imath}[-k])$, for some index $\bar\imath\in I$. Consequently, the resulting $R$-linear map
$$
	H^k(H_{\cal H}(X_{\bar\imath}[-k]))\longrightarrow H^k(H_{\cal H}(X[-k]))
$$
is a split epimorphism, by additivity of the standard cohomology functors. Such homomorphism coincides with the relevant vertical $R$-linear map in the middle of the first commutative diagram of the proof. Eventually, from the following commutative diagram with canonical morphisms
\[
\xymatrix{%
	0 \ar[r] & x_{k+1}(X_{\bar\imath}) \ar@{>->}[d]\ar[r] & X_{\bar\imath} \ar@{>->}[d]\ar[r] &
		y_{k+1}(X_{\bar\imath}) \ar[d]\ar[r] & 0 \cr
	0 \ar[r] & x_{k+1}(X) \ar[r] & X \ar[r] & y_{k+1}(X) \ar[r] & 0
}
\]
by the Snake~Lemma we deduce that $y_{k+1}(X_{\bar\imath})\to y_{k+1}(X)$ is monic as well, so that, in the first commutative diagram, the left vertical map is injective too. Altogether, in such diagram, by the Snake~Lemma again, we infer that the middle vertical map is an isomorphism, since its kernel is a submodule of the kernel of $C_{\bar\imath}\to C_X$, which belongs to ${\cal T}_{k+2}\subseteq{\cal T}_{k+1}$. On the other hand, $y_{k+1}(X_{\bar\imath})[-k-1]$ belongs to $\De^b_{R\lmod}(R)$, and since the t-structure restricts by hypothesis, the associated triangle, call it
$$
	U\longrightarrow y_{k+1}(X_{\bar\imath})[-k-1]\longrightarrow V\buildrel+\over\longrightarrow,
$$
lives in $\De^b_{R\lmod}(R)$. Now, in view of the proof of Lemma~\ref{l:SigmaT}, we compute:
\begin{align*}
	H_{\cal H}(X[-k]) &\cong H_{\cal H}(y_{k+1}(X)[-k])\cong
		H_{\cal H}(\Sigma^{k}(y_{k+1}(X))[-k]) \cr
	&\cong H_{\cal H}(\Sigma^{k}(y_{k+1}(X_{\bar\imath}))[-k])\cong
		H_{\cal H}(y_{k+1}(X_{\bar\imath})[-k])=V[1],
\end{align*}
where the equality follows from the previous triangle. Since $V[1]$ belongs to $\De^b_{R\lmod}(R)$, we conclude. \qedhere
\end{proof}

\begin{prop}
For every complex $M\in \De^b_{R\lmod}(R)$, every direct system $(X_i)_{i\in I}$ in $R\lMod$, and integer $k\in\Z$, the canonical morphism
$$
	\varinjlim_{i\in I}\Hom_{\De(R)}(M,X_i[k])\longrightarrow
	\Hom_{\De(R)}(M,(\varinjlim_{i\in I}X_i)[k])
$$
is an isomorphism.
\end{prop}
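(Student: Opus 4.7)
The strategy is to verify that the class $\mathcal{P}$ of complexes $M \in \De(R)$ for which the stated canonical map is an isomorphism (for all direct systems $(X_i)_{i\in I}$ in $R\lMod$ and all $k \in \Z$) forms a triangulated subcategory of $\De(R)$, and then to check the property directly on stalk complexes of finitely presented modules. Since $R$ is noetherian, every $N \in R\lmod$ admits a projective resolution by finitely generated free $R$-modules, and this will be what makes the base case work.

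First, I would verify closure of $\mathcal{P}$ under shifts (immediate, by reindexing $k$) and under extensions. Given an exact triangle $L \to M \to N \buildrel+\over\to$ with $L,N \in \mathcal{P}$, apply the cohomological functors $\varinjlim_{i\in I}\Hom_{\De(R)}(-,X_i[k])$ and $\Hom_{\De(R)}(-,(\varinjlim_{i\in I}X_i)[k])$ to obtain two long exact sequences of abelian groups; the former remains exact because $\Ab$ is AB-5, so filtered colimits preserve long exact sequences. The canonical comparison map between these two sequences is an isomorphism at every occurrence of $L$ and $N$, so by the Five Lemma it is an isomorphism at $M$ as well. Hence $\mathcal{P}$ is a triangulated subcategory of $\De(R)$.

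Next, I would verify that every stalk complex $N[0]$ with $N \in R\lmod$ lies in $\mathcal{P}$. By the standard Verdier identification, $\Hom_{\De(R)}(N, X[k]) \cong \Ext^k_R(N, X)$ naturally in $X \in R\lMod$, so the claim reduces to showing that for every $k \in \Z$ the functor $\Ext^k_R(N,-)$ commutes with direct limits. Since $R$ is noetherian and $N$ is finitely generated, $N$ admits a projective resolution $P_\bullet \to N$ with each $P_j$ a finitely generated free $R$-module; each $\Hom_R(P_j, -)$ therefore commutes with direct limits, and since direct limits in $\Ab$ are exact and commute with cohomology, so does $\Ext^k_R(N,-) = H^k(\Hom_R(P_\bullet, -))$.

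Finally, under the canonical equivalence $\De^b_{R\lmod}(R) \cong \De^b(R\lmod)$, the target category is the smallest triangulated subcategory of $\De(R)$ containing all stalk complexes of objects of $R\lmod$ (by iterated use of the standard truncation triangles $\tau^{\le n-1}(M) \to M \to H^n(M)[-n]\buildrel+\over\to$, shrinking the cohomological amplitude at each step). Combining this with the first two paragraphs yields $\De^b_{R\lmod}(R) \subseteq \mathcal{P}$, which is the statement. There is no real obstacle here; the only point worth care is the appeal to AB-5 exactness of direct limits to propagate the isomorphism along the long exact sequence in the Five Lemma step.
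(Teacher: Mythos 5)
Your proposal is correct and rests on the same two ingredients as the paper's proof: dévissage along the standard truncation triangles to reduce to stalk complexes of the (finitely generated) cohomology modules, and the fact that over a noetherian ring every $N\in R\lmod$ has a resolution by finitely generated frees, so $\Ext^k_R(N,-)$ commutes with direct limits for all $k$. Your packaging of the dévissage as ``the full subcategory $\mathcal{P}$ is triangulated and contains the generators'' is cleaner than the paper's explicit two-step recursion (surjectivity first, then injectivity), but it is essentially the same argument.
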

\begin{proof}
Let us start by showing that the displayed homomorphism is epic. Consider the standard triangle $\tau^{\le-k-1}(M)\to M\to\tau^{>-k-1}(M)\buildrel+\over\to$ and apply the cohomological contravariant hom~functors of the stalks $X_i[k]$ and $(\varinjlim_{i\in I}X_i)[k]$, to get the commutative diagram with exact rows
\[
\xymatrix{%
	0 \ar[r] & \varinjlim\limits_{i\in I}\Hom_{\De(R)}(\tau^{>-k-1}(M),X_i[k]) \ar[d]\ar[r] &
		\varinjlim\limits_{i\in I}\Hom_{\De(R)}(M,X_i[k]) \ar[r]\ar[d] & 0 \cr
	0 \ar[r] & \Hom_{\De(R)}(\tau^{>-k-1}(M),(\varinjlim\limits_{i\in I}X_i)[k])) \ar[r] &
		\Hom_{\De(R)}(M,(\varinjlim\limits_{i\in I}X_i)[k]) \ar[r] & 0
}
\]
In order to conclude our claim, it suffices to check that the left vertical homomorphism is epic. Consider the triangle $H^{-k}(M)[k]\to \tau^{>-k-1}(M)\to \tau^{>-k-2}(M)\buildrel+\over\to$, which we rename $H\to V_1\to V_2\buildrel+\over\to$ for brevity. Applying the same functors of before, call them respectively $\varGamma^k_{\!i}$ and $\varGamma^k_{\vphantom i}$, we obtain the following commutative diagram with exact rows:
\[
\xymatrix@C1em{%
	\varinjlim\limits_{i\in I}\varGamma^k_{\!i}(V_2[-1]) \ar[r]\ar[d]_-{\mu_1} &
		\varinjlim\limits_{i\in I}\varGamma^k_{\!i}(H) \ar[r]\ar[d]_-{\mu_2} &
		\varinjlim\limits_{i\in I}\varGamma^k_{\!i}(V_1) \ar[r]\ar[d]^-{\mu_3} &
		\varinjlim\limits_{i\in I}\varGamma^k_{\!i}(V_2) \ar[r]\ar[d]^-{\mu_4} &
		\varinjlim\limits_{i\in I}\varGamma^k_{\!i}(H[1]) \ar[d]^-{\mu_5} \cr
	\varGamma^k_{\vphantom i}(V_2[-1]) \ar[r] &
		\varGamma^k_{\vphantom i}(H) \ar[r] &
		\varGamma^k_{\vphantom i}(V_1) \ar[r] &
		\varGamma^k_{\vphantom i}(V_2) \ar[r] &
		\varGamma^k_{\vphantom i}(H[1])
}
\]
where $\mu_2$ and $\mu_5$ are isomorphisms since $H^{-k}(M)$ is finitely presented. By the Snake~Lemma, we reduced to prove that $\mu_4$ is epic. Moreover, arguing inductively, bearing in mind that $M$ belongs to $\De^b_{R\lmod}(R)$, it suffices to check our claim for a truncation $\tau^{>j}(M)$, for some $j\le-k$, having just two nonzero cohomologies. In such case, we have the following standard triangle:
$$
	H^{j-1}(M)[-j+1]\longrightarrow \tau^{>j}(M)\longrightarrow H^{j-2}(M)[-j+2]\buildrel+\over\longrightarrow,
$$
say it $\tilde H\to \tilde V_1\to\tilde V_2\buildrel+\over\to$. Applying the functors $\varGamma^k_{\!i}$ and $\varGamma^k_{\vphantom i}$ as above, we obtain
\[
\xymatrix@C1em{%
	\varinjlim\limits_{i\in I}\varGamma^k_{\!i}(\tilde V_2[-1]) \ar[r]\ar[d]_-{\tilde\mu_1} &
		\varinjlim\limits_{i\in I}\varGamma^k_{\!i}(\tilde H) \ar[r]\ar[d]_-{\tilde\mu_2} &
		\varinjlim\limits_{i\in I}\varGamma^k_{\!i}(\tilde V_1) \ar[r]\ar[d]^-{\tilde\mu_3} &
		\varinjlim\limits_{i\in I}\varGamma^k_{\!i}(\tilde V_2) \ar[r]\ar[d]^-{\tilde\mu_4} &
		\varinjlim\limits_{i\in I}\varGamma^k_{\!i}(\tilde H[1]) \ar[d]^-{\tilde\mu_5} \cr
	\varGamma^k_{\vphantom i}(\tilde V_2[-1]) \ar[r] &
		\varGamma^k_{\vphantom i}(\tilde H) \ar[r] &
		\varGamma^k_{\vphantom i}(\tilde V_1) \ar[r] &
		\varGamma^k_{\vphantom i}(\tilde V_2) \ar[r] &
		\varGamma^k_{\vphantom i}(\tilde H[1])
}
\]
whose vertical homomorphisms but $\tilde\mu_3$ are bijective, since $H^{j-1}(M)$ and $H^{j-2}(M)$ are finitely presented. By the Five~Lemma, $\tilde\mu_3$ is bijective as well.

The injectivity of the stated homomorphism follows by the recursive argument of the previous part. Indeed, since $V_2[-1]$ belongs to $\De^b_{R\lmod}(R)$, the homomorphism~$\mu_1$ is surjective, so that $\mu_3$ is an isomorphism if and only if $\mu_4$ is so; this said, since in the last step of the previous argument we proved that $\tilde\mu_3$ is bijective, it follows that $\mu_4$ will be bijective as well.
\end{proof}

\begin{prop}
Let\/ $\Phi$ be a Thomason filtration of finite length. Then for every complex $M\in{\cal H}\cap \De^b_{R\lmod}(R)$ we have that $M\in\mathop{\rm FP}_2({\cal H})$; that is, the functors $\Hom_{\cal H}^{\vphantom1}(M,-)$ and\/ $\Ext^1_{\cal H}(M,-)$ preserve direct limits.%
\label{p:R_noetherian-finite-length-FP2}%
\end{prop}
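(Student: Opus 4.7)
The plan is to reduce the statement to the preceding Proposition by induction on the length of a bounded complex in $\cal H$. Fix $M\in{\cal H}\cap\De^b_{R\lmod}(R)$ and a direct system $(N_i)_{i\in I}$ in $\cal H$, and set $L\leqdef\varinjlim^{({\cal H})} N_i$. By the t-structure correspondences recalled in Subsection~\ref{ss:t-structures}, for $N,N'\in\cal H$ we have $\Hom_{\cal H}(N,N')\cong\Hom_{\De(R)}(N,N')$ and $\Ext^1_{\cal H}(N,N')\cong\Hom_{\De(R)}(N,N'[1])$. So it is enough to establish, for every $k\in\Z$, that the canonical morphism $\varinjlim_{i\in I}\Hom_{\De(R)}(M,N_i[k])\to\Hom_{\De(R)}(M,L[k])$ is an isomorphism; the values $k=0,1$ are what the statement requires.

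First I would record that $L$ can actually be computed as the filtered colimit of $(N_i)$ in $\De(R)$: the t-structure is compactly generated, hence smashing and homotopically smashing in the sense of \cite{SSV17}, so the direct limit in $\cal H$ coincides with the filtered homotopy colimit in $\De(R)$, which for directed systems equals the direct limit of the underlying chain complexes (compare Lemma~\ref{l:cohomologies_commute} and \cite[Corollary~5.8]{SSV17}). In particular $H^n(L)\cong\varinjlim_{i\in I} H^n(N_i)$ for every $n$, and the standard truncation functors commute with this colimit: filtered colimits preserve cohomological vanishing, so the colimit of the standard truncation triangles $\tau^{\le j}N_i\to N_i\to\tau^{>j}N_i\buildrel+\over\to$ is, by uniqueness, the standard truncation triangle of $L$.

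Next I would prove by induction on $j$ that $\varinjlim_{i\in I}\Hom_{\De(R)}(M,\tau^{\le j}N_i[k])\to\Hom_{\De(R)}(M,\tau^{\le j}L[k])$ is an isomorphism for all $k\in\Z$. The base case $j=-l-1$ is trivial because $\cal H\subseteq\De^{\ge-l}(R)$ by Proposition~\ref{p:heart_is_bounded}, so both sides vanish. For the inductive step, apply the cohomological functor $\Hom_{\De(R)}(M,-[k])$ to the triangle $\tau^{\le j-1}N_i\to\tau^{\le j}N_i\to H^j(N_i)[-j]\buildrel+\over\to$ and to its analogue for $L$, obtaining a ladder of $6$-term long exact sequences of abelian groups. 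The maps between the entries involving $\tau^{\le j-1}$ are isomorphisms by the inductive hypothesis at shifts $k$ and $k+1$, while the maps between the entries involving the stalk $H^j(-)[-j]$ are isomorphisms by the preceding Proposition applied to the direct system $(H^j(N_i))_{i\in I}$ in $R\lMod$ at the appropriate shifts. The Five Lemma then yields the isomorphism for $\tau^{\le j}$. Taking $j=0$ concludes, since $N_i=\tau^{\le 0}N_i$ and $L=\tau^{\le 0}L$.

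The main obstacle is the identification of the direct limit $L$ taken in $\cal H$ with the filtered colimit of $(N_i)$ in $\De(R)$, and the attendant commutation of standard truncation with this limit; without it, the induction cannot even get off the ground, as the standard truncations of $L$ would not be accessible from those of the $N_i$. This step is not automatic in the heart of an arbitrary t-structure, but it is granted for us by the compactly generated, hence homotopically smashing, character of $({\cal U},{\cal V})$, through Lemma~\ref{l:cohomologies_commute} and the results of \cite{SSV17}.
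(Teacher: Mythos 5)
Your proof is correct and follows essentially the same route as the paper's: reduce to showing the canonical map $\varinjlim\Hom_{\De(R)}(M,N_i[k])\to\Hom_{\De(R)}(M,L[k])$ is bijective for all $k$, decompose via standard truncations (you induct upward on $\tau^{\le j}$, the paper peels off the lowest cohomology and recurses on $\tau^{>j}$ — the same argument in mirror image), handle the stalk pieces with the preceding Proposition, and conclude by the Five Lemma on the ladder of long exact sequences, with the identification of direct limits in $\cal H$ with filtered (homotopy) colimits in $\De(R)$ supplied by Lemma~\ref{l:cohomologies_commute} and \cite{SSV17} exactly as in the paper. Only a trivial indexing point to watch: with the paper's convention that $\Phi$ has length $l+1$ one has ${\cal H}\subseteq\De^{[-l-1,0]}(R)$, so the vacuous base case of your induction sits one degree lower than stated.
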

\begin{proof}
Let $l+1$ be the length of $\Phi$. Let $(M_i)_{i\in I}$ be a direct system of $\cal H$. Recall that the standard cohomology functors preserve direct limits of $\cal H$, so that
$$
	H^k(\varinjlim_{i\in I}\adjust M_i)\cong\varinjlim_{i\in I}H^k(M_i)
$$
for all $k\in\Z$. Let us prove that, for all integers $k\in\Z$, that the canonical morphism
$$
	\varinjlim_{i\in I}\Hom_{\De(R)}(M,M_i[k])\longrightarrow
	\Hom_{\De(R)}(M,(\varinjlim_{i\in I}\adjust M_i)[k])
$$
is an isomorphism. Consider the following family of triangles:
\begin{gather*}
	\bigl(H^{-l-1}(M_i)[l+1+k]\longrightarrow M_i[k]\longrightarrow
		\tau^{>-l-1}(M_i)[k]\buildrel+\over\longrightarrow\bigr)_{i\in I} \cr
	\noalign{\hbox{and}}
	H^{-l-1}(\varinjlim_{i\in I}M_i)[l+1+k]\longrightarrow (\varinjlim_{i\in I}M_i)[k]\longrightarrow
		\tau^{>-l-1}(\varinjlim_{i\in I}M_i)[k]\buildrel+\over\longrightarrow
\end{gather*}
let us rename them by
\begin{gather*}
	\bigl(H_i[k]\longrightarrow M_i[k]\longrightarrow
		V_i[k]\buildrel+\over\longrightarrow\bigr)_{i\in I} \cr
	\noalign{\hbox{and}}
	H[k]\longrightarrow L[k]\longrightarrow
		V[k]\buildrel+\over\longrightarrow
\end{gather*}
for short. Apply the functor $\Delta\leqdef\Hom_{\De(R)}(M,-)$ and the direct limit functor on these triangles to get
\[
\xymatrix@C1em{%
	\cdots \ar[r] & \varinjlim\limits_{i\in I}\Delta(H_i[k]) \ar[d]\ar[r] &
		\varinjlim\limits_{i\in I}\Delta(M_i[k]) \ar[d]\ar[r] & \varinjlim\limits_{i\in I}\Delta(V_i[k]) \ar[d]\ar[r] & \varinjlim\limits_{i\in I}\Delta(H_i[k+1]) \ar[r]\ar[d] & \cdots \cr
	\cdots \ar[r] & \Delta(H[k]) \ar[r] & \Delta(L[k]) \ar[r] & \Delta(V[k]) \ar[r] & \Delta(H[k+1]) \ar[r] & \cdots
}
\]
By the previous Proposition, all the vertical homomorphisms in which the $H$'s and their shifting do appear are bijective. Our claim then reduces to prove that all the vertical morphisms involving the $V$'s are bijective. Arguing recursively, we have to show that the canonical homorphism
$$
	\varinjlim_{i\in I}\Hom_{\De(R)}(M,\tau^{>-1}(M_i)[k])\longrightarrow
	\Hom_{\De(R)}(M,\tau^{>-1}(\varinjlim_{i\in I}\adjust M_i)[k])
$$
is bijective, but this occurs, by the previous Proposition, since we have
\begin{align*}
	\tau^{>-1}(M_i)=H^0(M_i)[0] \qquad\hbox{and}\qquad
	\tau^{>-1}(\varinjlim_{i\in I}\adjust M_i) &\cong H^0(\varinjlim_{i\in I}\adjust M_i)[0] \cr
	&\cong (\varinjlim_{i\in I}H^0(M_i))[0] \;.
\end{align*}
Thus, we conclude by \cite{BBD82} letting $k=0,1$ in the second display of the present proof, bearing in mind that $M\in{\cal H}$. \qedhere
\end{proof}

We are now ready to recover the aforementioned result of Saor\'\i n for Thomason filtrations of finite length:

\begin{cor}[{\cite[Theorem~6.3]{Sao17}}]
Let\/ $R$ be a commutative noetherian ring, and\/ $\Phi$ be a Thomason filtration of finite length. If the associated t-structure restricts to $\De^b_{R\lmod}(R)$, then the heart $\cal H$ is locally coherent and\/ $\fp({\cal H})={\cal H}\cap\De^b_{R\lmod}(R)$.
\end{cor}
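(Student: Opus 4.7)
My plan is to prove the equality $\fp({\cal H})={\cal H}\cap\De^b_{R\lmod}(R)$ and, in parallel, to show that the right-hand side is an abelian subcategory of ${\cal H}$; together these yield the local coherence of ${\cal H}$. Since $R$ is noetherian and $\Phi$ has finite length (hence is weakly bounded below), Theorem~\ref{t:when_hearts_coincide} guarantees that the AJS heart coincides with the heart of the compactly generated t-structure, so there is no ambiguity in which heart we work with.

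First I would verify that ${\cal H}\cap\De^b_{R\lmod}(R)$ is closed under kernels and cokernels computed in ${\cal H}$. Given a morphism $f\colon M\to N$ with cone $C$ in $\De(R)$, the formulae
\[
\ker^{({\cal H})}(f)=\tau^\le_{\cal U}(C[-1])\qquad\hbox{and}\qquad
\coker^{({\cal H})}(f)=\tau^>_{\cal U}(C[-1])[1]
\]
recalled in Subsection~\ref{ss:t-structures} settle the claim: when $M,N$ lie in $\De^b_{R\lmod}(R)$, so does $C[-1]$, and the restriction hypothesis keeps the truncations inside $\De^b_{R\lmod}(R)$. Next I would identify a distinguished set of generators: since $R$ is noetherian, every compact object of $\De(R)$ is a perfect complex and hence belongs to $\De^b_{R\lmod}(R)$, so by the same restriction argument $H_{\cal H}(S)\in{\cal H}\cap\De^b_{R\lmod}(R)$ for every $S\in\De^c(R)\cap{\cal U}$. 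By \cite[Theorem~8.31]{SS20}, the set $\mathcal{S}\leqdef\mathop{\rm add}H_{\cal H}(\De^c(R)\cap{\cal U})$ is a set of finitely presented generators of ${\cal H}$, all of whose members lie in ${\cal H}\cap\De^b_{R\lmod}(R)$.

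The inclusion ${\cal H}\cap\De^b_{R\lmod}(R)\subseteq\fp({\cal H})$ is then immediate from Proposition~\ref{p:R_noetherian-finite-length-FP2}. For the reverse, given $M\in\fp({\cal H})$, I would construct a two-step presentation by $\mathcal{S}$: first an epimorphism $p\colon S_0\twoheadrightarrow M$ from a finite direct sum $S_0$ of objects of $\mathcal{S}$, and then, using the standard fact that in a locally finitely presented Grothendieck category the kernel of an epimorphism from a finitely presented object onto a finitely presented object is finitely generated (\cite{CB94,Kra97}), an epimorphism $S_1\twoheadrightarrow\ker p$ from a further finite direct sum $S_1$ of objects of $\mathcal{S}$. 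The resulting exact sequence $S_1\to S_0\to M\to0$ in ${\cal H}$ has its first two terms in ${\cal H}\cap\De^b_{R\lmod}(R)$, so the closure under cokernels already established forces $M$ to lie there.

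Assembling the pieces, $\fp({\cal H})={\cal H}\cap\De^b_{R\lmod}(R)$ is an abelian subcategory, so ${\cal H}$ is locally coherent. The only delicate point in this plan is ensuring finite generation of $\ker p$, but this is a standard property of finitely presented objects in locally finitely presented Grothendieck categories; everything else is a transparent consequence of the restriction hypothesis together with the preceding results of the paper.
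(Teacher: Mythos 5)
Your proof is correct, and it reaches the conclusion by a route that is partly different from the paper's. The paper invokes the full strength of \cite[Theorem~8.31]{SS20}, namely the identification $\fp({\cal H})=\mathop{\rm add}H_{\cal H}({\cal U}\cap\De^c(R))$, which together with $\De^c(R)\subseteq\De^b_{R\lmod}(R)$ (noetherianity) and the restriction hypothesis immediately yields $\fp({\cal H})\subseteq{\cal H}\cap\De^b_{R\lmod}(R)$; the reverse inclusion comes from Proposition~\ref{p:R_noetherian-finite-length-FP2}, and local coherence is then deduced from the $\mathop{\rm FP}_2$ property of the finitely presented objects via the criterion \cite[Proposition~1.13(2)]{PSV19}. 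You instead use only the weaker fact that the objects $H_{\cal H}(S)$, $S\in\De^c(R)\cap{\cal U}$, form a set of finitely presented generators, and you recover $\fp({\cal H})\subseteq{\cal H}\cap\De^b_{R\lmod}(R)$ by a two-step presentation combined with the observation --- which is the real content of your argument --- that the restriction hypothesis forces ${\cal H}\cap\De^b_{R\lmod}(R)$ to be closed under the heart's kernels and cokernels, via the explicit truncation formulae of Subsection~\ref{ss:t-structures} (this uses that $\De^b_{R\lmod}(R)$ is a thick triangulated subcategory, again by noetherianity). This same closure gives you local coherence directly, without appealing to the $\Ext^1$-half of Proposition~\ref{p:R_noetherian-finite-length-FP2} or to the criterion from \cite{PSV19}. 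Your route is more self-contained on the local-coherence side, at the modest cost of the standard lemma that the kernel of an epimorphism between finitely presented objects of a locally finitely presented Grothendieck category is finitely generated; both arguments ultimately rely on Proposition~\ref{p:R_noetherian-finite-length-FP2} for the inclusion ${\cal H}\cap\De^b_{R\lmod}(R)\subseteq\fp({\cal H})$.
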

\begin{proof}
By \cite[Theorem~8.31]{SS20}, we know that $\cal H$ is a locally finitely presented Grothendieck category, with $\fp({\cal H})=\mathop{\rm add}H_{\cal H}({\cal U}\cap\De^c(R))$. By the hypothesis on $R$, it is clear that $\De^c_{\vphantom R}(R)\subseteq\De^b_{R\lmod}(R)$, and since the t-structure restricts to $\De^b_{R\lmod}(R)$, we infer that $\fp({\cal H})={\cal H}\cap\De^b_{R\lmod}(R)$. Now, by \cite[Proposition~1.13(2)]{PSV19} and in view of Proposition~\ref{p:R_noetherian-finite-length-FP2}, we obtain that $\cal H$ is locally coherent.
\end{proof}

Let us conclude this subsection with the following result, which shows that over a commutative noetherian ring any Thomason filtration of length~$1$ has locally coherent heart; we recall that Saor\'\i n exhibited in \cite[Remark~4.6]{Sao17} an example of a Thomason filtration of length~$1$ with locally coherent heart whose t-structure does not restrict to $\De^b_{R\lmod}(R)$.

\begin{cor}
Let\/ $R$ be a commutative noetherian ring. If\/ $\Phi$ is a Thomason filtration of length~$0$ or~$1$, then its heart $\cal H$ is locally coherent.
\end{cor}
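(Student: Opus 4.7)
The plan is to handle the two lengths separately, in both cases bootstrapping from results already established.

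For length~$0$, the filtration has the form treated in Subsection~\ref{ss:a_crucial_example}, namely $\Phi(n)=Z$ for $n\le 0$ and $\Phi(n)=\emptyset$ for $n\ge 1$, where $Z\leqdef\Phi(0)$. Proposition~\ref{p:torsion-lfp} identifies the heart with the torsion class ${\cal T}_Z$, and since a noetherian ring is in particular coherent, Corollary~\ref{c:coherent_ring=>torsion_LC_Grothendieck} delivers the local coherence of ${\cal T}_Z$ immediately.

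For length~$1$, the strategy is to verify the four hypotheses of Corollary~\ref{c:recursive-length1} one by one. Hypothesis~(1), that ${\cal T}_0$ is locally coherent, is again Corollary~\ref{c:coherent_ring=>torsion_LC_Grothendieck}. For the remaining (2), (3), (4), I would first observe that $\Phi(1)=\emptyset$ forces ${\cal T}_1=0$ and hence $\TFT_{-1}=\TF_{-1}$, so that by Remark~\ref{r:fp(TF_-1)} we have $\fp(\TFT_{-1})=\mathop{\rm add}y_0({\cal T}_{-1}\cap R\lmod)\subseteq R\lmod$, where the inclusion uses noetherianity of $R$ to identify finitely generated with finitely presented modules; similarly $\fp({\cal T}_0)\subseteq R\lmod$ by Proposition~\ref{p:torsion-Thomason_subset}(i). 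Next, direct limits in $\TFT_{-1}$ and in ${\cal T}_0$ coincide with those in $R\lMod$: for the former by Corollary~\ref{c:TF_n-AB5} together with Lemma~\ref{l:TF_-1=T_-1F_0}, for the latter by finite type of the torsion pair $({\cal T}_0,{\cal F}_0)$.

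With these identifications in place, hypotheses~(2), (3) and~(4) all reduce to the single assertion that $\Ext^k_R(M,-)\colon R\lMod\to\Ab$ commutes with direct limits for every $M\in R\lmod$ and every $k\ge 0$. This is the classical fact that over a noetherian ring any finitely presented module admits a projective resolution by finitely generated projectives, so that each $\Hom_R(P_i,-)$ commutes with direct limits and then cohomology does so by exactness of direct limits in $R\lMod$. Consequently no serious obstacle is expected: the recursive machinery of Section~\ref{s:thomason_filtrations_of_finite_length} reduces the problem, in these low-length regimes, to Corollary~\ref{c:coherent_ring=>torsion_LC_Grothendieck} and the standard commutation of Ext with direct limits over noetherian rings.
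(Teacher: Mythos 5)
Your proof is correct and follows essentially the same route as the paper: length~$0$ via Corollary~\ref{c:coherent_ring=>torsion_LC_Grothendieck}, and length~$1$ by checking the four conditions of Corollary~\ref{c:recursive-length1}, the crux being that $\fp(\TFT_{\!-1})\subseteq R\lmod$, after which noetherianity makes all the relevant $\Hom$- and $\Ext$-functors commute with direct limits. The only (immaterial) difference is that you deduce this inclusion from Remark~\ref{r:fp(TF_-1)}, whereas the paper argues directly with the canonical direct system of finitely generated submodules of an object of $\fp(\TFT_{\!-1})$.
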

\begin{proof}
The length~$0$ case has been proved in Corollary~\ref{c:coherent_ring=>torsion_LC_Grothendieck}. For the length~$1$ case we will show that the four conditions of Corollary~\ref{c:recursive-length1} hold true. In our hypotheses, only conditions~(2) and~(3) of such Corollary are not trivially satisfied. However, these conditions follow by the following inclusion
$$
	\fp(\TFT_{\!-1})\subseteq\fp({\cal T}_{-1}),
$$	
which we now prove. Indeed, let $B\in\fp(\TFT_{\!-1})$ and $(B_i)_{i\in I}$ be the canonical direct system in $R\lmod$ such that $B=\varinjlim_{i\in I}B_i$. Notice that the direct system lives in $\TFT_{\!-1}={\cal T}_{-1}\cap{\cal F}_0$, hence by hypothesis the identity of $B$ factors through $B_{\bar\imath}$, for some index $\bar\imath\in I$, whence $B$ is finitely presented in ${\cal T}_{-1}$. \qedhere
\end{proof}

\end{document}